\theoremstyle{definition}
\newtheorem{de}{Definition}
\theoremstyle{plain}
\newtheorem{theo}[de]{Theorem}
\newtheorem{lemma}[de]{Lemma}
\theoremstyle{remark}
\newcommand{\R}{\mathbb{R}}
\newcommand{\p}{\mathbb{P}}
\newcommand{\E}{\mathbb{E}}
\newcommand{\F}{\mathcal{F}}
\newcommand{\1}{\mathds{1}}
\newcommand{\dd}{\mathrm{d}}
\newcommand{\J}{\mathcal{J}}
\def\argmax{\mathop{\mathrm{argmax}}}
\def\ps@pprintTitle{%
 \let\@oddhead\@empty
 \let\@evenhead\@empty
 \def\@oddfoot{}%
 \let\@evenfoot\@oddfoot}
\begin{document}

\begin{frontmatter}

\title{On the $L_p$ error of the Grenander-type estimator in the Cox model}


\author[address1]{C\'ecile Durot}
\ead{cecile.durot@gmail.com}

\author[mymainaddress]{Eni Musta}
\ead{e.musta@tudelft.nl}

\address[address1]{Universit\'e Paris Nanterre}
\address[mymainaddress]{Delft University of Technology}

\begin{abstract}
We consider the Cox regression model and study the asymptotic global behavior of the Grenander-type estimator for  a monotone baseline hazard function. This model is not included in the general setting of \cite{durot2007}. However, we show that a similar central limit theorem holds for  {$L_p$-error} of the Grenander-type estimator. We also propose a test procedure for a Weibull baseline distribution, based on the {$L_p$-distance} between the Grenander estimator and a parametric estimator of the baseline hazard. Simulation studies are performed to investigate the performance of this test. 
\end{abstract}

\begin{keyword}
Cox regression model
\sep
Grenander-type estimator
\sep
 isotonic estimation
 \sep
  baseline hazard rate
  \sep 
  central limit theorem
  \sep
   testing a Weibull baseline distribution
   \sep
    global error

\end{keyword}

\end{frontmatter}


\section{Introduction}
A common problem in survival analysis is studying lifetime distributions in the presence of right censoring and covariates. In such cases the event time $X$ of a subject with covariate vector $Z\in\R^p$ is known only if the event occurs before the censoring time $C$. Hence, one observes only the follow up time $T=\min(X,C)$ and the censoring indicator $\Delta=\1_{\{X\leq C\}}$. For a sample of size $n$, data consists of  i.i.d. triplets $(T_1,\Delta_1,Z_1),\dots,(T_n,\Delta_n,Z_n)$.
A commonly used statistical method for investigating the relationship between the survival time and the predictor variables is the Cox proportional hazards model. It assumes that 
the hazard function at time $t$ for a subject with covariate vector $z\in\R^p$ has the form
\[
\lambda(t|z)=\lambda_0(t)\,\mathrm{e}^{\beta'_0z},\quad t\in\R^+,
\]
where $\lambda_0$ represents the baseline hazard function, corresponding to a subject with $z=0$, and $\beta_0\in\R^p$ is the vector of the regression coefficients. The proportional hazard property of the Cox model allows estimation of the effects $\beta_0$ of the covariates by the maximum partial likelihood estimator while leaving the baseline hazard completely unspecified. In this paper we deal with estimation of $\lambda_0$, which is required for example when one is interested in the absolute risk for the event to happen at a certain point in time.

Often it is natural to assume that the function $\lambda_0$ is monotone. For example, the risk of a second event for patients with acute coronary syndrome is expected to be decreasing over time because the conditions of patients stabilize as a result of the treatment and the natural disease course (\cite{Geloven13}). Moreover, in \cite{Geloven13} it is shown that using nonparametric shape constrained techniques leads to more accurate estimators than the traditional ones. Estimation of the baseline hazard function under monotonicity constraints has been studied in \cite{CC94}, \cite{LopuhaaNane2013}, \cite{LopuhaaMusta2016}, \cite{LopuhaaMusta2018}, where pointwise rates of convergence and limit distributions have been investigated. However, for goodness of fit tests, results on the global error of estimates are needed.  Here we consider one isotonic estimator which is the Grenander-type estimator  defined in \cite{LopuhaaNane2013} and we are interested in the limit behavior, as $n\to\infty$, of its $L_p$ error. 

Central limit theorems for the global error of Grenander-type estimators have been established in \cite{groeneboom1985}, \cite{groeneboom-hooghiemstra-lopuhaa1999}, \cite{kulikov-lopuhaa2005} for the density model and in \cite{durot2002} for the regression
setting. Later on, a unified approach that applies to a variety of statistical models was provided in 
\cite{durot2007}. It assumes the existence of a certain approximation of the observed cumulative hazard
by a Gaussian process. That kind of embedding is not available in the Cox model so the procedure cannot be directly applied. Here we make use of an embedding that exists for a simpler process which can then be connected to the observed cumulative hazard. As a result, the final approximating process will have also other components apart from the Brownian motion. However we can show that these additional terms do not contribute to the limit distribution. 

The paper is organized as follows. In Section~\ref{sec:notation} we fix the notations and describe some common assumptions on the model. The main results are given in Section~\ref{sec:main} and a test for the Weibull baseline distribution is proposed and investigated in Section~\ref{sec:testing}. In order to keep the exposition clear and simple, the
proofs are deferred to Section~\ref{sec:proofs}, and remaining technicalities have been put in the
Supplemental Material \cite{DM_supp}.

\section{Notation and assumptions}
\label{sec:notation}
The following assumptions are common when studying asymptotics  in the Cox regression model (see  for example \cite{LopuhaaNane2013}). Given the covariate vector $Z,$ the event time $X$ and the censoring time $C$ are assumed to be independent. Furthermore, conditionally on $Z=z,$ the event time is a nonnegative r.v. with an absolutely continuous distribution function $F(x|z)$ and density $f(x|z).$ Similarly the censoring time is a nonnegative r.v. with an absolutely continuous distribution function $G(x|z)$ and density $g(x|z).$ The censoring mechanism is assumed to be non-informative, i.e. $F$ and $G$ share no parameters. Moreover, we require that:
\begin{itemize}
	\item[(A1)]
	the end points of the support of $F$ and $G$ satisfy
	\[
	\tau_G<\tau_F\leq\infty,
	\]
	\item[(A2)]
	there exists $\epsilon>0$ such that
	\[
	\sup_{|\beta-\beta_0|\leq\epsilon}\E\left[|Z|^2\,\mathrm{e}^{2\beta'Z}\right]<\infty,
	\]
	\item[(A3)]
	for all $q\geq 1$, we have  $$\E\left[\mathrm{e}^{q\beta_0'Z}\right]<\infty. $$
\end{itemize}

Here $|\ . \ |$ denotes  the Euclidean norm and $\beta'$ denotes the transpose of $\beta$.
The first assumption tells us that, at the end of the study, there is at least one subject alive while (A2) can be seen as conditions on the boundedness of the second moment of the covariates, for $\beta$ in a neighbourhood of $\beta_0$. The third assumption is an additional one needed for our analysis in order to be able to apply certain results on empirical process theory and entropy integrals.

The most common estimator of  $\beta_0$ is $\hat{\beta}_n$, the maximizer of the partial likelihood function
\[
L(\beta)
=
\prod_{i=1}^m 
\frac{\mathrm{e}^{\beta'Z_i}}{\sum_{j=1}^n
	\1_{\{T_j\geq X_{(i)}\}}\mathrm{e}^{\beta'Z_j}}
\]
as proposed in~\cite{Cox72} and~\cite{Cox75},
where $0<X_{(1)}<\cdots<X_{(m)}<\infty$ denote the ordered, observed event times.
On the other hand, the nonparametric cumulative baseline hazard
\[
\Lambda_0(t)=\int_0^t\lambda_0(u)\,\mathrm{d}u,
\]
is usually estimated by the Breslow estimator 
\begin{equation}
\label{eq:Breslow}
\Lambda_n(x)=\int \frac{\delta\1_{\{ t\leq x\}}}{\Phi_n(t;\hat{\beta}_n)}\,\mathrm{d}\p_n(t,\delta,z).
\end{equation}
where 
\begin{equation}
\label{eq:def Phin}
\Phi_n(x;\beta)=\int \1_{\{t\geq x\}} \mathrm{e}^{\beta'z}\,\mathrm{d}\p_n(t,\delta,z),
\end{equation}
and $\p_n$ is the empirical measure of the triplets $(T_i,\Delta_i,Z_i)$ with $i=1,\dots,n.$ 
From  Theorem 5 in~\cite{LopuhaaNane2013}, {under the assumption that $\tau_G<\tau_F$} we have 
\begin{equation}
\label{eqn:Breslow}
\sup_{x\in[0,M]}|\Lambda_n(x)-\Lambda_0(x)|=O_p(n^{-1/2}), \qquad\text{for all }0<M<{\tau_G}.
\end{equation}
Without loss of generality we can assume {$\tau_G>1$} and take $M=1$. 
\eqref{eqn:Breslow} suggests that estimation near the end point of the support is problematic. Thus we consider estimation {of the baseline hazard function $\lambda_0$} on $[0,1]$ {assuming that it is non-increasing on $[0,1]$}. Let $\hat{\lambda}_n$ be the Grenander-type estimator of $\lambda_0$ on the interval $[0,1]$, i.e. $\hat{\lambda}_n$ is the left hand slope of the least concave majorant of $\Lambda_n$ on $[0,1]$. 
\section{Main results}
\label{sec:main}
At a fixed point $t\in(0,1)$, under certain regularity assumptions, the Grenander-type estimator $\hat{\lambda}_n$ converges at rate $n^{1/3}$ to $\lambda_0$ (\cite{LopuhaaNane2013}). In the present paper, we are interested in the asymptotic behavior of the $L_p$-error of  $\hat{\lambda}_n$
\[
\J_n= \int_0^1\left|\hat\lambda_n(t)-\lambda_0(t)\right|^p \,\dd t,\qquad p\geq 1.
\]
We start by providing a uniform bound of the order $n^{-p/3}$ on the $L_p$-error over points that are not very close to the boundaries. In addition, we also obtain some  control of the error  in the boundary regions which is of larger order. Throughout the paper we assume that (A1), (A2) and (A3) hold. {We assume moreover that $\tau_G>1$.} 
\begin{theo}
	\label{theo:bound_expectation_L_P}
	Let $p\geq 1$. Assume that {on $[0,1]$,} the follow-up time $T$ has uniformly continuous distribution with
	bounded density $h$ such that $\inf_{u\in[0,1]}h(u)>0$. Suppose that $\lambda_0$  and $x\mapsto \Phi(x;\beta_0)$ are  continuously differentiable and $\lambda'_0$ is bounded above by a strictly negative constant {on $[0,1]$}.
	There exists $K>0$ and an event $E_n$ with $\p(E_n)\to 1$, such that 
	\[
	\E\left[\1_{E_n}|\hat\lambda_n(t)-\lambda_0(t)|^p\right]\leq Kn^{-p/3},
	\]
	for all $t\in [n^{-1/3},1-n^{-1/3}]$, and 
	\[
	\E\left[\1_{E_n}|\hat\lambda_n(t)-\lambda_0(t)|^p\right]\leq K\left[n(t\wedge(1-t))\right]^{-p/2},
	\]
	for all $t\in[n^{-1},1-n^{-1}]$.
\end{theo}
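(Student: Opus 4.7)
The plan is to reduce pointwise tail bounds on $|\hat\lambda_n(t)-\lambda_0(t)|$ to tail bounds on the localisation of an argmax functional via the standard switching (inverse) relation for Grenander-type estimators, and then to control that argmax using a Gaussian-type approximation adapted to the Cox setting. Specifically, since $\hat\lambda_n(t)$ is the left derivative at $t$ of the least concave majorant of $\Lambda_n$ on $[0,1]$, for every $a$ one has $\hat\lambda_n(t)\ge a$ iff $\argmax_{s\in[0,1]}\{\Lambda_n(s)-as\}\ge t$, and similarly for the lower inequality. Writing $a=\lambda_0(t)+x$ and recentering by $\Lambda_0$, this turns $\{|\hat\lambda_n(t)-\lambda_0(t)|\ge x\}$ into an event of the form $\{\argmax_{s}\{(\Lambda_n-\Lambda_0)(s)+\Lambda_0(s)-\lambda_0(t)s\pm xs\}\text{ lies on a specified side of }t\}$. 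Since $\lambda_0$ is continuously differentiable with $\lambda_0'\le -c<0$ on $[0,1]$, the deterministic drift $\Lambda_0(s)-\lambda_0(t)s\mp xs$ has, after Taylor expansion around $t$, a strictly concave parabolic shape with unique maximum shifted from $t$ by an amount of order $x$.

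Next I would replace $\Lambda_n-\Lambda_0$ by a Gaussian approximation. In the Cox model the Breslow estimator admits a decomposition of the form
\[
\Lambda_n(s)-\Lambda_0(s)=M_n(s)+R_n(s),
\]
where $M_n$ is a martingale integral against the compensated counting process of the events (divided by $\Phi(\cdot;\beta_0)$) and $R_n$ gathers the error terms coming from replacing $\hat\beta_n$ by $\beta_0$ and $\Phi_n(\cdot;\hat\beta_n)$ by $\Phi(\cdot;\beta_0)$. Assumptions (A1)--(A3) and the differentiability of $\Phi(\cdot;\beta_0)$ give $\hat\beta_n-\beta_0=O_p(n^{-1/2})$ and $\sup_{x\in[0,1]}|\Phi_n(x;\hat\beta_n)-\Phi(x;\beta_0)|=O_p(n^{-1/2})$, hence $R_n$ is a sum of smooth perturbations of order $n^{-1/2}$ on an event $E_n$ of probability tending to $1$. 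Applying a KMT-type strong approximation to the martingale $M_n$ on $[0,1]$ (as available for empirical/martingale processes under the moment assumption (A3)) yields a Brownian motion $W_n$ and a remainder of order $n^{-1/2}\log n$, so that on $E_n$
\[
\Lambda_n(s)-\Lambda_0(s)=n^{-1/2}\sigma(s)W_n\bigl(A(s)\bigr)+\widetilde R_n(s),
\]
with $\widetilde R_n$ smooth and of order $n^{-1/2}$ uniformly in $s$.

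Plugging this into the argmax description and rescaling time locally around $t$ at speed $n^{-1/3}$ puts us exactly in the regime where one compares Brownian motion to a parabolic drift. The tail bound
\[
\p\!\left(\argmax_{u}\{\sigma_t W(u)-c_t u^2\}>y\right)\le Ce^{-c'y^3}
\]
(for $y$ up to the edge of the interval, with $c_t,\sigma_t$ bounded from above and below uniformly in $t$ thanks to the assumptions on $h$, $\Phi$ and $\lambda_0'$) gives, for $t\in[n^{-1/3},1-n^{-1/3}]$, a uniform sub-exponential tail in $n^{1/3}x$, which after integrating $\p(|\hat\lambda_n(t)-\lambda_0(t)|\ge x)$ against $px^{p-1}\dd x$ produces the $Kn^{-p/3}$ bound. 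For $t\in[n^{-1},1-n^{-1}]$ one cannot push $y$ beyond $\min(t,1-t)n^{1/3}$, so one truncates the argmax at the boundary: outside the parabolic region the bound degenerates and the best one obtains is $\p(|\hat\lambda_n(t)-\lambda_0(t)|\ge x)\lesssim \exp(-c'n x^2(t\wedge(1-t)))$, which after integration gives the $[n(t\wedge(1-t))]^{-p/2}$ bound. The contribution of $\widetilde R_n$ is absorbed by choosing $E_n$ so that $\sup_{s\in[0,1]}|\widetilde R_n(s)|\le \rho n^{-1/2}$ for a large constant $\rho$; since the $n^{-1/2}$ scale is smaller than the scales $n^{-1/3}$ (interior) and $[n(t\wedge(1-t))]^{-1/2}$ (boundary) that dominate the bounds, its effect is absorbed into the constant $K$.

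The main obstacle is to run this argmax argument uniformly in $t$ so that a single event $E_n$ (and a single constant $K$) works for every $t$ in the stated range; this requires a careful uniform control of $\widetilde R_n$ and a uniform version of the Brownian approximation on $[0,1]$, together with the boundary modification of the parabolic-drift tail inequality needed to obtain the $[n(t\wedge(1-t))]^{-p/2}$ rate in the second claim. The rest of the reduction is the standard Grenander machinery adapted to the Cox setting through the Breslow decomposition and the rate \eqref{eqn:Breslow}.
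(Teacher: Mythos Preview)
Your reduction via the switching relation is correct, and your decomposition of $\Lambda_n-\Lambda_0$ into a ``martingale'' part and a remainder coming from $\hat\beta_n,\Phi_n$ is the right starting point. However, the core of your argument---the Gaussian approximation step---is where the proposal breaks down, and this is precisely the point that separates the Cox model from the settings covered by the general theory of \cite{durot2007}.

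The gap is in your treatment of the remainder $\widetilde R_n$. You claim that a \emph{uniform} bound $\sup_s|\widetilde R_n(s)|\le\rho n^{-1/2}$ on $E_n$ suffices because ``the $n^{-1/2}$ scale is smaller than the scales $n^{-1/3}$.'' This comparison is wrong. For the argmax event $\{\hat U_n(a)-U(a)\ge cx\}$ you need
\[
(\Lambda_n-\Lambda_0)(y)-(\Lambda_n-\Lambda_0)(U(a))\ \ge\ c''(y-U(a))^2\quad\text{for some }y\text{ with }|y-U(a)|\ge cx,
\]
so the relevant comparison is between the \emph{increment} $\widetilde R_n(y)-\widetilde R_n(U(a))$ and the parabolic drift $(y-U(a))^2$. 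At the critical scale $|y-U(a)|\sim n^{-1/3}$ the drift is $\sim n^{-2/3}$, which is \emph{smaller} than a uniform remainder of size $n^{-1/2}$. With only a sup-norm bound on $\widetilde R_n$, the tail estimate you obtain is nontrivial only for $x\gtrsim n^{-1/4}$, and integrating yields $n^{-p/4}$ rather than $n^{-p/3}$. Nor is $\widetilde R_n$ ``smooth'': the KMT remainder is the difference of two rough processes, and the term $\int\delta\1_{\{u\le s\}}\bigl(1/\Phi_n(u;\hat\beta_n)-1/\Phi(u;\beta_0)\bigr)\,\dd\p_n$ is a step function in $s$.

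The paper does \emph{not} use a Gaussian approximation for this theorem at all; the embedding (Lemma \ref{le:embedding}) is reserved for the CLT. Instead, it obtains the exponential tail bounds $\p(\{|\hat U_n(a)-U(a)|\ge x\}\cap E_n)\le K_1\exp(-K_2nx^3)$ (Lemma \ref{le:inv_tail_prob1}) and a companion bound for $a\notin\lambda_0([0,1])$ (Lemma \ref{le:inv_tail_prob2}) by controlling \emph{increments} of each piece of $\Lambda_n-\Lambda_0$ directly: the martingale-type piece $S_n$ is split as $S_n^1+S_n^2$ by conditioning on $T_1,\dots,T_n$, and Doob/Hoeffding (Lemma \ref{le:S1}) plus a DKW/reverse-martingale argument (Lemma \ref{le:S2}) yield exponential increment bounds; the nuisance piece $R_n$ is handled via $|1/\Phi_n-1/\Phi|\lesssim n^{-1/2+\alpha}$ on $E_n$ together with Bernstein's inequality for the number of observations in a short interval. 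These are the ingredients your argument is missing, and they are exactly what replaces the unavailable martingale assumption (A2) of \cite{durot2007} in the Cox setting. The boundary bound $[n(t\wedge(1-t))]^{-p/2}$ then follows from the second tail lemma, not from a truncated parabolic-drift heuristic.
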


The problem of bounding the $L_p$-error of  {Grenander type estimators} has been considered also in \cite{durot2007} and \cite{durot2008}. Results in \cite{durot2007} hold for a general setting which includes the density model, the right censoring model and the regression model with fixed design points. However, it excludes the regression model with random design points, which  is considered in \cite{durot2008}, and the present case of Cox model. 

As in \cite{durot2007} and \cite{durot2008}, bounds on the $L_p$-error of {Grenander type estimators} are  obtained through the more tractable inverse process $\hat{U}_n$ defined in \eqref{def:U_n} below. First, exponential bounds for the tail probabilities of {$\hat U_n$} are derived in Lemmas \ref{le:inv_tail_prob1} and \ref{le:inv_tail_prob2}. Afterwards, by the switching relation, results on $\hat{U}_n$ are transfered to results on $\hat\lambda_n$. The key ingredient of the proof consists in being able to bound probabilities of the type
\begin{equation}
\label{eqn:bound_M_n}
\p\left(\sup_{|t-s|<\delta}|M_n(t)-M_n(s)|>x\right)
\end{equation}
where $t,\,s\in(0,1)$, $x,\,\delta>0$ and $M_n=\Lambda_n-\Lambda_0$. The setting considered in \cite{durot2007} assumes the existence of polynomial bounds of order $\delta/nx^2$ for probabilities analogous to those in~\eqref{eqn:bound_M_n} (see Assumption (A2) in~\cite{durot2007}). Such an assumption can be easily verified for models where $M_n$ is a martingale (or can easily be connected to a martingale), for example for the right censoring model without covariates. This is not the case in our setting, mainly because of the dependence introduced by $\Phi_n$ an $\hat{\beta}_n$. Here we obtain exponential bounds for probabilities as in~\eqref{eqn:bound_M_n} but the proof becomes more technical.

The main idea is as follows. 
Let $H$ and $H^{uc}$ denote respectively the distribution function of the follow-up time and the sub-distribution function of the uncensored observations, i.e.,
\begin{equation}
\label{eq:def Huc}
H^{uc}(x)=\p(T\leq x,\Delta=1)=\int \delta\1_{\{t\leq x\}}\,\mathrm{d}\mathbb{P}(t,\delta,z),
\end{equation}
where $\p$ is the distribution of $(T,\Delta,Z)$.
If we define 
\begin{equation}
\label{eq:def Phi}
\Phi(x;\beta)=\int \1_{ \{t\geq x\}}\,\mathrm{e}^{\beta'z}\,\mathrm{d}\p(t,\delta,z),
\end{equation}
we can write
\begin{equation}
\label{eqn:lambda0}
\lambda_0(x)
=
\frac{h^{uc}(x)}{\Phi(x;\beta_0)},\qquad x\in[0,\tau_H),
\end{equation}
where $h^{uc}(x)=\mathrm{d}H^{uc}(x)/\mathrm{d}x$
(e.g., see (9) in~\cite{LopuhaaNane2013}) {and $\tau_H$ is the endpoint of he support of $H$}.
For $\beta\in\R^p$ and $x\in\R$, the function $\Phi(x;\beta)$ can be estimated by $\Phi_n(x;\beta)$ defined in $\eqref{eq:def Phin}$. 
Moreover, in Lemma 4 of~\cite{LopuhaaNane2013} it is shown that
\begin{equation}
\label{eqn:Phi}
\sup_{x\in\R}|\Phi_n(x;\beta_0)-\Phi(x;\beta_0)|=O_p(n^{-1/2}).
\end{equation} 

Using \eqref{eqn:lambda0} and \eqref{eq:Breslow}, we decompose the process $M_n$ as 
\begin{equation}
\label{eqn:decomposition_M_n}
M_n(t)=\int\frac{\delta\1_{\{u\leq t\}}}{\Phi(u;\beta_0)}\,\dd(\p_n-\p)(u,\delta,z)+\int\delta\1_{\{u\leq t\}}\left(\frac{1}{\Phi_n(u;\hat\beta_n)}-\frac{1}{\Phi(u;\beta_0)}\right)\,\dd\p_n(u,\delta,z).
\end{equation}
The event $E_n$ of the previous theorem is essentially needed to bound 
\[
\left|\frac{1}{\Phi_n(u;\hat\beta_n)}-\frac{1}{\Phi(u;\beta_0)}\right|
\]
uniformly over $u\in(0,1)$ in order to get rid of the dependence introduced by $\Phi_n$ and $\hat\beta_n$. Then, we show by using Bernstein inequality that the second term of \eqref{eqn:decomposition_M_n} has a negligible contribution in the bounds for \eqref{eqn:bound_M_n}.  On the other hand, in order to deal with the first term, we start by conditioning with respect to the follow up times and then apply Doob inequality to the resulting sum of independent random variables. This idea is somewhat similar to what is done in~\cite{durot2008} where conditioning with respect to the design points is needed to obtain a martingale.
Details on the proof can be found in Section \ref{sec:proofs} and in the Supplementary Material \cite{DM_supp}.

From Theorem \ref{theo:bound_expectation_L_P}, it follows immediately that 
$n^{p/3}\E[\J_n]=O(1).$
Next, we show that $n^{p/3}\J_n$ is asymptotically Gaussian.  We follow the proof of Theorem 2 in \cite{durot2007}. There the main ingredient used  for proving asymptotic normality is Assumption (A4) which allows approximation of an analoguous of $M_n$ by a Gaussian process. Essentially we need to approximate a properly rescaled increment  $M_n(t_n)-M_n(t)  $ by some increment $B_n(t'_n)-B_n(t')$ of a  Brownian motion or bridge $B_n$.  Then if $B_n$ is a Brownian motion, the 
approximating process is also a Brownian motion up to some normalization and the independence of its increments can be used to show asymptotic normality. Otherwise, it can be shown that the Brownian bridge component $\xi_n (t_n-t)$, where $\xi_n$ is a standard normal random variable, has a negligible contribution to the limit distribution. In our case, no such embedding for the Breslow estimator exists and the main reason is the dependence introduced by $\Phi_n$ and $\hat{\beta}_n$. However, as stated in the following lemma, we can approximate $M_n(t_n)-M_n(t)  $ by a process which has a Brownian bridge component and two additional terms that depend on $\Phi_n$ and $\hat{\beta}_n$ and are linear with respect to $t_n-t$. Since $n^{1/2}\left[\Phi_n(t;\beta_0)-\Phi(t;\beta_0)\right]$ and $n^{1/2}(\hat\beta_n-\beta_0)$ are asymptotically Gaussian, these additional terms can be treated in the same way as the Brownian bridge component $\xi_n (t_n-t)$ and, without needing to specify the dependence between the limit normal distributions, we can show that they have no effect in the asymptotic behavior of the $L_p$-error. 
\begin{lemma}
	\label{le:Lambda_n-Lambda}
	Suppose that $\lambda_0$  and $x\mapsto \Phi(x;\beta_0)$ are  continuously differentiable and bounded away from zero {on $[0,1]$}. Assume that, given $z$, the follow up time $T$ has a continuous density uniformly bounded from above and below away from zero. Define
	\begin{equation}
	\label{def:A_0}
	A_0(x)=\int_0^x\frac{D^{(1)}(u;\beta_0)}{\Phi(u;\beta_0)}\lambda_0(u)\,\dd u,\qquad D^{(1)}(u;\beta)=\frac{\partial \Phi(u;\beta)}{\partial \beta}.
	\end{equation}
	Let $t\in(0,1)$, $T_n<n^{\gamma}$ {with $\gamma\in(0,1/12)$, and $q\in[2,2/(3\gamma)]$}. There exists a Brownian Bridge $B_n$ and an event $E_n$ such that 
	\[
	\begin{split}
	M_n(t_n)-M_n(t)&= n^{-1/2}\frac{1}{\Phi(t;\beta_0)}\left[B_n\left(H^{uc}(t_n)\right)-B_n\left(H^{uc}(t)\right)\right]+(\hat\beta_n-\beta_0)'A'_0(t)\left(t_n-t \right)\\
	&\quad-\left[\Phi_n(t;\beta_0)-\Phi(t;\beta_0)\right] \frac{\lambda_0(t)}{\Phi(t;\beta_0)}\left(t_n-t\right)+{r}_n(t,t_n),
	\end{split}
	\]
	with some $r_n(t,t_n)$ {such that for all $c>0$, there exists $K>0$} that satisfies 
	\[
	\p\left[\left\{\sup_{|t_n-t|\leq cn^{-1/3}T_n}|{r}_n(t,t_n)|>x \right\}\cap E_n\right]\leq K x^{-q}n^{1-q}, 
	\]
	{for all $t\in[0,1],$} $x\geq 0$. 
\end{lemma}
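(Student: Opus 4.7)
The plan is to exploit the decomposition $M_n=M_n^{(1)}+M_n^{(2)}$ given in~\eqref{eqn:decomposition_M_n}, where
\[
M_n^{(1)}(t)=\int\frac{\delta\1_{\{u\le t\}}}{\Phi(u;\beta_0)}\,\dd(\p_n-\p)(u,\delta,z)
\]
carries the randomness of the empirical process of $(T,\Delta)$, while
\[
M_n^{(2)}(t)=\int\delta\1_{\{u\le t\}}\left[\frac{1}{\Phi_n(u;\hat\beta_n)}-\frac{1}{\Phi(u;\beta_0)}\right]\dd\p_n(u,\delta,z)
\]
carries the plug-in errors from $(\hat\beta_n,\Phi_n)$. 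I expect $M_n^{(1)}(t_n)-M_n^{(1)}(t)$ to produce the Brownian-bridge increment, while a first-order Taylor expansion of $M_n^{(2)}(t_n)-M_n^{(2)}(t)$ will supply the two linear-in-$(t_n-t)$ correction terms; everything else is absorbed into $r_n$.

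For $M_n^{(1)}$, I would write $M_n^{(1)}(t_n)-M_n^{(1)}(t)=\int_t^{t_n}\Phi(u;\beta_0)^{-1}\,\dd(H_n^{uc}-H^{uc})(u)$, with $H_n^{uc}$ the empirical analogue of $H^{uc}$ in~\eqref{eq:def Huc}. Applying a Hungarian-type construction to the empirical distribution function of the i.i.d.\ real observations $(T_i,\Delta_i)$ yields a Brownian bridge $B_n$ with $\sup_{u\in[0,1]}|\sqrt n(H_n^{uc}-H^{uc})(u)-B_n(H^{uc}(u))|=O(n^{-1/2}\log n)$ almost surely. An integration by parts in the Lebesgue--Stieltjes sense, followed by a first-order expansion of $1/\Phi(u;\beta_0)$ around $u=t$ and the local modulus of continuity of $B_n$, would then convert the resulting integral into $n^{-1/2}\Phi(t;\beta_0)^{-1}[B_n(H^{uc}(t_n))-B_n(H^{uc}(t))]$ up to errors of order $n^{-1/2}|t_n-t|$, which go into $r_n$.

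For $M_n^{(2)}$, I would start from the algebraic identity
\[
\frac{1}{\Phi_n(u;\hat\beta_n)}-\frac{1}{\Phi(u;\beta_0)}=-\frac{(\Phi_n-\Phi)(u;\beta_0)+D^{(1)}(u;\beta_0)'(\hat\beta_n-\beta_0)}{\Phi(u;\beta_0)^2}+\rho_n(u),
\]
where $\rho_n$ collects the quadratic Taylor residuals in $(\Phi_n-\Phi,\hat\beta_n-\beta_0)$. On a high-probability event $E_n$ obtained from~\eqref{eqn:Phi} and the $\sqrt n$-consistency of $\hat\beta_n$, one has $\sup_u|\rho_n(u)|=O(n^{-1})$. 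Plugging this into $M_n^{(2)}(t_n)-M_n^{(2)}(t)$, replacing $\dd H_n^{uc}(u)$ by $h^{uc}(u)\dd u=\lambda_0(u)\Phi(u;\beta_0)\dd u$ via~\eqref{eqn:lambda0}, and finally evaluating the slowly varying integrands at $u=t$, the terms $(\hat\beta_n-\beta_0)'A_0'(t)(t_n-t)$ and $-[\Phi_n(t;\beta_0)-\Phi(t;\beta_0)]\lambda_0(t)\Phi(t;\beta_0)^{-1}(t_n-t)$ emerge with $A_0'(t)=D^{(1)}(t;\beta_0)\lambda_0(t)/\Phi(t;\beta_0)$ as in~\eqref{def:A_0}.

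The main obstacle is collecting every residual into a single $r_n$ that satisfies the uniform tail bound for every $q\in[2,2/(3\gamma)]$. The terms to track are (i) the KMT remainder, at most logarithmic in $n$ and therefore negligible in any polynomial sense; (ii) the quadratic Taylor residuals times $|t_n-t|$, which on $E_n$ are of order $n^{-4/3+\gamma}$; (iii) the localisation errors from evaluating smooth integrands at $u=t$, which yield contributions of order $|t_n-t|^2=O(n^{-2/3+2\gamma})$ possibly further multiplied by the $O_p(n^{-1/2})$ factors $\hat\beta_n-\beta_0$ or $\Phi_n-\Phi$; and most delicately, (iv) the empirical-process replacement error $\int_t^{t_n}\psi_n(u)\,\dd(H_n^{uc}-H^{uc})(u)$ on an interval of length at most $cn^{-1/3}T_n$, for which I would bound the $q$-th moment by a Bernstein/chaining argument adapted to this short interval. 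Discretising $\{t_n:|t_n-t|\le cn^{-1/3}T_n\}$, applying Markov's inequality at order $q$ to each residual, and summing over the discretisation then delivers the claimed $Kx^{-q}n^{1-q}$ bound uniformly in $t\in[0,1]$; the restriction $q\le 2/(3\gamma)$ is precisely what is needed for the moment gain to dominate the loss from the supremum over the interval of length $n^{-1/3+\gamma}$.
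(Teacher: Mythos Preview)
Your proposal is broadly sound but takes a different route from the paper, and there is one step that is less robust than you suggest.

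\textbf{Different starting decomposition.} You work directly from the decomposition~\eqref{eqn:decomposition_M_n} and Taylor-expand $1/\Phi_n(u;\hat\beta_n)-1/\Phi(u;\beta_0)$ inside $M_n^{(2)}$. The paper instead uses the \emph{linear representation of the Breslow estimator}~\eqref{eqn:Breslow_linear}, which already packages the $\hat\beta_n$-dependence into the clean term $(\hat\beta_n-\beta_0)'A_0(x)$ and puts all second-order cross-terms into a single remainder $R_n$ with $\sup_x|R_n(x)|=o_P(n^{-1+\alpha})$ globally. This saves you from having to track, term by term, the residuals that arise when you replace $\dd H_n^{uc}$ by $h^{uc}\,\dd u$ inside an integral whose integrand is itself random (i.e.\ involves $\Phi_n-\Phi$ or $\hat\beta_n-\beta_0$). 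In the paper the $[\Phi_n(t;\beta_0)-\Phi(t;\beta_0)]$ term does not come from a Taylor expansion of $1/\Phi_n$ at all, but from the identity $\int e^{\beta_0'z}\1_{\{u>t\}}\,\dd(\p_n-\p)=\Phi_n(t;\beta_0)-\Phi(t;\beta_0)$ applied to the second integral in~\eqref{eqn:Breslow_linear}; see~\eqref{eqn:second_term_breslow_linear}. Your approach can certainly be pushed through, but the linear representation makes the bookkeeping much shorter.

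\textbf{Remainder control.} For the residuals $R_n^1$ and $R_n^2$ (which correspond to your ``localisation error'' in $M_n^{(1)}$ and the analogous term from the second integral in~\eqref{eqn:Breslow_linear}), the paper does \emph{not} use a Bernstein/discretisation argument. Instead it applies the $L_q$-moment inequalities of empirical process theory (Theorems~2.14.2 and~2.14.5 in van der Vaart--Wellner) together with explicit bracketing-number computations; this is done in the supplement (Lemmas~\ref{le:R_1} and~\ref{le:R_2}) and yields the bound $Kx^{-q}n^{1-q}$ directly via Markov's inequality. The role of the constraint $q\le 2/(3\gamma)$ is exactly to make the $\|F\|_{L_2}^q$ and $n^{-q/2+1}\|F\|_{L_q}^q$ contributions balance. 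Your Bernstein-plus-discretisation idea is in the right spirit, but it is not obvious that it delivers the polynomial tail uniformly in $t$ without the entropy machinery; in particular your item~(iv) is stated for a deterministic $\psi_n$, whereas several of the residuals you generate have random integrands, and those need to be handled deterministically on $E_n$ before any empirical-process inequality is applied.
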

The asymptotic mean and variance of $\J_n$  depend on the process $X$ defined by 
\begin{equation}
\label{eq: X}
X(a)=\argmax_{u\in\R}\left\{ -(u-a)^2+W(u)\right\}
\end{equation}
where $W$ is a standard two-sided Brownian motion. Let
\[
k_p=\int_0^\infty \text{cov}\left(|X(0)|^p,|X(a)-a|^p\right)\,\text{d}a.
\]
The main result is the following.
\begin{theo}
	\label{theo:CLT}
	Let $p\in[1,5/2)$. 
	Assume that, given $z$, the follow up time $T$ has a continuous density. Suppose that $\lambda_0$,  $x\mapsto \Phi(x;\beta_0)$  are  continuously differentiable and $\lambda'_0$, $\Phi'(x;\beta_0)$ are bounded above by strictly negative constants. Moreover, assume that there exists $C'>0$ and $s>3/4$ such that 
	\begin{equation}
	\label{eqn:assumption_derivative}
	\left|\lambda'_0(t)-\lambda'_0(x)\right|\leq C'\left|t-x\right|^s,\qquad\text{for all }t,\,x\in[0,1].
	\end{equation}
	Then, it holds
	\[
	n^{1/6}\left(n^{p/3}\int_0^1\left|\hat{\lambda}_n(t)-\lambda_0(t)\right|^p\,\text{d}t-m_p\right)\xrightarrow{d}N(0,\sigma^2_p)
	\]
	where
	\[
	m_p=\E\left[\left|X(0)\right|^p\right]\int_0^1\left|\frac{4\lambda_0(t)\lambda'_0(t)}{\Phi(t;\beta_0)}\right|^{p/3}\,\text{d}t.
	\]
	and
	\[
	\sigma^2_p=8k_p\int_0^1\left|\frac{4\lambda_0(t)\lambda'_0(t)}{\Phi(t;\beta_0)}\right|^{2(p-1)/3}\frac{\lambda_0(t)}{\Phi(t;\beta_0)}\,\text{d}t.
	\]
\end{theo}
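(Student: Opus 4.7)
The plan is to follow the argument of Theorem~2 in \cite{durot2007}, with Lemma~\ref{le:Lambda_n-Lambda} playing the role of their Assumption~(A4) on the Gaussian approximation and Theorem~\ref{theo:bound_expectation_L_P} providing the moment control both in the interior and in the boundary layer. Fix a slowly diverging sequence $T_n\to\infty$ with $T_n=o(n^{\gamma})$ for $\gamma\in(0,1/12)$, and split $\J_n$ into a central integral over $I_n:=[n^{-1/3}T_n,\,1-n^{-1/3}T_n]$ and two boundary pieces. The first bound of Theorem~\ref{theo:bound_expectation_L_P} gives an $O(n^{-p/3})$ expectation on the central part, while the second bound, integrated over the two boundary regions, contributes $o(n^{-p/3-1/6})$ in $L^1$ and hence disappears after the $n^{1/6}$-rescaling. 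All subsequent analysis is carried out on the event $E_n$ of Theorem~\ref{theo:bound_expectation_L_P} and Lemma~\ref{le:Lambda_n-Lambda}.

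For each $t\in I_n$, the switching relation combined with a second-order Taylor expansion of $\Lambda_0$ (valid thanks to the H\"older condition $|\lambda_0'(t)-\lambda_0'(x)|\leq C'|t-x|^s$ with $s>3/4$) gives the local argmax representation
\[
n^{1/3}\bigl(\hat\lambda_n(t)-\lambda_0(t)\bigr)=c(t)\,\argmax_{u\in J_n}\Bigl\{n^{2/3}\bigl(M_n(t+n^{-1/3}c(t)u)-M_n(t)\bigr)-u^2\Bigr\}+o_p(1),
\]
where $c(t)=|4\lambda_0(t)\lambda'_0(t)/\Phi(t;\beta_0)|^{1/3}$ and $J_n$ is a growing compact window. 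Substituting the decomposition of Lemma~\ref{le:Lambda_n-Lambda} for $M_n(t+n^{-1/3}c(t)u)-M_n(t)$, the rescaled Brownian-bridge increment converges to the two-sided Brownian motion $W$ that appears in the definition of $X$, so up to the two extra linear drifts the local argmax converges in distribution to $X(0)$. Dominated convergence, combined with the first bound of Theorem~\ref{theo:bound_expectation_L_P}, identifies the mean $m_p$.

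The main obstacle is to show that the two additional linear drifts, $(\hat\beta_n-\beta_0)'A_0'(t)(s-t)$ and $-[\Phi_n(t;\beta_0)-\Phi(t;\beta_0)]\lambda_0(t)\Phi(t;\beta_0)^{-1}(s-t)$, are asymptotically negligible. Conditioning on $\hat\beta_n$ and on $\Phi_n(\cdot;\beta_0)$, each drift has the form $\alpha_n(t)(s-t)$ with $\alpha_n(t)=O_p(n^{-1/2})$ uniformly in $t$; following the argument used in \cite{durot2007} for the Brownian-bridge correction $\xi_n(t_n-t)$, a deterministic linear perturbation of this order contributes $O_p(n^{-1/2})=o_p(n^{-1/6})$ to $n^{p/3}\J_n$, and the joint law of the coefficients with $B_n$ plays no role. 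With these drifts removed, $n^{p/3}(\J_n-m_p)$ on $I_n$ reduces, up to $o_p(n^{-1/6})$, to a sum over a partition of $I_n$ into $O(n^{1/3}/T_n)$ blocks of length $n^{-1/3}T_n$; on distinct blocks the local argmax functionals depend on disjoint pieces of $W$, hence are independent, and a Lyapunov CLT delivers the Gaussian limit. The asymptotic variance $\sigma_p^2$ is identified through the constant $k_p$ after the local change of variables produces the weight $|4\lambda_0(t)\lambda'_0(t)/\Phi(t;\beta_0)|^{2(p-1)/3}\lambda_0(t)/\Phi(t;\beta_0)$, and the restriction $p<5/2$ ensures the finiteness of $\E|X(0)|^{2p}$ and the convergence of $k_p$.
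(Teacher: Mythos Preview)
Your outline follows the pattern of \cite{durot2007}, but there is a genuine gap in the handling of the linear drifts. The term $-[\Phi_n(t;\beta_0)-\Phi(t;\beta_0)]\,\lambda_0(t)\Phi(t;\beta_0)^{-1}(s-t)$ is not of the form ``random constant $\times$ deterministic function of $t$ $\times (s-t)$'': the coefficient $\Phi_n(t;\beta_0)-\Phi(t;\beta_0)$ varies with $t$ through the same data that produce the Brownian bridge $B_n$ of Lemma~\ref{le:Lambda_n-Lambda}, so conditioning on $\Phi_n(\cdot;\beta_0)$ alters the conditional law of $B_n$ and hence of the local argmax. The argument in \cite{durot2007} for the bridge correction $\xi_n(t_n-t)$ therefore does not transfer. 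The paper handles this in two steps: first, $\Phi_n-\Phi$ is replaced by a kernel-smoothed version $\Psi_n^s$ (bandwidth $n^{-1/4}$) so that the change of variable $a\mapsto a^\xi$, which absorbs all linear drifts into a single shift of the inverse variable, is differentiable; second, once the drifts are collected into a shift $n^{-1/6}\varsigma_n(t)$ of the local argmax $\tilde V(t)$, a separate partition-and-Cauchy--Schwarz argument (Step~3) is required for the $\Psi_n^s$-part of $\int\varsigma_n(t)\,\tilde V(t)|\tilde V(t)|^{p-2}g(t)\,\dd t$, because that part cannot be factored as ``$O_P(1)$ random variable'' times ``integral with mean zero by symmetry of $\tilde V$'', as the $\xi_n$ and $\hat\beta_n-\beta_0$ parts can. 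Your assertion that the drift contribution to $n^{p/3}\J_n$ is $O_p(n^{-1/2})$ is also not right: after the local rescaling the drift enters the argmax at order $n^{-1/6}$, and its removal at that scale relies on the symmetry of the law of $\tilde V$ and the near-independence of $\tilde V(t)$ for well-separated $t$, not on a crude size bound.

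Two smaller points. The argmax representation you write for $n^{1/3}(\hat\lambda_n(t)-\lambda_0(t))$ directly in terms of increments of $M_n$ around $t$ is not valid as stated; the switching relation yields an argmax representation for the inverse process $\hat U_n(a)$, and both \cite{durot2007} and the present paper first pass to $\int_{\lambda_0(1)}^{\lambda_0(0)}|\hat U_n(a)-U(a)|^p|U'(a)|^{1-p}\,\dd a$ (Step~1) before any local analysis. Finally, the restriction $p<5/2$ does not come from moments of $X(0)$ (which has moments of all orders) but from the boundary treatment in Step~1, where one needs an auxiliary exponent $p'\in(p-1/2,2)$ with $1\le p'\le p$.
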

The main motivation for studying the $L_p$-error of an isotonic estimator is to construct goodness of fit tests. However, the previous theorem only allows us to test for a fixed baseline distribution which might not be of practical interest. Usually one would like to test a certain parametric baseline distribution while leaving the parameters unspecified. Following the same line of argument as in Theorem \ref{theo:CLT}, we show that the following result holds. {Here, $\lambda_0$ takes a paramteric form, $\lambda_0(t)=f(\theta,t)$ with some unkown parameter $\theta$ and some known function $f$. For simplicity, we assume that $f\in\mathcal{C}^\infty(\R^d\times[\epsilon,M])$, which means that all derivatives of $f$ exist.}
\begin{theo}
	\label{theo:CLT2}
	Let $p\in[1,5/2)$. 
	Assume that, given $z$, the follow up time $T$ has a continuous density and $\lambda_0(t)=f(\theta,t)$ for some finite dimensional parameter $\theta\in\R^d$ and a strictly positive function $f\in\mathcal{C}^{\infty}(\R^d\times[\epsilon,M])$ {for fixed $\epsilon$ and $M$ such that $0\leq\epsilon\leq M\leq 1$}.  Suppose that {on $[\epsilon,M]$,} $\lambda_0$ and $x\mapsto \Phi(x;\beta_0)$  are  continuously differentiable and $\lambda'_0$, $\Phi'(x;\beta_0)$ are bounded above by strictly negative constants. Let $\hat\theta_n$ be an estimator of $\theta$, independent of the Grenander-type estimator $\hat{\lambda}_n$ constructed on $[\epsilon,M]$, such that 
	\begin{equation}
	\label{eqn:theta_hat}
	\sqrt{n}(\hat\theta_n-\theta)=O_P(1)
	\end{equation}	
	and define $\lambda_{\hat\theta}(t)={f(\hat\theta_n,t)}$.
	Then, it holds 
	\[
	n^{1/6}\left(n^{p/3}\int_\epsilon^M\left|\hat{\lambda}_n(t)-\lambda_{\hat{\theta}}(t)\right|^p\,\text{d}t-m_p\right)\xrightarrow{d}N(0,\sigma^2_p)
	\]
	where 
	\[
	m_p=\E\left[\left|X(0)\right|^p\right]\int_\epsilon^M\left|\frac{4\lambda_0(t)\lambda'_0(t)}{\Phi(t;\beta_0)}\right|^{p/3}\,\text{d}t.
	\]
	and
	\[
	\sigma^2_p=8k_p\int_\epsilon^M\left|\frac{4\lambda_0(t)\lambda'_0(t)}{\Phi(t;\beta_0)}\right|^{2(p-1)/3}\frac{\lambda_0(t)}{\Phi(t;\beta_0)}\,\text{d}t.
	\]
\end{theo}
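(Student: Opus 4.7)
The strategy is to reduce the parametric statement to Theorem~\ref{theo:CLT} by viewing $\lambda_{\hat\theta}-\lambda_0$ as a small perturbation. Since the proof of Theorem~\ref{theo:CLT} localizes on the integration interval and uses only hypotheses that remain valid on any subinterval $[\epsilon,M]\subseteq[0,1]$, applying it to $\hat\lambda_n$ on $[\epsilon,M]$ yields
\[
n^{1/6}\left(n^{p/3}\int_\epsilon^M|R(t)|^p\,\dd t-m_p\right)\xrightarrow{d}N(0,\sigma_p^2),\qquad R:=\hat\lambda_n-\lambda_0.
\]
By Slutsky's lemma it is then enough to establish
\[
\Xi_n:=n^{1/6+p/3}\int_\epsilon^M\left(|R+\Delta|^p-|R|^p\right)(t)\,\dd t\xrightarrow{p}0,
\]
where $\Delta:=\lambda_0-\lambda_{\hat\theta}$. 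The $\mathcal{C}^\infty$-regularity of $f$ combined with \eqref{eqn:theta_hat} gives $\Delta(t)=-\partial_\theta f(\theta,t)^{\top}(\hat\theta_n-\theta)+O_p(n^{-1})$ uniformly on $[\epsilon,M]$, so that $\|\Delta\|_\infty=O_p(n^{-1/2})$.

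For $p>1$, I would use the pointwise expansion $|R+\Delta|^p=|R|^p+p|R|^{p-1}\sgn(R)\,\Delta+\rho_p$ with $|\rho_p|\le C\left(|\Delta|^{\min(p,2)}+\Delta^2|R|^{p-2}\1_{\{p>2\}}\right)$, and split $\Xi_n$ into a linear cross term and a remainder. Since $\int_\epsilon^M|R|^{p-2}\,\dd t=O_p(n^{-(p-2)/3})$ by Theorem~\ref{theo:bound_expectation_L_P}, the remainder multiplied by $n^{1/6+p/3}$ is bounded by $O_p(n^{(1-p)/6})+O_p(n^{-1/6})=o_p(1)$, the condition $p<5/2$ being exactly what keeps the moment $\int|R|^{p-2}$ small enough. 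For $p=1$ the Taylor expansion fails on the set $\{|R|\le|\Delta|\}$, so there I would write $|R+\Delta|-|R|=\sgn(R)\Delta$ on $\{|R|>|\Delta|\}$ and dispose of the exceptional set by bounding its Lebesgue measure by $O_p(\|\Delta\|_\infty\,n^{1/3})=O_p(n^{-1/6})$, which follows from the $n^{-1/3}$-oscillation scale of $R$ inherited from Theorem~\ref{theo:bound_expectation_L_P}.

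What remains is the linear cross term $L_n:=p\int_\epsilon^M|R|^{p-1}\sgn(R)\,\Delta\,\dd t$. Here the independence of $\hat\theta_n$ and $\hat\lambda_n$ is crucial: conditioning on $\hat\theta_n$ reduces the analysis to showing that, for every fixed smooth $g$,
\[
V_n(g):=\int_\epsilon^M|R(t)|^{p-1}\sgn(R(t))\,g(t)\,\dd t=o_p\left(n^{(1-p)/3}\right),
\]
since then $n^{1/6+p/3}L_n=O_p(n^{(p-1)/3})\cdot o_p(n^{(1-p)/3})=o_p(1)$. The heuristic is a block decomposition of $[\epsilon,M]$ into $n^{1/3}$ intervals of length $n^{-1/3}$: on each block the rescaled error $n^{1/3}R(t_k+n^{-1/3}s)$ converges, by \cite{LopuhaaNane2013}, to a multiple of the process $X$ of \eqref{eq: X}, and the symmetry $X(0)\stackrel{d}{=}-X(0)$ forces $\E[|X(0)|^{p-1}\sgn X(0)]=0$, so the block contributions have asymptotically vanishing conditional means and summable conditional variances, leading heuristically to $V_n(g)=O_p(n^{-(2p-1)/6})$, which is strictly stronger than the required bound throughout $p\in[1,5/2)$. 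Turning this heuristic into a rigorous estimate is the main obstacle: it requires recycling the embedding of Lemma~\ref{le:Lambda_n-Lambda} and the exponential tail controls of Lemmas~\ref{le:inv_tail_prob1}--\ref{le:inv_tail_prob2} that drive the proof of Theorem~\ref{theo:CLT}, and in particular showing that the two non-Brownian components appearing in Lemma~\ref{le:Lambda_n-Lambda}---which are linear in the increment and possess no built-in sign cancellation---contribute only lower-order terms once integrated against the smooth weight $g$.
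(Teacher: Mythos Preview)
Your strategy is valid in outline but differs from the paper's route. The paper does \emph{not} invoke Theorem~\ref{theo:CLT} and then treat $\Delta=\lambda_0-\lambda_{\hat\theta}$ as a perturbation. Instead it conditions on $\hat\theta_n$ and reruns Steps~1--4 of the proof of Theorem~\ref{theo:CLT} with $\lambda_{\hat\theta}$ and $U_{\hat\theta}$ in place of $\lambda_0$ and $U$; this forces an extension of Lemmas~\ref{le:R_1}--\ref{le:R_3} to base points depending on $n$ (Lemmas~\ref{le:R_1_2}--\ref{le:R_3_2} in the supplement). The payoff is that the perturbation $n^{1/2}(\lambda_{\hat\theta}-\lambda_0)$ then appears automatically as a fourth summand in the drift $\eta_n$ alongside $\xi_n$, $\hat\beta_n-\beta_0$ and $\Psi_n^s$ (equation~\eqref{def:eta_t2}), and is eliminated in Step~3 by exactly the same symmetry-of-$\tilde V$ argument already used for the other three summands.

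Your organization is conceptually cleaner---one call to Theorem~\ref{theo:CLT}, one perturbation bound---but the work you flag as the ``main obstacle'' (making the block/symmetry heuristic for the cross term $L_n$ rigorous) is precisely the paper's Step~3, applied to $R$ rather than to $\tilde V$. To carry it out you would first have to pass from $n^{1/3}R$ to $\tilde V$ via the inverse representation and localization of Steps~1--2, at which point the two proofs merge. So you do not save effort, you reorganize it; what you gain is modularity---your argument isolates exactly where the parametric hypotheses enter, namely the smoothness of $\partial_\theta f(\theta,\cdot)$ and the independence of $\hat\theta_n$ from $\hat\lambda_n$. One small warning: your appeal to Theorem~\ref{theo:bound_expectation_L_P} for $\int|R|^{p-2}\,\dd t$ when $2<p<5/2$ needs a Lyapunov-type interpolation, since that theorem is stated only for exponents $\ge1$.
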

{The above theorem is true for instance with $\epsilon=0$ and $M=1$ so that the interval of integration is the same as in Theorem \ref{theo:CLT}. However, we consider here the more general case for $\epsilon$ and $M$ to increase the applicability of the theorem.
	For instance, in the simulation study in the next section, we cannot choose $\epsilon=0$ because the hazard for the Weibull model and its derivative diverge to infinity at the point  zero. Hence, in that case we will apply the Theorem for some $\epsilon>0$. At the same time, in the simulation study we will also consider different values for the end point $M$ (also $M\neq 1$).}
The proof  is very similar to the one of Theorem \ref{theo:CLT}. Main ideas  are given in Section \ref{sec:proofs} while  further details can be found in the Supplementary Material \cite{DM_supp}. 
\section{A goodness of fit test}
\label{sec:testing}
The Cox regression model is widely used in survival analysis and the most common and straightforward approach is to assume a parametric model for the baseline hazard $\lambda_0$.  
Frequent choices of parametric models are the Exponential, Weibull and Gompertz distribution (see for example \cite{LG97}). When one wants to model the survival distribution of a population with increasing or
decreasing risk, the Weibull distribution may be used  since depending on whether the shape parameter is greater/smaller than
one, the hazard rate is increasing/decreasing. The Weibull distribution is characterized by the shape parameter $\nu$ and the scale parameter $\mu$. The hazard rate is given by 
\[
\lambda_0(t)=\nu\mu^\nu t^{\nu-1}\quad\mbox{ for all } t\geq 0.
\]
The adequacy of the Weibull model can be assessed by looking at the quality  of the regression of  $\log(-\log (1-\hat{F}_n(t)))$ on  $\log t$, where $\hat{F}_n$ is an estimator of the baseline distribution function (\cite{miller}). Indeed, if the true distribution is Weibull, then
\[
1-F(t)=\exp\left\{-(\mu t)^\nu \right\}\quad\text{and}\quad \log(-\log (1-F(t)))=\nu\log t+\nu\log \mu.
\]
Such approach is commonly used in practice (see for example Section 5.2 in~\cite{collett2015} or \cite{ducrocq1988,zhang2016}).
Here we propose a general procedure for testing a parametric assumption on the baseline distribution, assuming that the baseline hazard function is monotone. The test is based on the $L_p$-distance between the parametric estimator and the Grenander-type estimator. We describe the test and investigate its performance for the case of the Weibull baseline distribution since it is of more practical interest. 

We assume that the baseline hazard is decreasing and we want to test whether it corresponds to a Weibull distribution, i.e. we consider the following test
\[
H_0:\quad\lambda_0(t)=\nu \mu^\nu t^{\nu-1}\quad\text{for some }\mu>0\text{ and }\nu<1
\]
against
\[
H_1:\quad\lambda_0\text{ is decreasing}.
\]
Under the null hypothesis we estimate the baseline hazard by its parametric estimator
\[
\lambda_{\hat{\theta}}(t)=\hat\nu_n \hat{\mu}_n^{\hat\nu_n} t^{\hat\nu_n-1}
\]
where $\hat\theta_n=(\hat\mu_n,\hat\nu_n)$ is the maximum likelihood estimator of $\theta=(\mu,\nu)$. 

Under $H_1$ we estimate $\lambda_0$ by the Grenander-type estimator $\hat{\lambda}_n$. In this case we need to consider estimation on $[\epsilon,M]$ with $\epsilon>0$ because $
\lim_{t\to 0}\lambda_0(t)=\lim_{t\to 0}|\lambda'_0(t)|=\infty.$
We split the sample in two parts of sizes $n_1=rn$ and $n_2=(1-r)n$ for some $r\in(0,1)$. We use the first part to estimate $\theta$ and obtain $\lambda_{\hat{\theta}}$ and the second part to construct the nonparametric estimator $\hat{\lambda}_n$. Hence $\hat\theta_n$ is independent of $\hat{\lambda}_n$.
Moreover, the conditions of Theorem \ref{theo:CLT2} are satisfied since
\[
\sqrt{n}\left(\hat\mu_n-\mu\right)=O_P(1)\quad\text{and}\quad\sqrt{n}\left(\hat\nu_n-\nu\right)=O_P(1),
\]
See for example Section 3.4 in \cite{kalbfleisch2011}.
Let $0<\epsilon<M<\tau_H$. We consider the test statistic 
\[
T_n=n_2^{1/3}\int_\epsilon^M \left|\hat{\lambda}_n(t)-\lambda_{\hat{\theta}}(t) \right|\,\dd t
\] 
{where $\hat{\lambda}_n$ is constructed on $[\epsilon,M]$. For the simulation study we consider various choices of $\epsilon$ and $M$, including cases where $M\neq 1$} {and,} at level $\alpha$  we reject the null hypothesis if $T_n>c_{n,\alpha}$ for some critical value $c_{n,\alpha}$.

In order to avoid computation of the asymptotic mean and variance in Theorem {\ref{theo:CLT2}}, we compute the critical value $c_{n,\alpha}$ by a bootstrap procedure as follows.

Let $\tilde\beta_n$ be the maximum likelihood estimator of $\beta_0$ (assuming Weibull baseline distribution).  We generate  $B=1000$ bootstrap samples as follows. We fix the covariates and we generate the event time $X_i^*$ from the parametric estimator
\[
F_{\hat{\theta}}(x|Z_i)=1-\exp\left(-\Lambda_{\hat\theta}(x)e^{\tilde\beta_nZ_i}\right)
\] 
for the cdf of $X$ conditional on $Z_i$, {where $\Lambda_{\hat\theta}$ is the cumulative baseline hasard corresponding to $\lambda_{\hat\theta}$}.
We assume that the censoring times $C_i$ are independent of the covariates and we generate $C^*_i$ from the Kaplan-Meier estimate $\hat{G}_n$ (independently of $Z_i$).
Then we take~$T^*_i=\min(X^*_i,C^*_i)$ and $\Delta^*_i=\1_{\{X^*_i\leq C^*_i\}}$.
We divide each bootstrap sample $(T_1^*,\Delta_1^*,Z_1),\dots,(T_{n}^*,\Delta_{n}^*,Z_{n})$ in two subsamples of size $n_1$ and $n_2$. Using the first subsamples we compute the maximum likelihood estimators $\hat{\theta}_{n,j}^*$
of $\hat{\theta}_n$ and obtain $\lambda^*_{\hat\theta,j} $. From the second subsamples, we compute  the Grenander-type estimators $\hat{\lambda}_{n,j}^{*}$ and the value of the test statistic
\[
T_{n,j}^{*}=n_2^{1/3}\int_\epsilon^M \left|\hat{\lambda}_{n,j}^{*}(t)-\lambda^*_{\hat{\theta},j}(t) \right|\,\dd t,\qquad j=1,\dots,B.
\]
Then we take as a critical value the $100\alpha$-th upper percentile of the values $T_{n,1}^*,\dots,T_{n,B}^*$.

Consistency of the bootstrap method follows from the next 
theorem. Let $P_n^*$ denote the conditional distribution given the data. Let $\hat\beta_n^*$ be 
{an} estimator of $\tilde{\beta}_n$ in the bootstrap sample. We assume that 
$\hat\beta_n^*-\tilde{\beta}_n\to 0$,
for almost all sequences $(T_i^*,\Delta_i^*,Z_i)$, $i=1,2,\ldots$, conditional on the sequence  $(T_i,\Delta_i,Z_i)$, $i=1,2,\ldots$,
and that 
\begin{equation}
\label{eqn:beta_n^*}
\sqrt{n}(\hat\beta_n^*-\tilde{\beta}_n)=O_{P^*}(1),
\end{equation}
meaning that for all $\epsilon>0$, there exists $M>0$ such that
\[
\limsup_{n\to\infty}
P_n^*
\left(
\sqrt{n}
|\hat\beta_n^*-\tilde{\beta}_n|
>M
\right)
<
\epsilon,
\qquad
\p-\text{almost surely}.
\]
This property is proved in \cite{hjort1985bootstrapping} {for the maximum partial likelihood estimator} in a slightly different setting. There, the bootstrap event times are  generated from a non-smooth estimator instead of our $F_{\hat\theta}$.
\begin{theo}
	\label{theo:CLT*}
	Suppose that the censoring time is independent of the covariate and the assumptions of Theorem \ref{theo:CLT2} are satisfied. Let $\hat\lambda_n^*$ be the Grenander estimator in the bootstrap sample constructed as above from a baseline distribution with hazard rate $\lambda_{\hat\theta}$. Given the data, let $\hat{\theta}_n^*$ be an estimator of $\hat{\theta}_n$ independent of $\hat\lambda_n^*$ such that 
	\begin{equation}
	\label{eqn:theta_hat*}
	\sqrt{n}(\hat{\theta}_n^*-\hat{\theta}_n)=O_{P^*}(1)
	\end{equation}
	and define $\lambda^*_{\hat\theta}(t)={f({\hat{\theta}_n^*},t)}$. {Then,} given the data $(T_1,\Delta_1,Z_1),$ $\dots,(T_{n},\Delta_{n},Z_{n})$, it holds
	\[ 
	n^{1/6}\left(n^{p/3}\int_\epsilon^M\left|\hat{\lambda}_n^*(t)-\lambda^*_{\hat\theta}(t)\right|^p\,\text{d}t-m_p\right)\xrightarrow{d}N(0,\sigma^2_p)
	\]
	{in probability} where $m_p$ and $\sigma^2_p$ are as in Theorem \ref{theo:CLT2}.

	{Therefore, the test that rejects the null hypothesis if $T_n>c_{n,\alpha}$, where $c_{n,\alpha}$ is the $100\alpha$-th upper percentile of the values $T_{n,1}^*,\dots,T_{n,B}^*$, has asymptotic level $\alpha$.}
\end{theo}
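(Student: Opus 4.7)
The plan is to replay the proof of Theorem \ref{theo:CLT2} in the bootstrap universe, treating the observed data $(T_i,\Delta_i,Z_i)$ as fixed and taking probabilities with respect to the resampling law $P^*$. In that universe the role of the ``truth'' $(\lambda_0,\beta_0,\Phi(\cdot;\beta_0))$ is played by $(\lambda_{\hat\theta},\tilde\beta_n,\Phi^*(\cdot;\tilde\beta_n))$, with $\Phi^*(x;\beta)=\E^*[\1_{\{T^*\geq x\}}e^{\beta'Z}]$, while the role of the estimators $(\hat\beta_n,\hat\theta_n)$ is played by $(\hat\beta_n^*,\hat\theta_n^*)$.

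First, I would check that the regularity assumptions of Theorem \ref{theo:CLT2} are satisfied in the bootstrap world on an event of $\p$-probability tending to one. Since $\hat\theta_n\to\theta$ and $\tilde\beta_n\to\beta_0$ almost surely, and since $f\in\mathcal{C}^\infty$, the bootstrap baseline $\lambda_{\hat\theta}$ and its derivative converge uniformly on $[\epsilon,M]$ to $\lambda_0,\lambda_0'$; in particular $\lambda_{\hat\theta}'$ is eventually bounded above by a strictly negative constant. Using Glivenko--Cantelli for $\hat G_n\to G$ and $\tilde\beta_n\to\beta_0$, a similar statement holds for $\Phi^*(\cdot;\tilde\beta_n)$ and its derivative. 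Thus the eventual constants in the conclusion of the theorem, in the bootstrap world, converge to those in the real world.

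Second, I would prove the bootstrap analogue of Lemma \ref{le:Lambda_n-Lambda}. The decomposition \eqref{eqn:decomposition_M_n} of $M_n^*=\Lambda_n^*-\Lambda_{\hat\theta}$ produces an approximation of $M_n^*(t_n)-M_n^*(t)$ by a rescaled Brownian bridge increment (applied to the bootstrap subdistribution $H^{uc,*}$) plus two linear correction terms involving $n^{1/2}(\hat\beta_n^*-\tilde\beta_n)$ and $n^{1/2}[\Phi_n^*(t;\tilde\beta_n)-\Phi^*(t;\tilde\beta_n)]$. Both corrections are $O_{P^*}(1)$: the former is \eqref{eqn:beta_n^*}, the latter follows by applying the argument of Lemma 4 in \cite{LopuhaaNane2013} to the bootstrap empirical measure, since the class $\{(t,\delta,z)\mapsto \1_{\{t\geq x\}}e^{\beta'z}\}$ is VC with an envelope controlled uniformly thanks to (A2)--(A3) and $\tilde\beta_n\to\beta_0$. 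With the Lemma in hand, the remainder of the proof of Theorem \ref{theo:CLT2} goes through verbatim and yields, conditionally on the data and in $P^*$-probability,
\[
n^{1/6}\left(n^{p/3}\int_\epsilon^M \bigl|\hat\lambda_n^*(t)-\lambda_{\hat\theta}^*(t)\bigr|^p\,\dd t-m_p^*\right)\xrightarrow{d} N\bigl(0,(\sigma_p^*)^2\bigr),
\]
where $m_p^*,(\sigma_p^*)^2$ are obtained from the formulas for $m_p,\sigma_p^2$ by substituting $\lambda_{\hat\theta},\lambda_{\hat\theta}',\Phi^*(\cdot;\tilde\beta_n)$ for $\lambda_0,\lambda_0',\Phi(\cdot;\beta_0)$. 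By Step~1 we have $m_p^*\to m_p$ and $(\sigma_p^*)^2\to \sigma_p^2$ almost surely, and Slutsky gives the stated conditional weak convergence. The final claim about asymptotic level~$\alpha$ is then standard: for $p=1$ the conditional CLT implies that the bootstrap quantile $c_{n,\alpha}$ converges in $\p$-probability to the upper $\alpha$-quantile $z_\alpha$ of $N(m_1,\sigma_1^2)$, while Theorem \ref{theo:CLT2} gives $T_n\to N(m_1,\sigma_1^2)$, so $\p(T_n>c_{n,\alpha})\to\alpha$.

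The main obstacle is the bootstrap analogue of Lemma \ref{le:Lambda_n-Lambda}: all the uniform empirical-process, Bernstein and Doob-type inequalities used in Section \ref{sec:proofs} have to be re-derived with constants that do not depend on the sample since the ``true'' distribution in the bootstrap world is itself $n$-dependent. A subtlety to handle carefully is that $T^*\mid Z$ has a mixed law (its censoring component $\hat G_n$ is a step function), so the ``continuous density'' hypothesis of Theorem \ref{theo:CLT2} does not hold literally; one must verify that this smoothness is only ever used through quantities such as $h^{uc,*}/\Phi^*$ in which the factor $(1-\hat G_n)$ cancels, or else replace $\hat G_n$ by a smoothed version without affecting the limit.
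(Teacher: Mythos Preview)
Your proposal is correct and follows essentially the same route as the paper: replay the proof of Theorem~\ref{theo:CLT2} in the bootstrap world, establish the bootstrap analogues of the tail-probability and embedding lemmas, and verify that the $n$-dependent ``true'' quantities can be controlled uniformly. You also correctly isolate the main technical obstacle, namely that $\hat G_n$ is a step function so $H^*$ and $H^{uc,*}$ are not differentiable.

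One small point of comparison on that last issue: the cancellation of $(1-\hat G_n)$ you mention does occur in the ratio $h^{uc,*}/\Phi^*$, but it does \emph{not} help where $H^*$ or $H^{uc,*}$ appear on their own (e.g.\ in the Bernstein-type bounds for the inverse process, or in the argument of the Brownian bridge $B_n\circ H^{uc,*}$). The paper's device is your second option, but with a specific choice: rather than kernel-smoothing $\hat G_n$, it replaces $\hat G_n$ by the true $G$ to define differentiable proxies $\tilde H^*,\tilde H^{uc,*}$, and uses $\sup_t|\hat G_n(t)-G(t)|=O_P(n^{-1/2})$ to show $\sup_t|H^*-\tilde H^*|\lesssim n^{-1/2+\alpha}$ on a good event. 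This keeps all constants uniform in $n$ and avoids introducing an extra bandwidth.
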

The proof follows the ones of Theorems \ref{theo:CLT} and \ref{theo:CLT2} but it is more technical because the \lq true\rq\ functions in the Bootstrap version  depend on $n$ and are less smooth than the original ones.  Some ideas are given in Section \ref{sec:proofs} and further details in the Supplemental Material \cite{DM_supp}.

To investigate the performance of the test we repeat the procedure $N=1000$ times and count the percentage of rejections. This gives an approximation of the level (or the power) of the test if we start with a sample from a Weibull distribution (or not Weibull). 

We perform the following simulation study. To investigate how the test behaves in terms of level we start with a sample of event times from the Weibull distribution with parameters $\mu>0$ and $\nu<1$. We consider five different scenarios, depending on the choice of the parameters, which correspond to slightly and strongly decreasing hazard rates. The real valued covariates and the censoring times are chosen to be uniformly distributed on $[0,1]$ and $[0,\tau]$ respectively. $\tau$ is chosen in such a way that the percentage of not censored observations is around $75\%$.
We take $\beta_0=0.5$. Unfortunately, it is not clear how to optimally choose  $\epsilon$ and $M$. In practice we take $\epsilon$ to be relatively small with respect to the range of the data and $M$ is smaller than the last observed event time. We compare this test with a likelihood ratio type of test (using the Grenander estimator instead of the maximum likelihood estimator) and a Kolmogorov-Smirnov type of test. 
For the first one, note that the likelihood in the Cox model can be written as
\[
\prod_{i=1}^n \left(\lambda_0(T_i)\exp(\beta_0Z_i)\right)^{\Delta_i}\exp\{-\Lambda_0(T_i)\exp(\beta_0Z_i)\}\prod_{i=1}^n g(T_i|Z_i)^{1-\Delta_i}\left[1-G(T_i|Z_i)\right]^{\Delta_i}.
\]
Hence we consider the test that rejects $H_0$ for small values of 
\[
LR_n=\frac{\prod_{i=1}^n \left(\lambda_{\hat\theta}(T_i)\exp(\tilde{\beta}_nZ_i)\right)^{\Delta_i}\exp\{-\Lambda_{\hat\theta}(T_i)\exp(\tilde\beta_nZ_i)\}}{\prod_{i=1}^n \left(\hat\lambda_n(T_i)\exp(\hat\beta_nZ_i)\right)^{\Delta_i}\exp\{-\hat\Lambda_n(T_i)\exp(\hat\beta_nZ_i)\}},
\]
where $\hat\Lambda_n(t)=\int_0^t\hat\lambda_n(u)\,\dd u$. The second test we consider rejects $H_0$ for large values of $S_n=\sup_{[0,\tau]}|F_{\hat\theta}(t)-{F}_n(t)|$, where $F_{\hat\theta}$ is the parametric estimator of the distribution function of the event times and ${F}_n(t)=1-\exp(\Lambda_n(t))$. Note that the last test makes no use of the monotonicity assumption. We determine the  critical values of these tests through a bootstrap procedure as described above. 

Results of the simulations for sample size $n=2000, 4000$  and $n_1=n/2$ are reported in Table~\ref{tab:level1}. Division in two subsamples of equal size seems to give better results with respect to for example $n_1=\lfloor n/3\rfloor$. 
\begin{table}[h]
	\begin{tabular}{cccccc}
		\toprule
		$(\mu;\nu)$ &$(5;0.5)$ &$(1;0.1)$ & $(1;0.9)$ & $(1;0.5)$ &$(0.1;0.5)$\\
		$(\epsilon;M;\tau)$  & $(0.1;0.5;0.7)$ & $(0.5;4.5;7)$ & $(0.5;2.5;3.5)$ & $(0.5;2.5;3.5)$ & $(1;30;35)$ \\
		\\[-10pt]
		\cline{1-6}
		\\[-10pt]
		& $0.073$  & $0.053$  & $0.050$  &  $0.058$ & $0.058$ \\
		$n=2000$  & $0.081$ & $0.067$ & $0.074$ & $0.068$ & $0.074$\\
		& $0.053$ &$0.055$  & $0.047$  &$0.056$  & $0.049$\\
		\\[-10pt]
		\cline{1-6}
		\\[-10pt]
		& $0.062$ & $0.062$ & $0.054$ & $0.056$ & $0.064$\\
		$n=4000$  & $0.086$ &$0.079$  & $0.059$  & $0.083$  & $0.077$\\
		& $0.059$  & $0.053$  & $0.036$ & $0.063$ & $0.055$\\
		\bottomrule\\
	\end{tabular}
	\caption{Simulated level of $T_n$ (first numbers), $LR_n$ (second numbers) and $S_n$ (third numbers) for $\alpha=0.05$.}
	\label{tab:level1}
\end{table}
However, in practice it seems that if we do not divide the sample in two part but we use all the data for computing both the parametric and the nonparametric estimator, the performance of the tests improves  (see Table~\ref{tab:level2}). This suggests that the independence of $\hat\theta_n$ from the Grenander estimator might be only a theoretical requirement and the bootstrap procedure works even if they are dependent. 
\begin{table}[h]
	\begin{tabular}{cccccc}
		\toprule
		$(\mu;\nu)$ &$(5;0.5)$ &$(1;0.1)$ & $(1;0.9)$ & $(1;0.5)$ &$(0.1;0.5)$\\
		$(\epsilon;M;\tau)$  & $(0.1;0.5;0.7)$ & $(0.5;4.5;7)$ & $(0.5;2.5;3.5)$ & $(0.5;2.5;3.5)$ & $(1;30;35)$ \\
		\\[-10pt]
		\cline{1-6}
		\\[-10pt]
		& $0.038$  & $0.043$  & $0.046$  &  $0.034$ & $0.038$ \\
		$n=2000$  & $0.040$ & $0.029$ & $0.068$ & $0.040$ & $0.041$\\
		&$0.046$  & $0.039$  &$0.044$  &$0.053$  & $0.047$ \\
		\\[-10pt]
		\cline{1-6}
		\\[-10pt]
		& $0.051$ & $0.051$ & $0.038$ & $0.046$ & $0.044$\\
		$n=4000$  & $0.036$  & $0.019$ & $0.064$ & $0.036$ &$0.034$ \\
		& $0.056$ &$0.050$  &$0.053$  & $0.042$ & $0.046$\\
		\bottomrule\\
	\end{tabular}
	\caption{Simulated level of $T_n$ (first numbers), $LR_n$ (second numbers) and $S_n$ (third numbers) for $\alpha=0.05$ without dividing in two subsamples.}
	\label{tab:level2}
\end{table}

To investigate the performance of the test in terms of power we start with samples of event times from a distribution with failure rate given by
\[
(a) \quad \lambda_0(t)=\frac{1}{t+c}\qquad\text{and}\qquad(b)\quad\lambda_{0}(t)=c+\frac{1}{2\sqrt{t}}.
\]
Note that both these baseline distributions have decreasing hazard rates but are not Weibull. The correspondent distribution functions are $F(t)=1-c/(t+c)$ and $F(t)=1-\exp(-ct-\sqrt{t})$.

\begin{table}[h]
	\begin{tabular}{ccccc}
		\toprule
		Model & (a) $c=1$ &(a) $c=6$ & (b) $c=1$ & (b) $c=3$ \\
		$(\epsilon;M;\tau)$  & $(0.1;5;6)$ & $(1;30;35)$ & $(0.1;1.1;1.3)$ & $(0.1;0.5;0.6)$  \\
		\\[-10pt]
		\cline{1-5}
		\\[-10pt]
		& $0.385$  & $0.333$  & $0.302$  &  $0.311$ \\
		$n=2000$  & $0.048$ & $0.036$  & $0.266$ & $0.298$  \\
		&$0.308$  & $0.291$ & $0.089$ & $0.082$  \\
		\\[-10pt]
		\cline{1-5}
		\\[-10pt]
		& $0.896$ & $0.866$ & $0.423$ & $0.465$ \\
		$n=4000$  & $0.158$ & $0.145$ & $0.338$ &  $0.398$ \\
		& $0.587$ & $0.556$ &$0.168$  & $0.169$  \\
		\bottomrule\\
	\end{tabular}
	\caption{Simulated power of $T_n$ (first numbers), $LR_n$ (second numbers) and $S_n$ (third numbers) for $\alpha=0.05$.}
	\label{tab:power1}
\end{table}
\begin{table}[h]
	\begin{tabular}{ccccc}
		\toprule
		Model & (a) $c=1$ &(a) $c=6$ & (b) $c=1$ & (b) $c=3$ \\
		$(\epsilon;M;\tau)$  & $(0.1;5;6)$ & $(1;30;35)$ & $(0.1;1.1;1.3)$ & $(0.1;0.5;0.6)$  \\
		\\[-10pt]
		\cline{1-5}
		\\[-10pt]
		& $0.996$  & $0.989$  & $0.675$  &  $0.748$ \\
		$n=2000$  & $0.097$ & $0.084$ & $0.275$ & $0.379$ \\
		&$0.990$  & $0.990$  &$0.461$  &$0.502$ \\
		\\[-10pt]
		\cline{1-5}
		\\[-10pt]
		& $1$ & $1$ & $0.915$ & $0.958$\\
		$n=4000$  & $0.600$  & $0.582$ & $0.413$  & $0.539$  \\
		& $1$ & $1$  & $0.742$ &  $0.790$  \\
		\bottomrule\\
	\end{tabular}
	\caption{Simulated power of $T_n$ (first numbers), $LR_n$ (second numbers) and $S_n$ (third numbers) for $\alpha=0.05$ without dividing in two subsamples.}
	\label{tab:power2}
\end{table}
Results of the simulations for sample size $n=2000, 4000$  and $n_1=n/2$ are reported in Table~\ref{tab:power1}.
Again, we repeat the simulation without dividing the sample in two subsamples. Results in Table~\ref{tab:power2} indicate that the performance of the tests improves significantly. 
We note that the power of the tests depends on whether we consider alternative (a) or (b) but is not very sensitive to the choice of the constant $c$. When dividing the sample in two subsamples, the test $T_n$ is most powerful under both  alternatives (a) and (b). None of the tests is uniformly more powerful. When using the whole sample for the computation of both the parametric and nonparametric estimators, the test $T_n$ becomes comparable to $S_n$ under alternative (a). Again $T_n$ is the most powerful of the three considered tests for model (b). 

To conclude, simulations show promising results for testing a parametric assumption (Weibull) on the baseline distribution of the event times in the Cox model, when knowing that the hazard rate is monotone. However, further research is needed to better understand how the constants $\epsilon$ and $M$ can be chosen in practice and to investigate the behavior in other scenarios.
\section{Auxiliary results and proofs}
\label{sec:proofs}
{In this section, we use the shorthand $\lesssim$ instead of $\leq C$ where $C$ is a constant (i.e. a real number independent of $n$) that may depend on the parameters of the model such as lower or upper bounds for the involved density functions or derivatives. Moreover, $K,K_1$, \dots and $c,c_1,\dots$ denote such constants. The constants may change from line to line.} 
\subsection{Proof of Theorem \ref{theo:bound_expectation_L_P}}
Properties of the Grenander-type estimator $\hat{\lambda}_n$ are commonly derived through the  inverse process $\hat U_n$ defined as
\begin{equation}
\label{def:U_n}
\hat{U}_n(a)
=
\argmax_{t\in[0,1]} \left\{\Lambda_n(t)-at\right\}.
\end{equation}
Afterwards, what permits us to  translate the results in terms of $\hat{\lambda}_n$ is the switching relation
\begin{equation}
\label{eqn:switching}
\hat{\lambda}_n(t)\geq a\qquad\text{if and only if}\qquad\hat{U}_n(a)\geq t,\qquad\text{for } t\in(0,1].
\end{equation} 
Moreover, let $U$ be the {generalized} inverse of $\lambda_0$ on $\R$, i.e.,
\begin{equation}
\label{def:U}
U(a)
=
\begin{cases}
1 & \mbox{{if }}a< \lambda_0(1);\\
\lambda_0^{-1}(a) & \mbox{{if }}a\in[\lambda_0(1),\lambda_0(0)];\\
0 & \mbox{{if }}a>\lambda_0(0).
\end{cases}
\end{equation}
We aim at obtaining bounds on tail probabilities of $\hat{U}_n$. A polynomial bound has been provided in Lemma 6.3 of \cite{LopuhaaMusta2018} but it is not sufficient when dealing with the global error of the estimator. Here we derive exponential bounds which hold on an event $E_n$ with $\p(E_n)\to 1$ similar to the one considered in \cite{LopuhaaMusta2018}. Let us first describe this event. 

For some $\xi_1>0$, define
\begin{equation}
\label{eqn:E1}
E_{n,1}=\left\{\left|\sqrt{p}\sum_{I_k\subseteq I}
\left(\frac{1}{n}\sum_{i=1}^n|Z_i|\,\mathrm{e}^{\gamma'_kZ_i} \right)
-
\sqrt{p}\sum_{I_k\subseteq I}\E\left[|Z|\mathrm{e}^{\gamma'_kZ} \right]\right|\leq\xi_1 \right\},
\end{equation}
where the summations are over all subsets $I_k=\{i_1,\ldots,i_k\}$ of $I=\{1,\ldots,p\}$,
and $\gamma_k$ is the vector consisting of coordinates
$\gamma_{kj}=\beta_{0j}+\epsilon/(2\sqrt{p})$, for $j\in I_k$, and
$\gamma_{kj}=\beta_{0j}-\epsilon/(2\sqrt{p})$, for $j\in I\setminus I_k$. It is shown in \cite{LopuhaaMusta2018} (see the proof of Lemma 3.1)) that $\p(E_{n,1})\to 1$ and that in this event we have
\begin{equation}
\label{eqn:dn2}
\sup_{x\in\R}|D_{n}^{(1)}(x;\beta^*_n)|\leq \sqrt{p}\sum_{I_k\subseteq I}\E\left[|Z|\mathrm{e}^{\gamma'_kZ} \right]+\xi_1=\tilde\xi_1,
\end{equation}
where $D_n^{(1)}(x;\beta)=\partial \Phi_n(x;\beta)/\partial \beta$ and $\beta^*_n$ is a sequence such that $\beta^*_n\to \beta_0$ almost surely. Hence, {under the assumption (A2),} in the event $E_{n,1}$ it holds that  $\sup_{x\in\R}|D_{n}^{(1)}(x;\beta^*_n)|$ is bounded uniformly in $n$.

Moreover, using the linear representation of the Breslow estimator in \cite{linear_breslow}, for $x\in[0,1]$ and each $\alpha>0$, we have 
\begin{equation}
\label{eqn:Breslow_linear}
\begin{split}
\Lambda_n(x)-\Lambda_0(x)&=\int \left(\frac{\delta\1_{\{t\leq x\}}}{\Phi(t;\beta_0)}-e^{\beta_0'z}\int_0^{t\wedge x}\frac{\lambda_0(v)}{\Phi(v;\beta_0)}\,\dd v \right)\,\dd (\p_n-\p)(t,\delta,z)\\
&\quad+(\hat\beta_n-\beta_0)'A_0(x)+R_n(x),
\end{split}
\end{equation}
where 
\begin{equation}\label{eq: Rn}
\sup_{x\in[0,1]}|R_n(x)|=o_P(n^{-1+\alpha}).
\end{equation}
For $\xi_2,\xi_3,\xi_4>0$ and $\alpha>0$ (small) {define}
\begin{equation}\label{eq:E2E3}
\begin{split}
E_{n,2}
&=
\left\{n^{1/2-\alpha}|\hat{\beta}_n-\beta_0|<\xi_2\right\}, \quad
E_{n,3}
=
\left\{n^{1/2-\alpha}\sup_{x\in \mathbb{R}}\left|\Phi_n(x;\beta_0)-\Phi(x;\beta_0)\right|\leq \xi_3\right\}, 
\end{split}
\end{equation}
\[
E_{n,4}=\left\{\sup_{x\in[0,1]}|R_{n}(x)|\leq \xi_4n^{-1+\alpha} \right\}.
\]
From Theorem~3.2 in~\cite{Tsiatis81} and Lemma~4 in~\cite{LopuhaaNane2013}, it follows that $\p(E_{n,2})\to 1$ and $\p(E_{n,3})\to 1$.
Moreover, {\eqref{eq: Rn}} yields also
$\p(E_{n,4})\to 1$ and as a result
for 
\begin{equation}
\label{def:E_n}
E_n=E_{n,1}\cap E_{n,2}\cap E_{n,3}\cap E_{n,4},
\end{equation}
we have $\p(E_n)\to 1$. 
This event is of interest because it allows us to bound 
\[
\sup_{{x\in[0,1]}}|\Phi_n(x;\hat{\beta}_n)-\Phi(x;\beta_0)|.
\]
Indeed, by the triangular inequality and the definition of $E_n$, we get
\begin{equation}
\label{eqn:bound_phi_n_phi}
\begin{split}
\sup_{x\in\R}|\Phi_n(x;\hat{\beta}_n)-\Phi(x;\beta_0)|
&\leq
\sup_{x\in\R}|\Phi_n(x;\hat{\beta}_n)-\Phi_n(x;\beta_0)|+\sup_{x\in\R}|\Phi_n(x;\beta_0)-\Phi(x;\beta_0)|\\
&\leq
|\hat{\beta}_n-\beta_0|\sup_{x\in\R}|D_n^{(1)}(x;\beta^*)|+ \frac{\xi_3}{n^{1/2-\alpha}}
\lesssim n^{-1/2+\alpha}.
\end{split}
\end{equation}
In particular, for sufficiently large $n$, we have
$\sup_{x\in\R}\left|\Phi_n(x;\hat{\beta}_n)-\Phi(x;\beta_0)\right|\leq \Phi(1;\beta_0)/2$,
{where $\Phi(1;\beta_0)>0$ since under the assumption that $\tau_G>1$, we have $\p(T\geq 1)>0$. This} yields that, for $x\in[0,1]$,
\begin{equation}
\label{eqn:bound Phihat}
\Phi_n(x;\hat{\beta}_n)
\geq
\Phi(x;\beta_0)- \frac{1}{2}\Phi(1;\beta_0)
\geq \frac{1}{2}\Phi(1;\beta_0)>0.
\end{equation}
Using~\eqref{eqn:bound_phi_n_phi} and \eqref{eqn:bound Phihat}, on the event $E_n$, for $n$ sufficiently large, we can write
\begin{equation}
\label{eqn:bound_inverse_Phi_n}
\begin{split}
\sup_{{x\in[0,1]}}\left|\frac{1}{\Phi_n(x;\hat{\beta}_n)}-\frac{1}{\Phi(x;\beta_0)} \right|
&\leq
\sup_{x\in[0,1]}\frac{\left|\Phi_n(x;\hat{\beta}_n)-\Phi(x;\beta_0)\right|}{\Phi_n(x;\hat{\beta}_n)\,\Phi(x;\beta_0)}\\
&\leq
\frac{2}{\Phi^2(1;\beta_0)}\sup_{x\in[0,1]}\left|\Phi_n(x;\hat{\beta}_n)-\Phi(x;\beta_0)\right|\\
&\lesssim n^{-1/2+\alpha}.
\end{split}
\end{equation}
The following two lemmas correspond to Lemmas 2, 3 and 4 in \cite{durot2007}. 
\begin{lemma}
	\label{le:inv_tail_prob1}
	Assume that the follow-up time $T$ has uniformly continuous distribution with
	continuously differentiable density $h$ such that $\inf_{u\in[0,1]}h(u)>0$. Suppose that $\lambda_0$  and $x\mapsto \Phi(x;\beta_0)$ are  continuously differentiable and $\lambda'_0$ is bounded above by a strictly negative constant.
	There exist  constants $K_1,\, K_2>0$ such that, 	for every $a\geq 0$ and $x> 0$,
	\begin{equation}
	\label{eqn:inv}
	\p\left(
	\left\{|\hat{U}_n(a)-U(a)|\geq x\right\}
	\cap E_n
	\right)
	\leq
	K_1\exp\left(-K_2nx^3\right),
	\end{equation}
	{provided that $\alpha$ in the definition of $E_n$ is chosen sufficiently small.}
\end{lemma}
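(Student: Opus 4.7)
The plan is to combine the argmax characterization of $\hat{U}_n(a)$ with the strict concavity of $t\mapsto \Lambda_0(t)-at$, and then peel. Suppose $\hat{U}_n(a)\geq U(a)+x$ (the symmetric lower-tail event is treated analogously). Since $\hat{U}_n(a)$ maximises $\Lambda_n(t)-at$ over $[0,1]$, we can take $t^{*}=\hat{U}_n(a)\in[U(a)+x,1]$ with $\Lambda_n(t^{*})-at^{*}\geq\Lambda_n(U(a))-aU(a)$. Writing $g(t)=\Lambda_0(t)-at$ and $\Lambda_n=\Lambda_0+M_n$, this rearranges to $M_n(t^{*})-M_n(U(a))\geq g(U(a))-g(t^{*})$. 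Because $-\lambda_0'$ is bounded below by a positive constant on $[0,1]$, $g$ is strictly concave and its maximiser $U(a)$ satisfies $g(U(a))-g(t)\geq c_0(t-U(a))^{2}$ uniformly in $a$ and $t\in[0,1]$, whence $|M_n(t^{*})-M_n(U(a))|\geq c_0 x^{2}$.

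A dyadic peeling then reduces the lemma to a modulus-of-continuity bound for $M_n$. Set $A_j=\{\hat{U}_n(a)-U(a)\in[2^{j}x,2^{j+1}x]\}$ for $0\leq j\leq J$ with $2^{J}x\lesssim 1$. On $A_j$ the previous step yields
\[
\sup_{0\leq s\leq 2^{j+1}x}\bigl|M_n(U(a)+s)-M_n(U(a))\bigr|\geq c_0\,4^{j}x^{2},
\]
and a union bound over the shells gives
\[
\p\bigl(\{\hat{U}_n(a)-U(a)\geq x\}\cap E_n\bigr)\leq \sum_{j=0}^{J}\p\!\left(\{\omega_n(U(a);2^{j+1}x)\geq c_0 4^{j}x^{2}\}\cap E_n\right),
\]
where $\omega_n(t;\delta)=\sup_{|s|\leq\delta}|M_n(t+s)-M_n(t)|$. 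The pivotal analytic input is then an exponential modulus bound of the form $\p(\{\omega_n(t;\delta)\geq y\}\cap E_n)\leq K\exp(-c\,ny^{2}/\delta)$, uniform in $t\in[0,1]$. Plugging in $\delta=2^{j+1}x$ and $y=c_0 4^{j}x^{2}$ produces a $j$-th contribution $\lesssim\exp(-c'\,8^{j}nx^{3})$, and summing over $j$ delivers the target $K_1\exp(-K_2 nx^{3})$.

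To obtain the modulus bound on $E_n$ I would exploit decomposition (\ref{eqn:decomposition_M_n}). The second summand there is an integral against $\p_n$ of a factor uniformly of size $n^{-1/2+\alpha}$ on $E_n$ thanks to (\ref{eqn:bound_inverse_Phi_n}); its increment over an interval of length $\delta$ is therefore controlled by $n^{-1/2+\alpha}$ times the empirical mass of that interval, which has the required sub-Gaussian tail by Bernstein's inequality. For the first summand, following the idea sketched after (\ref{eqn:decomposition_M_n}), I would condition on the follow-up times $T_1,\dots,T_n$: conditionally the process is a sum of independent, bounded, centered random variables indexed by the monotone class $\{u\leq t\}$, so Doob's maximal inequality combined with Bernstein's inequality yields the claimed sub-Gaussian bound on the modulus, and removing the conditioning preserves the exponential rate.

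The main obstacle is precisely this modulus bound and, within it, absorbing the $\hat{\beta}_n$--$\Phi_n$ dependence without polluting the exponential rate; this is why $\alpha$ in the definition of $E_n$ must be taken sufficiently small, as the lemma allows. The boundary cases $U(a)=0$ (when $a>\lambda_0(0)$) and $U(a)=1$ (when $a<\lambda_0(1)$) are handled by the same peeling argument with one-sided shells and the corresponding half of the concavity inequality, giving the stated bound uniformly in $a\geq 0$.
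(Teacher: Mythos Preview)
Your outline---argmax characterization, quadratic lower bound from strict concavity, dyadic peeling, and the split of $M_n$ via \eqref{eqn:decomposition_M_n} into a ``martingale'' part and a part controlled on $E_n$ by \eqref{eqn:bound_inverse_Phi_n} plus Bernstein---is exactly the paper's route. The $R_n$ piece is handled just as you describe.

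There is, however, a real gap in your treatment of the first summand. Conditioning on $T_1,\dots,T_n$ does \emph{not} center that process: writing the first summand as $n^{-1}\sum_i \Delta_i\Phi(T_i;\beta_0)^{-1}\1_{\{T_i\le t\}}-\E[\cdots]$, its conditional expectation given the $T$'s is
\[
S_n^2(t)\;=\;\frac{1}{n}\sum_{i=1}^n \E_T[\Delta_i]\,\Phi(T_i;\beta_0)^{-1}\1_{\{T_i\le t\}}\;-\;\E\!\left[\Delta\,\Phi(T;\beta_0)^{-1}\1_{\{T\le t\}}\right],
\]
which is a nontrivial empirical process in the $T_i$'s alone. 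Only the complementary piece $S_n^1(t)=\text{(first summand)}-S_n^2(t)$ is a conditionally centered martingale in $t$, and your Doob-plus-exponential-inequality step applies to $S_n^1$, not to the whole thing. The paper splits $S_n=S_n^1+S_n^2$ explicitly: Lemma~\ref{le:S1} controls $S_n^1$ essentially as you suggest (Doob conditionally on the $T$'s, then Hoeffding), while $S_n^2$ requires a separate and less immediate argument (Lemma~\ref{le:S2}), based on a reverse-time martingale property of $(H_n(s)-H_n(t))/(H(s)-H(t))$ together with binomial concentration. This is the missing ingredient in your sketch; without it the modulus bound you claim for the first summand is unjustified.

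A minor point: for $a\notin[\lambda_0(1),\lambda_0(0)]$ the paper does not redo the peeling with one-sided shells. It simply observes that monotonicity of $\hat U_n$ and the definition of $U$ give $|\hat U_n(a)-U(a)|\le |\hat U_n(\lambda_0(0))-U(\lambda_0(0))|$ (or the analogue at the other endpoint), so the bound at the boundary value of $a$ suffices. Your one-sided concavity argument would only deliver a linear lower bound $g(U(a))-g(t)\ge (a-\lambda_0(0))t$ there, not a quadratic one, so the reduction is actually needed.
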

\begin{proof}
	From the definition of $U$ and the fact that $\hat{U}_n$ is decreasing, it follows that
	$|\hat{U}_n(a)-U(a)|
	\leq
	|\hat{U}_n(\lambda_0(0))-U(\lambda_0(0))|$,
	if $a>\lambda_0(0)$,
	and
	\[
	|\hat{U}_n(a)-U(a)|
	\leq
	|\hat{U}_n(\lambda_0(1))-U(\lambda_0(1))|,\qquad \text{if } a<\lambda_0(1).
	\]
	Hence, it suffices to prove~\eqref{eqn:inv} only for $a\in[ \lambda_0(1),\lambda_0(0)]$.
	Moreover, it suffices to prove the inequality for $x\in[n^{-1/3},1]$ since for $x\in[0,n^{-1/3}]$, the inequality is trivially true with $K_2=1$, $K_1=\exp(1)$ and for $x>1$ the probability is zero.
	We start by writing
	\begin{equation}
	\label{eqn:main_inv_pr}
	\begin{split}
	&
	\p\left(
	\left\{|\hat{U}_n(a)-U(a)|\geq x\right\}\cap E_n\right)\\
	&\quad{\leq}
	\p\left(\left\{
	\hat{U}_n(a)\geq U(a)+x \right\}\cap E_n\right)
	+
	\p\left(\left\{\hat{U}_n(a)\leq U(a)-x\right\}\cap E_n\right).
	\end{split}
	\end{equation}
	First consider the first probability on the right hand side of~\eqref{eqn:main_inv_pr}.
	It is zero, if $U(a)+x>1$. Otherwise, if $U(a)+x\leq 1$, 
	\[
	\begin{split}
	&
	\p\left(\left\{
	\hat{U}_n(a)\geq U(a)+x \right\}\cap E_n\right)\\
	&\leq
	\p\Big(\left\{
	\Lambda_n(y)-ay\geq \Lambda_n(U(a))-aU(a),\text{for some }y\in[U(a)+x,1]
	\right\}\cap E_n
	\Big)\\
	&\leq
	\p\left(\left\{
	\sup_{y\in[U(a)+x,1]}
	\Big(
	\Lambda_n(y)-ay-\Lambda_n(U(a))+a\,U(a)
	\Big)
	\geq 0\right\}\cap E_n\right).
	\end{split}
	\]
	From Taylor's expansion, we obtain
	\begin{equation}
	\label{eqn:taylor_inv_pr}
	\Lambda_0(y)-\Lambda_0(U(a))\leq \big(y-U(a)\big)a-c \big(y-U(a)\big)^2,
	\end{equation}
	where $c=\inf_{t\in[0,1]}|\lambda'_0(t)|/2>0$. Hence 
	\[
	\begin{split}
	&
	\p\left(\left\{
	\hat{U}_n(a)\geq U(a)+x \right\}\cap E_n\right)\\
	&\leq\p\left(\left\{
	\sup_{y\in[U(a)+x,1]}
	\Big(\Lambda_n(y)-\Lambda_0(y)-\Lambda_n(U(a))+\Lambda_0(U(a))-c(y-U(a))^2\Big)\geq 0\right\}\cap E_n\right).
	\end{split}
	\] 
	Define
	\[
	S_n(t)=\int \frac{\delta\1_{\{u\leq t\}}}{\Phi(u;\beta_0)}\,\dd (\p_n-\p)(u,\delta,z),
	\]
	and
	\[
	R_n(t)=\int \delta\1_{\{u\leq t\}}\left(\frac{1}{\Phi_n(u;\hat\beta_n)}-\frac{1}{\Phi(u;\beta_0)}\right)\,\dd \p_n(u,\delta,z).
	\]
	{Since $\Lambda_n-\Lambda_0=R_n+S_n$, it} follows that the previous probability can be bounded by
	\begin{equation}
	\label{eqn:S+R}
	\begin{split}
	&\sum_{k=0}^\infty
	\p\left(\left\{
	\sup_{y\in I_k}
	\big|S_n(y)-S_n(U(a))\big|
	\geq \frac{c}{2}\,x^2\,2^{2k}\right\}\cap E_n\right)\\
	&\qquad+\sum_{k=0}^\infty
	\p\left(\left\{
	\sup_{y\in I_k}
	|R_n(y)-R_n(U(a))|
	\geq \frac{c}{2}\,x^2\,2^{2k}\right\}\cap E_n\right)
	\end{split}
	\end{equation}
	where the supremum is taken over $y\in[0,1]$, such that $y-U(a)\in[x2^k,x2^{k+1}{\wedge 1})$ with the convention that $\sup$ over an empty set is zero.
	Moreover, the process $S_n$ can be written as
	\[
	\begin{split}
	S_n(t)&=\frac{1}{n}\sum_{i=1}^n\left(\Delta_i-\E_T[\Delta_i]\right)\frac{1}{\Phi(T_i;\beta_0)}\1_{\{T_i\leq t\}}\\
	&\quad+\frac{1}{n}\sum_{i=1}^n\left(\E_T\left[\Delta_i\right]\frac{1}{\Phi(T_i;\beta_0)}\1_{\{T_i\leq t\}}-\E\left[\frac{\Delta_i}{\Phi(T_i;\beta_0)}\1_{\{T_i\leq t\}}\right]\right)\\
	&=:S_n^1(t)+S^2_n(t),
	\end{split}
	\]
	where $E_T$ denotes the conditional expectation given ${T_1},\dots,T_n$.  
	By Lemmas \ref{le:S1}, \ref{le:S2} {in the supplementary material} with 
	\[
	r(t)=\frac{1}{\Phi(t;\beta_0)}<\frac{1}{\Phi(1;\beta_0)}\qquad\text{and}\qquad m(t)=\frac{\E\left[\Delta\,|\,T=t\right]}{\Phi(t;\beta_0)}<\frac{1}{\Phi(1;\beta_0)},
	\]
	{where $m$ is continuously differentiable under our assumptions since}
	\[
	m(t)=\frac{\p(\Delta=1|T=t)}{\Phi(t;\beta_0)}=\frac{h^{uc}(t)}{h(t)\Phi(t;\beta_0)}{=\frac{\lambda_0(t)}{h(t)}},
	\]
	it follows that 
	\begin{equation}
	\label{eqn:S1+S2}
	\begin{split}
	&\p\left(\left\{
	\sup_{y\in I_k}
	\big|S_n(y)-S_n(U(a))\big|
	\geq \frac{c}{2}\,x^2\,2^{2k}\right\}\cap E_n\right)\\
	&\leq \p\left(
	\sup_{y\in I_k}
	\big|S^1_n(y)-S^1_n(U(a))\big|
	\geq \frac{c}{4}\,x^2\,2^{2k}\right)\\
	&\quad +\p\left(
	\sup_{y\in I_k}
	\big|S_n^2(y)-S_n^2(U(a))\big|
	\geq \frac{c}{4}\,x^2\,2^{2k}\right)\\
	&\leq K_1\exp\left(-K_2nx^32^{3k}\right)
	\end{split}
	\end{equation}
	for some positive constants $K_1,\, K_2$. Note that when we apply Lemma \ref{le:S1} we have 
	\[
	\tilde{\theta}=\min\left\{1, {\frac{cx^22^{2k}}{3x2^{k+1}R^2Ce^{R^2/2}}} \right\}=\min\left\{1, {\frac{cx2^k}{6R^2Ce^{R^2/2}}} \right\}={\frac{cx2^k}{6R^2Ce^{R^2/2}}}
	\]
	because $x2^k\leq 1\leq {6R^2Ce^{R^2/2}}/c$ since we can choose $R^2,\,C$ large enough. 
	For Lemma \ref{le:S2}, in the statement we can replace $\min\left\{1,xt_n^{-1}\right\}$ by $\min\{K,xt_n^{-1}\}$ for every constant $K>0$. Now we take $K=c/8$ and we have
	\[
	\min\left\{\frac{c}{8},\frac{c}{4}\,x^2\,2^{2k}\left(x2^{k+1}\right)^{-1} \right\}=\frac{c}{8}x2^k.
	\]
	{Hence,} from \eqref{eqn:S1+S2} it follows that
	\[
	\begin{split}
	&\sum_{k=0}^\infty
	\p\left(\left\{
	\sup_{y\in I_k}
	\big|S_n(y)-S_n(U(a))\big|
	\geq \frac{c}{2}\,x^2\,2^{2k}\right\}\cap E_n\right)\\
	&\leq \sum_{k=0}^\infty K_1\exp\left(-K_2nx^32^{3k}\right)\\
	&\leq K_1\exp\left(-K_2nx^3\right)
	\end{split}
	\]
	{using that $nx^3\geq 1$.}
	
	Now we deal with the second probability in \eqref{eqn:S+R}. Using \eqref{eqn:bound_inverse_Phi_n} we obtain
	\[
	\begin{split}
	&\sum_{k=0}^\infty
	\p\left(\left\{
	\sup_{y\in I_k}
	|R_n(y)-R_n(U(a))|
	\geq \frac{c}{2}\,x^2\,2^{2k}\right\}\cap E_n\right)\\
	&\leq \sum_{k=0}^\infty
	\p\left(\left\{
	\sup_{y\in I_k}
	\sum_{i=1}^n \Delta_i\1_{\{U(a)<T_i\leq y\}}\left|\frac{1}{\Phi_n(T_i;\hat\beta_n)}-\frac{1}{\Phi(T_i;\beta_0)}\right|
	\geq n\frac{c}{2}\,x^2\,2^{2k}\right\}\cap E_n\right)\\
	&\leq \sum_{k=0}^\infty
	\p\left(\left\{
	\sum_{i=1}^n \Delta_i\1_{\{U(a)<T_i\leq U(a)+x2^{k+1}\}}
	\geq \frac{c}{2}n^{3/2-\alpha}\,x^2\,2^{2k}\right\}\cap E_n\right)\\
	&\leq \sum_{k=0}^\infty
	\p\left(\left\{
	\sum_{i=1}^n \left(\Delta_i\1_{\{U(a)<T_i\leq U(a)+x2^{k+1}\}}-\E\left[\Delta\1_{\{U(a)<T\leq U(a)+x2^{k+1}\}}\right]\right)
	\geq {\frac{c}{4}}n^{3/2-\alpha}\,x^2\,2^{2k}\right\}\cap E_n\right),
	\end{split}
	\]
	{using} that $n\E\left[\Delta\1_{\{U(a)<T\leq U(a)+x2^{k+1}\}}\right]=O(nx2^{k+1}){\ll}O(n^{3/2-\alpha}\,x^2\,2^{2k})$ because $x>n^{-1/3}$ and $\alpha$ can be chosen small enough. From  Bernstein inequality it follows that
	\[
	\begin{split}
	&\sum_{k=0}^\infty
	\p\left(\left\{
	\sup_{y\in I_k}
	|R_n(y)-R_n(U(a))|
	\geq \frac{c}{2}\,x^2\,2^{2k}\right\}\cap E_n\right)\\
	&\leq{\sum}_{k=0}^\infty \exp\left\{-c\frac{n^{3-2\alpha}\,x^4\,2^{4k}}{n^{3/2-\alpha}\,x^2\,2^{2k}+nx2^{k+1}} \right\}\\
	&\leq{\sum}_{k=0}^\infty \exp\left\{-c\,2^{2k} n^{3/2-\alpha}\,x^2\right\}\\
	&\leq K_1 \exp\left\{-K_2 nx^3\right\}
	\end{split}
	\]
	{where we used that $x\leq 1$ for the last inequality.}
	Here the constant $c$ changes from line to line.
	As a result, for the first term in the right hand side of~\eqref{eqn:main_inv_pr},
	we have that there exist constants $K_1,\,K_2>0$ such that for all $a>0$, $x>0$,
	\[
	\p\left(\left\{\hat{U}_n(a)\geq U(a)+x\right\}\cap E_n\right)\leq
	K_1\exp\left(-K_2nx^3\right).
	\]
	In the same way we can also deal with the second term   of~\eqref{eqn:main_inv_pr}. 
\end{proof}
\begin{lemma}
	\label{le:inv_tail_prob2}
	Under the assumptions of Lemma \ref{le:inv_tail_prob1}, there exist positive constants $K_1,\, K_2$ such that, 	for every  $x> 0$ and $a\notin \lambda_0([0,1])$,
	\begin{equation}
	\label{eqn:inv_2}
	\p\left(
	\left\{|\hat{U}_n(a)-U(a)|\geq x\right\}
	\cap E_n
	\right)
	\leq K_1\exp\left\{-K_2nx\left|\lambda_0(U(a))-a\right| \min\left(1,\left|\lambda_0(U(a))-a\right|\right)\right\}
	\end{equation}
	{provided that $\alpha$ in the definition of $E_n$ is chosen sufficiently small.}
	
\end{lemma}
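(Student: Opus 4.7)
The proof follows the structure of Lemma~\ref{le:inv_tail_prob1}, with the crucial modification that the deterministic drift of $\Lambda_0(y)-ay$ around $y=U(a)$ is now linear rather than quadratic. By monotonicity of $\hat U_n$ and the piecewise definition of $U$, it suffices to treat the two boundary cases $a>\lambda_0(0)$ (where $U(a)=0$) and $a<\lambda_0(1)$ (where $U(a)=1$); the two are analogous, so I focus on $a>\lambda_0(0)$ and set $b=a-\lambda_0(0)=|\lambda_0(U(a))-a|>0$.

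The event $\{\hat U_n(a)\geq x\}$ forces $\Lambda_n(y)-ay\geq\Lambda_n(0)=0$ for some $y\in[x,1]$. Monotonicity of $\lambda_0$ yields the linear upper bound
\[
\Lambda_0(y)-ay\;\leq\;y\bigl(\lambda_0(0)-a\bigr)\;=\;-yb,\qquad y\in[0,1],
\]
so any $y$ witnessing $\hat U_n(a)\geq x$ must satisfy $(\Lambda_n-\Lambda_0)(y)\geq yb$. Using the decomposition $\Lambda_n-\Lambda_0=S_n+R_n$ introduced in the proof of Lemma~\ref{le:inv_tail_prob1} together with dyadic peeling on the shells $I_k=[x2^k,x2^{k+1})\cap[0,1]$, $k=0,1,\dots$, the event $\{\hat U_n(a)\geq x\}\cap E_n$ is contained in the union over $k$ of
\[
\Bigl(\Bigl\{\sup_{y\in I_k}|S_n(y)-S_n(0)|\geq\tfrac{1}{2}xb\,2^k\Bigr\}\cup\Bigl\{\sup_{y\in I_k}|R_n(y)-R_n(0)|\geq\tfrac{1}{2}xb\,2^k\Bigr\}\Bigr)\cap E_n.
\]

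For the $S_n$ shells I apply Lemmas~\ref{le:S1} and \ref{le:S2} in the supplementary material with scale $t_n=x2^{k+1}$ and threshold of order $xb\,2^k$. For the $R_n$ shells I use the uniform bound \eqref{eqn:bound_inverse_Phi_n}, reducing the problem to a Bernstein estimate on $\sum_i(\Delta_i\1_{\{0<T_i\leq x2^{k+1}\}}-\E[\cdot])$ with threshold of order $n^{3/2-\alpha}xb\,2^k$. Since both the mean and the variance of this sum are of order $nx\,2^k$, the resulting Bernstein exponent in shell $k$ is of order
\[
\frac{(nxb\,2^k)^2}{nx\,2^k+nxb\,2^k}\;=\;nxb\,2^k\cdot\frac{b}{1+b}\;\asymp\;nxb\,2^k\min(1,b),
\]
and an analogous order is produced by the $S_n$ estimates. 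Summing the super-geometric series in $k$ yields the claimed bound $K_1\exp(-K_2\,nxb\min(1,b))$; the case $a<\lambda_0(1)$ is handled identically on $y\in[0,1-x]$, the linear gain now coming from $\lambda_0(y)\geq\lambda_0(1)>a$.

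The main obstacle is the Bernstein bookkeeping. The split into the two regimes $b\leq 1$ and $b\geq 1$, according to whether the drift $nxb\,2^k$ or the variance $nx\,2^k$ dominates the Bernstein denominator, is precisely what produces the factor $\min(1,b)$ in the exponent; without this split one would lose a factor of $b$ for small $b$. A secondary technicality is that $\alpha$ in the definition of $E_n$ must be small enough that the mean $O(nx\,2^k)$ of the centered sum on the $R_n$-side remains negligible compared to the threshold $n^{3/2-\alpha}xb\,2^k$ throughout the range of $(x,b)$ where the claimed bound is nontrivial; this is exactly the proviso on $\alpha$ in the statement.
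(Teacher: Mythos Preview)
Your overall architecture---replace the quadratic drift of Lemma~\ref{le:inv_tail_prob1} by the linear bound $\Lambda_0(y)-\Lambda_0(U(a))\leq (y-U(a))\lambda_0(U(a))$, then run the same dyadic peeling and $S_n+R_n$ decomposition---is exactly the paper's approach. The $S_n$ part via Lemmas~\ref{le:S1}--\ref{le:S2} is correct, and this is indeed where the factor $\min(1,b)$ originates.

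There is, however, a genuine gap in the $R_n$ part. Write $b=|\lambda_0(U(a))-a|$. After invoking \eqref{eqn:bound_inverse_Phi_n} you need $\sum_i\Delta_i\1_{\{T_i\in I_k\}}\geq c\,n^{3/2-\alpha}xb\,2^k$, and to pass to the centered sum you must first know that the mean $n\E[\Delta\1]\asymp nx2^k$ is at most half of this threshold, i.e.\ $b\gtrsim n^{-1/2+\alpha}$. Your claim that this holds ``throughout the range of $(x,b)$ where the claimed bound is nontrivial'' is false: take $x=1$ and $b\sim n^{-1/2}$, so that $nxb\min(1,b)\sim 1$ and the target bound is a nontrivial constant, yet $b\ll n^{-1/2+\alpha}$ for every $\alpha>0$. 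More generally, for $b$ anywhere in $[n^{-1/2},n^{-1/2+\alpha}]$ the exponent $nxb^2$ can be as large as $n^{2\alpha}$, so the bound is far from trivial while your centering step fails. No choice of $\alpha>0$ repairs this; the proviso on $\alpha$ in the statement serves a different purpose.

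The paper closes this gap by a preliminary case split that you omitted: if $b\leq x$ one invokes Lemma~\ref{le:inv_tail_prob1} directly (since $nx^3\geq nxb^2\geq nxb\min(1,b)$), and if $x<b\leq n^{-1/3}$ the right-hand side of \eqref{eqn:inv_2} exceeds one. Only after these reductions does the paper run Bernstein, now with $b>n^{-1/3}$ guaranteed, which (for $\alpha<1/6$) makes the mean negligible against the threshold. Incidentally, your displayed Bernstein exponent uses $nxb\,2^k$ in the numerator rather than the $n^{3/2-\alpha}xb\,2^k$ you announced one line earlier; with the correct threshold and $b>n^{-1/3}$ the paper in fact gets the stronger exponent $n^{3/2-\alpha}xb\,2^k$ for the $R_n$ shells, so the $\min(1,b)$ in the final statement comes solely from $S_n$.
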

\begin{proof}
	{If $\left|\lambda_0(U(a))-a\right|\leq x$ then it follows from Lemma \ref{le:inv_tail_prob1} that
		\begin{eqnarray*}
			\p\left(
			\left\{|\hat{U}_n(a)-U(a)|\geq x\right\}
			\cap E_n
			\right)
			&\leq&
			K_1\exp\left(-K_2nx^3\right)
		\end{eqnarray*}
		and \eqref{eqn:inv_2} follows. Moreover, if $x<\left|\lambda_0(U(a))-a\right|\leq n^{-1/3}$ then the inequality in \eqref{eqn:inv_2} is trivial since the upper bound in the inequality  is larger than one for appropriate choice of $K_1$ and $K_2$. Hence, in the sequel we assume $x<\left|\lambda_0(U(a))-a\right|$ and $\left|\lambda_0(U(a))-a\right|>n^{-1/3}$.}
	We argue as in the previous lemma, using 
	\[
	\Lambda_0(y)-\Lambda_0(U(a))\leq(y-U(a))\lambda_0(U(a)).
	\]	
	instead of \eqref{eqn:taylor_inv_pr}. We obtain
	\[
	\begin{split}
	&
	\p\left(\left\{
	\hat{U}_n(a)\geq U(a)+x \right\}\cap E_n\right)\\
	&\leq\p\left(\left\{
	\sup_{y\in[U(a)+x,1]}
	\left(\Lambda_n(y)-\Lambda_0(y)-\Lambda_n(U(a))+\Lambda_0(U(a))+(y-U(a))\left(\lambda_0(U(a))-a\right)\right)\geq 0\right\}\cap E_n\right).
	\end{split}
	\]
	With {$I_k$,} $S_n$ and $R_n$ defined as in Lemma \ref{le:inv_tail_prob1}, the previous probability can be bounded by 
	\begin{equation}
	\label{eqn:S+R_2}
	\begin{split}
	&\sum_{k=0}^\infty
	\p\left(\left\{
	\sup_{y\in I_k}
	\big|S_n(y)-S_n(U(a))\big|
	\geq \frac{1}{2}\,x\,2^{k}\left|\lambda_0(U(a))-a\right|\right\}\cap E_n\right)\\
	&\qquad+\sum_{k=0}^\infty
	\p\left(\left\{
	\sup_{y\in I_k}
	|R_n(y)-R_n(U(a))|
	\geq \frac{1}{2}\,x\,2^{k}\left|\lambda_0(U(a))-a\right|\right\}\cap E_n\right).
	\end{split}
	\end{equation}
	From Lemmas \ref{le:S1}, \ref{le:S2}, it follows that, 
	\[
	\begin{split}
	&\sum_{k=0}^\infty \p\left(
	\sup_{y\in I_k}
	\big|S_n(y)-S_n(U(a))\big|
	\geq \frac{1}{2}\,x\,2^{k}\left|\lambda_0(U(a))-a\right|\right)\\
	&\leq K_1\exp\Big(-K_2nx\left|\lambda_0(U(a))-a\right|\min\left\{1,\left|\lambda_0(U(a))-a\right| \right\}\Big).
	\end{split}
	\]
	Now we deal with the second probability in \eqref{eqn:S+R_2}. Using \eqref{eqn:bound_inverse_Phi_n} we obtain
	\[
	\begin{split}
	&\sum_{k=0}^\infty
	\p\left(\left\{
	\sup_{y\in I_k}
	|R_n(y)-R_n(U(a))|
	\geq \frac{1}{2}\,x\,2^{k}\left|\lambda_0(U(a))-a\right|\right\}\cap E_n\right)\\
	&\leq \sum_{k=0}^\infty
	\p\left(\left\{
	\sup_{y\in I_k}
	\sum_{i=1}^n \Delta_i\1_{\{U(a)<T_i\leq y\}}\left|\frac{1}{\Phi_n(T_i;\hat\beta_n)}-\frac{1}{\Phi(T_i;\beta_0)}\right|
	\geq
	{\frac{n}{2}}\,x\,2^{k}\left|\lambda_0(U(a))-a\right|\right\}\cap E_n\right)\\
	&\leq \sum_{k=0}^\infty
	\p\left(\left\{
	\sum_{i=1}^n \Delta_i\1_{\{U(a)<T_i\leq U(a)+x2^{k+1}\}}
	\geq cn^{3/2-\alpha}\,x\,2^{k}\left|\lambda_0(U(a))-a\right|\right\}\cap E_n\right)\\
	&\leq \sum_{k=0}^\infty
	\p\left(
	\sum_{i=1}^n \left(\Delta_i\1_{\{U(a)<T_i\leq U(a)+x2^{k+1}\}}-\E\left[\Delta\1_{\{U(a)<T\leq U(a)+x2^{k+1}\}}\right]\right){
		\geq} cn^{3/2-\alpha}\,x\,2^{k}\left|\lambda_0(U(a))-a\right|){\cap E_n}\right)
	\end{split}
	\]
	{using} that $n\E\left[\Delta\1_{\{U(a)<T\leq U(a)+x2^{k+1}\}}\right]=O\left(nx2^{k+1}){\ll}O(n^{3/2-\alpha}\,x\,2^{k}\left|\lambda_0(U(a))-a\right|\right)$ because {$\left|\lambda_0(U(a))-a\right|>n^{-1/3}$ and we can assume $\alpha<1/6$.} 
	From  Bernstein inequality it follows that
	\[
	\begin{split}
	&\sum_{k=0}^\infty
	\p\left(\left\{
	\sup_{y\in I_k}
	|R_n(y)-R_n(U(a))|
	\geq \frac{1}{2}\,x\,2^{k}\left|\lambda_0(U(a))-a\right|\right\}\cap E_n\right)\\
	&\leq{\sum}_{k=0}^\infty \exp\left\{-c\frac{n^{3-2\alpha}\,x^2\,2^{2k}\left|\lambda_0(U(a))-a\right|^2}{n^{3/2-\alpha}\,x\,2^{k}\left|\lambda_0(U(a))-a\right|+nx2^{k+1}} \right\}\\
	&\leq{\sum}_{k=0}^\infty \exp\left\{-c\,n^{3/2-\alpha}\,x\,2^{k}\left|\lambda_0(U(a))-a\right|\right\}\\
	&\leq K_1 \exp\left\{-K_2 nx\left|\lambda_0(U(a))-a\right|\right\}.
	\end{split}
	\]
	Again, the constant $c$ changes from line to line.
	As a result,  there exist constants $K_1,\,K_2>0$ such that for all $a>0$, $x>0$,
	\[ 
	\p\left(\left\{\hat{U}_n(a)\geq U(a)+x\right\}\cap E_n\right)\leq
	K_1\exp\left(-K_2nx\left|\lambda_0(U(a))-a\right|\min\left\{1,\left|\lambda_0(U(a))-a\right|\right\}\right).
	\]
	{The same bound can be obtained similarly for $\p\left(\left\{{U}(a)\geq \hat U_n(a)+x\right\}\cap E_n\right)$ and Lemma \ref{le:inv_tail_prob2} follows.}
\end{proof}
\begin{proof}[Proof of Theorem~\ref{theo:bound_expectation_L_P}]
	Let $t\in(0,1)$. We consider first
	\[
	I_1=\E\left[\1_{E_n}(\hat\lambda_n(t)-\lambda_0(t))_+^p\right]=\int_0^{\infty}\p\left[\left\{\hat\lambda_n(t)-\lambda_0(t)>x \right\}\cap E_n \right]px^{p-1}\,\dd x.
	\]
	Using the switching relation $\hat\lambda_n(t)>\lambda_0(t)+x\Rightarrow \hat{U}_n(\lambda_0(t)+x){\geq }t$, we obtain
	\begin{equation}
	\label{eqn:I_1}
	\begin{split}
	I_1&\leq \int_0^{\infty}\p\left[\left\{\hat{U}_n(\lambda_0(t)+x){\geq }t \right\} \cap E_n\right]px^{p-1}\,\dd x\\
	&=\int_0^{\lambda_0(0)-\lambda_0(t)} \p\left[\left\{\hat{U}_n(\lambda_0(t)+x){\geq }t \right\} \cap E_n\right]\,px^{p-1}\,\dd x\\
	&\quad+\int_{\lambda_0(0)-\lambda_0(t)}^{\infty}\p\left[\left\{\hat{U}_n(\lambda_0(t)+x){\geq }t \right\} \cap E_n\right]px^{p-1}\,\dd x.
	\end{split}
	\end{equation}
	For ${x\in[0,\lambda_0(0)-\lambda_0(t))}$ there exists $c>0$ such that $U(\lambda_0(t)+x)<t-cx$. Hence, {it follows from Lemma \ref{le:inv_tail_prob1} that}
	\[
	\begin{split}
	&\int_0^{\lambda_0(0)-\lambda_0(t)}\p\left[\left\{\hat{U}_n(\lambda_0(t)+x){\geq }t \right\} \cap E_n\right]\,px^{p-1}\,\dd x\\
	&\leq\int_0^{\lambda_0(0)-\lambda_0(t)}\p\left[\left\{\left|\hat{U}_n(\lambda_0(t)+x)-U(\lambda_0(t)+x)\right|>cx \right\}\cap E_n \right]\,px^{p-1}\,\dd x\\
	&\lesssim \int_0^{\infty} \exp\left\{-K_2nx^3 \right\}\,px^{p-1}\,\dd x\\
	&\lesssim n^{-p/3}\int_0^{\infty} \exp\left\{-K_2y^3 \right\}\,py^{p-1}\,\dd y\\
	&\lesssim
	n^{-p/3}.
	\end{split}
	\]
	Since, for $x>\lambda_0(0)-\lambda_0(t)$, $U(\lambda_0(t)+x))=0$,  we obtain { from Lemma \ref{le:inv_tail_prob1} that}
	\[
	\begin{split}
	&\int_{\lambda_0(0)-\lambda_0(t)}^{\lambda_0(0)-\lambda_0(t)+t}\p\left[\left\{\hat{U}_n(\lambda_0(t)+x){\geq}t \right\} \cap E_n\right]px^{p-1}\,\dd x\\
	&\lesssim \exp\left\{-K_2nt^3 \right\} \int_{0}^{\lambda_0(0)-\lambda_0(t)+t}px^{p-1}\,\dd x\\
	&\lesssim \exp\left\{-K_2nt^3 \right\}t^{p}\\
	&\lesssim \frac{t^{p}}{(nt^3)^{p/3}}\\
	&\lesssim n^{-p/3}.
	\end{split}
	\]
	Moreover, from  Lemma \ref{le:inv_tail_prob2}, it follows that
	\begin{equation}
	\label{eqn:bound_expectation1}
	\begin{split}
	&\int_{\lambda_0(0)-\lambda_0(t)+t}^\infty \p\left[\left\{\hat{U}_n(\lambda_0(t)+x){\geq}t \right\} \cap E_n\right]px^{p-1}\,\dd x\\
	&\lesssim \int_{\lambda_0(0)-\lambda_0(t)+t}^{\lambda_0(0)-\lambda_0(t)+1} \exp\left\{-K_2nt\left|\lambda_0(0)-\lambda_0(t)-x\right|^{2} \right\} px^{p-1}\,\dd x\\
	&\quad +\int_{\lambda_0(0)-\lambda_0(t)+1}^\infty \exp\left\{-K_2nt\left|\lambda_0(0)-\lambda_0(t)-x\right|\right\} px^{p-1}\,\dd x.
	\end{split}
	\end{equation}
	For the first integral in the right hand side of \eqref{eqn:bound_expectation1} we have
	\[
	\begin{split}
	&\int_{\lambda_0(0)-\lambda_0(t)+t}^{\lambda_0(0)-\lambda_0(t)+1} \exp\left\{-K_2nt\left|\lambda_0(0)-\lambda_0(t)-x\right|^{2} \right\} px^{p-1}\,\dd x\\
	&=\int_{t}^{1} \exp\left\{-K_2nty^{2} \right\} p\left(y+\lambda_0(0)-\lambda_0(t)\right)^{p-1}\,\dd y\\
	&\lesssim\int_{t}^{1} \exp\left\{-K_2nty^{2} \right\} y^{p-1}\,\dd y\\
	&\lesssim (nt)^{-p/2}\int_{0}^{\infty} \exp\left\{-K_2z^{2} \right\}z^{p-1}\,\dd z\\
	&\lesssim (nt)^{-p/2}.
	\end{split}
	\]
	Now we consider the second integral in the right hand side of \eqref{eqn:bound_expectation1}
	\[
	\begin{split}
	&\int_{\lambda_0(0)-\lambda_0(t)+1}^{\infty} \exp\left\{-K_2nt\left|\lambda_0(0)-\lambda_0(t)-x\right| \right\} px^{p-1}\,\dd x\\
	&=\int_{1}^{\infty} \exp\left\{-K_2nty \right\} p\left(y+\lambda_0(0)-\lambda_0(t)\right)^{p-1}\,\dd y\\
	&\lesssim\int_{1}^{\infty} \exp\left\{-K_2nty \right\} y^{p-1}\,\dd y\\
	&\lesssim (nt)^{-p}\int_{0}^{\infty} \exp\left\{-K_2z \right\}z^{p-1}\,\dd z\\
	&\lesssim (nt)^{-p}.
	\end{split}
	\]
	{For $t>n^{-1}$ we have} $(nt)^{-p}\leq(nt)^{-p/2}$ and {therefore,
		\[
		I_1\lesssim n^{-p/3}+\left(nt\right)^{-p/2}.
		\]}
	In particular,  for $t\geq n^{-1/3}$ we get {$I_1\lesssim n^{-p/3}$.}
	Similarly, 
	\[
	I_2=\E\left[\1_{E_n}(\lambda_0(t)-\hat\lambda_n(t))_+^p\right]\lesssim n^{-p/3}+\left(n(1-t)\right)^{-p/2}
	\]
	and the result follows.
\end{proof}

\subsection{Proof of Theorem \ref{theo:CLT}}
{In this section, we assume that the assumptions of Theorem \ref{theo:CLT} are fulfilled.}
\begin{lemma}
	\label{le:embedding}
	Let 
	\begin{equation}
	\label{def:tilde_M_n}
	\begin{split}
	\tilde{M}_n(t)&=\frac{1}{n}\sum_{i=1}^{n}\left(\Delta_i\1_{\{T_i\leq t\}}-\E\left[\Delta_i\1_{\{T_i\leq t\}} \right]\right)\\
	&=\frac{1}{n}\sum_{i=1}^{n}\Delta_i\1_{\{T_i\leq t\}}-H^{uc}(t).
	\end{split}
	\end{equation}
	There exist a Brownian bridge $B_n$ and positive constants $K,\,c_1, \,c_2$ such that 
	\[
	\p\left[n\sup_{s\in[0,1]}\left|\tilde{M}_n(s)-n^{-1/2}B_n\left(H^{uc}(s)\right) \right|\geq x+c_1\log n \right]\leq K\exp\left(-c_2 x\right).
	\]
\end{lemma}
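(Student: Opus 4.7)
The plan is to reduce the statement to the classical Komlós-Major-Tusnády (KMT) embedding for the uniform empirical process. First I rewrite $\Delta_i\1_{\{T_i\leq t\}}=\1_{\{V_i\leq t\}}$ with $V_i=T_i$ on $\{\Delta_i=1\}$ and $V_i=+\infty$ on $\{\Delta_i=0\}$, so that $n^{-1}\sum_i\Delta_i\1_{\{T_i\leq t\}}$ is the empirical CDF of $V_1,\ldots,V_n$. Its true CDF equals $H^{uc}(t)$ for $t<\infty$ and jumps from $H^{uc}(\infty)=\p(\Delta=1)$ to $1$ at $+\infty$, so classical KMT does not apply directly. To kill this atom I would enlarge the probability space with i.i.d.\ $\mathrm{Uniform}[0,1]$ variables $U_1,\ldots,U_n$ independent of the data and set
$$\tilde V_i=\Delta_i\,H^{uc}(T_i)+(1-\Delta_i)\bigl(H^{uc}(\infty)+(1-H^{uc}(\infty))U_i\bigr).$$
Under the positivity assumptions on $h$ and $\lambda_0$ (recall $h^{uc}=\lambda_0\,\Phi(\cdot;\beta_0)$), the function $H^{uc}$ is continuous and strictly increasing on $[0,\tau_H)$, and a direct computation (splitting into the regions $u<H^{uc}(\infty)$ and $u\geq H^{uc}(\infty)$) shows that $\tilde V_i$ is $\mathrm{Uniform}[0,1]$.

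Next I would verify the crucial indicator identity. For every $t\in[0,1]$ one has $H^{uc}(t)<H^{uc}(\infty)$ (because $\tau_G>1<\tau_F$ yields $\p(T>1,\Delta=1)>0$), so censored subjects ($\Delta_i=0$) never contribute to $\{\tilde V_i\leq H^{uc}(t)\}$, while for uncensored subjects strict monotonicity of $H^{uc}$ on $[0,\tau_H)$ gives $\{\tilde V_i\leq H^{uc}(t)\}=\{T_i\leq t\}$. Letting $\tilde G_n$ denote the empirical CDF of $\tilde V_1,\ldots,\tilde V_n$, this yields
$$\tilde M_n(t)=\tilde G_n\bigl(H^{uc}(t)\bigr)-H^{uc}(t),\qquad t\in[0,1].$$

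The final step is to invoke the classical KMT theorem for i.i.d.\ uniforms: there exist a Brownian bridge $B_n$ on $[0,1]$ and positive constants $K,c_1,c_2$ such that
$$\p\Bigl[\sqrt n\,\sup_{u\in[0,1]}\bigl|\sqrt n(\tilde G_n(u)-u)-B_n(u)\bigr|\geq x+c_1\log n\Bigr]\leq K e^{-c_2 x}.$$
Composing with $H^{uc}$ (whose range lies in $[0,1]$) bounds $\sqrt n\,\sup_{s\in[0,1]}|\sqrt n\,\tilde M_n(s)-B_n(H^{uc}(s))|$ by the KMT supremum above, and the inequality in the lemma follows upon multiplying inside the probability by $\sqrt n$ and rewriting $\sqrt n\cdot\sqrt n\,\tilde M_n=n\tilde M_n$ and $\sqrt n\cdot B_n=n^{1/2}B_n$.

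The only technicalities are confirming that $\tilde V_i$ is exactly uniform and that the indicator identity is sharp on $[0,1]$; once this coupling has been set up, the conclusion is an immediate consequence of KMT, and no real obstacle arises beyond choosing the coupling correctly and quoting the embedding in the right normalization.
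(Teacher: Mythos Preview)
Your proposal is correct and is essentially the same argument as the paper's: both extend the sub-distribution $H^{uc}$ to a proper distribution by randomizing the censored observations with auxiliary uniform variables, then invoke the Koml\'os--Major--Tusn\'ady embedding and restrict back to $[0,1]$. The only cosmetic difference is that you first transform uncensored observations by $H^{uc}$ so as to obtain genuine $\mathrm{Unif}[0,1]$ variables, whereas the paper keeps the original time scale, sets $Y_i=T_i$ when $\Delta_i=1$ and $Y_i\sim\mathrm{Unif}(1,2)$ otherwise, and applies KMT with the resulting distribution $G_Y$.
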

\begin{proof}
	Let $U_1,\dots,U_n$ be independent uniform random variables on $(1,2)$. For $i\in\{1,\dots,n\}$, define
	\[
	Y_i=\begin{cases}
	T_i & \text{ if } \Delta_i=1,\\
	U_i & \text{ if } \Delta_i=0.
	\end{cases}
	\]
	The variables $Y_i$ are i.i.d with distribution function $G_Y$ given by
	\[
	G_Y(t)=\begin{cases}
	0 & \text{ if } t<0,\\
	H^{uc}(t) & \text{ if } 0\leq t\leq 1,\\
	H^{uc}(1)+\p(\Delta=0)(t-1) & \text{ if } 1<t\leq 2,\\
	1 & \text{ if } t>2.
	\end{cases}
	\] 
	By the strong approximation result in  \cite{komlosmajortusnady1975}, there exists a Brownian bridge $B_n$ and positive constants $K,\,c_1, \,c_2$ such that 
	\[
	\p\left[n\sup_{t\in[0,2]}\left|\frac{1}{n}\sum_{i=i}^n\1_{\{Y_i\leq t\}}-G_Y(t)-n^{-1/2}B_n\left(G_Y(t)\right) \right|\geq x+c_1\log n \right]\leq K\exp\left(-c_2 x\right).
	\] 
	In particular,
	\[
	\p\left[n\sup_{t\in[0,1]}\left|\frac{1}{n}\sum_{i=i}^n\1_{\{Y_i\leq t\}}-G_Y(t)-n^{-1/2}B_n\left(G_Y(t)\right) \right|\geq x+c_1\log n \right]\leq K\exp\left(-c_2 x\right).
	\] 
	The statement follows from the fact that, when  $t\in[0,1]$, then $G_Y(t)=H^{uc}(t)$ and
	\[
	\frac{1}{n}\sum_{i=i}^n\1_{\{Y_i\leq t\}}=\frac{1}{n}\sum_{i=i}^n\Delta_i\1_{\{T_i\leq t\}}.
	\]
\end{proof}
\begin{lemma}
	\label{le:boundaries}
	$\hat{\lambda}_n(0)$ and $\hat{\lambda}_n(1)$ are stochastically bounded.
\end{lemma}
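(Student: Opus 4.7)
The plan is to reduce the stochastic boundedness of $\hat\lambda_n(0)$ to a classical fact about the uniform empirical process, and then obtain the same conclusion for $\hat\lambda_n(1)$ from the monotonicity of the Grenander estimator. Denoting by $\hat\Lambda_n$ the least concave majorant of $\Lambda_n$ on $[0,1]$ and noting that $\Lambda_n(0)=0$, concavity implies that $\hat\Lambda_n(t)/t$ is non-increasing on $(0,1]$, so the slope of its first linear segment is
\[
\hat\lambda_n(0)=\sup_{t\in(0,1]}\frac{\hat\Lambda_n(t)}{t}=\sup_{t\in(0,1]}\frac{\Lambda_n(t)}{t},
\]
the second equality being standard (equality is attained at the first touch point of $\hat\Lambda_n$ and $\Lambda_n$, and everywhere else $\hat\Lambda_n(t)/t\ge\Lambda_n(t)/t$).

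Next, on the event $E_n$ defined in \eqref{def:E_n}, inequality \eqref{eqn:bound Phihat} yields $\Phi_n(T_i;\hat\beta_n)\ge \Phi(1;\beta_0)/2$ for every $i$ with $T_i\in[0,1]$, hence
\[
\Lambda_n(t)=\frac{1}{n}\sum_{i=1}^n\frac{\Delta_i\1_{\{T_i\le t\}}}{\Phi_n(T_i;\hat\beta_n)}\le\frac{2}{\Phi(1;\beta_0)}\,H_n(t),
\]
where $H_n$ is the empirical distribution function of $T_1,\ldots,T_n$. Writing $H_n(t)=G_n(H(t))$ with $G_n$ the empirical cdf of the uniform variables $H(T_1),\ldots,H(T_n)$, and using the boundedness of $h$ on $[0,1]$ so that $H(t)\le t\|h\|_\infty$, I would deduce $\sup_{t\in(0,1]}H_n(t)/t\le\|h\|_\infty\sup_{u\in(0,1]}G_n(u)/u$. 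The key ingredient is then Daniels' theorem, which asserts $\p(\sup_{u\in(0,1]}G_n(u)/u\ge\lambda)=1/\lambda$ for every $\lambda\ge 1$, so in particular $\sup_u G_n(u)/u=O_p(1)$. Chaining the inequalities gives $\hat\lambda_n(0)=O_p(1)$ on $E_n$, and since $\p(E_n)\to 1$ this proves the first part of the lemma.

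For $\hat\lambda_n(1)$ I would simply invoke monotonicity: since $\hat\lambda_n$ is the left derivative of a concave function it is non-increasing on $(0,1]$, and because $\Lambda_n$ is non-decreasing the last linear segment of $\hat\Lambda_n$ has non-negative slope, so $0\le\hat\lambda_n(1)\le\hat\lambda_n(0)=O_p(1)$. The only non-routine step in the whole argument is the use of Daniels' theorem to control $\sup_u G_n(u)/u$; everything else follows from the uniform bound $\Phi_n(\cdot;\hat\beta_n)\ge\Phi(1;\beta_0)/2$ already established on $E_n$ together with the boundedness of $h$.
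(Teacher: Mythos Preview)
Your argument is correct and takes a genuinely different route from the paper. The paper invokes Lemma~1 in \cite{durot2007}, a general criterion that bounds the Grenander-type estimator at the endpoints whenever the probability that the underlying step process $\Lambda_n$ jumps in $(0,\delta/n)$ or $(1-\delta/n,1)$ is $O(\delta)$; this is verified by noting that $h^{uc}=\lambda_0\Phi(\cdot;\beta_0)$ is bounded, so $n\p[\Delta T\in(0,\delta/n)\cup(1-\delta/n,1)]\le 2\delta\sup h^{uc}$. Your approach instead exploits the explicit representation $\hat\lambda_n(0)=\sup_{t\in(0,1]}\Lambda_n(t)/t$, the uniform lower bound on $\Phi_n(\cdot;\hat\beta_n)$ on $E_n$, and Daniels' theorem for the uniform empirical process. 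This is more self-contained (no external lemma) and makes the mechanism transparent; the paper's proof is shorter and does not require passing through the event $E_n$. Your treatment of $\hat\lambda_n(1)$ by monotonicity, $0\le\hat\lambda_n(1)\le\hat\lambda_n(0)$, is pleasantly direct.

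One small technical remark: you bound $\Lambda_n(t)$ by a multiple of $H_n(t)$ and then use boundedness of $h$ on $[0,1]$. Under the assumptions of Theorem~\ref{theo:CLT} the bounded quantity that is immediately available is $h^{uc}=\lambda_0\Phi(\cdot;\beta_0)$, not $h$ itself. A cleaner version of your argument keeps the factor $\Delta_i$ and works with $H^{uc}_n(t)=n^{-1}\sum_i\Delta_i\1_{\{T_i\le t\}}$: conditioning on the number $N$ of uncensored observations, the corresponding $T_i$'s have cdf $H^{uc}/H^{uc}(\infty)$, and Daniels' theorem applied to the resulting uniform sample gives $\sup_t H^{uc}_n(t)/t=O_p(1)$ via $H^{uc}(t)\le t\sup h^{uc}$. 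This avoids any appeal to boundedness of $h$ and matches the quantity the paper actually controls.
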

\begin{proof}
	{The sub-distribution function of the observed event times has a bounded density on $[0,1]$ since $h^{uc}(t)=\lambda_0(t)\Phi(t;\beta_0)$ and both $\lambda_0$ and $\Phi$ are assumed to be continuous (hence bounded on $[0,1]$). Therefore, }
	{Lemma \ref{le:boundaries}} follows from Lemma 1 in \cite{durot2007} because for every $\delta>0$, the probability that $\Lambda_n$ jumps in $(0,\delta/n)$ or in $(1-\delta/n,1)$ is no more than
	\[
	n\p\left[\Delta T\in (0,\delta/n)\cup(1-\delta/n,1)\right]\leq 2K\delta,
	\]
	where $K=\sup_{u\in[0,1]}h^{uc}(u)$.
\end{proof}

\begin{proof}[Proof of Lemma \ref{le:Lambda_n-Lambda}]
	Let  $M_n=\Lambda_n-\Lambda_0$ and $\pi(u;t,t_n)=\1_{\left\{u\leq t_n\right\}}-\1_{\left\{u\leq t\right\}}$. Using \eqref{eqn:Breslow_linear}, we can write
	\begin{equation}
	\label{eqn:Lambda_n-Lambda}
	\begin{split}
	M_n(t_n)-M_n(t)
	&=\int\frac{\delta {\pi}(u;t,t_n)}{\Phi(u;\beta_0)}\,\dd (\p_n-\p)(u,\delta,z)+{(\hat\beta_n-\beta_0)'}\left[A_0\left(t_n\right)-A_0(t)\right]\\
	&\quad-\int e^{{\beta_0' z}}\int_{u\wedge t}^{u\wedge t_n}\frac{\lambda_0(v)}{\Phi(v;\beta_0)}\,\dd v\,\dd(\p_n-\p)(u,\delta,z)+R_n(t_n)-R_n(t).
	\end{split}
	\end{equation}
	Let's consider the first term in the right hand side. 
	We have
	\begin{equation}
	\label{eqn:term1_1}
	\begin{split}
	&\int\frac{\delta {\pi}(u;t,t_n)}{\Phi(u;\beta_0)}\,\dd (\p_n-\p)(u,\delta,z)=\frac{1}{\Phi(t;\beta_0)}\int \delta {\pi}(u;t,t_n)\,\dd (\p_n-\p)(u,\delta,z)+R^1_n(t,t_n)
	\end{split}
	\end{equation}
	where
	\[
	R^1_n(t,t_n)=\int\delta {\pi}(u;t,t_n)\left(\frac{1}{\Phi(u;\beta_0)}-\frac{1}{\Phi(t;\beta_0)}\right)\,\dd (\p_n-\p)(u,\delta,z).
	\]
	Moreover, in view of Lemma \ref{le:embedding}, we write 
	\begin{equation}
	\label{eqn:term1_2}
	\begin{split}
	\int \delta {\pi}(u;t,t_n)\,\dd (\p_n-\p)({u},\delta,z)&=n^{-1/2}\left[B_n\left(H^{uc}(t_n)\right)-B_n\left(H^{uc}(t)\right)\right]\\
	&\quad+\left[(\tilde{M}_n-n^{-1/2}B_n\circ H^{uc})(t_n)-(\tilde{M}_n-n^{-1/2}B_n\circ H^{uc})(t)\right]
	\end{split}
	\end{equation}
	where $\tilde{M}_n$ is defined in \eqref{def:tilde_M_n}.
	For the third term in \eqref{eqn:Lambda_n-Lambda}, 
	if $t_n>t$, we have
	\[
	\begin{split}
	&\int e^{\beta'_0 z}\int_{u\wedge t}^{u\wedge t_n}\frac{\lambda_0(v)}{\Phi(v;\beta_0)}\,\dd v\,\dd(\p_n-\p)(u,\delta,z)\\
	&=\int e^{\beta'_0 z}\1_{\{u>t_n\}}\,\dd(\p_n-\p)(u,\delta,z) \int_{t}^{t_n}\frac{\lambda_0(v)}{\Phi(v;\beta_0)}\,\dd v\\
	&\quad+\int e^{\beta'_0 z}\1_{\{t<u\leq t_n\}}\int_{t}^{u}\frac{\lambda_0(v)}{\Phi(v;\beta_0)}\,\dd v\,\dd(\p_n-\p)(u,\delta,z).
	\end{split}
	\]
	Moreover
	\[
	\begin{split}
	&\int e^{\beta'_0 z}\1_{\{u>t_n\}}\,\dd(\p_n-\p)(u,\delta,z) \int_{t}^{t_n}\frac{\lambda_0(v)}{\Phi(v;\beta_0)}\,\dd v\\
	&=\int e^{\beta'_0 z}\1_{\{u>t\}}\,\dd(\p_n-\p)(u,\delta,z) \frac{\lambda_0(t)}{\Phi(t;\beta_0)}\left(t_n-t\right)\\
	&\quad+\frac{1}{2}\int e^{\beta'_0 z}\1_{\{u>t\}}\,\dd(\p_n-\p)(u,\delta,z)\int_{t}^{t_n}\left(\frac{\lambda_0}{\Phi}\right)'(s_{n,v})\left(v-t\right)^2\,\dd v\\
	&\quad-\int e^{\beta'_0 z}\1_{\{t<u\leq t_n\}}\,\dd(\p_n-\p)(u,\delta,z) \int_{t}^{t_n}\frac{\lambda_0(v)}{\Phi(v;\beta_0)}\,\dd v,
	\end{split}
	\]
	for some $t\leq s_{n,v}\leq v$.
	Similarly we can deal with the case $t_n<t$ obtaining that for all $t_n\in(0,1)$,
	\begin{equation}
	\label{eqn:second_term_breslow_linear}
	\begin{split}
	&\int e^{\beta'_0 z}\int_{u\wedge t}^{u\wedge t_n}\frac{\lambda_0(v)}{\Phi(v;\beta_0)}\,\dd v\,\dd(\p_n-\p)(u,\delta,z)\\
	&=\left[\Phi_n(t;\beta_0)-\Phi(t;\beta_0)\right] \frac{\lambda_0(t)}{\Phi(t;\beta_0)}\left(t_n-t\right)\\
	&\quad+\frac{1}{2}\left[\Phi_n(t;\beta_0)-\Phi(t;\beta_0)\right]\int_{t}^{t_n}\left(\frac{\lambda_0}{\Phi}\right)'(s_{n,v})\left(v-t\right)^2\,\dd v-R^2_n(t,t_n)
	\end{split}
	\end{equation}
	where
	\[
	R^2_n(t,t_n)=\int {e^{\beta_0' z}\pi}(u;t,t_n)\int_{u}^{t_n}\frac{\lambda_0(v)}{\Phi(v;\beta_0)}\,\dd v\,\dd(\p_n-\p)(u,\delta,z).
	\]
	Finally, for the second term in \eqref{eqn:Lambda_n-Lambda}, by a Taylor expansion we obtain
	\begin{equation}
	\label{eqn:taylor_A_0}
	\begin{split}
	A_0\left(t_n\right)-A_0(t)=A'_0(t)\left(t_n-t \right) +\frac{1}{2}A''_0(\zeta_n)\left(t_n-t \right)^2
	\end{split}
	\end{equation}
	where $|\zeta_n-t|\leq \left| t_n-t\right|$. 
	Note that by definition
	\[
	\begin{split}
	A''_0(t)=\frac{\dd}{\dd t}D^{(1)}(t;\beta_0)\frac{\lambda_0(t)}{\Phi(t;\beta_0)}+D^{(1)}(t;\beta_0)\frac{\dd}{\dd t}\frac{\lambda_0(t)}{\Phi(t;\beta_0)}
	\end{split}
	\]
	with
	\[
	\frac{\dd}{\dd t}D^{(1)}(t;\beta_0)=\frac{\dd}{\dd t}\int ze^{\beta'_0z}\1_{\{u\geq t\}}\,\dd \p(u,\delta,z)=\frac{\dd}{\dd t}\E\left[Ze^{\beta'_0Z}\p\left(T\geq t|Z\right)\right].
	\]
	Hence, $A''_0$ is continuous because we are assuming that, given $z$, the follow up time $T$ has a continuous density and $\lambda_0$, $\Phi$ are continuously differentiable. 
	
	Let $E_n$ the event in \eqref{def:E_n}.
	Combining \eqref{eqn:Lambda_n-Lambda}-\eqref{eqn:taylor_A_0}, we get
	\[
	\begin{split}
	M_n(t_n)-M_n(t)
	&=n^{-1/2}{\frac{1}{\phi(t;\beta_0)}}\left[B_n\left(H^{uc}(t_n)\right)-B_n\left(H^{uc}(t)\right)\right]+(\hat\beta_n-\beta_0)'A'_0(t)\left(t_n-t \right)\\
	&-\left[\Phi_n(t;\beta_0)-\Phi(t;\beta_0)\right] \frac{\lambda_0(t)}{\Phi(t;\beta_0)}\left(t_n-t\right)
	+R^1_n(t,t_n)+R^2_n(t,t_n)+\tilde{R}_n(t,t_n),
	\end{split}
	\]
	with
	\[
	\1_{E_n}\sup_{|t_n-t|\leq {c}n^{-1/3}T_n}|\tilde{R}_n(t,t_n)|\lesssim \sup_{t\in[0,1]}\left|\tilde{M}_n(t)-n^{-1/2}B_n(H^{uc}(t))\right|+ n^{-7/6+\alpha}T_n^2+n^{-1+\alpha}.
	\]
	{Here, we used that $E_n$ contains the events in \eqref{eq:E2E3}.}

	In order to prove the statement of the Lemma, it is sufficient to consider $x>n^{-1+1/q}$ because otherwise the bound is trivial. For such values of $x$ and for $\alpha<1/q$, from the strong approximation result in Lemma \ref{le:embedding}, it follows that
	\[
	\p\left[\left\{\sup_{|t_n-t|\leq {c}n^{-1/3}T_n}|\tilde{R}_n(t,t_n)|>x\right\}\cap E_n \right]\lesssim \exp\left\{-K_1nx\right\}\lesssim x^{-q}n^{1-q}.
	\]
	Moreover the bound is uniform over $t\in[0,1]$.
	The statement then follows from Lemmas \ref{le:R_1} and \ref{le:R_2} with
	\[
	r_n(t,t_n)=R^1_n(t,t_n)+R^2_n(t,t_n)+\tilde{R}_n(t,t_n)
	\]
\end{proof}

\begin{proof}[Proof of Theorem~\ref{theo:CLT}.]
	Let 
	\[
	\mathcal{J}_n=n^{p/3}\int_0^1\left|\hat{\lambda}_n(t)-\lambda_0(t)\right|^p\,\text{d}t.
	\]
	
	\textbf{Step 1.} It can be shown exactly as in Step 1 of \cite{durot2007} that
	\[
	\mathcal{J}_n\1_{E_n}=\1_{E_n}n^{p/3}\int_{\lambda_0(1)}^{\lambda_0(0)}\left|\hat{U}_n(a)-U(a)\right|^p\left|U'(a)\right|^{1-p}\,\text{d}a+o_P(n^{-1/6}).
	\]
	Details are given in the Supplementary Material \cite{DM_supp}.
	
	\textbf{Step 2.} Let $B_n$ be the Brownian bridge from Lemma \ref{le:Lambda_n-Lambda} and $\xi_n$  a standard Gaussian random variable independent of $B_n$. Then $W_n(t)=B_n(t)+\xi_nt$  
	is a Brownian motion.
	Let 
	\[
	d(t)=\frac{|\lambda'_0(t)|\Phi(t;\beta_0)}{2h^{uc}(t)^2}
	\]
	and 
	\[
	W_t(u)=n^{1/6}\left[W_n\left(H^{uc}(t)+n^{-1/3}u\right)-W_n\left(H^{uc}(t)\right)\right].
	\]
	For each $t\in[0,1]$, $W_t$ is a standard {two-sided} Brownian motion and we define $\tilde{V}{(t)}$ as 
	\begin{equation}
	\label{eq: Vtilde}
	\tilde{V}(t)=\argmax_{|u|\leq\log n}\left\{W_t(u)-d(t)u^2\right\}.
	\end{equation}
	We show that, for some $\eta_n$ which satisfies $\sup_{t\in[0,1]}|{\eta}_n(t)|=O_P(1)$, we have	 
	\begin{equation}
	\label{eqn:step2}
	\mathcal{J}_n=\int_b^{1-b}\left|\tilde{V}(t)-n^{-1/6}{\eta}_n(t) \right|^p\left|\frac{\lambda'_0(t)}{h^{uc}(t)}\right|^p\,\dd t+o_P(n^{-1/6}).
	\end{equation}	
	Let $k$ be a  twice differentiable, symmetric kernel function with bounded support $[-1,1]$ such that $k(u)>0$ for all $u\in(-1,1)$ and $\int k(y)\,\dd y =1$. 
	We denote by $k_b$ its scaled version $k_b(u)=b^{-1}k(u/b).$ Here, $b$  is a bandwidth that depends on the sample size and we consider $b=n^{-1/4}$. 
	Let 
	\[
	\Psi_n^s(t)={\int_{\mathbb{R}}} k_b(t-v)\left(\Phi_n(v;\beta_0)-\Phi(v;\beta_0)\right)\,\dd v.
	\]
	Note that 
	\[
	\sup_{t\in[b,1-b]}\left|\Psi_n^s(t) \right|\leq \sup_{v\in[0,1]}\left| \Phi_n(v;\beta_0)-\Phi(v;\beta_0)\right|=O_P(n^{-1/2})
	\]
	and
	\[
	\begin{split}
	\sup_{t\in[b,1-b]}\left|\frac{\dd\Psi_n^s(t)}{\dd t}\right|&=\frac{1}{b^2}\left|\int k'\left(\frac{t-v}{b}\right)\left(\Phi_n(v;\beta_0)-\Phi(v;\beta_0)\right)\,\dd v\right|\\
	&{=\frac{1}{b^2}\left|\int k'\left(\frac{t-v}{b}\right)\left[\left(\Phi_n(v;\beta_0)-\Phi(v;\beta_0)\right)-\left(\Phi_n(t;\beta_0)-\Phi(t;\beta_0)\right) \right]\,\dd v\right|}\\
	&\leq b^{-1}{\sup_{|v-t|\leq b}\left| \left(\Phi_n(v;\beta_0)-\Phi(v;\beta_0)\right)-\left(\Phi_n(t;\beta_0)-\Phi(t;\beta_0)\right)\right|}{\times \int_{\mathbb R}|k'(v)|\dd v}\\
	&={O_P\left(n^{-1/2}b^{-1/2}\right)=o_P\left(n^{-1/6}\right)}.
	\end{split}
	\]
	Here we used the fact that $\int k'(v)\,\dd v=0$ and that
	\[
	\sup_{|v-t|\leq b}\left| \left(\Phi_n(v;\beta_0)-\Phi(v;\beta_0)\right)-\left(\Phi_n(t;\beta_0)-\Phi(t;\beta_0)\right)\right|=O_P\left(n^{-1/2}\sqrt{b}\right),
	\]
	which can be shown as in Lemma~\ref{le:R_3}.

	For every $a\in\R$, define
	\[
	\begin{split}
	a^\xi=a-n^{-1/2}\xi_n\frac{h^{uc}(U(a))}{\Phi(U(a);\beta_0)}+(\hat\beta_n-\beta_0)A'_0(U(a))-\Psi_n^s(U(a))\frac{\lambda_0(U(a))}{\Phi(U(a);\beta_0)}
	\end{split}
	\]
	{where $A_0$ is taken from \eqref{def:A_0}.}
	Note that the smoothed version $\Psi_n^s$ of $\Phi_n-\Phi$ is needed in order to make the function $a\mapsto a^\xi$ differentiable. {Note also that since $h^{uc}(t)=\lambda_{0}(t)\Phi(t;\beta_0)$, it follows from the assumptions on $\lambda$ and $\Phi$ that also $h^{uc}$ is differentiable. }{The differentiability will allow us to do the change of variable $a\mapsto a^\xi$ below.} 
	
	Let $E_n$ be the event in \eqref{def:E_n}. For some proper modification of $E_n$, we can assume that on the event $E_n$ it holds {that $|\xi_n|\leq\log n$ and}
	\begin{equation}
	\label{eqn:E_n_add}
	\sup_{a\in\R}\left|a-a^\xi\right|\leq cn^{-1/2+\alpha}
	\end{equation}
	{for an appropriate $c>0$, with $\alpha$ as in \eqref{eq:E2E3}.}
	From Lemma 6(i) in \cite{durot2007} and the change of variable $a\to a^{\xi}$ we have
	\[
	\begin{split}
	\mathcal{J}_n\1_{E_n}&=\1_{E_n}n^{p/3}\int_{\lambda_0(1)}^{\lambda_0(0)}\left|\frac{H^{uc}(\hat{U}_n(a))-H^{uc}(U(a))}{h^{uc}(U(a))}\right|^p\left|U'(a)\right|^{1-p}\,\text{d}a+o_P(n^{-1/6})\\
	&=\1_{E_n}n^{p/3}\int_{\lambda_0(1)+cb}^{\lambda_0(0)-cb}\left|\frac{H^{uc}(\hat{U}_n(a))-H^{uc}(U(a))}{h^{uc}(U(a))}\right|^p\left|U'(a)\right|^{1-p}\,\text{d}a+o_P(n^{-1/6})\\
	&=\1_{E_n}n^{p/3}\int_{J_n}\left|\frac{H^{uc}(\hat{U}_n(a^\xi))-H^{uc}(U(a^\xi))}{h^{uc}(U(a))}\right|^p\left|U'(a)\right|^{1-p}\,\text{d}a+o_P(n^{-1/6}),
	\end{split}
	\]
	with $c=\inf_{t\in[0,1]}|\lambda'_0(t)|$ and  
	\[
	J_n=\left[\lambda_0(1)+n^{-1/6}/\log n,\lambda_0(0)-n^{-1/6}/\log n\right] .
	\]  {To do the change of variable above,} we first restrict the interval into $[\lambda_0(1)+cb,\lambda_0(0)-cb]$ {because} the previous bounds on $\Psi^s_n(t)$ and its derivative need $t\in[b,1-b]$. {The definition of $a^\xi$ involves} $\Psi_n^s(U(a))$ so we need $U(a)\in[b,1-b]$ {whereas} if $a>\lambda_{0}(1){+}cb$ then $U(a)<1-b$ for this choice of $c$. Afterwards we can replace $\dd a^\xi$ by $\dd a$ because, on $E_n$, $\sup_a |\dd a-\dd a^\xi|\leq C(n^{-1/2+\alpha}+n^{-3/8})\,\dd a$ for $\alpha<1/3$. The last step consists in replacing $h^{uc}(U(a^\xi))$ by $h^{uc}(U(a))$ and $U'(a^\xi)$ by $U'(a)$  using Lemma 6(i) in \cite{durot2007} with for example $r=r'=2$. Here we also use that $U'$, $(h^{uc})'$ are uniformly bounded and $U'$ satisfies \eqref{eqn:assumption_derivative}.   
	By definition of $\hat{U}_n$, it follows that 
	\[
	H^{uc}(\hat{U}_n(a^\xi))=\argmax_{u\in[H^{uc}(0),H^{uc}(1)]}\left\{ \left(\Lambda_n\circ (H^{uc})^{-1}-a^\xi(H^{uc})^{-1}\right)(u)\right\}.
	\]
	Let $  t_n=(H^{uc})^{-1}(H^{uc}(U(a))+n^{-1/3}u)$. 
	Using properties of the $\argmax$ function we obtain
	\[
	n^{1/3}\left(H^{uc}(\hat{U}_n(a^\xi))-H^{uc}(U(a))\right)=\argmax_{u\in I_n(a)}\left\{D_n(a,u)+S_n(a,u) \right\},
	\]
	where
	\[
	I_n(a)=\left[-n^{1/3}\left(H^{uc}(U(a))-H^{uc}(0)\right),n^{1/3}\left(H^{uc}(1)-H^{uc}(U(a))\right)\right],
	\]
	\[
	\begin{split}
	D_n(a,u)=n^{2/3}\Phi(U(a);\beta_0)\left\{\left(\Lambda_0(t_n)-at_n\right)- \left(\Lambda_0(U(a))-aU(a)\right) \right\}
	\end{split}
	\]
	and
	\begin{equation*}
	\begin{split}
	S_n(a,u)=n^{2/3}\Phi(U(a);\beta_0)\left\{ (a-a^\xi)\left[t_n-U(a)\right]+(\Lambda_n-\Lambda_0)\left(t_n\right)-(\Lambda_n-\Lambda_0)(U(a))\right\}.
	\end{split}
	\end{equation*}
	Using Lemma \ref{le:Lambda_n-Lambda} and the definition of $a^\xi$, we get
	\[
	S_n(a,u)=W_{U(a)}(u)+R^3_n(a,u)+R^4_n(a,u)
	\]
	where
	\[
	R^3_n(a,u)=n^{2/3}\left\{\Psi_n^s(U(a)) -\left[\Phi_n(U(a);\beta_0)-\Phi(U(a);\beta_0)\right] \right\}\lambda_0(U(a))\left[t_n-U(a)\right],
	\]
	and
	\[
	R^4_n(a,u)=n^{2/3}\Phi(U(a);\beta_0)\,r_n\left(U(a),t_n\right),
	\]
	with $r_n$ as in Lemma \ref{le:Lambda_n-Lambda}.
	Let $R_n(a,u)=R^3_n(a,u)+R^4_n(a,u)$.	
	Now we use Lemma 5 in \cite{durot2007} to show that $R_n$ is negligible. First we localize. Let $q>12$. {We will apply Lemmas 	\ref{le:Lambda_n-Lambda},
		\ref{le:R_1}, \ref{le:R_2}, \ref{le:R_3} with such a $q$ which is possible because now we choose $T_n=n^{1/(3(6q-11))}$, so $\gamma=1/(3(6q-11))$ and $q\gamma<1$.} {The choice of $T_n$ is motivated by Lemma 5 of \cite{durot2007} that is used below.} Define
	\[
	\tilde{U}_n{(a)}=\argmax_{|u|\leq T_n}\left\{D_n(a,u)+W_{U(a)}(u)+R_n(a,u) \right\}
	\]
	For large $n$, $[-T_n,T_n]\subset I_n(a)$ for all $a\in J_n$, so {we have the equivalence}
	\[
	\1_{E_n}\tilde{U}_n{(a)}\neq \1_{E_n}n^{1/3}\left(H^{uc}(\hat{U}_n(a^\xi))-H^{uc}(U(a))\right)\quad\Leftrightarrow\quad \1_{E_n}n^{1/3}\left|H^{uc}(\hat{U}_n(a^\xi))-H^{uc}(U(a))\right|>T_n
	\]
	Since $h^{uc}$ and $U'$ are bounded, by monotonicity of $\hat{U}_n$ {together with \eqref{eqn:E_n_add}, that holds on $E_n$,} and Lemma \ref{le:inv_tail_prob1}, {for all $a\in\R$, $x\geq n^{-1/3}$ and sufficiently large $n$} we then have 
	\begin{equation}
	\label{eqn:bound_inv_a_xi}
	\begin{split}
	&\p\left(\left\{\left|H^{uc}(\hat{U}_n(a^\xi))-H^{uc}(U(a))\right|>  x\right\}\cap E_n\right)\\
	&\leq\p\left(\left\{\left|\hat{U}_n(a^\xi)-U(a)\right|>Cx \right\}\cap E_n\right)\\
	&\leq\p\left(\left\{\hat{U}_n(a-cn^{-1/2+\alpha})>Cx+U(a) \right\}\cap E_n\right)+\p\left(\left\{\hat{U}_n(a+cn^{-1/2+\alpha})<U(a)-Cx \right\}\cap E_n\right)\\
	&\leq\p\left(\left\{\left|\hat{U}_n(a-cn^{-1/2+\alpha})-U(a-cn^{-1/2+\alpha})\right|>\frac{C}{2}x \right\}\cap E_n\right)\\
	&\quad+\p\left(\left\{\left|\hat{U}_n(a+cn^{-1/2+\alpha})-U(a+cn^{-1/2+\alpha})\right|>\frac{C}{2}x \right\}\cap E_n\right)\\
	&\leq K_1\exp(-K_2nx^3),
	\end{split}
	\end{equation}
	using that $\alpha<1/6$ for the penultimate inequality,  {since $\alpha$ can be chosen arbitrarily small.} Since the upper bound is greater than one for appropriate $K_1,K_2$ and all $x\in[0,n^{-1/3}]$, the above inequality holds true for all $a\in\R$, $x\geq 0$ and $n$. Combining the two preceding displays yields
	\begin{equation}
	\label{eq: lem6}
	\p\left(\left\{\tilde{U}_n(a)\neq n^{1/3}\left(H^{uc}(\hat{U}_n(a^\xi))-H^{uc}(U(a))\right)\right\}\cap E_n\right)\leq K_1\exp(-K_2T_n^3).
	\end{equation}
	Moreover,
	integrating inequality \eqref{eqn:bound_inv_a_xi}, we obtain 
	\begin{equation}
	\label{eqn:bound_exp_inv_a_xi}
	\sup_{a\in\R}\E\left[\1_{E_n}n^{q'/3}\left|H^{uc}(\hat{U}_n(a^\xi))-H^{uc}(U(a))\right|^{q'}\right]\leq K,
	\end{equation}
	for every $q'\geq 1$. From \eqref{eqn:bound_inv_a_xi}, \eqref{eqn:bound_exp_inv_a_xi} and Lemma 6(ii) in \cite{durot2007} with $r$ and $r'$ chosen arbitrarily we get
	\[
	\begin{split}
	\mathcal{J}_n\1_{E_n}&=\1_{E_n}\int_{J_n}\left|\tilde{U}_n(a)+n^{1/3}\left(H^{uc}(U(a))-H^{uc}(U(a^\xi))\right) \right|^p\frac{|U'(a)|^{1-p}}{h^{uc}(U(a))^p}\,\dd a+o_P(n^{-1/6})\\
	&=\1_{E_n}\int_{J_n}\left|\tilde{U}_n(a)-n^{-1/6}\eta_n(a) \right|^p\frac{|U'(a)|^{1-p}}{h^{uc}(U(a))^p}\,\dd a+o_P(n^{-1/6})
	\end{split}
	\]
	where
	\[
	\eta_n(a)=n^{1/2}(a-a^\xi)|U'(a)|h^{uc}(U(a))=O_P(1).
	\]
	{The rest in the penultimate equality is of order $o_P(n^{-1/6})$ because $h^{uc}$ and $U$ are continuously differentiable (this follows from the assumptions on $\lambda_{0}$ and $\Phi$), hence their derivatives are uniformly bounded, and $\alpha$ can be chosen as small as we want.} 
	Now we approximate $\tilde{U}_n{(a)}$ by 
	\[
	\tilde{\tilde{U}}_n(a)=\argmax_{|u|\leq \log n}\left\{D_n(a,u)+W_{U(a)}(u) \right\}.
	\]	
	We will apply Lemmas 5(i) and 6(i) in \cite{durot2007} to get 
	\begin{equation}
	\label{eqn:step2_tilde_U}
	\mathcal{J}_n\1_{E_n}=\1_{E_n}\int_{J_n}\left|\tilde{\tilde{U}}_n(a)-n^{-1/6}\eta_n(a) \right|^p\frac{|U'(a)|^{1-p}}{h^{uc}(U(a))^p}\,\dd a+o_P(n^{-1/6}).
	\end{equation}
	Since $\lambda'_0$ is bounded above by a strictly negative constant, the function $\Phi$ is uniformly bounded and  $h^{uc}$ is bounded away from zero, it follows by a Taylor expansion that
	\[
	D_n(a,u)\leq -cu^2,
	\]
	for some positive constant $c$, {and all} $a\in J_n$ and $|u|\leq T_n$. 
	Moreover, since 
	\[
	\left|\frac{\partial}{\partial u}(\Lambda_0(t_n)-at_n) \right|=	\left|[t_n-U(a)]\lambda'_0(\zeta)\frac{\partial}{\partial u}(t_n) \right| 
	\]
	for some $|\zeta-U(a)|\leq |t_n-U(a)|$, it follows that 
	\[
	\left|\frac{\partial}{\partial u}D_n(a,u) \right|\leq K|u|,
	\]
	for some positive constant $K$ {and all} $a\in J_n$ and $|u|\leq T_n$. 
	Again we used the fact that $\Phi$ and $\lambda'_0$ are uniformly bounded and  $h^{uc}$ is bounded away from zero. {Hence, the function $D_n$ satisfies the conditions in \cite{durot2007}.}

	It remains to show that $R_n(a,u)$ satisfies the assumptions of Lemma 5(i) in \cite{durot2007}. If we show that 
	\begin{equation}
	\label{eqn:bound_R_n}
	\p\left[\left\{\sup_{|u|\leq T_n}|R_n(a,u)|>x \right\}\cap E_n\right]\lesssim x^{-q} n^{1-q/3},\quad\text{for all }x\in(0,n^{2/3}],
	\end{equation}
	then, as in Lemma 5(i) in \cite{durot2007}, it follows that 
	\[
	\E\left[\1_{E_n}\left|\tilde{U}_n-\tilde{\tilde{U}}_n\right|^r\right]\lesssim \left(n^{-1/6}/\log n\right)^r
	\]
	with $r=2(q-1)/(2q-3)$ (as in Lemma 5(i) in \cite{durot2007})
	and by Lemma 6(i) in \cite{durot2007}, $\tilde{U}_n$ can be replaced by $\tilde{\tilde{U}}_n$.
	Note that for \eqref{eqn:bound_R_n} it is sufficient to consider $x>n^{-1/3+1/q}$.
	{For such $x$'s} we have
	\[
	\begin{split}
	&\p\left[\left\{\sup_{|u|\leq T_n}|R_n(a,u)|>x \right\}\cap E_n\right]\\
	&\leq \p\left[\left\{\sup_{|u|\leq T_n}|R^3_n(a,u)|>x/2 \right\}\cap E_n\right]+\p\left[\left\{\sup_{|u|\leq T_n}|R^4_n(a,u)|>x/2 \right\}\cap E_n\right].
	\end{split}
	\]
	From Lemma \ref{le:Lambda_n-Lambda}, it follows that
	\[
	\p\left[\left\{\sup_{|u|\leq T_n}|R^4_n(a,u)|>x/2 \right\}\cap E_n\right]\lesssim x^{-q}n^{1-q/3}.
	\]
	It remains to consider $R^3_n$. Note that, since $a\in J_n $, it follows that $b<U(a)<1-b$ where we recall that $b=n^{-1/4}$. Hence
	\[
	\begin{split}
	&\p\left[\left\{\sup_{|u|\leq T_n}|R^3_n(a,u)|>x/4 \right\}\cap E_n\right]\\
	&\leq \p\left[{n^{1/3}T_n\left|\Psi_n^s(U(a))-\left[\Phi_n(U(a);\beta_0)-\Phi(U(a);\beta_0)\right]\right|>cx }\right]\\
	&=\p\left[n^{1/3}T_n\left|\int k_b(U(a)-v)\left[\Phi_n(v;\beta_0)-\Phi(v;\beta_0)-\Phi_n(U(a);\beta_0)+\Phi(U(a);\beta_0)\right]\,\dd v\right|>cx \right]\\
	&\leq\p\left[n^{1/3}T_n\sup_{|v-U(a)|\leq b}\left|\Phi_n(v;\beta_0)-\Phi(v;\beta_0)-\Phi_n(U(a);\beta_0)+\Phi(U(a);\beta_0)\right|>cx \right],
	\end{split}
	\]
	{using that the kernel $k$ integrates to one, is non-negative and is supported on $[-1,1]$.}	It follows from Lemma \ref{le:R_3}  that
	\[
	\p\left[\left\{\sup_{|u|\leq T_n}|R^3_n(a,u)|>x/4 \right\}\cap E_n\right]\lesssim x^{-q} n^{1-q/3}. 
	\]
	{Here we need $2\leq q<1/(\gamma+1/24)$ to apply the Lemma. Since $\gamma=1/(3(6q-11))$,  it is possible to choose $q>12$ (but $q<22$).}
	To conclude, from  Lemma 5(i) in \cite{durot2007}, it follows that ${\tilde{U}_n}$ can be replaced by $\tilde{\tilde{U}}_n$, i.e. we have \eqref{eqn:step2_tilde_U}. 
	
	It remains to replace $\tilde{\tilde{U}}_n(a)$ by $\tilde{V}(U(a))$ {where $\tilde V$ is taken from \eqref{eq: Vtilde}}. {Since $h^{uc}(t)=\lambda_{0}(t)\Phi(t;\beta_0)$, it follows from the assumptions on $\lambda_0$ and $\Phi$ that $h^{uc}$ is continuously differentiable. Combining this with a Taylor expansion together with \eqref{eqn:assumption_derivative}}, we have
	\[
	\sup_{|u|\leq \log n}\left|{-}D_n(a,u)-d(U(a))u^2\right|\lesssim n^{-s/3}(\log n)^3.
	\]
	{It follows from \eqref{eq: moments betan} and \eqref{eq: moments Phin} that $\1_{E_n}n^{-1/6}\eta_n$  has bounded moments of any order.
		Since $\tilde{\tilde{U}}_n(a)$ and $\tilde{V}(U(a))$ also} have bounded moments of any order, by Lemmas 5(ii) and 6(i) in \cite{durot2007} it follows {from \eqref{eqn:step2_tilde_U}} that 
	\[
	\begin{split}
	\mathcal{J}_n\1_{E_n}&=\1_{E_n}\int_{J_n}\left|\tilde{V}(U(a))-n^{-1/6}\eta_n(a) \right|^p\frac{|U'(a)|^{1-p}}{h^{uc}(U(a))^p}\,\dd a+o_P(n^{-1/6})\\
	&=\1_{E_n}\int_{b}^{1-b}\left|\tilde{V}(t)-n^{-1/6}\varsigma_n(t) \right|^p\left|\frac{\lambda'_0(t)}{h^{uc}(t)}\right|^{p}\,\dd t+o_P(n^{-1/6}),
	\end{split}
	\]
	where {using \eqref{eqn:lambda0},}
	\begin{equation}
	\label{def:eta_t}
	\varsigma_n(t)=\frac{h^{uc}(t)}{|\lambda'_0(t)|}\left\{\xi_n\lambda_0(t)-n^{1/2}(\hat\beta_n-\beta_0)A'_0(t)+n^{1/2}\Psi_n^s(t)\frac{\lambda_0(t)}{\Phi(t;\beta_0)} \right\}.
	\end{equation}
	Moreover, since $\p(E_n)\to 1$, we can remove $\1_{E_n}$ and get \eqref{eqn:step2}.
	{Note that the interval of integration has been restricted to $[b,1-b]$ because the bounds that we have of $\Psi_n^s$ are valid only on $[b,1-b]$ (see beginning of Step 2) so the change of variable is done on this restricted interval.}
	
	\textbf{Step 3.} Now we prove that ${\varsigma}_n$ can be removed from the integrand in \eqref{eqn:step2}. Let 
	\[
	\mathcal{D}_n=n^{1/6}\left\{\int_{b}^{1-b}\left|\tilde{V}(t) \right|^p\left|\frac{\lambda'_0(t)}{h^{uc}(t)}\right|^{p}\,\dd t-\int_{b}^{1-b}\left|\tilde{V}(t)-n^{-1/6}\varsigma_n(t) \right|^p\left|\frac{\lambda'_0(t)}{h^{uc}(t)}\right|^{p}\,\dd t \right\}.
	\]
	Define 
	\[
	V(t)=\argmax_{u\in\R}\left\{W_t(u)-d(t)u^2\right\}.
	\]
	Then {we can have $\tilde{V}(t)\neq V(t)$ only if $|V(t)|>\log n$} and 
	\begin{equation}
	\label{eqn:tildeV-V}
	\p\left(\tilde{V}(t)\neq V(t)\right)\leq 2\exp\left(-c^2(\log n)^3\right)
	\end{equation}
	(see {Theorem 4 in \cite{durot2002}}). Moreover,
	\begin{equation}
	\label{eqn:V-X0}
	d(t)^{2/3}V(t)=\argmax_{u\in\R}\left\{W_t\left(ud(t)^{-2/3}\right)-u^2d(t)^{-1/3}\right\}\stackrel{d}{=} X(0)
	\end{equation}
	{where $X$ is taken from \eqref{eq: X}.}
	Corollaries 3.4 and 3.3 in \cite{groeneboom1989} show that $X(0)$ has a bounded density function, so from \eqref{eqn:tildeV-V} and \eqref{eqn:V-X0}, so it follows that, for $\gamma\in(0,1/12)$,
	\[
	\p\left(|\tilde{V}(t)|\leq n^{-\gamma}\right)\leq Kn^{-\gamma},
	\]
	for some $K>0$ independent of $t$. Let $C>0$ and define
	\[
	A_n(t)=\left\{|\tilde{V}(t)|>n^{-\gamma}\right\},\qquad B_n=\left\{n^{-1/24}\sup_{t\in[0,1]}|{\varsigma}_n(t)|\leq C\right\}
	\]
	Then $\p(B_n)\to 1$ because $\sup_{t\in[0,1]}|{\varsigma}_n(t)|=O_P(1)$. Moreover, {it follows from \eqref{eq: moments betan} and \eqref{eq: moments Phin} that $\varsigma_n$ has bounded moments of all orders. Since $\tilde{V}$ also} has bounded moments of all the orders, by an expansion of $x\mapsto x^p$ around $\tilde{V}(t)$ we obtain
	\[
	\begin{split}
	\mathcal{D}_n&=\1_{B_n}pn^{1/6}\int_{b}^{1-b}\left\{\left|\tilde{V}(t) \right|-\left|\tilde{V}(t)-n^{-1/6}\varsigma_n(t) \right|\right\}\left|\tilde{V}(t)\right|^{p-1}   \left|\frac{\lambda'_0(t)}{h^{uc}(t)}\right|^{p}\1_{A_n(t)}\,\dd t+o_P(1)\\
	&=p\int_{b}^{1-b} \varsigma_n(t)\tilde{V}(t)\left|\tilde{V}(t) \right|^{p-2}   \left|\frac{\lambda'_0(t)}{h^{uc}(t)}\right|^{p}\,\dd t+o_P(1).
	\end{split}
	\]
	Using the definition of $\eta_n$ in \eqref{def:eta_t} we have
	\begin{equation}
	\label{eqn:integral_eta}
	\begin{split}
	&\int_{b}^{1-b} \varsigma_n(t)\tilde{V}(t)\left|\tilde{V}(t) \right|^{p-2}   \left|\frac{\lambda'_0(t)}{h^{uc}(t)}\right|^{p}\,\dd t\\
	&=\xi_n\int_{0}^1 \frac{\lambda_0(t)h^{uc}(t)}{|\lambda'_0(t)|}\tilde{V}(t)\left|\tilde{V}(t) \right|^{p-2}   \left|\frac{\lambda'_0(t)}{h^{uc}(t)}\right|^{p}\,\dd t\\
	&\quad-n^{1/2}(\hat\beta_n-\beta_0)\int_{0}^1 \frac{A'_0(t)h^{uc}(t)}{|\lambda'_0(t)|}\tilde{V}(t)\left|\tilde{V}(t) \right|^{p-2}   \left|\frac{\lambda'_0(t)}{h^{uc}(t)}\right|^{p}\,\dd t\\
	&\quad+n^{1/2}\int_{b}^{1-b}\Psi_n^s(t) \tilde{V}(t)\left|\tilde{V}(t) \right|^{p-2} \frac{\lambda_0(t)^2}{|\lambda'_0(t)|}  \left|\frac{\lambda'_0(t)}{h^{uc}(t)}\right|^{p}\,\dd t+o_P(1),
	\end{split}
	\end{equation}
	{using \eqref{eqn:lambda0}.}
	Since $\tilde{V}(t)$ has a symmetric distribution, 
	\[
	\E\left[ \left(\int_{0}^1 \frac{\lambda_0(t)h^{uc}(t)}{|\lambda'_0(t)|}\tilde{V}(t)\left|\tilde{V}(t) \right|^{p-2}   \left|\frac{\lambda'_0(t)}{h^{uc}(t)}\right|^{p}\,\dd t\right)^2\right]=\mathrm{Var}\left(\int_{0}^1 \frac{\lambda_0(t)h^{uc}(t)}{|\lambda'_0(t)|}\tilde{V}(t)\left|\tilde{V}(t) \right|^{p-2}   \left|\frac{\lambda'_0(t)}{h^{uc}(t)}\right|^{p}\,\dd t\right)
	\]
	and arguing as in Step 5 in \cite{durot2007}, we obtain
	\[
	\int_{0}^1 \frac{\lambda_0(t)h^{uc}(t)}{|\lambda'_0(t)|}\tilde{V}(t)\left|\tilde{V}(t) \right|^{p-2}   \left|\frac{\lambda'_0(t)}{h^{uc}(t)}\right|^{p}\,\dd t=o_P(1).
	\]
	In the same way also the second term in the right hand side of \eqref{eqn:integral_eta} converge to zero in probability. 
	It remains to deal with the third term. Let $a_i=(b+ib^2)\wedge(1-b)$ for $i=0,\dots,M$ where $M=\lfloor (1-2b)b^{-2}\rfloor+1$.  Then
	\[
	\begin{split}
	&n^{1/2}\int_{b}^{1-b}\Psi_n^s(t) \tilde{V}(t)\left|\tilde{V}(t) \right|^{p-2} \frac{\lambda_0(t)^2}{|\lambda'_0(t)|}  \left|\frac{\lambda'_0(t)}{h^{uc}(t)}\right|^{p}\,\dd t\\
	&=n^{1/2}\sum_{i=0}^{M-1} \int_{a_i}^{a_{i+1}}\Psi_n^s(t) \tilde{V}(t)\left|\tilde{V}(t) \right|^{p-2} \frac{\lambda_0(t)^2}{|\lambda'_0(t)|}  \left|\frac{\lambda'_0(t)}{h^{uc}(t)}\right|^{p}\,\dd t.
	\end{split}
	\]
	For each $i\in\{0,\dots,M-1\}$ we can write
	\[
	\begin{split}
	&\int_{a_i}^{a_{i+1}}\Psi_n^s(t) \tilde{V}(t)\left|\tilde{V}(t) \right|^{p-2} \frac{\lambda_0(t)^2}{|\lambda'_0(t)|}  \left|\frac{\lambda'_0(t)}{h^{uc}(t)}\right|^{p}\,\dd t\\
	&=\Psi_n^s(a_i)\int_{a_i}^{a_{i+1}} \tilde{V}(t)\left|\tilde{V}(t) \right|^{p-2} \frac{\lambda_0(t)^2}{|\lambda'_0(t)|}  \left|\frac{\lambda'_0(t)}{h^{uc}(t)}\right|^{p}\,\dd t\\
	&\qquad +\int_{a_i}^{a_{i+1}}\left[ \Psi_n^s(t)-\Psi_n^s(a_i)\right] \tilde{V}(t)\left|\tilde{V}(t) \right|^{p-2} \frac{\lambda_0(t)^2}{|\lambda'_0(t)|}  \left|\frac{\lambda'_0(t)}{h^{uc}(t)}\right|^{p}\,\dd t.
	\end{split}
	\]
	Moreover, {since $\int k'(u)\, \dd u=0$ and $k$ is supported on $[-1,1]$ and is twice differentiable, it follows from the Taylor expansion that there exist {$\xi^1_{v,t}$ and $\xi^2_{v,t}$} such that} 
	\[
	\begin{split}
	&\sup_{t\in[a_i,a_{i+1}]}\left|\Psi_n^s(t)-\Psi_n^s(a_i)\right|\\
	&= \sup_{t\in[a_i,a_{i+1}]}\frac{1}{b}\left|\int\left(k\left(\frac{t-v}{b}\right)-k\left(\frac{a_i-v}{b}\right)\right)\left[\Phi_n(v;\beta_0)-\Phi(v;\beta_0)\right]\,\dd v\right|\\
	&\leq \left|\Phi_n(a_i;\beta_0)-\Phi(a_i;\beta_0)\right|\sup_{t\in[a_i,a_{i+1}]}\frac{1}{b}\left|\int\left(k\left(\frac{t-v}{b}\right)-k\left(\frac{a_i-v}{b}\right)\right)\,\dd v\right|\\
	&\quad+ \sup_{t\in[a_i,a_{i+1}]}\frac{1}{b}\int\left|k\left(\frac{t-v}{b}\right)-k\left(\frac{a_i-v}{b}\right)\right|\left|(\Phi_n-\Phi)(v;\beta_0)-(\Phi_n-\Phi)(a_i;\beta_0)\right|\,\dd v\\
	&\leq 
	\left|\Phi_n(a_i;\beta_0)-\Phi(a_i;\beta_0)\right|\sup_{t\in[a_i,a_{i+1}]}\left(\frac{t-a_i}{b}\right)^2{\frac{b^{-1}}{2}}\int_{a_i-b}^{{a_{i+1}}+b}\left|k''\left({\xi^1_{v,t}}\right)\right|\,\dd v\\
	&\quad+\sup_{{v}\in{[a_i-b,a_{i+1}+b]}}\left|(\Phi_n-\Phi)(v;\beta_0)-(\Phi_n-\Phi)(a_i;\beta_0)\right|\sup_{t\in[a_i,a_{i+1}]}{\frac{t-a_i}{b^2}}\int_{a_i-b}^{{a_{i+1}}+b}\left|k'\left({\xi^2_{v,t}}\right)\right|\,\dd v.
	\end{split}
	\]
	{Since $|a_i-a_{i+1}|=b^2$, it follows that
		\[
		\begin{split}
		&\sup_{t\in[a_i,a_{i+1}]}\left|\Psi_n^s(t)-\Psi_n^s(a_i)\right|\\
		&\leq 
		\left|\Phi_n(a_i;\beta_0)-\Phi(a_i;\beta_0)\right|O(b^2)+\sup_{v\in{[a_i-b,a_{i+1}+b]}}\left|(\Phi_n-\Phi)(v;\beta_0)-(\Phi_n-\Phi)(a_i;\beta_0)\right|O(b).
		\end{split}
		\]
	}
	As in Lemma \ref{le:R_3} it can be shown {using \eqref{eq: moments Phin}} that {for $q\geq 2$, 
		\[
		\begin{split}
		\E\left[n^{q/2}\sup_{v\in[a_i{-b},a_{i+1}{+b}]}\left|(\Phi_n-\Phi)(v;\beta_0)-(\Phi_n-\Phi)(a_i;\beta_0)\right|^q\right]&\lesssim b^{{q/2}}+n^{-q/2+1}{b}\\
		&\lesssim  b^{{q/2}}.
		\end{split}
		\]
	}
	Moreover the constant in the inequality $\lesssim$ is not dependent on $i$.
	Hence, it follows that {
		\begin{equation}
		\label{eqn:diff_psi}
		\E^{1/2}\left[\sup_{t\in[a_i,a_{i+1}]}\left|\Psi_n^s(t)-\Psi_n^s(a_i)\right|^2\right]=O(b^{3/2} n^{-1/2})
		\end{equation}
		uniformly over $i$}
	{Using the Cauchy-Schwartz inequality we obtain
		\[
		\begin{split}
		&n^{1/2}\E\left[\sum_{i=0}^{M-1} \left|\int_{a_i}^{a_{i+1}}\left[ \Psi_n^s(t)-\Psi_n^s(a_i)\right] \tilde{V}(t)\left|\tilde{V}(t) \right|^{p-2} \frac{\lambda_0(t)^2}{|\lambda'_0(t)|}  \left|\frac{\lambda'_0(t)}{h^{uc}(t)}\right|^{p}\,\dd t\right|\right]\\
		&\leq O(b^{3/2}) \int_{a_0}^{a_{M}}\E^{1/2}\left[\left|\tilde{V}(t) \right|^{2(p-1)}\right] \frac{\lambda_0(t)^2}{|\lambda'_0(t)|}  \left|\frac{\lambda'_0(t)}{h^{uc}(t)}\right|^{p}\,\dd t\\
		&=o(1).
		\end{split}
		\]
	}
	Now we consider
	\[
	\begin{split}
	&n^{1/2}\sum_{i=0}^{M-1}\Psi_n^s(a_i)
	\int_{a_i}^{a_{i+1}} \tilde{V}(t)\left|\tilde{V}(t) \right|^{p-2} \frac{\lambda_0(t)^2}{|\lambda'_0(t)|}  \left|\frac{\lambda'_0(t)}{h^{uc}(t)}\right|^{p}\,\dd t\\
	&=n^{1/2}\Psi_n^s(a_0)
	\int_{a_0}^{a_{M}} \tilde{V}(t)\left|\tilde{V}(t) \right|^{p-2} \frac{\lambda_0(t)^2}{|\lambda'_0(t)|}  \left|\frac{\lambda'_0(t)}{h^{uc}(t)}\right|^{p}\,\dd t\\
	&\quad+n^{1/2}\sum_{i=1}^{M-1}\left[\Psi_n^s(a_i)-\Psi_n^s(a_{i-1})\right]
	\int_{a_i}^{a_{M}} \tilde{V}(t)\left|\tilde{V}(t) \right|^{p-2} \frac{\lambda_0(t)^2}{|\lambda'_0(t)|}  \left|\frac{\lambda'_0(t)}{h^{uc}(t)}\right|^{p}\,\dd t.
	\end{split}
	\]
	Using the symmetry of the distribution of $\tilde{V}(t)$ and and the fact that $\tilde{V}(t)$ is independent of $\tilde{V}(s)$ for $|t-s|>2n^{-1/3}\log n/\inf_t h^{uc}(t)$, it can be shown that 
	{
		\[
		\E^{1/2}\left[\int_{a_i}^{a_j}\tilde{V}(t)\left|\tilde{V}(t) \right|^{p-2} \frac{\lambda_0(t)^2}{|\lambda'_0(t)|}  \left|\frac{\lambda'_0(t)}{h^{uc}(t)}\right|^{p}\,\dd t\right]^2\lesssim(a_j-a_i)^{1/2} n^{-1/6}\left(\log n\right)^{1/2}
		\]
	}
	Moreover, from \eqref{eqn:diff_psi}, we have that, for all $i=1,\dots,M-1$,
	\[
	\E^{1/2}\left[n^{1/2}\left|\Psi_n^s(a_i)-\Psi_n^s(a_{i-1})\right|\right]^2\lesssim b^{3/2}
	\]
	uniformly in $i$.  {Since \eqref{eq: moments Phin} ensures that $\E\left[n^{1/2}\Psi_n^s(a_0)\right]^2\lesssim 1$,} it follows that 
	\[
	\begin{split}
	&\E\left\vert n^{1/2}\sum_{i=0}^{M-1}\Psi_n^s(a_i)
	\int_{a_i}^{a_{i+1}} \tilde{V}(t)\left|\tilde{V}(t) \right|^{p-2} \frac{\lambda_0(t)^2}{|\lambda'_0(t)|}  \left|\frac{\lambda'_0(t)}{h^{uc}(t)}\right|^{p}\,\dd t\right\vert \\
	&\lesssim n^{-1/6}\left( \log n\right)^{1/2}+b^{3/2}\sum_{i=1}^{M-1}(M-i)^{1/2}bn^{-1/6}(\log n)^{1/2}\\
	&\lesssim o(1)+b^{3/2}\sum_{j=1}^{M-1}j^{1/2}bn^{-1/6}(\log n)^{1/2}\\
	&\lesssim o(1)+b^{5/2}n^{-1/6}(\log n)^{1/2}M^{3/2}=o(1),
	\end{split}
	\]
	
	because $b=n^{-1/4}$ and $M\approx b^{-2}$.
	To conclude we have
	\[
	\mathcal{J}_n=\int_{0}^1\left|\tilde{V}(t) \right|^p\left|\frac{\lambda'_0(t)}{h^{uc}(t)}\right|^{p}\,\dd t+o_P(n^{-1/6}).
	\]
	\textbf{Step 4.} Now this is the same as $\mathcal{J}_n$ at the end of Step 3 of the proof of Theorem 2 in \cite{durot2007}. Here $L=H^{uc}$, $g=U$, $d(t)$  differs by a factor $\Phi(t;\beta_0)$ and $D_n(a,u)$ differs by a factor $\Phi(U(a);\beta_0)$. 
	The rest of the proof is exactly as in \cite{durot2007}. Here, using $h^{uc}(t)=\lambda_0(t)\Phi(t;\beta_0)$,  we obtain
	\[
	\begin{split}
	m_p&=\E\left[\left|X(0)\right|^p\right]\int_0^1 d(t)^{-2p/3}\left|\frac{\lambda'_0(t)}{h^{uc}(t)}\right|^{p}\,\text{d} t\\
	&=\E\left[\left|X(0)\right|^p\right]\int_0^1\left|\frac{4\lambda_0(t)\lambda'_0(t)}{\Phi(t;\beta_0)}\right|^{p/3}\,\text{d}t.
	\end{split}
	\]
	and
	\[
	\sigma^2_p=8k_p\int_0^1\left|\frac{4\lambda_0(t)\lambda'_0(t)}{\Phi(t;\beta_0)}\right|^{2(p-1)/3}\frac{\lambda_0(t)}{\Phi(t;\beta_0)}\,\text{d}t.
	\]
\end{proof}
\subsection{Proof of Theorem \ref{theo:CLT2}}
\label{sec:proof_CLT2}
We first need to establish similar results to those in Lemma \ref{le:inv_tail_prob1} and Theorem \ref{theo:bound_expectation_L_P}, where $\lambda
_0$ is replaced by $\lambda_{\hat\theta}$ and $U$ by  $U_{\hat\theta}$ defined as
the inverse of $\lambda_{\hat{\theta}}$, 
\begin{equation}
\label{def:U_theta}
U_{\hat\theta}(a)=\begin{cases}
\epsilon & \text{ if }a>\lambda_{\hat{\theta}}(\epsilon)\\
\lambda_{\hat{\theta}}^{-1}(a) & \text{ if }a\in[\lambda_{\hat{\theta}}(M),\lambda_{\hat{\theta}}(\epsilon)]\\
M & \text{ if } a<\lambda_{\hat{\theta}}(M).
\end{cases}
\end{equation}
From \eqref{eqn:theta_hat}, {using that $f\in\mathcal{C}^\infty(\R^d\times[\epsilon,M])$} it follows that 
\[
\begin{split}
&\sqrt{n}\sup_{t\in[\epsilon,M]}\left(\lambda_{\hat{\theta}}(t)-\lambda_0(t)\right)=O_P(1),\quad\quad\sqrt{n}\sup_{t\in[\epsilon,M]}\left(\lambda'_{\hat{\theta}}(t)-\lambda'_0(t)\right)=O_P(1),\\
&\sqrt{n}\sup_{a\in(0,\infty)}\left(U_{\hat\theta}(a)-U(a)\right)=O_P(1),\quad\quad\sqrt{n}\sup_{a\in(0,\infty)}\left(U'_{\hat\theta}(a)-U'(a)\right)=O_P(1).
\end{split}
\]
Thus, we can assume that on the event $E_n$ defined in \eqref{def:E_n}, we have
\[
\left|\hat\theta_n-\theta\right|\lesssim n^{-1/2+\alpha},
\]
{where $\alpha\in(0,1/6)$ is a fixed number,}
and
\begin{equation}
\label{eqn:par_est-true}
\begin{split}
&\sup_{t\in[\epsilon,M]}\left
|\lambda_{\hat{\theta}}(t)-\lambda_0(t)\right|\lesssim n^{-1/2+\alpha},\qquad \sup_{t\in[\epsilon,M]}\left
|\lambda'_{\hat{\theta}}(t)-\lambda'_0(t)\right|\lesssim n^{-1/2+\alpha}\\
&\sup_{a\in(0,\infty)}\left|U_{\hat\theta}(a)-U(a)\right|\lesssim n^{-1/2+\alpha},\qquad\sup_{a\in(0,\infty)}\left|U'_{\hat\theta}(a)-U'(a)\right|\lesssim n^{-1/2+\alpha}.
\end{split}
\end{equation}
From Lemma \ref{le:inv_tail_prob1} and \eqref{eqn:par_est-true}, {where $\alpha<1/6$,} it follows that
\begin{equation}
\label{eqn:inv2}
\begin{split}
&\p\left[\left\{\left|\hat{U}_n(a)-U_{\hat{\theta}}(a)\right|>x\right\}\cap E_n\right]\\
&\leq \p\left[\left\{\left|\hat{U}_n(a)-U(a)\right|>x/2\right\}\cap E_n\right]+\p\left[\left\{\left|U_{\hat{\theta}}(a)-U(a)\right|>x/2\right\}\cap E_n\right]\\
&\lesssim\exp(-Knx^3).
\end{split}
\end{equation}
Similarly, Theorem \ref{theo:bound_expectation_L_P} and \eqref{eqn:par_est-true} imply that for  $t\in[\epsilon+n^{-1/3},M-n^{-1/3}] $ it holds
\begin{equation}
\label{eqn:bound_expectation_L_p1}
\begin{split}
\E\left[1_{E_n}\left|\hat\lambda_n(t)-\lambda_{\hat{\theta}}(t)\right|^p\right]
&\lesssim \E\left[1_{E_n}\left|\hat\lambda_n(t)-\lambda_0(t)\right|^p\right]+\E\left[1_{E_n}\left|\lambda_0(t)-\lambda_{\hat{\theta}}(t)\right|^p\right]\\
&\lesssim n^{-p/3}+n^{p(-1/2+\alpha)}\lesssim n^{-p/3}
\end{split}
\end{equation}
and, for $t\in[\epsilon+n^{-1},\epsilon+n^{-1/3}]\cup[M-n^{-1/3},M-n^{-1}]$, 
\begin{equation}
\label{eqn:bound_expectation_L_p2}
\E\left[1_{E_n}\left|\hat\lambda_n(t)-\lambda_{\hat{\theta}}(t)\right|^p\right]\lesssim \left[n{((t-\epsilon)\wedge(M-t))}\right]^{-p/2}.
\end{equation}
Afterwards we show that, if we consider the conditional probability $\p_\theta$ given $\hat\theta_n$, 
{Lemma} \ref{le:Lambda_n-Lambda} holds also if $t$ is replaced by a sequence depending on $n$. In this case, we apply this Lemma with $t=U_{\hat{\theta}}(a)$ and using Lemmas \ref{le:R_1_2} and \ref{le:R_2_2} in \cite{DM_supp}, we obtain that the remainder term $r_n$ satisfies
\[
\p_\theta\left[\left\{\sup_{|t_n-U_{\hat{\theta}}(a)|\leq cn^{-1/3}T_n}|{r}_n(U_{\hat{\theta}}(a),t_n)|>x \right\}\cap E_n\right]\leq K x^{-q}n^{1-q}, 
\]
for some $K>0$ and all $x\geq 0$. The rest of the proof continues as for Theorem \ref{theo:CLT}, replacing $U(a)$ by $U_{\hat\theta}(a)$ (see \cite{DM_supp}).

\subsection{Proof of Theorem \ref{theo:CLT*}}
Let $\p_n^*$ be the empirical measure of $(T_1^*,\Delta_1^*,Z_1),\dots,(T_n^*,\Delta_n^*,Z_n)$ and let $P_n^*$ be the conditional probability measure of $(T^*,\Delta^*,Z^*)$ given the data $(T_1,\Delta_1,Z_1),\dots,$ $(T_n,\Delta_n,Z_n)$.
The bootstrap versions of $\Phi_n$ and $\Phi$ defined in~\eqref{eq:def Phin} and \eqref{eq:def Phi} are
\begin{equation}
\label{eq:def Phin*}
\Phi^*_n(x;\beta)=\int \1_{\{t\geq x\}} \mathrm{e}^{\beta'z}\,\mathrm{d}\p^*_n(t,\delta,z),\quad\text{and}\quad \Phi^*(x;\beta)=\int \1_{ \{t\geq x\}}\,\mathrm{e}^{\beta'z}\,\mathrm{d}P_n^*(t,\delta,z).
\end{equation}
To prove Theorem \ref{theo:CLT*}, we mimic the proof of Theorems~\ref{theo:CLT} and~\ref{theo:CLT2}. The bootstrap version of  \eqref{eqn:Breslow} and \eqref{eqn:lambda0}
is given by the following Lemma.
\begin{lemma}
	\label{le:Breslow*}
	The following equivalent representations  hold
	\[	
	\Lambda_{\hat\theta}(x)=\int \delta\1_{\{t\leq x\}}\frac{1}{\Phi^*(t;\tilde\beta_n)}\,\mathrm{d}P_n^*(t,\delta,z),\quad\quad\frac{ \mathrm{d}H^{uc,*}(t)}{\Phi^*(t;\tilde\beta_n)}=\lambda_{\hat\theta}(t)\mathrm{d}t,
	\]
	where $H^{uc,*}$ is the sub-distribution function of the observed event times in the bootstrap sample.
\end{lemma}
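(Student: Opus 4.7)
The plan is to unwind the construction of the bootstrap sample so that both claimed identities reduce to direct computations of the conditional measure $P_n^*$. The key observation is that $P_n^*$ has the following product structure: the marginal of $Z^*$ is the empirical distribution of $Z_1,\dots,Z_n$, and conditionally on $Z^*=Z_i$, the event time $X^*$ has the absolutely continuous distribution $F_{\hat\theta}(\cdot\mid Z_i)$ with density $\lambda_{\hat\theta}(s)\mathrm{e}^{\tilde\beta_n'Z_i}\exp(-\Lambda_{\hat\theta}(s)\mathrm{e}^{\tilde\beta_n'Z_i})$, the censoring time $C^*$ has the Kaplan–Meier law $\hat G_n$, and $X^*$ and $C^*$ are independent.

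First I would compute $\Phi^*(t;\tilde\beta_n)$ explicitly. Using independence of $X^*$ and $C^*$ given $Z^*$, together with $T^*=\min(X^*,C^*)$,
\[
\Phi^*(t;\tilde\beta_n)
=\frac{1}{n}\sum_{i=1}^n\mathrm{e}^{\tilde\beta_n'Z_i}\,P^*(T^*\geq t\mid Z^*=Z_i)
=\bigl(1-\hat G_n(t^-)\bigr)\cdot\frac{1}{n}\sum_{i=1}^n\mathrm{e}^{\tilde\beta_n'Z_i}\exp\bigl(-\Lambda_{\hat\theta}(t)\mathrm{e}^{\tilde\beta_n'Z_i}\bigr).
\]
Next I would compute $H^{uc,*}$. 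Writing $\Delta^*\mathbf{1}_{\{T^*\leq x\}}=\mathbf{1}_{\{X^*\leq x\wedge C^*\}}$ and integrating with respect to the density of $X^*$ given $Z^*$,
\[
H^{uc,*}(x)=\int_0^x\lambda_{\hat\theta}(s)\bigl(1-\hat G_n(s^-)\bigr)\,\frac{1}{n}\sum_{i=1}^n\mathrm{e}^{\tilde\beta_n'Z_i}\exp\bigl(-\Lambda_{\hat\theta}(s)\mathrm{e}^{\tilde\beta_n'Z_i}\bigr)\,\mathrm{d}s.
\]
Comparing the integrand with the formula for $\Phi^*(s;\tilde\beta_n)$ immediately yields $\mathrm{d}H^{uc,*}(t)/\Phi^*(t;\tilde\beta_n)=\lambda_{\hat\theta}(t)\,\mathrm{d}t$, which is the second identity.

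Finally, for the first identity I would apply the same calculation to the integral on the right-hand side:
\[
\int\delta\,\mathbf{1}_{\{t\leq x\}}\frac{1}{\Phi^*(t;\tilde\beta_n)}\,\mathrm{d}P_n^*(t,\delta,z)=\int_0^x\frac{\mathrm{d}H^{uc,*}(s)}{\Phi^*(s;\tilde\beta_n)}=\int_0^x\lambda_{\hat\theta}(s)\,\mathrm{d}s=\Lambda_{\hat\theta}(x),
\]
where the first equality is just the definition of $H^{uc,*}$ and the second is the identity just proved. There is no genuine obstacle here; the only point requiring a little care is the left-continuous evaluation $1-\hat G_n(t^-)$ of the Kaplan–Meier survival function, which appears identically in the numerator and denominator and therefore cancels cleanly when forming the ratio $h^{uc,*}/\Phi^*$.
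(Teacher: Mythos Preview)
Your proof is correct and follows essentially the same approach as the paper: both explicitly compute $\Phi^*(t;\tilde\beta_n)$ and the sub-distribution $H^{uc,*}$ from the bootstrap construction, then observe that the common factor $(1-\hat G_n)\cdot n^{-1}\sum_i\mathrm{e}^{\tilde\beta_n'Z_i}\exp(-\Lambda_{\hat\theta}\mathrm{e}^{\tilde\beta_n'Z_i})$ cancels in the ratio. The only cosmetic difference is the order of presentation---the paper establishes the first identity by direct substitution and leaves the second as an equivalent restatement, whereas you prove the second identity first and then integrate to obtain the first.
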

Moreover, the bootstrap versions of Lemmas 3 and 4 in \cite{LopuhaaNane2013} hold (see Lemma \ref{le:Phi*} in \cite{DM_supp}) and in particular, we have 
\[
\sqrt{n}\sup_{x\in\R}\left|\Phi_n^*(x;\tilde\beta_n)-\Phi^*(x;\tilde\beta_n)\right|=O_{P^*}(1),
\]
which corresponds to \eqref{eqn:Phi}. Furthermore, with the same argument as in \cite{linear_breslow},  it can be shown that the linear representation of the Breslow estimator holds also for the bootstrap version, i.e. we have
\begin{equation}
\label{eqn:Breslow_linear*}
\begin{split}
\Lambda_n^*(x)-\Lambda_{\hat\theta}(x)&=\int \left(\frac{\delta\1_{\{t\leq x\}}}{\Phi^*(t;\tilde\beta_n)}-e^{\tilde\beta_nz}\int_0^{t\wedge x}\frac{\lambda_{\hat\theta}(v)}{\Phi^*(v;\tilde\beta_n)}\,\dd v \right)\,\dd (\p_n^*-P_n^*)(t,\delta,z)\\
&\quad+(\hat\beta_n^*-\tilde\beta_n)A_0^*(x)+R_n^*(x),
\end{split}
\end{equation}
where
\begin{equation}
\label{def:A_0*}
A_0^*(x)=\int_0^x\frac{D^{(1),*}(u;\tilde\beta_n)}{\Phi^*(u;\tilde\beta_n)}\lambda_{\hat\theta}(u)\,\dd u,\qquad D^{(1),*}(x;\beta)=\frac{\partial \Phi^*(x;\beta)}{\partial \beta}=\int \1_{\{u\geq x\}}ze^{\beta' z}\mathrm P^*_n(u,\delta,z),
\end{equation}
and $\sup_{x\in[0,M]}|R_n^*(x)|=o_P^*(n^{-1+\alpha}).$ Let 
\begin{equation}
\label{def:D_n^*}
D^{(1),*}_n(x;\beta)=\frac{\partial\Phi_n^*(x;\beta)}{\partial\beta}=\int\1_{\{u\geq x\}}ze^{\beta'z}\p_n^*(u,\delta,z),5
\end{equation}
Then we define the bootstrap version of $E_n$ as $E_n^*=E_{n,1}^{*}\cap E_{n,2}^{*}\cap E_{n,3}^{*}\cap E_{n,4}^{*}$ where
\[
E_{n,1}^*=\left\{\sup_{x\in\R}|D_{n}^{(1),*}(x;\beta^*_n)|\leq\xi_1 \text{ for }|\beta^*_n-\tilde\beta_n|\to 0\right\},\qquad E_{n,2}^*
=
\left\{n^{1/2-\alpha}|\hat{\beta}_n^*-\tilde\beta_n|<\xi_2\right\}, 
\]
\[
E_{n,3}^*
=
\left\{n^{1/2-\alpha}\sup_{x\in \mathbb{R}}\left|\Phi_n^*(x;\tilde\beta_n)-\Phi^*(x;\tilde\beta_n)\right|\leq \xi_3\right\},\qquad E_{n,4}^*=\left\{\sup_{x\in[0,1]}|R_{n}^*(x)|\leq \xi_4n^{-1+\alpha} \right\}.
\]
This event is such that  $\1_{E_n^*}=1+o_p^*(1)$,
meaning that for all $\epsilon>0$, 
\[
\limsup_{n\to\infty}
P_n^*(|\1_{E_n^*}-1|>\epsilon)=0,
\qquad
\p-\text{almost surely}.
\]
Moreover, in $E_n^*$, we have that for sufficiently large $n$, 
\[
\sup_{x\in\R}|\Phi_n^*(x;\hat{\beta}_n^*)-\Phi^*(x;\tilde\beta_n)|\lesssim n^{-1/2+\alpha},\qquad\inf_{x\in[0,M]}\Phi_n^*(x;\hat{\beta}_n^*)\geq c>0
\]
\begin{equation}
\label{eqn:bound_inv_Phi_n*}
\sup_{s\in[0,M]}\left|\frac{1}{\Phi_n^*(s;\hat{\beta}_n^*)}-\frac{1}{\Phi^*(s;\tilde\beta_n)} \right|\lesssim n^{-1/2+\alpha}.
\end{equation}
The main problems one faces when following the proof of Theorem \ref{theo:CLT} for the bootstrap version are the dependence on $n$ of the functions related to the true distribution of the data and their non-differentiability. The dependence on $n$ is actually not a problem because these functions are uniformly bounded since they are consistent estimators of some bounded smooth function. On the other hand, to overcome non-differentiability we consider approximations of these functions by smooth versions of them. For example, let $H^*$ and $H^{uc,*}$ be the distribution of $T^*$ and the sub-distribution function of the observed event times in the bootstrap sample. We have
\[
H^*(t)=P^*_n\left(T\leq t\right)=\frac{1}{n}\sum_{i=1}^n P^*_n\left(T\leq t\,|\,Z=Z_i\right)=1-\left[1-\hat{G}_n(t)\right]\frac{1}{n}\sum_{i=1}^n\left[1-F_{\hat\theta}(t|Z_i)\right]
\]
and
\[
\begin{split}
H^{uc,*}(t)&=P^*_n\left(T\leq t,\,\Delta=1\right)=\frac{1}{n}\sum_{i=1}^n P^*_n\left(T\leq t,\,\Delta=1\,|\,Z=Z_i\right)\\
&=\frac{1}{n}\sum_{i=1}^n\int_0^tf_{\hat\theta}(u|Z_i)\left[1-\hat{G}_n(u)\right]\,\dd u.
\end{split}
\]
In particular, $H^*$ is not continuous and $H^{uc,*}$ is not continuously differentiable. However, we can approximate them by continuously differentiable functions.
Since, for the Kaplan-Meier estimator, it holds
\[
\sqrt{n}\sup_{t\in[0,M]}|\hat G_n(t)-G(t)|=O_P(1),
\]
we obtain
\[
\sqrt{n}\sup_{t\in[0,M]}|H^*(t)-\tilde{H}^*(t)|=O_P(1)
\]
where
\begin{equation}
\label{def:tilde_H*}
\tilde{H}^*(t)=1-\left[1-G(t)\right]\frac{1}{n}\sum_{i=1}^n\left[1-F_{\hat\theta}(t|Z_i)\right].
\end{equation}
Hence, with probability converging to one, given the data we have
\begin{equation}
\label{eqn:approx_H*}
\sup_{t\in[0,M]}|H^*(t)-\tilde{H}^*(t)|\lesssim n^{-1/2+\alpha}.
\end{equation}
Moreover, with
\[
\tilde{H}^{uc,*}(t)=\frac{1}{n}\sum_{i=1}^n\int_0^tf_{\hat\theta}(u|Z_i)\left[1-G(u)\right]\,\dd u,
\]
it holds
\[
\begin{split}
\sup_{t\in[0,M]}|\tilde{H}^{uc,*}(t)-\tilde{H}^{uc,*}(t)|\leq \sup_{t\in[0,M]} \frac{1}{n}\sum_{i=1}^n\int_0^tf_{\hat\theta}(u|Z_i)\left|\hat{G}_n(u)-G(u)\right|\,\dd u=O_P(n^{-1/2}).
\end{split}
\]
Hence, with probability converging to one, given the data we have
\begin{equation}
\label{eqn:approx_Huc*}
\sup_{t\in[0,M]}|H^{uc,*}(t)-\tilde{H}^{uc,*}(t)|\lesssim n^{-1/2+\alpha}.
\end{equation}

\section*{Supplementary Material}
Supplement to ''On the $L_p$-error of the Grenander-type estimator in the Cox model''.
\begin{itemize}
	\item Supplement~\ref{sec:clt_supp}: Auxiliary results for Section~\ref{sec:main}.
	\item Supplement~\ref{sec:parametric}: CLT under a parametric baseline distribution.
	\item Supplement~\ref{sec:supp_bootstrap}: CLT for the bootstrap version.
	\item {Supplement~\ref{sec:Cox}: Estimation in the Cox model.}
\end{itemize}
\section*{Acknowledgement}
This research has been conducted as part of the project Labex MME-DII (ANR11-LBX-0023-01)

\bibliography{shapeconstrained-estimation}	

\newpage
\setcounter{page}{1}
\setcounter{equation}{0}
\renewcommand{\theequation}{S\arabic{equation}}
\pagestyle{myheadings}
\markboth{C\'ecile Durot and Eni Musta}{On the $L_p$-error of the Grenander estimator in the Cox model}
%
%
\thispagestyle{empty}
\centerline{\Large\bf On the $L_p$-error of the Grenander-type estimator}
\smallskip
\centerline{\Large\bf in the Cox model}
\bigskip
\centerline{\Large Supplementary Material}
\bigskip
\centerline{ C\'ecile Durot$^\dag$ and Eni Musta$ ^\S$}
\medskip
\centerline{\it Universit\'e Paris Nanterre$^\dag$, Delft University of Technology$^\S$} 
\bigskip

\appendix

\section{Auxiliary results for Section~\ref{sec:main}}
\label{sec:clt_supp}
In what follows, $\p_T$ and $\E_T$ denote the conditional probability and conditional expectation given $T_1,\dots,T_n$.  
Moreover, we use the same notations $\lesssim$, $K,K_1$, \dots and $c,c_1,\dots$ as in Section \ref{sec:proofs}. {We begin with a general concentration inequality for binomial variables.
	\begin{lemma}
		\label{le:binomial}
		Let $X$ be a binomial variable with parameter $n$ and probability of success $p$. Then for all $r\geq 0$ we have
		$$\E\left[\exp(r|X-\E(X)|)\right]\leq 2\exp(np\phi(r))$$
		where $\phi(r)=\exp(r)-r-1$. Moreover, 
		\begin{equation}\notag
		\p(|X-\E(X)|\geq r)\leq 2\exp\left(-\frac{r^2}{2(np+r)}\right).
		\end{equation}
	\end{lemma}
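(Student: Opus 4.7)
The plan is to use the classical Chernoff/Bernstein argument. Write $X = \sum_{i=1}^{n} Y_{i}$ with $Y_{i}$ i.i.d.\ Bernoulli$(p)$, so that $X - \E(X) = \sum_{i=1}^{n}(Y_{i}-p)$. First I would establish the one-sided moment generating function bound
\[
\E\bigl[\exp(r(X-\E X))\bigr] \leq \exp(np\,\phi(r))
\]
by computing, for a single Bernoulli, $\E[e^{r(Y_{i}-p)}] = e^{-rp}(1+p(e^{r}-1))$ and applying the elementary inequality $1+x \leq e^{x}$, giving $\E[e^{r(Y_{i}-p)}] \leq \exp(p(e^{r}-1-r)) = \exp(p\,\phi(r))$. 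Taking the product over $i$ yields the claim.

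Next I would bound the symmetric MGF by writing
\[
\exp(r|X-\E X|) \leq \exp(r(X-\E X)) + \exp(-r(X-\E X)),
\]
so that
\[
\E\bigl[\exp(r|X-\E X|)\bigr] \leq \exp(np\,\phi(r)) + \exp(np\,\phi(-r)).
\]
The key observation is that for $r \geq 0$ one has $\phi(-r) \leq \phi(r)$ (equivalently $e^{r}-e^{-r}\geq 2r$), so both terms are at most $\exp(np\,\phi(r))$ and the first assertion follows.

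For the tail bound, apply Markov's inequality: for any $s \geq 0$,
\[
\p(|X-\E X| \geq r) \leq e^{-sr}\,\E\bigl[\exp(s|X-\E X|)\bigr] \leq 2\exp(np\,\phi(s) - sr).
\]
Optimising over $s$ (the optimum being $s = \log(1+r/(np))$) and using the standard bound $\log(1+x)\geq 2x/(2+x)$ for $x\geq 0$ with $x = r/(np)$ yields
\[
np\,\phi(s) - sr \leq -\frac{r^{2}}{2(np+r)},
\]
which gives the desired inequality. The only delicate point is choosing the right lower bound on $\log(1+x)$ to obtain exactly the denominator $2(np+r)$; this is the standard step in the Bernstein-type derivation and presents no real obstacle.
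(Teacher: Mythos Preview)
Your proposal is correct and follows essentially the same Chernoff--Bernstein route as the paper: bound the two-sided MGF by the sum of the one-sided ones, use $\phi(-r)\leq\phi(r)$, then optimise Markov's inequality at $s=\log(1+r/(np))$. The only cosmetic difference is in the last step: the paper rewrites the optimised exponent as $-np\,h(r/(np))$ with $h(u)=(1+u)\log(1+u)-u$ and invokes $h(u)\geq u^{2}/(2+2u)$, whereas you work directly with $\log(1+x)\geq 2x/(2+x)$; your inequality in fact gives the slightly sharper exponent $-r^{2}/(2np+r)$, from which the stated bound $-r^{2}/(2(np+r))$ follows immediately.
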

	\begin{proof}
		Since $X$ has a Binomial distribution we have
		\[\begin{split}
		\E\left[e^{r|X-\E[X]|}\right]&\leq \E\left[e^{r(X-\E[X])}\right]+\E\left[e^{-r(X-\E[X])}\right]\\
		&\leq\exp(np\phi(r))+\exp(np\phi(-r)).
		\end{split}
		\]
		The function $x\mapsto\phi(x)-\phi(-x)$ is convex on $[0,\infty)$ with a vanishing derivative at $x=0$. As a consequence, it is a non-negative function and therefore, $\phi(r)\geq\phi(-r)$ for all $r\geq 0$. The first inequality of the lemma follows by combining this with the previous display. Now, it follows from Markov inequality and the first inequality of the lemma that for all $c\geq 0$, 
		\begin{eqnarray*}
			\p(|X-\E(X)|\geq r)&\leq& \exp(-cr)\E\left[\exp(c|X-\E(X)|)\right]\\
			&\leq& 2\exp(-cr)\exp(np\phi(c)).
		\end{eqnarray*}
		Choosing $c$ in such a way that the previous bound is as small as possible, i.e. $c=\log(1+r/(np))$, yields
		\begin{eqnarray*}
			\p(|X-\E(X)|\geq r)&\leq& 2\exp(-nph(r/(np)))
		\end{eqnarray*}
		where for all $u\geq -1$, $h(u)=(1+u)\log(1+u)-u$. The result now follows from the inequality $h(u)\geq u^2/(2+2u)$, that holds for all $u\geq 0$ thanks to the Taylor expansion, since $h(0)=h'(0)=0$.
\end{proof} }

\begin{lemma}
	\label{le:S1}
	Assume that the follow-up time $T$ has uniformly continuous distribution with bounded density $h$. Let $C=\sup_{u\in[0,1]}h(u)$. Let $t\mapsto r(t)$ be a positive bounded function on $[0,1]$ with $R=\sup_{u\in[0,1]}r(t)$  and consider
	\[
	S_n^1(t)=\frac{1}{n}\sum_{i=1}^n \left(\Delta_i-\E_T[\Delta_i]\right)\1_{\{T_i\leq t\}}r(T_i).
	\] 
	There exists a positive constant $K$ such that for each $t\in[0,1]$, $t_n\in(0,1)$, and $x>0$ it holds
	\[
	\p\left(\sup_{\substack{|s-t|<t_n\\s\in[0,1]}}\left|S_n^1(t)-S_n^1(s) \right|\geq x\right)\leq {4}\exp\left(-K\tilde\theta nx\right)
	\]
	where 
	\begin{equation}
	\label{def:theta_tilde}
	\tilde\theta=\min\left\{1,\frac{4x}{3t_nR^2Ce^{R^2/2}}\right\}.
	\end{equation}
\end{lemma}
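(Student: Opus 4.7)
The lemma gives an exponential tail bound for the modulus of continuity of $S_n^1$. The key observation is that, conditionally on $T_1,\ldots,T_n$ (and the covariates), the variables $\Delta_i-\E_T[\Delta_i]$ are independent, centered and take values in $[-1,1]$, so $s\mapsto S_n^1(s)$ is a martingale in $s$ with jumps at the $T_i$'s of size at most $R/n$. The plan is: (i) condition on the $T_i$'s; (ii) apply an exponential maximal inequality (Doob combined with exponential Chebyshev) to control the supremum over $s$; (iii) bound the conditional moment generating function by Hoeffding; (iv) integrate out the $T_i$'s using the moment generating function of the binomial count $N:=\#\{i\,:\,T_i\in(t,t+t_n]\}$.

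I first split $\{s\in[0,1]\,:\,|s-t|<t_n\}$ into $[t-t_n,t]\cap[0,1]$ and $[t,t+t_n]\cap[0,1]$, and split the event $\{|S_n^1(s)-S_n^1(t)|\geq x\}$ into its two tails; the factor $4$ in the statement reflects the union bound over these four pieces. For a representative piece $\p_T(\sup_{s\in[t,t+t_n]\cap[0,1]}(S_n^1(s)-S_n^1(t))\geq x)$, Doob's submartingale inequality applied to $e^{\theta(S_n^1(s)-S_n^1(t))}$ (for $\theta>0$) yields the bound $e^{-\theta x}\,\E_T[e^{\theta(S_n^1(t+t_n)-S_n^1(t))}]$. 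The left supremum is handled identically by reversing the time parameter, since $u\mapsto S_n^1(t-u)-S_n^1(t)$ is again a conditional martingale.

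Next I bound the MGF. Writing the increment as $\sum_i Y_i$ with $Y_i=(\Delta_i-\E_T[\Delta_i])\1_{\{T_i\in(t,t+t_n]\}}r(T_i)/n$, each $Y_i$ is conditionally centered and bounded by $R/n$, so Hoeffding's lemma gives $\E_T[e^{\theta Y_i}]\leq \exp(\theta^2R^2\1_{\{T_i\in(t,t+t_n]\}}/(2n^2))$. Taking the product and then unconditional expectation, and using that $N$ is Binomial$(n,p)$ with $p=\p(T\in(t,t+t_n])\leq Ct_n$ together with the standard bound $\E[e^{uN}]\leq \exp(np(e^u-1))$, yields the unconditional bound
\[
\exp\!\left(-\theta x+nCt_n\bigl(\exp(\theta^2R^2/(2n^2))-1\bigr)\right).
\]
Setting $\theta=n\tilde\theta$ with $\tilde\theta\in(0,1]$ gives $\theta^2R^2/(2n^2)=\tilde\theta^2R^2/2\leq R^2/2$, and using the elementary inequality $e^u-1\leq u\,e^{u}\leq u\,e^{R^2/2}$ on $u\in[0,R^2/2]$, the exponent is bounded above by $-n\tilde\theta x+(nCt_nR^2e^{R^2/2}/2)\tilde\theta^2$. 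For $\tilde\theta\leq 4x/(3t_nR^2Ce^{R^2/2})$ the quadratic-in-$\tilde\theta$ term is at most $(2/3)n\tilde\theta x$, hence the exponent is at most $-(1/3)n\tilde\theta x$, and the union of the four pieces produces the stated bound with $K=1/3$.

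The main difficulty is purely in the bookkeeping: matching the precise factors ($4$, $4/3$, and especially $e^{R^2/2}$) forces the use of the plain Hoeffding MGF bound $\E[e^{\theta Y}]\leq \exp(\theta^2B^2/2)$ for $|Y|\leq B$ centered, rather than the sharper Bennett/Bernstein form with $\E[Y^2]$, since the factor $R^2/2$ in the exponent arises exactly from taking $\theta=n\tilde\theta$ with $\tilde\theta\leq 1$ in Hoeffding's bound and subsequently controlling $e^u-1$ by $u\,e^{R^2/2}$. A minor point is the boundary case $t\pm t_n\notin[0,1]$: this only reduces the effective range of $s$ (hence $N$), so all bounds remain valid after the trivial restriction to $[0,1]$.
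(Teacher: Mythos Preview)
Your proof is correct and follows essentially the same approach as the paper: condition on the $T_i$'s, apply Doob's maximal inequality to the exponentiated conditional martingale, bound each factor by Hoeffding's lemma, take the unconditional expectation via the binomial MGF, and optimize in $\theta=n\tilde\theta$ to obtain $K=1/3$. The only cosmetic difference is that the paper handles the absolute value via $e^{\theta|\cdot|}\leq e^{\theta(\cdot)}+e^{-\theta(\cdot)}$ rather than an explicit four-piece union bound, but the resulting factor $4$ and all constants match.
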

\begin{proof}
	We {consider the case $s\in[t,t+t_n]$. The case $s\in[t-t_n,t]$ can be handled similarly.} By definition we have
	\[
	\begin{split}
	\p\left(\sup_{\substack{s\in[t,t+t_n]\\s\leq 1}}\left|S_n^1(s)-S_n^1(t) \right|\geq x\right) 
	&=\p\left(\sup_{\substack{s\in[t,t+t_n]\\s\leq 1}}\left|\sum_{i=1}^n \left(\Delta_i-\E_T[\Delta_i]\right)\1_{\{t<T_i\leq s\}}r(T_i) \right|\geq nx\right)\\
	&=\E\left[\p_T\left(\sup_{\substack{s\in[t,t+t_n]\\s\leq 1}}\left|\sum_{i=1}^n \left(\Delta_i-\E_T[\Delta_i]\right)\1_{\{t<T_i\leq s\}}r(T_i) \right|\geq nx\right)\right].\\
	\end{split}
	\]
	Take $\theta>0$. Then we can write 
	\[
	\begin{split}
	&\p\left(\sup_{\substack{s\in[t,t+t_n]\\s\leq 1}}\left|S_n^1(s)-S_n^1(t) \right|\geq x\right) \\
	&=\E\left[\p_T\left(\exp\left\{\theta\sup_{s\in[t,t+t_n]}\left|\sum_{i=1}^n \left(\Delta_i-\E_T[\Delta_i]\right)\1_{\{t<T_i\leq s\}}r(T_i) \right|\right\}\geq \exp(\theta nx) \right)\right].
	\end{split}
	\]
	Let 
	\[
	Y_i(s)=\left(\Delta_i-\E_T[\Delta_i]\right)\1_{\{t<T_i\leq s\}}r(T_i).
	\]
	Since, conditionally on {$T_1,\dots,T_n$}, $S_n^1$ is a martingale we can apply Doob inequality and obtain
	\[
	\begin{split}
	&\p\left(\sup_{\substack{s\in[t,t+t_n]\\s\leq 1}}\left|S_n^1(s)-S_n^1(t) \right|\geq x\right) 
	\leq\E\left[\sup_{\substack{s\in[t,t+t_n]\\s\leq 1}}\E_T\left[\exp\left\{\theta\left|\sum_{i=1}^n Y_i(s) \right|\right\} \right]\right]\exp(-\theta nx)  \\
	&\quad\leq \left(\E\left[\sup_{\substack{s\in[t,t+t_n]\\s\leq 1}}\E_T\left[\exp\left\{\theta\sum_{i=1}^n Y_i(s) \right\} \right]\right]+\E\left[\sup_{\substack{s\in[t,t+t_n]\\s\leq 1}}\E_T\left[\exp\left\{-\theta\sum_{i=1}^n Y_i(s) \right\} \right]\right]\right)\exp(-\theta nx). 
	\end{split}
	\]
	Since, conditionally on {$T_1,\dots,T_n$}, the $Y_i$'s are independent we have
	\[
	\E_T\left[\exp\left\{\theta\sum_{i=1}^n Y_i(s) \right\} \right]=\prod_{i=1}^{n}\E_T\left[\exp\left\{\theta Y_i(s) \right\} \right].
	\]
	Moreover, the $Y_i$'s are centered random variables with
	\[
	Y_i(s)\leq b_i(s)=\sup_{u\in [0,1]}r(u)\1_{\{t<T_i\leq s\}},\qquad Y_i(s)\geq -b_i(s).
	\]
	It follows by Hoeffding's Lemma that
	\[
	\E_T\left[e^{\theta Y_i(s)} \right]\leq \exp\left(\frac{\theta^2(2b_i(s))^2}{8}\right)\leq \exp\left(\frac{\theta^2\left(\sup_{u\in [0,1]}r(u)\right)^2\1_{\{t<T_i\leq s\}}}{2}\right).
	\]
	In the same way (considering $-Y_i(s)$ instead of $Y_i(s)$) we get
	\[
	\E_T\left[e^{-\theta Y_i(s)} \right]\leq \exp\left(\frac{\theta^2\left(\sup_{u\in [0,1]}r(u)\right)^2\1_{\{t<T_i\leq s\}}}{2}\right).
	\]
	Hence,
	\[
	\begin{split}
	\p\left(\sup_{\substack{s\in[t,t+t_n]\\s\leq 1}}\left|S_n^1(s)-S_n^1(t) \right|\geq x\right) 
	&\leq 2\E\left[\prod_{i=1}^n\exp\left(\frac{\theta^2\left(\sup_{u\in [0,1]}r(u)\right)^2\1_{\{t<T_i\leq t+t_n\}}}{2}\right) \right]\exp(-\theta nx)\\
	&=2\left(\E\left[\exp\left(\frac{\theta^2\left(\sup_{u\in [0,1]}r(u)\right)^2\1_{\{t<T\leq t+t_n\}}}{2}\right) \right]\right)^n\exp(-\theta nx)
	\end{split}
	\]
	{where $T$ is a random variable with the same distribution as the $T_i$'s.}
	Since, $\1_{\{t<T\leq t+t_n\}}$ is a Bernoulli random variable with success probability $\gamma<Ct_n$, we have, for {$\theta\leq 1$},
	\[
	\E\left[\exp\left(\frac{\theta^2\left(\sup_{u\in [0,1]}r(u)\right)^2\1_{\{t<T\leq t+t_n\}}}{2}\right) \right]\leq \exp\left(\gamma\left(e^{R^2\theta^2/2}-1\right)\right)\leq \exp\left(\frac12Ct_nR^2\theta^2e^{R^2/2}\right)
	\]
	and
	\[
	\begin{split}
	\p\left(\sup_{\substack{s\in[t,t+t_n]\\s\leq 1}}\left|S_n^1(s)-S_n^1(t) \right|\geq x\right) &\leq 2\exp\left(\frac12n\theta^2Ct_nR^2e^{R^2/2}\right)\exp(-\theta nx)\\
	&=2\exp\left(-n\theta\left\{x-\frac12\theta C t_nR^2e^{R^2/2}\right\}\right).
	\end{split}
	\]
	If we choose $\theta=\tilde\theta$ {as defined in \eqref{def:theta_tilde}}
	we get {$x-\frac12\theta Ct_nR^2e^{R^2/2}\geq x/3$ and therefore,}
	\[
	\p\left(\sup_{\substack{s\in[t,t+t_n]\\s\leq 1}}\left|S_n^1(s)-S_n^1(t) \right|\geq x\right)\leq 2\exp\left(-\frac{1}{3}\tilde\theta nx\right).
	\]
	{The same bound can be obtained for the supremum over $[t-t_n,t]\cap[0,1]$; and combining the two bounds completes the proof of Lemma \ref{le:S1} with $K=1/3$.}
\end{proof}

\begin{lemma}
	\label{le:S2}
	Assume that the follow-up time $T$ has uniformly continuous distribution with bounded density $h$ such that $\inf_{u\in[0,1]}h(u)>0$. Let $t\mapsto m(t)$ be a positive continuously differentiable function on $[0,1]$ and consider
	\[
	S_n^2(t)=\frac{1}{n}\sum_{i=1}^n \left(m(T_i)\1_{\{T_i\leq t\}}-\E\left[m(T_i)\1_{\{T_i\leq t\}}\right]\right).
	\] 
	There exist two positive constants $K_1,\,K_2$ such that for each $t\in[0,1]$, $t_n\in(0,1)$, {and $x>0$,} it holds
	\[
	\p\left(\sup_{\substack{|s-t|<t_n\\ s\in[0,1]}}\left|S_n^2(t)-S_n^2(s) \right|\geq x\right)\leq K_1\exp\left(-K_2nx\min\left\{1,xt_n^{-1}\right\}\right).
	\]
\end{lemma}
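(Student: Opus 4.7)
The plan is to combine a monotone reduction to a finite grid with Bernstein's inequality. The Doob-based strategy of Lemma~\ref{le:S1} does not transfer here: after conditioning on $T_1,\ldots,T_n$, the process $S_n^2$ becomes deterministic, so there is no natural martingale structure to exploit. Treating $s\in[t,t+t_n]$ and $s\in[t-t_n,t]$ symmetrically, consider the former. I would write $S_n^2(s) - S_n^2(t) = A_n(s) - B(s)$, where $A_n(s):= n^{-1}\sum_{i=1}^n m(T_i)\1_{\{t<T_i\leq s\}}$ is random and non-decreasing in $s$, while $B(s):=\int_t^s m(u)h(u)\,\dd u$ is deterministic, non-decreasing, and has derivative bounded by $M_h := \|m\|_\infty \|h\|_\infty$.

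Next I would introduce a grid $t = s_0 < s_1 < \cdots < s_K = t + t_n$ with spacing $x/(2 M_h)$, so that $B(s_{j+1}) - B(s_j) \leq x/2$ and $K+1 \lesssim 1 + t_n/x$. Monotonicity of $A_n$ and $B$ sandwiches $A_n(s_j) - B(s_{j+1}) \leq S_n^2(s) - S_n^2(t) \leq A_n(s_{j+1}) - B(s_j)$ for $s\in[s_j,s_{j+1}]$, which yields
\[
\sup_{s\in[t,t+t_n]}|S_n^2(s) - S_n^2(t)| \leq \max_{0\leq j\leq K}|S_n^2(s_j) - S_n^2(t)| + x/2.
\]
For each $s_j$, $n(S_n^2(s_j) - S_n^2(t))$ is a sum of i.i.d.\ centered variables bounded by $\|m\|_\infty$ with variance at most $\|m\|_\infty^2\|h\|_\infty t_n$. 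Bernstein's inequality then gives, with some $c_0>0$,
\[
\p\bigl(|S_n^2(s_j) - S_n^2(t)|\geq x/2\bigr) \leq 2\exp\bigl(-c_0 nx\min\{1,x/t_n\}\bigr),
\]
using that $nx^2/(t_n + x) \geq nx\min\{1,x/t_n\}/2$. A union bound over the grid, together with the symmetric argument on $[t-t_n,t]$, then yields
\[
\p\Bigl(\sup_{|s-t|<t_n,\,s\in[0,1]}|S_n^2(s) - S_n^2(t)|\geq x\Bigr) \lesssim \bigl(1 + t_n/x\bigr) \exp\bigl(-c_0 nx\min\{1,x/t_n\}\bigr).
\]

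The main obstacle will be absorbing the polynomial prefactor $1+t_n/x$ into the exponential to recover the clean form claimed in the lemma. I would handle it by a constant adjustment: whenever $nx\min\{1,x/t_n\}$ falls below a fixed threshold $C_0$, the right-hand side of the target inequality exceeds one once $K_1$ is chosen large enough, so the inequality is trivial; otherwise, replacing $c_0$ by a slightly smaller $K_2$ lets the exponential dominate the prefactor, since $t_n/x$ is at most polynomial in $n$ whereas the exponent is linear in $nx\min\{1,x/t_n\}\geq C_0$. An alternative that sidesteps this technicality entirely is to apply Bousquet's concentration inequality directly to the empirical process $\p_n - \p$ over the VC-type class $\{u\mapsto m(u)\1_{\{u\leq s\}}:s\in[t-t_n,t+t_n]\}$, using an entropy bound of order $\sqrt{nt_n}$ on the expected supremum to produce the desired exponential bound without any prefactor.
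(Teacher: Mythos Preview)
Your grid-and-Bernstein strategy is fine when $x\gtrsim t_n$, but the absorption step fails in the regime $x<t_n$. There the prefactor and the exponent are not linked: writing $t_n/x=\sqrt{nt_n/E}$ with $E=nx^2/t_n$, for any fixed $E>C_0$ you may send $n\to\infty$ (take $t_n$ bounded away from zero and $x=\sqrt{Et_n/n}\to 0$) and make $t_n/x$ arbitrarily large while the exponent stays equal to $E$. Your sentence ``$t_n/x$ is at most polynomial in $n$ whereas the exponent is linear in $nx\min\{1,x/t_n\}$'' conflates two unrelated scales; the exponent need not grow with $n$ at all. Hence neither the ``trivial when $E<C_0$'' case nor the ``absorb when $E\geq C_0$'' case covers the configuration $E$ moderate but $t_n/x$ huge, and the clean bound $K_1e^{-K_2E}$ is out of reach for the union-bound route.

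Your Talagrand/Bousquet alternative does work: the class $\{u\mapsto m(u)\1_{\{t<u\leq s\}}:\,|s-t|\leq t_n\}$ is VC with envelope of $L_2(\p)$-norm $O(\sqrt{t_n})$, so the expected supremum is $O(\sqrt{t_n/n})$; once $E\gtrsim 1$ the deviation $x$ dominates this mean and Bousquet delivers $\exp(-cE)$ with no polynomial prefactor. The paper takes a different route. After an integration-by-parts reduction and DKW-type bounds, the main term becomes $\sup_{s}|(H_n-H)(s)-(H_n-H)(t)|$; for $t_n\leq c_0x$ a single binomial tail bound suffices, while for $t_n>c_0x$ the paper exploits that $\mathcal{M}_n(s)=(H_n(s)-H_n(t))/(H(s)-H(t))$ is a reverse-time martingale conditionally on $H_n(t)$, applies Doob's inequality to $\exp(r|\mathcal{M}_n-1|)$ on dyadic shells $[t+t_n2^{-(k+1)},t+t_n2^{-k}]$, and optimises in $r$. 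Both the Bousquet route and the paper's reverse-martingale route avoid exactly the prefactor that the naive union bound cannot absorb.
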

\begin{proof}
	We consider first the case $s\in[t,t+t_n]\cap[0,1]$.	We have
	\[
	S^2_n(s)-S^2_n(t)=\frac{1}{n}\sum_{i=1}^n m(T_i)\1_{\{t<T_i\leq s\}}-\E\left[m(T)\1_{\{{t<T\leq s}\}}\right]
	\]
	{where $T$ is a random variable with the same distribution as the $T_i$'s.}
	By a change of variable, we can write
	\[
	\E\left[{m(T)\1_{\{t<T\leq s\}}}\right]=\int_t^s m(u)\,\dd H(u)=\int_{H(t)}^{H(s)} m\left(H^{-1}(v)\right)\,\dd v
	\]
	{where $H$ denotes the distribution function of $T$.}
	Moreover,
	\[
	\frac{1}{n}\sum_{i=1}^n m(T_i)\1_{\{t<T_i\leq s\}}=\int_0^1 m\left(H^{-1}_n(v)\right)\1_{\{t<H^{-1}_n(v)\leq s \}}\,\dd v
	\]
	{where $H_n^{-1}$ is the empirical quantile function corresponding to $T_1,\dots,T_n$.}
	Hence, we get 
	\begin{equation}
	\label{eqn:1}
	\begin{split}
	\left|S^2_n(s)-S^2_n(t)\right|&=\left|\int_0^1 m\left(H^{-1}_n(v)\right)\1_{\{t<H^{-1}_n(v)\leq s \}}\,\dd v-\int_{H(t)}^{H(s)} m\left(H^{-1}(v)\right)\,\dd v\right|\\
	&\leq \int_{H_n(t)}^{H_n(s)}\left| m\left(H^{-1}_n(v) \right)-m\left(H^{-1}(v)\right)\right| \,\dd v\\
	&\quad+\left|\int_0^1 m\left(H^{-1}(v) \right)\left[\1_{\{H_n(t)<v\leq H_n(s) \}}-\1_{\{H(t)<v\leq H(s) \}}\right]\,\dd v\right|
	\end{split}
	\end{equation}
	and as a result 
	\begin{equation}
	\label{eqn:main}
	\begin{split}
	&\p\left(\sup_{\substack{s\in[t,t+t_n]\\ s\leq 1}}\left|S_n^2(t)-S_n^2(s) \right|\geq x\right)\\
	&\leq \p\left(\sup_{\substack{s\in[t,t+t_n]\\ s\leq 1}}\int_{H_n(t)}^{H_n(s)}\left| m\left(H^{-1}_n(v) \right)-m\left(H^{-1}(v)\right)\right| \,\dd v \geq \frac{1}{2}x\right)\\
	&\quad +\p\left(\sup_{\substack{s\in[t,t+t_n]\\ s\leq 1}}\left|\int_0^1 m\left(H^{-1}(v) \right)\left[\1_{\{H_n(t)<v\leq H_n(s) \}}-\1_{\{H(t)<v\leq H(s) \}}\right]\,\dd v\right|\geq \frac{1}{2}x\right).
	\end{split}	
	\end{equation}
	We consider first the first probability on the right hand side. 
	Since $m$ and $H$ are differentiable with bounded derivative, we have
	\begin{equation}
	\label{eqn:2}
	\begin{split}
	&\int_{H_n(t)}^{H_n(s)}\left| m\left(H^{-1}_n(v) \right)-m\left(H^{-1}(v)\right)\right| \,\dd v\\
	&\lesssim [H_n(s)-H_n(t)]\sup_{u\in[0,1]}\left|H_n^{-1}(u)-H^{-1}(u)\right|\\
	&\lesssim (s-t)\sup_{u\in[0,1]}\left|H_n^{-1}(u)-H^{-1}(u)\right|\\
	&\quad+\left|(H_n-H)(s)-(H_n-H)(t)\right|\sup_{u\in[0,1]}\left|H_n^{-1}(u)-H^{-1}(u)\right|\\	
	&\lesssim (s-t)\sup_{u\in[0,1]}\left|H_n^{-1}(u)-H^{-1}(u)\right|\\
	&\quad+\sup_{u\in[0,1]}\left|H_n(u)-H(u)\right|	\sup_{u\in[0,1]}\left|H_n^{-1}(u)-H^{-1}(u)\right|.
	\end{split}
	\end{equation}
	As a result, for some constant $c>0$, we obtain 
	\[
	\begin{split}
	&\p\left(\sup_{\substack{s\in[t,t+t_n]\\ s\leq 1}}\int_{H_n(t)}^{H_n(s)}\left| m\left(H^{-1}_n(v) \right)-m\left(H^{-1}(v)\right)\right| \,\dd v \geq \frac{1}{2}x\right)\\
	&\leq \p\left(\sup_{u\in[0,1]}|H_n^{-1}(u)-H^{-1}(u)| \geq cxt_n^{-1}\right)\\
	&\quad+\p\left(\sup_{u\in[0,1]}\left|H_n(u)-H(u)\right|	\sup_{u\in[0,1]}\left|H_n^{-1}(u)-H^{-1}(u)\right|\geq cx\right)
	\end{split}
	\]
	By Lemma A1 in \cite{banerjee2016divide} and the 
	DKW inequality, it follows that 
	\begin{equation}
	\label{eqn:prob1}
	\begin{split}
	&\p\left(\sup_{\substack{s\in[t,t+t_n]\\ s\leq 1}}\int_{H_n(t)}^{H_n(s)}\left| m\left(H^{-1}_n(v) \right)-m\left(H^{-1}(v)\right)\right| \,\dd v \geq \frac{1}{2}x\right)\\
	&\lesssim \exp\left(-c_1nx^2t_n^{-2} \right)+\p\left(\sup_{u\in[0,1]}\left|H_n^{-1}(u)-H^{-1}(u)\right|\geq cx^{1/2}\right)\\
	&\quad +\p\left(\sup_{u\in[0,1]}|H_n(u)-H(u)|\geq x^{1/2}\right)\\
	&\lesssim \exp\left(-c_1nx^2t_n^{-2} \right)+\exp\left(-c_2nx\right).
	\end{split}
	\end{equation}
	Now we deal with the second probability on the right hand side of \eqref{eqn:main}.
	We have
	\begin{equation}
	\label{eqn:4}
	\begin{split}
	&\left|\int_0^1 m\left(H^{-1}(v) \right)\left[\1_{\{H_n(t)<v\leq H_n(s) \}}-\1_{\{H(t)<v\leq H(s) \}}\right]\,\dd v\right|\\
	&\leq \left|\int_0^1 \left[m\left(H^{-1}(v) \right)-m\left(H^{-1}(H(t))\right)\right]\left[\1_{\{H_n(t)<v\leq H_n(s) \}}-\1_{\{H(t)<v\leq H(s) \}}\right]\,\dd v\right|\\
	&\quad +m(t)\left|(H_n-H)(s)-(H_n-H)(t)\right|\\
	\end{split}
	\end{equation}
	and {for some  $\zeta_v$ lying between $v$ and $H(t)$,}
	\begin{equation*}
	\begin{split}
	&\left|\int_0^1 \left[m\left(H^{-1}(v) \right)-m\left(H^{-1}(H(t))\right)\right]\left[\1_{\{H_n(t)<v\leq H_n(s) \}}-\1_{\{H(t)<v\leq H(s) \}}\right]\,\dd v\right|\\
	&=\left|\int_0^1 (v-H(t))(m\circ H^{-1})'(\zeta_v) \left[\1_{\{H_n(t)<v\leq H_n(s) \}}-\1_{\{H(t)<v\leq H(s) \}}\right]\,\dd v\right|\\
	&=\left|\int_{H_n(t)}^{H(t)} (v-H(t))(m\circ H^{-1})'(\zeta_v)\,\dd v-\int_{H_n(s)}^{H(s)} (v-H(t))(m\circ H^{-1})'(\zeta_v)\,\dd v\right|.\\
	\end{split}
	\end{equation*}
	Moreover,
	\begin{equation*}
	\left|\int_{H_n(t)}^{H(t)} (v-H(t))(m\circ H^{-1})'(\zeta_v)\,\dd v\right|\leq \frac{1}{2}\sup_{u\in[0,1]}|(m\circ H^{-1})'(u)||H_n(t)-H(t)|^2    
	\end{equation*}
	and 
	\begin{equation*}
	\begin{split}
	&\left|\int_{H_n(s)}^{H(s)} (v-H(t))(m\circ H^{-1})'(\zeta_v)\,\dd v\right|\\
	&\leq \frac{1}{2}\sup_{u\in[0,1]}|(m\circ H^{-1})'(u)|
	|H_n(s)-H(s)|\left\{|H(s)-H(t)|+|H_n(s)-H(t)| \right\} 
	\end{split}
	\end{equation*}
	Hence, we obtain 
	\begin{equation}
	\label{eqn:5}
	\begin{split}
	&\left|\int_0^1 \left[m\left(H^{-1}(v) \right)-m\left(H^{-1}(H(t))\right)\right]\left[\1_{\{H_n(t)<v\leq H_n(s) \}}-\1_{\{H(t)<v\leq H(s) \}}\right]\,\dd v\right|\\
	&\lesssim  \frac{\sup_{u\in[0,1]}|m'(u)|}{\inf_{u\in[0,1]}h(u)}\left\{\sup_{u\in[0,1]}|H_n(u)-H(u)|^2+\sup_{u\in[0,1]}h(u)(s-t)\sup_{u\in[0,1]}|H_n(u)-H(u)| \right\}.
	\end{split}
	\end{equation}
	From \eqref{eqn:4} and \eqref{eqn:5} it follows that
	\begin{equation}
	\label{eqn:6}
	\begin{split}
	&\p\left(\sup_{\substack{s\in[t,t+t_n]\\ s\leq 1}}\left|\int_0^1 m\left(H^{-1}(v) \right)\left[\1_{\{H_n(t)<v\leq H_n(s) \}}-\1_{\{H(t)<v\leq H(s) \}}\right]\,\dd v\right|\geq \frac{x}{2}\right)\\
	&\leq \p\left(\sup_{\substack{s\in[t,t+t_n]\\ s\leq 1}}\left|(H_n-H)(s)-(H_n-H)(t)\right|\geq cx\right)\\
	&\quad+ \p\left(\sup_{\substack{s\in[t,t+t_n]\\ s\leq 1}}\left\{\sup_{u\in[0,1]}|H_n(u)-H(u)|^2+\sup_{u\in[0,1]}h(u)(s-t)\sup_{u\in[0,1]}|H_n(u)-H(u)| \right\}\geq cx\right).
	\end{split}
	\end{equation}
	
	{
		Next we deal with the first probability in the right hand side of \eqref{eqn:6}. Assume first that $t_n\leq c_0x$ where $c_0=c/(2\sup_uh(u))$. 
		By monotonicity of both $H_n$ and $H$, for $s\in[t,t+t_n]$ we have
		\begin{eqnarray*}
			(H_n-H)(s)-(H_n-H)(t)&\leq& H_n(t+t_n)-H(t)+(H_n-H)(t)\\
			&\leq& (H_n-H)(t+t_n)-(H_n-H)(t)+t_n\sup_uh(u).
		\end{eqnarray*}
		Hence,
		$$|(H_n-H)(s)-(H_n-H)(t)|\leq| (H_n-H)(t+t_n)-(H_n-H)(t)|+t_n\sup_uh(u).$$
		Therefore, since $t_n\leq c_0x$ where $c_0=c/(2\sup_uh(u))$ we obtain
		\begin{eqnarray*}
			&&\p\left(\sup_{\substack{s\in[t,t+t_n]\\ s\leq 1}}\left|(H_n-H)(s)-(H_n-H)(t)\right|\geq cx\right)\\
			&&\leq \p\left(\left|(H_n-H)(t+t_n)-(H_n-H)(t)\right|\geq cx/2\right)\\
			&&\leq 2\exp\left(-\frac{nc^2x^2}{8p+4cx}\right),
		\end{eqnarray*}
		using Lemma \ref{le:binomial} for the last inequality with $p=H(t_n)-H(t)$. For $t_n\leq c_0x$ we have $p\leq cx/2$ and therefore,
		\begin{eqnarray}\label{eq:b1}
		\p\left(\sup_{\substack{s\in[t,t+t_n]\\ s\leq 1}}\left|(H_n-H)(s)-(H_n-H)(t)\right|\geq cx\right)\leq 2\exp\left(-cnx/8\right).
		\end{eqnarray}
		Now, if we assume that $t_n>c_0 x$ we have
		\begin{eqnarray}\notag
		&&\p\left(\sup_{\substack{s\in[t,t+t_n]\\ s\leq 1}}\left|(H_n-H)(s)-(H_n-H)(t)\right|\geq cx\right)\\ \notag
		&&\leq \p\left(\sup_{\substack{s\in[t,t+c_0x]\\ s\leq 1}}\left|(H_n-H)(s)-(H_n-H)(t)\right|\geq cx\right)\\ \label{eq:c0x}
		&&+ \p\left(\sup_{\substack{s\in[t+c_0x,t+t_n]\\ s\leq 1}}\left|(H_n-H)(s)-(H_n-H)(t)\right|\geq cx\right).
		\end{eqnarray}
		From what precedes, the first probability on the right hand side is bounded above by $2\exp(-ncx/8)$ and it remains to deal with the second probability.  For $t\in[0,1]$, define 
		\[
		\mathcal{M}_n(s)=\frac{H_n(s)-H_n(t)}{H(s)-H(t)},\qquad s\in[0,1]
		\]
		We can write
		\[
		\begin{split}
		&\p\left(\sup_{\substack{s\in[t+c_0x,t+t_n]\\ s\leq 1}}\left|(H_n-H)(s)-(H_n-H)(t)\right|\geq cx\right)\\
		&\leq\sum_{k:\ t_n>2^{k-1}c_0x}
		\p\left(\sup_{\substack{s\in[t+t_n/2^{k+1},t+t_n/2^k]\\ s\leq 1}}\left|(H_n-H)(s)-(H_n-H)(t)\right|\geq cx\right)\\
		&\leq\sum_{k:\ t_n>2^{k-1}c_0x}
		\p\left(\sup_{\substack{s\in[t+t_n/2^{k+1},t+t_n/2^k]\\ s\leq 1}}\left|\frac{(H_n-H)(s)-(H_n-H)(t)}{H(s)-H(t)}\right|\geq \frac{cx}{|H(t+t_n/2^k)-H(t)|}\right)\\
		&\leq\sum_{k:\ t_n>2^{k-1}c_0x}
		\p\left(\sup_{\substack{s\in[t+t_n/2^{k+1},t+t_n/2^k]\\ s\leq 1}}\left|\mathcal{M}_n(s)-1\right|\geq 2^k\tilde{c}xt_n^{-1}\right).
		\end{split}
		\] 
		By Lemma 2.2 in \cite{groeneboom-hooghiemstra-lopuhaa1999}, $\mathcal{M}_n(s)$, $s\in[t,1]$, is a reverse time martingale, conditionally on $H_n(t)$. Hence, each $r>0$, $\exp(r|\mathcal{M}_n(s)-1|)$ is a reverse time submartingale, conditionally on $H_n(t)$. It then follows from Doob's inequality that for all $r_k>0$,
		\[
		\begin{split}
		&\p\left(\sup_{\substack{s\in[t+c_0x,t+t_n]\\ s\leq 1}}\left|(H_n-H)(s)-(H_n-H)(t)\right|\geq cx\right)\\
		&\leq\sum_{k:\ t_n>2^{k-1}c_0x}
		\E\left[\p\left(\sup_{\substack{s\in[t+t_n/2^{k+1},t+t_n/2^k]\\ s\leq 1}}\exp\left(r_k\left|\mathcal{M}_n(s)-1\right|\right)\geq \exp\left(r_k2^k\tilde{c}xt_n^{-1}\right)\,\bigg| H_n(t)\right)\right]\\
		&\leq\sum_{k:\ t_n>2^{k-1}c_0x}
		\exp\left(-r_k2^k\tilde{c}xt_n^{-1}\right)\E\left[\exp\left(r_k\left|\mathcal{M}_n(t+t_n/2^{k+1})-1\right|\right)\right]\\
		&\leq\sum_{k:\ t_n>2^{k-1}c_0x}
		\exp\left(-r_k2^k\tilde{c}xt_n^{-1}\right)\E\left[\exp\left(r_kp^{-1}\left|(H_n-H)(t+t_n/2^{k+1})-(H_n-H)(t)\right|\right)\right],
		\end{split}
		\]
		where $p=H(t+t_n/2^{k+1})-H(t)$.
		Since $X=n[H_n(t+t_n/2^{k+1})-H_n(t)]$ is a Binomial distribution with parameters $n$ and  $p$, it follows from the first inequality in Lemma \ref{le:binomial} that
		\[
		\begin{split}
		&\p\left(\sup_{\substack{s\in[t+c_0x,t+t_n]\\ s\leq 1}}\left|(H_n-H)(s)-(H_n-H)(t)\right|\geq cx\right)\\
		&\leq\sum_{k:\ t_n>2^{k-1}c_0x}2\exp\left(-r_k2^k\tilde{c}xt_n^{-1}+np{\phi\left(\frac{r_k}{np}\right)}\right).
		\end{split}
		\]
		In the above sum, we choose $r_k$ for which the exponential takes the smallest possible value, that is $r_k=np\log(1+	2^k\tilde{c}xt_n^{-1})>0$. With this choice we obtain
		\[
		\begin{split}
		&
		\p\left(\sup_{\substack{s\in[t+c_0x,t+t_n]\\ s\leq 1}}\left|(H_n-H)(s)-(H_n-H)(t)\right|\geq cx\right)\\
		&\leq\sum_{k:\ t_n>2^{k-1}c_0x}2\exp\left(-nph(2^k\tilde{c}xt_n^{-1})\right)
		\end{split}
		\]
		where for all $u\geq -1$, $h(u)=(1+u)\log(1+u)-u$. Since $h(u)\geq u^2/(2+2u)$ for all $u\geq 0$ (see the proof of Lemma \ref{le:binomial}) and $p\geq t_n\inf_uh(u)/2^{k+1}$, this implies that
		\[
		\begin{split}
		&
		\p\left(\sup_{\substack{s\in[t+c_0x,t+t_n]\\ s\leq 1}}\left|(H_n-H)(s)-(H_n-H)(t)\right|\geq cx\right)\\
		&\leq\sum_{k:\ t_n>2^{k-1}c_0x}2\exp\left(-Kn\frac{x^2t_n^{-1}}{2^{-k}+xt_n^{-1}}\right)\\
		&\leq\sum_{k=0}^\infty 2\exp\left(-Kn\frac{2^kx^2t_n^{-1}}{1+2/c_0}\right)
		\end{split}
		\]
		and therefore,
		\begin{equation}\notag
		\p\left(\sup_{\substack{s\in[t+c_0x,t+t_n]\\ s\leq 1}}\left|(H_n-H)(s)-(H_n-H)(t)\right|\geq cx\right)	\leq K_1\exp\left(-K_2nx^2t_n^{-1}\right)\end{equation}
		for all $t_n>0$ and $x>0$ that satisfy $t_n>c_0 x$. Since \eqref{eq:b1} holds for all $t_n\leq c_0 x$	we conclude from \eqref{eq:c0x} that for all $t_n>0$ and $x>0$, we have
		\begin{equation}\label{eq:b2}
		\p\left(\sup_{\substack{s\in[t,t+t_n]\\ s\leq 1}}\left|(H_n-H)(s)-(H_n-H)(t)\right|\geq cx\right)	\leq K_1\exp\left(-K_2nx\min\{1,xt_n^{-1}\}\right).
		\end{equation}} 
	
	It remains to deal with the second probability in the right hand side of \eqref{eqn:6}. 
	We have
	\begin{equation}
	\label{eqn:prob2_2}
	\begin{split}
	&\p\left(\sup_{\substack{s\in[t,t+t_n]\\ s\leq 1}}\left\{{\sup_{u\in[0,1]}}|H_n(u)-H(u)|^2+{\sup_{u\in[0,1]}}h(u)(s-t)\sup_{u\in[0,1]}|H_n(u)-H(u)| \right\}\geq cx\right)\\
	&\leq \p\left(\sup_{u\in[0,1]}|H_n(u)-H(u)|^2\geq c_1x\right)+\p\left(\sup_{u\in[0,1]}|H_n(u)-H(u)|\geq c_2xt_n^{-1}\right)\\
	&\lesssim \exp\left(-cnx\min\left\{1,xt_n^{-2}\right\}\right),
	\end{split}
	\end{equation}
	{using again the DKW inequality.}
	Putting together \eqref{eqn:main}, \eqref{eqn:prob1}, \eqref{eqn:6}, \eqref{eq:b2} and \eqref{eqn:prob2_2},  we conclude that 
	\[
	\p\left(\sup_{\substack{s\in[t,t+t_n]\\ s\leq 1}}\left|S_n^2(t)-S_n^2(s) \right|\geq x\right)
	\leq K_1\exp\left(-K_2nx\min\left\{1,xt_n^{-1} \right\}\right)
	\]
	for some positive constants $K_1,\,K_2$. The same bound can be obtained similarly for the case $s\in[t-t_n,t]\cap[0,1]$.
\end{proof}

\begin{lemma}
	\label{le:R_1}
	Let $T_n=n^{\gamma}$ for some $\gamma\in (0,1/3)$.  For $t,\,s\in(0,1)$  define
	\begin{equation}
	\label{def:pi}
	\pi(u;t,s)=\1_{\left\{u\leq s\right\}}-\1_{\{u\leq t\}}
	\end{equation}
	and
	\begin{equation}
	\label{def:R_1}
	R^1_n(t,s)=\int\delta {\pi}(u;t,s)\left(\frac{1}{\Phi(u;\beta_0)}-\frac{1}{\Phi(t;\beta_0)}\right)\,\dd (\p_n-\p)(u,\delta,z).
	\end{equation}
	Suppose that $x\mapsto \Phi(x;\beta_0)$ is continuously differentiable and it is bounded from below by a strictly positive constant. {Let $c>0$.} For each $2\leq q< 2/(3\gamma)$,  there exists $K>0$ such that, for all $t\in[0,1]$ and $x\geq 0$,
	\[
	\p\left[\sup_{|t_n-t|\leq c  n^{-1/3}T_n}|R^1_n(t,t_n)|>x\right]\leq K x^{-q} n^{1-q}.
	\]
\end{lemma}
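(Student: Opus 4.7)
The plan is to establish a moment bound $\E[\sup |R^1_n(t, t_n)|^q] \lesssim n^{1-q}$ and then apply Markov's inequality. First, write $R^1_n(t, t_n) = \int f_{t, t_n}\, \dd(\p_n - \p)$ where $f_{t, t_n}(u, \delta, z) = \delta \pi(u; t, t_n)(1/\Phi(u;\beta_0) - 1/\Phi(t;\beta_0))$ is a centered i.i.d. integrand. Since $\Phi(\cdot;\beta_0)$ is continuously differentiable on $[0, 1]$ and bounded away from zero, one has $|1/\Phi(u;\beta_0) - 1/\Phi(t;\beta_0)| \lesssim |u - t|$, so $\|f_{t, t_n}\|_\infty \lesssim |t_n - t|$; using that the identity $\Phi'(x;\beta_0) = -h(x)\E[e^{\beta_0'Z} \mid T = x]$ forces the density $h$ of $T$ to be bounded on $[0,1]$, one also gets the sharper variance bound $\E f_{t, t_n}^2 \lesssim |t_n - t|^3$.

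Next, for each fixed $t_n$, Rosenthal's inequality for i.i.d. sums yields, for $q \geq 2$,
\[
\E|R^1_n(t, t_n)|^q \lesssim n^{-q/2}|t_n - t|^{3q/2} + n^{1-q}|t_n - t|^q.
\]
To pass from pointwise to supremum, I would perform a dyadic chaining: for each $k \geq 0$, introduce a grid of $2^k$ equispaced points in $[t - cn^{-1/3}T_n, t + cn^{-1/3}T_n]$ and apply the pointwise bound above to the increments between successive levels (at spatial scale $\eta/2^k$ with $\eta = cn^{-1/3}T_n$). The union-bound penalty $2^{k/q}$ on the $L^q$-norm is absorbed by the Rosenthal decay in scale, leaving convergent geometric sums in $k$ since $q \geq 2$ makes the exponents $1/q-3/2$ and $1/q-1$ both negative, and yielding
\[
\E \sup_{|t_n - t| \leq cn^{-1/3}T_n} |R^1_n(t, t_n)|^q \lesssim n^{-q/2}(n^{-1/3}T_n)^{3q/2} + n^{1-q}(n^{-1/3}T_n)^q.
\]

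Finally, substituting $T_n = n^\gamma$, the first term becomes $n^{-q + 3q\gamma/2}$, which is $\leq n^{1-q}$ precisely when $q \leq 2/(3\gamma)$; the second term is $n^{1-q+q(\gamma-1/3)}$, which is $\leq n^{1-q}$ since $\gamma < 1/3$. Hence $\E \sup |R^1_n|^q \lesssim n^{1-q}$, and Markov's inequality completes the argument.

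The main obstacle is the chaining step combined with the sharp variance bound $\lesssim |t_n-t|^3$. With the cruder variance bound $\lesssim |t_n-t|^2$ (which only uses $\|f\|_\infty$ and $\p(T \in [t \wedge t_n, t \vee t_n]) \leq 1$), the Rosenthal term $n^{-q/2}|t_n-t|^q$ becomes $n^{q\gamma - 5q/6}$, which is $\leq n^{1-q}$ only under the stronger restriction $q \leq 1/(\gamma + 1/6)$, strictly less than $2/(3\gamma)$ for the admissible $\gamma$. Consequently, exploiting the Lipschitz continuity of $H$ on $[0,1]$ (inherited from boundedness of $h$) to upgrade the variance estimate is essential for the full range of $q$ claimed by the lemma.
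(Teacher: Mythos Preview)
Your strategy—bound $\E\sup|R^1_n|^q$ via Rosenthal plus chaining, then Markov—is sound, and the pointwise Rosenthal estimate you write down is correct. The chaining step, however, has a gap. The increment $R^1_n(t,t_n')-R^1_n(t,t_n)$ has integrand $\delta\,\pi(u;t_n,t_n')\bigl(1/\Phi(u;\beta_0)-1/\Phi(t;\beta_0)\bigr)$, and on its support the factor $1/\Phi(u;\beta_0)-1/\Phi(t;\beta_0)$ is of size $|u-t|\lesssim\eta:=cn^{-1/3}T_n$, \emph{not} $|t_n'-t_n|=\eta/2^k$. Hence the increment's variance is $\lesssim\eta^{2}\cdot(\eta/2^k)$ and its $q$-th absolute moment is $\lesssim\eta^{q}\cdot(\eta/2^k)$, rather than $(\eta/2^k)^3$ and $(\eta/2^k)^{q+1}$ as the substitution $|t_n-t|\to\eta/2^k$ would give. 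With the correct bounds, after the $2^{k/q}$ union-bound penalty the variance-driven term decays like $2^{k(1/q-1/2)}$ (still summable for $q>2$, though not at your claimed rate $2^{k(1/q-3/2)}$), whereas the envelope-driven term is \emph{constant} in $k$ and the chaining sum diverges. This is the standard obstruction to direct $L^q$-chaining of empirical processes when $q>2$.

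The paper sidesteps this by invoking the maximal inequalities of van der Vaart and Wellner (Theorems 2.14.2 and 2.14.5 in \cite{VW96}), which give
\[
\Bigl\|\sup_{f\in\F}\bigl|\sqrt n\,(\p_n-\p)f\bigr|\Bigr\|_q\ \lesssim\ J_{[\,]}(1,\F,L_2(\p))\,\|F\|_{L_2(\p)}+n^{-1/2+1/q}\|F\|_{L_q(\p)}.
\]
The second term isolates the envelope contribution via Hoffmann--J{\o}rgensen (so it is \emph{not} chained), and the first is handled by verifying $J_{[\,]}=O(1)$ through an explicit bracket construction. Computing $\|F\|_{L_2}^2\sim(n^{-1/3}T_n)^{3}$ and $\|F\|_{L_q}^q\sim(n^{-1/3}T_n)^{q+1}$ then recovers precisely the two terms in your final display. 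If you prefer to keep a hands-on argument, the clean repair is the same separation: apply Hoffmann--J{\o}rgensen to reduce to bounding $\E\sup|R^1_n|$ (which can be chained using only the sub-Gaussian/variance part) plus the single envelope term $n^{1-q}\|F\|_{L_q}^q$.
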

\begin{proof}
	{Fix $c>0$, $x>0$ and $t\in[0,1]$.}
	Let $\F$ be the following class of functions on ${[t,1]}\times\{0,1\}\times\R^p$,
	\[
	\F=\left\{f_{t_n}(u,\delta,z)= \delta {\pi}(u;t,t_n)\left(\frac{1}{\Phi(u;\beta_0)}-\frac{1}{\Phi(t;\beta_0)}\right)\,\bigg|\,t_n\in[t,t+cn^{-1/3}T_n]\right\}
	\]	
	with envelope function
	\[
	{F}(u,\delta,z)=\delta {\pi}\left(u;t,t+cn^{-1/3}T_n\right)\left(\frac{1}{\Phi(u;\beta_0)}-\frac{1}{\Phi(t;\beta_0)}\right).
	\]
	By Markov inequality we have
	\[
	\p\left[\sup_{t_n\in[t,t+cn^{-1/3}T_n]}|R^1_n(t,t_n)|>x\right]{\leq} n^{-q/2}x^{-q}\E\left[\sup_{f\in\F}\left|\int f(u,\delta,z)\,\dd\sqrt{n}(\p_n-\p)(u,\delta,z) \right|^q\right].
	\]
	From Theorem 2.14.5 and 2.14.2 in \cite{VW96}, it follows that 
	\begin{equation}
	\label{eqn:q_moment_emipirical}
	\begin{split}
	&\E\left[\sup_{f\in\F}\left|\int f(u,\delta,z)\,\dd\sqrt{n}(\p_n-\p)(u,\delta,z) \right|^q\right]^{1/q}\\
	&\lesssim \E\left[\sup_{f\in\F}\left|\int f(u,\delta,z)\,\dd\sqrt{n}(\p_n-\p)(u,\delta,z) \right|\right]+n^{-1/2+1/q}\Vert F\Vert_{L_q(\p)}\\
	&\lesssim J_{[]}(1,\F,L_2(\p))\Vert F\Vert_{L_2(\p)}+n^{-1/2+1/q}\Vert F\Vert_{L_q(\p)},
	\end{split}
	\end{equation}
	where
	\[
	J_{[]}(1,\F,L_2(\p))=\int_0^1\sqrt{1+\log N_{[]}\left(\epsilon\Vert F\Vert_{L_2(\p)},\F,L_2(\p) \right)}\,\dd\epsilon.
	\]
	The constants in the inequalities $\lesssim$ are universal.
	{Since $\Phi$ is bounded from below by a strictly positive constant and has a continuous derivative, we} have
	\[
	\begin{split}
	\Vert F\Vert_{L_q(\p)}^q&=\int \delta \1_{\left\{t<u\leq t+cn^{-1/3}T_n\right\}}\left(\frac{1}{\Phi(u;\beta_0)}-\frac{1}{\Phi(t;\beta_0)}\right)^q\,\dd \p(u,\delta,z)\\
	&=\int_{t}^{t+cn^{-1/3}T_n}(u-t)^q \left(\frac{\Phi'(t;\beta_0)}{\Phi(t;\beta_0)^2}\right)^q\,\dd H^{uc}(u)+o\left((n^{-1/3}T_n)^{q+1}\right)\\
	&={(q+1)^{-1}\left(\frac{\Phi'(t;\beta_0)}{\Phi(t;\beta_0)^2}\right)^q}
	(n^{-1/3}T_n)^{q+1}+o\left((n^{-1/3}T_n)^{q+1}\right),
	\end{split}
	\]
	{where the small$-o$ term is uniform in $t$}.  Hence, if $J_{[]}(1,\F,L_2(\p))$ is bounded {uniformly in $t$}, we obtain
	\[
	\begin{split}
	\E\left[\sup_{f\in\F}\left|\int f(u,\delta,z)\,\dd\sqrt{n}(\p_n-\p)(u,\delta,z) \right|^q\right]
	&\lesssim \Vert F\Vert_{L_2(\p)}^q+n^{-q/2+1}\Vert F\Vert_{L_q(\p)}^q\\
	&\lesssim (n^{-1/3}T_n)^{3q/2}+n^{-q/2+1}(n^{-1/3}T_n)^{q+1}
	\end{split}
	\]
	{where the constants in the inequalities $\lesssim$ can be chosen independently of $t$.}
	It follows that,
	\[
	\begin{split}
	\p\left[\sup_{t_n\in[t,t+cn^{-1/3}T_n]}|R^1_n(t,t_n)|>x\right]
	&\lesssim n^{-q}x^{-q}T_n^{3q/2}+n^{1-q}x^{-q}(n^{-1/3}T_n)^{q+1}\\
	&\lesssim n^{1-q}x^{-q},
	\end{split}
	\]
	{since $q\leq 2/(3\gamma).$}
	It remains to show that $J_{[]}(1,\F,L_2(\p))$ is bounded. We first compute $N_{[]}\left(\epsilon\Vert F\Vert_{L_2(\p)},\F,L_2(\p) \right)$. Divide the interval $[t,t+cn^{-1/3}T_n]$ in $M$ subintervals of length $L=cn^{-1/3}T_n/M$. For $i=1,\dots,M$, let
	\[
	l_i(u,\delta,z)=\delta\1_{\left\{t<u\leq t+(i-1)L\right\}}\left(\frac{1}{\Phi(u;\beta_0)}-\frac{1}{\Phi(t;\beta_0)}\right),
	\]
	\[
	L_i(u,\delta,z)=\delta\1_{\left\{t<u\leq t+iL\right\}}\left(\frac{1}{\Phi(u;\beta_0)}-\frac{1}{\Phi(t;\beta_0)}\right).
	\]
	Consider the brackets $[l_1,L_1],\dots,[l_M,L_M]$. Since, for each $t_n\in[t,t+cn^{-1/3}T_n]$, there exists $i$ such that $f_{t_n}\in [l_i,L_i]$, they cover all the class $\F$. Moreover, with $c_i=t+iL,$ we have
	\[
	\begin{split}
	\Vert L_i-l_i\Vert_{L_2(\p)}^2&=\int\left(L_i(u,\delta,z)-l_i(u,\delta,z)\right)^2\,\dd\p(u,\delta,z)\\
	&\lesssim\int \delta\1_{\{c_{i-1}<u\leq c_i\}}\left(\frac{1}{\Phi(u;\beta_0)}-\frac{1}{\Phi(t;\beta_0)}\right)^2\,\dd\p(u,\delta,z)\\
	&\lesssim L\left(n^{-1/3}T_n\right)^2.
	\end{split}
	\]
	Again, the constant in the inequality $\lesssim$ can be chosen independently of $t$.
	If we take 
	\[
	M=\left\lfloor\frac{KT_n^3}{n{\epsilon^2 \Vert F\Vert_{L_2(\p)}^2}}+1\right\rfloor,
	\]
	for a properly chosen constant $K>0$, then 
	\[
	\Vert L_i-l_i\Vert_{L_2(\p)}\leq \epsilon\Vert F\Vert_{L_2(\p)}.
	\]
	It follows that 
	\begin{equation}\label{eq: entropy}
	N_{[]}\left(\epsilon\Vert F\Vert_{L_2(\p)},\F,L_2(\p) \right)\leq \left\lfloor\frac{KT_n^3}{n{\epsilon^2 \Vert F\Vert_{L_2(\p)}^2}}+1\right\rfloor 
	\end{equation}
	and consequently
	\[
	\begin{split}
	J_{[]}(1,\F,L_2(\p))&\leq \int_0^{1}\sqrt{1+\log \left\lfloor\frac{KT_n^3}{n{\epsilon^2 \Vert F\Vert_{L_2(\p)}^2}}+1\right\rfloor  }\,\dd\epsilon\\
	&{\lesssim 1+\int_0^{(KT_n^3)^{1/2}/(n^{1/2}\Vert F\Vert_{L_2(\p)})}\sqrt{1+\log \left(\frac{KT_n^3}{n{\epsilon^2 \Vert F\Vert_{L_2(\p)}^2}}\right) } \,\dd\epsilon}\\
	&\lesssim 1+\frac{{(KT_n^{3})^{1/2}}}{{n^{1/2}}\Vert F\Vert_{L_2(\p)}}\int_0^{{1}}\sqrt{{1+\log(x^{-2})}}\,\dd x\\
	&\leq K'
	\end{split}
	\] 
	because $\Vert F\Vert_{L_2(\p)}=C(n^{-1/3}T_n)^{3/2}+o\left((n^{-1/3}T_n)^{3/2}\right)$.
	Similarly, it can be shown that 
	\[
	\p\left[\sup_{t_n\in[t-cn^{-1/3}T_n,t]}|R^1_n(t,t_n)|>x\right]\lesssim n^{1-q}x^{-q},
	\]
	for some universal constant in $\lesssim$.
\end{proof}
\begin{lemma}
	\label{le:R_2}
	Let $T_n=n^{\gamma}$ for some $\gamma\in (0,1/3)$. 
	For $t,\,s\in(0,1)$, define
	\begin{equation}
	\label{def:R_2}
	R^2_n(t,s)=\int e^{{\beta_0'} z}\pi(u;t,s)\int_{u}^{s}\frac{\lambda_0(v)}{\Phi(v;\beta_0)}\,\dd v\,\dd(\p_n-\p)(u,\delta,z),
	\end{equation}
	where {$\pi$} is as in \eqref{def:pi}.
	Suppose $x\mapsto \Phi(x;\beta_0)$  and $\lambda_0$ are  continuously differentiable bounded away from zero. 
	Assume that, given $z$, the follow up time $T$ has a continuous density uniformly bounded in $z$ and $t$, from above and below away from zero, {and assume furthermore that (A3) holds}. 
	For each $q\in[2,2/(3\gamma)],$ {and $c>0$} 
	there exist $K>0$ such that, for all $t\in[0,1]$ and $x\geq 0$,
	\[
	\p\left[\sup_{|t_n-t|\leq cn^{-1/3}T_n}|R^2_n(t,t_n)|>x\right]\leq Kx^{-q} n^{1-q}.
	\]
\end{lemma}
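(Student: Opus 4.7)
The plan is to imitate the proof of Lemma~\ref{le:R_1}: I will bound the $q$-th moment of the supremum of the empirical process indexed by
\[
\F = \left\{f_{t_n}(u,\delta,z) = e^{\beta_0'z}\pi(u;t,t_n)\int_u^{t_n}\frac{\lambda_0(v)}{\Phi(v;\beta_0)}\,\dd v : |t_n-t|\leq cn^{-1/3}T_n\right\},
\]
and then apply Markov's inequality. I will treat the two sides $t_n\geq t$ and $t_n\leq t$ separately since they are symmetric; note that in both cases the product $\pi(u;t,t_n)\int_u^{t_n}\lambda_0(v)/\Phi(v;\beta_0)\,\dd v$ is non-negative, which will simplify the bracket construction.

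For the case $t_n\in[t,t+cn^{-1/3}T_n]$, I would take as envelope
\[
F(u,\delta,z) = e^{\beta_0'z}\1_{\{t<u\leq t+cn^{-1/3}T_n\}}\int_u^{t+cn^{-1/3}T_n}\frac{\lambda_0(v)}{\Phi(v;\beta_0)}\,\dd v.
\]
Since $\lambda_0/\Phi(\cdot;\beta_0)$ is bounded, the inner integral is $O(n^{-1/3}T_n)$ uniformly in $u$; combining this with the uniform boundedness of the density of $T$ given $z$ and with assumption~(A3), I expect to obtain $\|F\|_{L_2(\p)}^2 \lesssim (n^{-1/3}T_n)^3$ and $\|F\|_{L_q(\p)}^q\lesssim (n^{-1/3}T_n)^{q+1}$, both uniformly in $t$.

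To estimate the bracketing numbers, I will partition $[t,t+cn^{-1/3}T_n]$ into $M$ equal subintervals of length $L$ and, on the $i$-th subinterval, bracket $f_{t_n}$ by $[l_i,L_i]$ obtained by replacing $t_n$ by $t+(i-1)L$ and $t+iL$ respectively in both the indicator and the upper limit of the integral. The pointwise inequality $l_i\leq f_{t_n}\leq L_i$ should then follow by elementary monotonicity, and a direct computation, similar in spirit to the one in Lemma~\ref{le:R_1}, should yield $\|L_i-l_i\|_{L_2(\p)}^2\lesssim L^2 n^{-1/3}T_n$. Choosing $M\asymp \epsilon^{-1}$ would then give $N_{[]}(\epsilon\|F\|_{L_2(\p)},\F,L_2(\p))\lesssim 1+\epsilon^{-1}$, so the bracketing integral $J_{[]}(1,\F,L_2(\p))$ would be uniformly bounded in $t$ and in $n$.

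It would then remain to plug these estimates into the maximal inequality coming from Theorems~2.14.5 and 2.14.2 in~\cite{VW96}, exactly as in~\eqref{eqn:q_moment_emipirical}, and to apply Markov's inequality. This should produce the bound $\lesssim n^{-q}x^{-q}T_n^{3q/2}+n^{1-q}x^{-q}n^{-q/3-1/3}T_n^{q+1}$, and the condition $q\leq 2/(3\gamma)$ (together with $\gamma<1/3$) is exactly what forces both terms to collapse to $n^{1-q}x^{-q}$. The main subtlety relative to Lemma~\ref{le:R_1} is the presence of the unbounded factor $e^{\beta_0'z}$ in the integrand, which is precisely why assumption~(A3) is needed here: it is this assumption that allows the $L_q$-norm of the envelope to be controlled uniformly in $t$.
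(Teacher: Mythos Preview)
Your proposal is correct and follows essentially the same route as the paper: the same class $\F$, the same envelope $F$, the same maximal inequality from \cite{VW96}, and the same bracketing construction obtained by partitioning $[t,t+cn^{-1/3}T_n]$ into $M$ equal pieces. The only noticeable difference is that your bracket-size estimate $\|L_i-l_i\|_{L_2(\p)}^2\lesssim L^2\,n^{-1/3}T_n$ is slightly sharper than the paper's $\|L_i-l_i\|_{L_2(\p)}^2\lesssim L^3M^2$ (equivalently $\lesssim L\,(n^{-1/3}T_n)^2$), so you need only $M\asymp\epsilon^{-1}$ brackets where the paper takes $M\asymp\epsilon^{-2}$; either way $J_{[]}(1,\F,L_2(\p))$ is bounded uniformly in $t$ and $n$, and the final bound is the same.
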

\begin{proof}
	{Fix $c>0$, $x>0$ and $t\in[0,1]$.}
	Let $\F$ be the following class of functions {$f_{t_n}$} on ${[t,t_n]}\times\{0,1\}\times\R^p$,
	\[
	\F=\left\{f_{t_n}(u,\delta,z)= e^{\beta'_0 z}\pi(u;t,t_n)\int_{u}^{t_n}\frac{\lambda_0(v)}{\Phi(v;\beta_0)}\,\dd v\,\bigg|\,t_n\in[t,t+cn^{-1/3}T_n]\right\}.
	\]	
	with envelope function
	\[
	{F}(u,\delta,z)=e^{\beta'_0 z}\pi(u;t,t+cn^{-1/3}T_n)\int_{u}^{t+cn^{-1/3}T_n}\frac{\lambda_0(v)}{\Phi(v;\beta_0)}\,\dd v.
	\]
	By Markov inequality we have
	\[
	\p\left[\sup_{t_n\in[t,t+cn^{-1/3}T_n]}|R^2_n(t,t_n)|>x\right]{\leq} n^{-q/2}x^{-q}\E\left[\sup_{f\in\F}\left|\int f(u,\delta,z)\,\dd\sqrt{n}(\p_n-\p)(u,\delta,z) \right|^q\right].
	\]
	Moreover, as in \eqref{eqn:q_moment_emipirical}, 
	\[
	\E\left[\sup_{f\in\F}\left|\int f(u,\delta,z)\,\dd\sqrt{n}(\p_n-\p)(u,\delta,z) \right|^q\right]^{1/q}\lesssim J_{[]}(1,\F,L_2(\p))\Vert F\Vert_{L_2(\p)}+n^{-1/2+1/q}\Vert F\Vert_{L_q(\p)}.
	\]
	The constants in the inequalities $\lesssim$ are universal.
	In this case, we have
	\[
	\begin{split}
	\Vert F\Vert_{L_q(\p)}^q&=\int e^{q\beta'_0z} {\pi}(u;t,t+cn^{-1/3}T_n)\left(\int_{u}^{t+cn^{-1/3}T_n}\frac{\lambda_0(v)}{\Phi(v;\beta_0)}\,\dd v \right)^q\,\dd \p(u,\delta,z)\\
	&=\int_{\R^p} e^{q\beta'_0z}g_n(z) \,\dd F_Z(z)+o\left(\int_{\R^p} e^{q\beta'_0z}g_n(z) \,\dd F_Z(z)\right)
	\end{split}
	\]
	for 
	\[
	\begin{split}
	g_n(z)&=\int_{t}^{t+cn^{-1/3}T_n}\left[t+cn^{-1/3}T_n-u\right]^q \left(\frac{\lambda_0({t})}{\Phi({t};\beta_0)}\right)^q\,\dd H(u|z).
	\end{split}
	\]
	Since $\lambda_{0},$ $\Phi$ and the density of $T$ given $z$ are uniformly bounded above and below away from zero, there exist constants $c_1>0$, $c_2>0$ {independent of $t$} such that 
	\[
	c_1(n^{-1/3}T_n)^{q+1}\leq \inf_{z} g_n(z)\leq \sup_{z}{g_n(z)}\leq c_2(n^{-1/3}T_n)^{q+1}
	\]
	Hence {under Assumption (A3),} if $J_{[]}(1,\F,L_2(\p))$ is bounded, 
	as in the previous lemma it follows that 
	\[
	\p\left[\sup_{t_n\in[t,t+cn^{-1/3}T_n]}|R^2_n(t,t_n)|>x\right]\lesssim n^{1-q}x^{-q}.
	\]
	It remains to show that $J_{[]}(1,\F,L_2(\p))$ is bounded. We first compute $N_{[]}\left(\epsilon\Vert F\Vert_{L_2(\p)},\F,L_2(\p) \right)$. Divide the interval $[t,t+cn^{-1/3}T_n]$ in $M$ subintervals of length $L=cn^{-1/3}T_n/M$. For $i=1,\dots,M$, let
	\[
	l_i(u,\delta,z)=e^{\beta'_0z}\1_{\left\{t<u\leq t+(i-1)L\right\}}\int_{u}^{t+(i-1)L}\frac{\lambda_0(v)}{\Phi(v;\beta_0)}\,\dd v,
	\]
	\[
	L_i(u,\delta,z)=e^{\beta'_0z}\1_{\left\{t<u\leq t+iL\right\}}\int_{u}^{t+iL}\frac{\lambda_0(v)}{\Phi(v;\beta_0)}\,\dd v.
	\]
	Consider the brackets $[l_1,L_1],\dots,[l_M,L_M]$. Since, for each $u\in[0,T_n]$, there exists $i$ such that $f_u\in [l_i,L_i]$, they cover all the class $\F$. Moreover, {using the assumption (A3) with $q=2$ together with the assumption that the conditional distribution of $T$ given $Z=z$ has a density that is bounded uniformly in $z$,}  we have
	\[
	\begin{split}
	\Vert L_i-l_i\Vert_{L_2(\p)}^2
	&\lesssim\int e^{2\beta'_0z}\1_{\{t<u\leq t+iL\}}\left(\int_{t+(i-1)L}^{t+iL}\frac{\lambda_0(v)}{\Phi(v;\beta_0)}\,\dd v\right)^2\,\dd\p(u,\delta,z)\\
	&\quad+\int e^{2\beta'_0z}\1_{\{t+(i-1)L<u\leq t+iL\}}\left(\int_{u}^{t+(i-1)L}\frac{\lambda_0(v)}{\Phi(v;\beta_0)}\,\dd v\right)^2\,\dd\p(u,\delta,z)\\
	&\lesssim iL^3+{((i-1)L+n^{-1/3}T_n)^2L}\lesssim L^3M^2.
	\end{split}
	\]
	If we let 
	\[
	M=\left\lfloor \frac{KT_n^3}{n\epsilon^2 \Vert F\Vert_{L_2(\p)}^2}+1\right\rfloor
	\]
	for a properly chosen constant $K>0$,	then 
	\[
	\Vert L_i-l_i\Vert_{L_2(\p)}\lesssim L^{3/2}M\leq \epsilon\Vert F\Vert_{L_2(\p)}.
	\]
	It follows that 
	\[
	N_{[]}\left(\epsilon\Vert F\Vert_{L_2(\p)},\F,L_2(\p) \right)\leq\left\lfloor \frac{KT_n^3}{n\epsilon^2 \Vert F\Vert_{L_2(\p)}^2}+1\right\rfloor.
	\]
	{Since we obtain the same entropy bound as in \eqref{eq: entropy}, the proof can be completed with the same arguments as for the proof of Lemma \ref{le:R_1}.}
\end{proof}
\begin{proof}[Proof of Step 1 in Theorem \ref{theo:CLT}]
	Let 
	\[
	I_1=\int_0^1\left[\hat{\lambda}_n(t)-\lambda_0(t)\right]_+^p\,\text{d}t\quad\text{and}\quad J_1=\int_0^1\int_0^{(\lambda_0(0)-\lambda_0(t))^p}\1_{\{\hat{\lambda}_n(t)\geq \lambda_0(t)+a^{1/p}\}}\,\dd a\,\dd t.
	\]
	Since $\hat{\lambda}_n(t)<{\lambda_0(0)}$ for $t>\hat{U}_n(\lambda_0(0))$, we have
	\[
	\begin{split}
	0&\leq (I_1-J_1)\1_{E_n}=\1_{E_n}\int_0^{\hat{U}_n(\lambda_0(0))}\int_{(\lambda_0(0)-\lambda_0(t))^p}^\infty \1_{\{\hat{\lambda}_n(t)\geq \lambda_0(t)+a^{1/p}\}}\,\dd a\,\dd t\\
	&\leq \1_{E_n} \int_0^{\hat{U}_n(\lambda_0(0))}\left[\hat{\lambda}_n(t)-\lambda_0(t)\right]_+^p\,\dd t\\
	&\leq \1_{E_n}\int_0^{n^{-1/3}\log n} \left[\hat{\lambda}_n(t)-\lambda_0(t)\right]_+^p\,\dd t+\left|\hat{\lambda}_n(0)-\lambda_0(1)\right|^p \1_{\{n^{1/3}\hat{U}_n(\lambda_0(0))>\log n \}\cap E_n}.
	\end{split}
	\]
	Let $p'\in(p-1/2,2)$be such that $1\leq p'\leq p$. From Lemma \ref{le:boundaries} and Lemma \ref{le:inv_tail_prob1} it follows that
	\[
	(I_1-J_1)\1_{E_n}\leq \1_{E_n} \left|\hat{\lambda}_n(0)-\lambda_0(1)\right|^{p-p'}\int_0^{n^{-1/3}\log n} \left[\hat{\lambda}_n(t)-\lambda_0(t)\right]_+^{p'}\,\dd t +o_P\left(n^{-p/3-1/6}\right).
	\]
	Moreover, as $p'\in[1,2)$, Theorem \ref{theo:bound_expectation_L_P} implies that 
	\[
	\E\left[\1_{E_n} \int_{n^{-1}}^{n^{-1/3}\log n} \left[\hat{\lambda}_n(t)-\lambda_0(t)\right]_+^{p'}\,\dd t\right]\lesssim n^{-(1+p')/3}\log n.
	\]
	Since $p'>p-1/2$ and $p\leq 5/2$,
	\[
	\begin{split}
	&\1_{E_n} \int_0^{n^{-1/3}\log n} \left[\hat{\lambda}_n(t)-\lambda_0(t)\right]_+^{p'}\,\dd t\\
	&=\1_{E_n} \int_0^{n^{-1}} \left[\hat{\lambda}_n(t)-\lambda_0(t)\right]_+^{p'}\,\dd t+O_P\left(n^{-(1+p')/3}\log n\right)\\
	&\leq \1_{E_n} \left|\hat{\lambda}_n(0)-\lambda_0(1)\right|^{p'} O(n^{-1})+o_P\left(n^{-p/3-1/6}\right)\\
	\end{split}
	\]
	{whence $I_1\1_{E_n}=J_1\1_{E_n}+o_P\left(n^{-p/3-1/6}\right).$}
	With a change of variable $b=\lambda_0(t)+a^{1/p}$ we get
	\[
	I_1\1_{E_n}=\1_{E_n}\int_{\lambda_0(1)}^{\lambda_0(0)}\int _{U(b)}^{\hat{U}_n(b)}p(b-\lambda_0(t))^{p-1}\1_{\{U(b)<\hat{U}_n(b) \}}\,\text{d}t\,\dd b+o_P\left(n^{-p/3-1/6}\right).
	\]
	By a Taylor expansion and \eqref{eqn:assumption_derivative} we have
	\[
	\left|[b-\lambda_0(t)]^{p-1}-\left[(U(b)-t)\lambda'_0\circ U(b)\right]^{p-1}\right|\lesssim (t-U(b))^{p-1+s}
	\] 
	for all $b\in(\lambda_0(1),{\lambda_0(0)})$ and $t\in(U(b),1)$. Integrating \eqref{eqn:inv} shows that, for all $q\geq 1$, there exists $K>0$ such that 
	\[
	\E\left[\1_{E_n}\left(n^{1/3}|\hat{U}_n(a)-U(a)|\right)^{q} \right]\leq K, \qquad\text{for all }a\in\R.
	\]
	It follows that 
	\[
	\begin{split}
	I_1\1_{E_n}&=\1_{E_n}\int_{\lambda_0(1)}^{\lambda_0(0)}\int _{U(b)}^{\hat{U}_n(b)}p(t-U(b))^{p-1}|{\lambda_0}'(U(b))|^{p-1}\1_{\{U(b)<\hat{U}_n(b) \}}\,\text{d}t\,\dd b\\
	&\quad+O_P\left(n^{-(p+s)/3}\right)+o_P\left(n^{-p/3-1/6}\right)\\
	&=\1_{E_n}\int_{\lambda_0(1)}^{\lambda_0(0)}{(\hat{U}_n(b)-U(b))^{p}|\lambda_0}'(U(b))|^{p-1}\1_{\{U(b)<\hat{U}_n(b) \}}\,\dd b+o_P\left(n^{-p/3-1/6}\right)\\
	&={\1_{E_n}\int_{\lambda_0(1)}^{\lambda_0(0)}(\hat{U}_n(b)-U(b))_+^{p}|U'(b))|^{p-1}\dd b+o_P\left(n^{-p/3-1/6}\right)}.
	\end{split}
	\]
	In the same way we can {prove that}
	\[
	{\1_{E_n}}\int_0^1\left[\lambda_0(t)-\hat{\lambda}_n(t)\right]_+^p\,\text{d}t={\1_{E_n}\int_{\lambda_0(1)}^{\lambda_0(0)}(U(b)-\hat{U}_n(b))_+^{p}|U'(b))|^{p-1}\dd b+o_P\left(n^{-p/3-1/6}\right)}
	\] 	
	{and the result follows.}
\end{proof}

\begin{lemma}
	\label{le:R_3}
	Let $T_n=n^{\gamma}$ for some $\gamma\in (0,1/3)$  and $b=n^{-1/4}$.  	
	Assume that, given $z$, the follow up time $T$ has a continuous density uniformly bounded from above and from below away from zero. 
	For   each $2\leq q<1/(\gamma+1/24)$,  there {exists} $K>0$ such that, for $x\geq 0$  and all $a\in\R$, 
	\[
	\p\left[n^{1/3}T_n\sup_{|v-U(a)|\leq b}\left|(\Phi_n -\Phi)(v;\beta_0)-(\Phi_n-\Phi )(U(a);\beta_0)\right|>x\right]\leq K x^{-q}n^{1-q/3}.
	\]
	
\end{lemma}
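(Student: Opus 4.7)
The plan is to follow very closely the scheme used for Lemmas \ref{le:R_1} and \ref{le:R_2}: apply Markov's inequality with $q$-th moments to a centered empirical process indexed by a class of functions, then control the $q$-th moment via a bracketing maximal inequality (Theorems 2.14.5 and 2.14.2 in \cite{VW96}). First I would rewrite the increment as
\[
(\Phi_n-\Phi)(v;\beta_0)-(\Phi_n-\Phi)(U(a);\beta_0)=\int f_v(t,\delta,z)\,\dd(\p_n-\p)(t,\delta,z),
\]
with $f_v(t,\delta,z)=(\1_{\{t\geq v\}}-\1_{\{t\geq U(a)\}})e^{\beta_0'z}$, and consider the class
\[
\F=\left\{f_v\;:\;v\in[U(a)-b,U(a)+b]\right\},
\]
with envelope $F(t,\delta,z)=\1_{\{|t-U(a)|\leq b\}}e^{\beta_0'z}$. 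Under (A3) together with the assumption that the conditional density of $T$ is uniformly bounded, direct computation gives $\|F\|_{L_2(\p)}^2\lesssim b$ and $\|F\|_{L_q(\p)}^q\lesssim b$.

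Next I would estimate the bracketing numbers. Partitioning $[U(a)-b,U(a)+b]$ into $M$ intervals of length $2b/M$ produces brackets $[l_i,L_i]$ with
\[
\|L_i-l_i\|_{L_2(\p)}^2\lesssim\int\1_{\{v_{i-1}\leq t<v_i\}}e^{2\beta_0'z}\,\dd\p\lesssim \frac{b}{M},
\]
so taking $M\approx 1/\epsilon^2$ yields $N_{[]}(\epsilon\|F\|_{L_2(\p)},\F,L_2(\p))\lesssim 1/\epsilon^2$ and hence $J_{[]}(1,\F,L_2(\p))<\infty$. Plugging into the maximal inequality gives
\[
\E\left[\sup_{f\in\F}\left|\int f\,\dd\sqrt{n}(\p_n-\p)\right|^q\right]\lesssim\|F\|_{L_2(\p)}^q+n^{-q/2+1}\|F\|_{L_q(\p)}^q\lesssim b^{q/2}+n^{-q/2+1}b.
\]

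Finally, applying Markov's inequality,
\[
\p\left[n^{1/3}T_n\sup_{|v-U(a)|\leq b}|(\Phi_n-\Phi)(v;\beta_0)-(\Phi_n-\Phi)(U(a);\beta_0)|>x\right]\lesssim\frac{n^{q/3}T_n^q n^{-q/2}}{x^q}\bigl(b^{q/2}+n^{-q/2+1}b\bigr).
\]
With $b=n^{-1/4}$ and $T_n=n^\gamma$, the first term is $x^{-q}n^{q(\gamma+1/3-1/2-1/8)}=x^{-q}n^{q(\gamma-7/24)}$ and the second is $x^{-q}n^{-2q/3+3/4+q\gamma}$. The upper bound $x^{-q}n^{1-q/3}$ follows precisely from the condition $q\leq 1/(\gamma+1/24)$ (which is tight for the first contribution); the second contribution is always smaller because $\gamma<1/3$. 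The argument is uniform in $a\in\R$ since all the constants depend only on model-wide quantities, so this finishes the proof. The only genuinely delicate point is checking the sharp exponent balance that produces the constraint $q<1/(\gamma+1/24)$; the bracketing entropy itself is easy because $\F$ is a simple class of indicator-weighted functions, uniformly dominated by an $L_q$ envelope thanks to (A3).
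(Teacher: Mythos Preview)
Your proof is correct and follows essentially the same approach as the paper: Markov's inequality, the bracketing maximal inequalities from \cite{VW96} (Theorems 2.14.5 and 2.14.2), the same envelope estimate $\|F\|_{L_q(\p)}^q\lesssim b$, and the same bracketing count leading to a bounded entropy integral. The only cosmetic difference is that the paper treats $v\in[U(a),U(a)+b]$ and $v\in[U(a)-b,U(a)]$ separately (with nonnegative indicator classes) whereas you handle both sides at once via $f_v=(\1_{\{t\geq v\}}-\1_{\{t\geq U(a)\}})e^{\beta_0'z}$; since $v\mapsto f_v$ is monotone this changes nothing in the bracketing argument.
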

\begin{proof}
	Let $\F$ be the following class of functions {$f_v$} on ${[U(a),v]}\times\{0,1\}\times\R^p$,
	\[
	\F=\left\{f_v(t,\delta,z)=  \1_{\left\{U(a)<t\leq v\right\}}e^{\beta'_0z}\,\bigg|\,v\in\left[U(a),U(a)+b\right]\right\}
	\]	
	with envelope function
	\[
	F(t,\delta,z)=\1_{\left\{U(a)<t\leq U(a)+b\right\}}e^{\beta'_0z}.
	\]
	By Markov inequality we have
	\[
	\begin{split}
	&\p\left[n^{1/3}T_n\sup_{v\in\left[U(a),U(a)+b\right]}\left|(\Phi_n -\Phi)(v;\beta_0)-(\Phi_n-\Phi )(U(a);\beta_0)\right|>x \right]\\
	&\lesssim n^{-q/6}x^{-q}T_n^q\E\left[\sup_{f\in\F}\left|\int f(t,\delta,z)\,\dd\sqrt{n}(\p_n-\p)(t,\delta,z) \right|^q\right].
	\end{split}
	\]
	As in \eqref{eqn:q_moment_emipirical}, it follows that 
	\[
	\begin{split}
	\E\left[\sup_{f\in\F}\left|\int f(t,\delta,z)\,\dd\sqrt{n}(\p_n-\p)(t,\delta,z) \right|^q\right]^{1/q}\lesssim J_{[]}(1,\F,L_2(\p))\Vert F\Vert_{L_2(\p)}+n^{-1/2+1/q}\Vert F\Vert_{L_q(\p)}
	\end{split}
	\]
	for a universal constant in $\lesssim$.
	For the envelope function, we obtain
	\[
	\begin{split}
	\Vert F\Vert_{L_q(\p)}^q&=\int \1_{\left\{U(a)<t\leq U(a)+b\right\}}e^{q\beta'_0z}\,\dd \p(t,\delta,z)\\
	&=\int_{\R^p}e^{q\beta_0z}\int_{U(a)}^{U(a)+n^{-1/3}}\,\dd H(t|z)\,\dd F_Z(z)\,\in[c_1b,c_2b],
	\end{split}
	\]
	for some $c_1,c_2>0$ independent of $a$ because the density of $T$ given $z$ is uniformly bounded from above and below away from zero. Hence, if $J_{[]}(1,\F,L_2(\p))$ is bounded, we obtain
	\[
	\begin{split}
	&\E\left[\sup_{f\in\F}\left|\int f(t,\delta,z)\,\dd\sqrt{n}(\p_n-\p)(t,\delta,z) \right|^q\right]\\
	&\lesssim \Vert F\Vert_{L_2(\p)}^q+n^{-q/2+1}\Vert F\Vert_{L_q(\p)}^q\\
	&{\lesssim}  b^{q/2}+n^{-q/2+1}b
	\end{split}
	\]
	and it follows that,
	\[
	\begin{split}
	&\p\left[n^{1/3}T_n\sup_{v\in\left[U(a),U(a)+b\right]}\left|(\Phi_n -\Phi)(v;\beta_0)-(\Phi_n-\Phi )(U(a);\beta_0)\right|>x \right]\\
	&\lesssim n^{-7q/24}x^{-q}T_n^{q}+n^{-2q/3+3/4}x^{-q}T_n^{q}\\
	&\lesssim n^{1-q/3}x^{-q},
	\end{split}
	\]
	{since $q\leq 1/(\gamma+1/24)$.} Again, the constant in $\lesssim$ is independent of $a$.
	It remains to show that $J_{[]}(1,\F,L_2(\p))$ is bounded. We first compute $N_{[]}\left(\epsilon\Vert F\Vert_{L_2(\p)},\F,L_2(\p) \right)$. Divide the interval $\left[U(a),U(a)+b\right]$ in $M$ subintervals of length $L=b/M$. For $i=1,\dots,M$, define
	\[
	l_i(t,\delta,z)= \1_{\left\{U(a)<t\leq U(a)+(i-1)L\right\}}e^{\beta'_0z},\qquad L_i(t,\delta,z)= \1_{\left\{U(a)<t\leq U(a)+iL\right\}}e^{\beta'_0z}.
	\]
	Consider the brackets $[l_1,L_1],\dots,[l_M,L_M]$. Since, for each $u\in{\left[U(a),U(a)+b\right]}$, there exists $i$ such that $f_u\in [l_i,L_i]$, they cover all the class $\F$. Moreover, we have
	\[
	\begin{split}
	\Vert L_i-l_i\Vert_{L_2(\p)}^2
	&\lesssim\int \1_{\{U(a)+(i-1)L<t\leq U(a)+iL\}}e^{2\beta'_0z}\,\dd\p(t,\delta,z)\lesssim L.
	\end{split}
	\]
	Since the density of $T$ given $z$ is uniformly bounded from above, the constant in the inequality $\lesssim$ is independent  of $t$.
	If we take 
	\[
	M=\left\lfloor\frac{Kb}{{\epsilon^2 \Vert F\Vert_{L_2(\p)}^2}}+1\right\rfloor
	\]
	for some properly chosen constant $K$,	then
	\[
	\Vert L_i-l_i\Vert_{L_2(\p)}\leq \epsilon\Vert F\Vert_{L_2(\p)}.
	\]
	It follows that 
	\[
	N_{[]}\left(\epsilon\Vert F\Vert_{L_2(\p)},\F,L_2(\p) \right)\leq \left\lfloor\frac{Kb}{{\epsilon^2 \Vert F\Vert_{L_2(\p)}^2}}+1\right\rfloor 
	\]
	and consequently
	\[
	\begin{split}
	J_{[]}(1,\F,L_2(\p))&\leq \int_0^{1}\sqrt{1+\log \left\lfloor\frac{Kb}{{\epsilon^2 \Vert F\Vert_{L_2(\p)}^2}}+1\right\rfloor  }\,\dd\epsilon\\
	&{\lesssim 1+\int_0^{(Kb)^{1/2}/\Vert F\Vert_{L_2(\p)}}\sqrt{1+\log \left(\frac{Kb}{\epsilon \Vert F\Vert_{L_2(\p)}}\right)  }\,\dd\epsilon}\\
	&\lesssim 1+\frac{{(Kb)^{1/2}}}{\Vert F\Vert_{L_2(\p)}}\int_0^{{1}}\sqrt{{1+\log (x^{-2})}}\,\dd x\\
	&\leq K'
	\end{split}
	\] 
	because $\Vert F\Vert_{L_2(\p)}=cb^{1/2}+o(b^{1/2})$.
	Similarly, it can be shown that 
	\[
	\p\left[n^{1/3}T_n\sup_{v\in\left[U(a)-b,U(a)\right]}\left|(\Phi_n -\Phi)(v;\beta_0)-(\Phi_n-\Phi )(U(a);\beta_0)\right|>x \right]\lesssim n^{1-q/3}x^{-q}.
	\]
\end{proof}

\section{CLT under a parametric baseline distribution}
\label{sec:parametric}
In what follows, {without loss of generality we assume that $|\hat\theta_n-\theta|\leq n^{-1/2+\alpha}$ and \eqref{eqn:par_est-true} hold on $E_n$, where $\alpha\in(0,1/6)$ is fixed. $\p_\theta$ and $\E_\theta$ denote the conditional probability and expectation, given $\hat\theta_n$. Moreover, $U_{\hat\theta}$ is defined in the same manner as $\hat U_n$ in \eqref{def:U_n} but with $\Lambda_n$ replaced by $\Lambda_{\hat\theta}$, the cumulative baseline hasard corresponding to $\lambda_{\hat\theta}$.}
\begin{lemma}
	\label{le:R_1_2}
	Let $T_n=n^{\gamma}$ for some $\gamma\in (0,1/3)$. {Let} $R_n^1$ be as in  \eqref{def:R_1}.
	Suppose that {$x\mapsto \Phi(x;\beta_0)$ is  continuously differentiable and it is  bounded from below by a strictly positive constant. Let $c>0$. For each $2\leq q<2/(3\gamma)$, there exist $K>0$ such that, for all $a\in\R$ and $x\geq 0$,}
	\[
	\p_\theta\left[\left\{\sup_{|t_n-U_{\hat\theta}(a)|\leq cn^{-1/3}T_n}|R^1_n(U_{\hat\theta}{(a)},t_n)|>x\right\}\cap E_n\right]\leq Kx^{-q} n^{1-q}.
	\]
\end{lemma}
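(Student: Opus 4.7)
The plan is to reduce Lemma~\ref{le:R_1_2} directly to Lemma~\ref{le:R_1} via a conditioning argument. By the setup of Theorem~\ref{theo:CLT2}, the parametric estimator $\hat\theta_n$ is constructed from a subsample that is independent of the nonparametric subsample, and in particular independent of the empirical measure $\p_n$ that appears in the definition \eqref{def:R_1} of $R^1_n$. Since $R^1_n$ depends on the data only through $\p_n$ (with $\p$ and $\Phi(\cdot;\beta_0)$ deterministic, and with no occurrence of $\hat\beta_n$), it is independent of $\hat\theta_n$, so its conditional law given $\hat\theta_n$ equals its unconditional law. On the other hand, conditionally on $\hat\theta_n$, the function $U_{\hat\theta}$ from \eqref{def:U_theta} is deterministic and $U_{\hat\theta}(a)$ is a fixed point lying in $[\epsilon,M]\subseteq[0,1]$.

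Given this, I would simply apply Lemma~\ref{le:R_1} with $t=U_{\hat\theta}(a)$. The key point is that the bound in Lemma~\ref{le:R_1} is uniform in $t\in[0,1]$ and already contains no indicator $\1_{E_n}$. Hence, for every realization of $\hat\theta_n$,
\[
\p_\theta\!\left[\sup_{|t_n-U_{\hat\theta}(a)|\leq cn^{-1/3}T_n}|R^1_n(U_{\hat\theta}(a),t_n)|>x\right]\leq K x^{-q}n^{1-q}
\]
for all $a\in\R$, $x\geq 0$ and $q\in[2,2/(3\gamma))$. Since intersecting the event inside the probability with $E_n$ can only make it smaller, the bound stated in the lemma follows at once.

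The only subtlety is in making the independence step rigorous, i.e.\ justifying that the conditional law of $R^1_n$ given $\hat\theta_n$ coincides with its unconditional law; this comes for free from the two-subsample construction together with the observation that $R^1_n$ is measurable with respect to the nonparametric subsample alone. Beyond this, no new empirical-process work is needed: the bracketing-entropy computation and maximal-inequality argument from the proof of Lemma~\ref{le:R_1} carry over verbatim, with the harmless simplification that in the present setting one only ever needs $t\in[\epsilon,M]$, where the standing hypotheses on $\Phi(\cdot;\beta_0)$ and on the density of $T$ hold uniformly. I therefore do not anticipate any serious obstacle; the lemma is essentially a bookkeeping corollary of Lemma~\ref{le:R_1}, recorded here in the form that will be used in the proof of Theorem~\ref{theo:CLT2}.
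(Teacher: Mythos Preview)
Your argument is correct, and it is a genuinely different (and shorter) route from the one the paper takes.

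The paper does \emph{not} simply invoke Lemma~\ref{le:R_1} at the fixed point $t=U_{\hat\theta}(a)$. Instead, it uses that on $E_n$ one has $|U_{\hat\theta}(a)-U(a)|\lesssim n^{-1/2+\alpha}$ (see \eqref{eqn:par_est-true}) and enlarges the function class to
\[
\F=\Big\{f_{t_n,s}(u,\delta,z)=\delta\,\pi(u;U(a)+s,t_n)\Big(\tfrac{1}{\Phi(u;\beta_0)}-\tfrac{1}{\Phi(U(a)+s;\beta_0)}\Big):\ |s|\le n^{-1/2+\alpha},\ t_n\in[U(a)+s,U(a)+s+cn^{-1/3}T_n]\Big\},
\]
so that on $E_n$ the random supremum $\sup_{|t_n-U_{\hat\theta}(a)|\le cn^{-1/3}T_n}|R^1_n(U_{\hat\theta}(a),t_n)|$ is dominated by a supremum over the \emph{deterministically indexed} class $\F$. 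This yields
\[
\p_\theta\Big[\{\cdot\}\cap E_n\Big]\le \p\Big[\sup_{f\in\F}\Big|\int f\,\dd\sqrt n(\p_n-\p)\Big|>\sqrt n\,x\Big],
\]
and the bracketing--entropy computation of Lemma~\ref{le:R_1} is then redone for the two-parameter class $\F$ (now with brackets indexed by $(i,j)$), leading to the same bound $Kx^{-q}n^{1-q}$.

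Your shortcut bypasses all of this: you condition on $\hat\theta_n$, use the independence of $\hat\theta_n$ from the empirical measure $\p_n$ of the nonparametric subsample to identify the conditional law of $R^1_n(\cdot,\cdot)$ with its unconditional law, and then apply Lemma~\ref{le:R_1} at the (now deterministic) point $t=U_{\hat\theta}(a)\in[\epsilon,M]\subset[0,1]$, exploiting that the constant $K$ there is uniform in $t$. Dropping the intersection with $E_n$ is harmless, as you note. Both routes ultimately rely on the same independence to pass from $\p_\theta$ to $\p$; the paper's version buys an explicit domination by a non-random class (a device that is reused in the bootstrap section), whereas your version is quicker and perfectly sufficient for the lemma as stated.
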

\begin{proof}
	Let {$\pi$} be as in \eqref{def:pi} 	and let $\F$ be the  class of functions on $[U(a)-n^{-1/2+\alpha},1]\times\{0,1\}\times\R^p$ of the form
	\[
	f_{t_n,s}(u,\delta,z)= \delta {\pi}\left(u;U(a)+s,t_n\right)\left(\frac{1}{\Phi(u;\beta_0)}-\frac{1}{\Phi(U(a)+s;\beta_0)}\right)
	\]
	for $t_n\in[U(a)+s,U(a)+s+cn^{-1/3}T_n]$ and $|s|\leq n^{-1/2+\alpha}$. 
	The  envelope function is 
	\[
	F_n(u,\delta,z)=\delta \1_{\left\{U(a)-n^{-1/2+\alpha}<u\leq U(a)+n^{-1/2+\alpha}+n^{-1/3}T_n\right\}}\left(\frac{1}{\Phi(u;\beta_0)}-\frac{1}{\Phi(U(a)-n^{-1/2+\alpha};\beta_0)}\right).
	\]
	Then, {using that \eqref{eqn:par_est-true} holds on $E_n$,}
	\[
	\begin{split}
	&\p_\theta\left[\left\{\sup_{|t_n-U_{\hat\theta}(a)|\leq cn^{-1/3}T_n}|R^1_n(U_{\hat\theta}{(a)},t_n)|>x\right\}\cap E_n\right]\\
	&\leq \p\left[\sup_{f\in\F}\left|\int f(u,\delta,z)\,\dd\sqrt{n}(\p_n-\p)(u,\delta,z)\right|>\sqrt{n}x\right].
	\end{split}
	\]
	The rest of the proof remains the same as in Lemma \ref{le:R_1} because 
	\[
	\begin{split}
	\Vert F\Vert_{L_q(\p)}^q&=\int_{U(a)-n^{-1/2+\alpha}}^{U(a)+n^{-1/2+\alpha}+n^{-1/3}T_n}(u-U(a)+n^{-1/2+\alpha})^q \left(\frac{\Phi'(U(a)-n^{-1/2+\alpha};\beta_0)}{\Phi(U(a)-n^{-1/2+\alpha};\beta_0)^2}\right)^q\,\dd H^{uc}(u)\\
	&\quad+o\left((n^{-1/3}T_n)^{q+1}\right)\\
	&={(q+1)^{-1}\left(\frac{\Phi'(U(a)-n^{-1/2+\alpha};\beta_0)}{\Phi(U(a)-n^{-1/2+\alpha};\beta_0)^2}\right)^q}(n^{-1/3}T_n)^{q+1}+o\left((n^{-1/3}T_n)^{q+1}\right),
	\end{split}
	\]
	{where the small-$o$ term is uniform in $t$.} It remains to show that $J_{[]}(1,\F,L_2(\p))$ is bounded {uniformly in $t$}. We first compute $N_{[]}\left(\epsilon\Vert F\Vert_{L_2(\p)},\F,L_2(\p) \right)$. Divide the interval $[0,cn^{-1/3}T_n]$ in $M$ subintervals of length $L_1=cn^{-1/3}T_n/M$ and divide the interval $[-n^{-1/2+\alpha},n^{-1/2+\alpha}]$ in $M$ subintervals of length $L_2=n^{-1/2+\alpha}/M$. For $i,j=1,\dots,M$,  let $d_j=-n^{-1/2+\alpha}+jL_2$ and $c_{i,j}=U(a)+d_j+iL_1.$ Define 
	\[
	l_{i,j}(u,\delta,z)=\delta\1_{\left\{U(a)+d_{j}<u\leq c_{i-1,j-1}\right\}}\left(\frac{1}{\Phi(u;\beta_0)}-\frac{1}{\Phi(U(a)+d_{j-1};\beta_0)}\right),
	\]
	\[
	L_{i,j}(u,\delta,z)=\delta\1_{\left\{U(a)+d_{j-1}<t\leq c_{i,j}\right\}}\left(\frac{1}{\Phi(u;\beta_0)}-\frac{1}{\Phi(U(a)+d_j;\beta_0)}\right).
	\]
	The brackets $[l_{i,j},L_{i,j}]$, for $i,j=1,\dots,M$, cover all the class $\F$. Moreover, we have
	\[
	\begin{split}
	&\Vert L_{i,j}-l_{i,j}\Vert_{L_2(\p)}^2\\
	&\lesssim\int \delta\left[\1_{\{U(a)+d_{j-1}<u\leq U(a)+d_{j}\}}+\1_{\{c_{i-1,j-1}<u\leq c_{i,j}}\right]\left(\frac{1}{\Phi(u;\beta_0)}-\frac{1}{\Phi(U(a)+d_j;\beta_0)}\right)^2\,\dd\p(u,\delta,z)\\
	&\quad+\int \delta\1_{\{U(a)+d_{j}<u\leq c_{i-1,j-1}\}}\left(\frac{1}{\Phi(U(a)+d_{j-1};\beta_0)}-\frac{1}{\Phi(U(a)+d_j;\beta_0)}\right)^2\,\dd\p(u,\delta,z)\\
	&\lesssim L_1\left(n^{-1/3}T_n\right)^2+L_1^2n^{-1/3}T_n\lesssim L_1\left(n^{-1/3}T_n\right)^2,
	\end{split}
	\]
	{using that $\alpha<1/6$.}
	If we take 
	\[
	M=\left\lfloor\frac{KT_n^3}{n{\epsilon^2 \Vert F\Vert_{L_2(\p)}^2}}+1\right\rfloor,
	\]
	for a properly chosen constant $K>0$, we get
	\[
	\Vert L_{i,j}-l_{i,j}\Vert_{L_2(\p)}\leq \epsilon\Vert F\Vert_{L_2(\p)}.
	\]
	It follows that 
	\[
	N_{[]}\left(\epsilon\Vert F\Vert_{L_2(\p)},\F,L_2(\p) \right)\leq {\left\lfloor\frac{KT_n^3}{n\epsilon^2 \Vert F\Vert_{L_2(\p)}^2}+1\right\rfloor}
	\]
	and consequently, as in Lemma \ref{le:R_1}, 
	\[
	J_{[]}(1,\F,L_2(\p))\leq \int_0^{1}\sqrt{1+{\log} \left\lfloor\frac{KT_n^3}{n{\epsilon^2 \Vert F\Vert_{L_2(\p)}^2}}+1\right\rfloor  }\,\dd\epsilon\leq K
	\] 
\end{proof}
\begin{lemma}
	\label{le:R_2_2}
	Let $T_n=n^{\gamma}$ for some $\gamma\in (0,1/3)$. Let $R_n^2$ be as in \eqref{def:R_2}.	Suppose $x\mapsto \Phi(x;\beta_0)$ and $\lambda_0$  are continuously differentiable bounded away from zero. 
	Assume that, given $z$, the follow up time $T$ has a continuous density uniformly bounded in $z$ and $t$, from above and below away from zero, and assume furthermore that (A3) holds.
	For each $q\in[2,2/(3\gamma))$, and $c>0$, there exist $K>0$ such that, for all {$a\in\R$} and $x\geq 0$,
	\[
	\p_\theta\left[\left\{\sup_{|t_n-U_{\hat\theta}(a)|\leq cn^{-1/3}T_n}|R^2_n(U_{\hat\theta}{(a)},t_n)>x\right\}\cap E_n\right]\leq Kx^{-q} n^{1-q}.
	\]
\end{lemma}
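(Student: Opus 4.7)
The plan is to follow closely the strategy of Lemma \ref{le:R_1_2}, which handled the parametric analogue of Lemma \ref{le:R_1}, but now applied to the more involved term $R_n^2$ from \eqref{def:R_2}. The key observation is that on the event $E_n$ we have $|U_{\hat\theta}(a)-U(a)|\lesssim n^{-1/2+\alpha}$ uniformly in $a$ by \eqref{eqn:par_est-true}, and since $\alpha<1/6$ and $\gamma>0$ we have $n^{-1/2+\alpha}\ll n^{-1/3}T_n$. This means the random centering point $U_{\hat\theta}(a)$ lies in a deterministic neighbourhood of $U(a)$ of size asymptotically negligible compared with $n^{-1/3}T_n$, so the random supremum can be absorbed into a slightly enlarged deterministic supremum over a two-parameter class.

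More precisely, I would introduce the class of functions on $\R\times\{0,1\}\times\R^p$ given by
\[
\mathcal{F}=\left\{f_{t_n,s}(u,\delta,z)=e^{\beta_0' z}\pi(u;U(a)+s,t_n)\int_u^{t_n}\frac{\lambda_0(v)}{\Phi(v;\beta_0)}\,\dd v \,:\,|s|\leq n^{-1/2+\alpha},\ t_n\in[U(a)+s,U(a)+s+cn^{-1/3}T_n]\right\}
\]
with envelope
\[
F(u,\delta,z)=e^{\beta_0' z}\,\pi\bigl(u;U(a)-n^{-1/2+\alpha},U(a)+n^{-1/2+\alpha}+cn^{-1/3}T_n\bigr)\int_u^{U(a)+n^{-1/2+\alpha}+cn^{-1/3}T_n}\frac{\lambda_0(v)}{\Phi(v;\beta_0)}\,\dd v.
\]
On $E_n$ the random function $f(u,\delta,z)\mapsto e^{\beta_0' z}\pi(u;U_{\hat\theta}(a),t_n)\int_u^{t_n}\lambda_0/\Phi$ belongs to $\mathcal{F}$ for any $t_n$ with $|t_n-U_{\hat\theta}(a)|\leq cn^{-1/3}T_n$. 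Combined with Markov's inequality, this reduces the lemma to bounding $\E\bigl[\sup_{f\in\mathcal{F}}|\sqrt n(\p_n-\p)f|^q\bigr]$.

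For this I would invoke Theorems 2.14.5 and 2.14.2 of \cite{VW96} exactly as in the proof of Lemma \ref{le:R_2}, which gives the bound in terms of $J_{[]}(1,\mathcal{F},L_2(\p))\Vert F\Vert_{L_2(\p)}+n^{-1/2+1/q}\Vert F\Vert_{L_q(\p)}$. The moment bounds on $F$ are, up to constants independent of $a$, the same as in Lemma \ref{le:R_2}: $\Vert F\Vert_{L_q(\p)}^q\asymp (n^{-1/3}T_n)^{q+1}$, since the extra stretch of width $n^{-1/2+\alpha}$ is of smaller order than $n^{-1/3}T_n$ and the bounds rely only on the integrability assumption (A3) and on the uniform boundedness of the conditional density of $T$ given $Z$. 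It remains to construct brackets. Here I would partition the $s$-range $[-n^{-1/2+\alpha},n^{-1/2+\alpha}]$ into $M$ subintervals of length $L_2=n^{-1/2+\alpha}/M$ and the $t_n$-range $[U(a)+s,U(a)+s+cn^{-1/3}T_n]$ into $M$ subintervals of length $L_1=cn^{-1/3}T_n/M$ (just as in Lemma \ref{le:R_1_2} for $R_n^1$), producing $M^2$ brackets $[l_{i,j},L_{i,j}]$ with upper and lower bounds built by taking the outer/inner pair of endpoints in both parameters.

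The main technical point, and the place I expect most care is needed, is to verify that each bracket has $L_2(\p)$-size bounded by $\epsilon\Vert F\Vert_{L_2(\p)}$ with the same order of $M$ as in Lemma \ref{le:R_2}, namely $M\asymp T_n^3/(n\epsilon^2\Vert F\Vert_{L_2(\p)}^2)$. The contribution to $\|L_{i,j}-l_{i,j}\|_{L_2(\p)}^2$ splits into three pieces: one from enlarging the $t_n$-interval by $L_1$, which is $O(L_1(n^{-1/3}T_n)^2)$ as in Lemma \ref{le:R_2}; one from enlarging the $s$-interval by $L_2$, which contributes through the difference $\int_u^{t_n}\lambda_0/\Phi$ between the two outer centerings and yields a bound $O(L_2^2\cdot n^{-1/3}T_n)$ plus $O(L_2\cdot (n^{-1/3}T_n)^2)$ when bounding the relevant integrals using continuity of $\lambda_0/\Phi$; and a cross term absorbed similarly. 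Using $L_2\leq n^{-1/2+\alpha}\ll n^{-1/3}T_n$, all the $s$-direction contributions are of strictly smaller order than the $t_n$-direction one, so the effective bracketing number is of the same order as in Lemma \ref{le:R_2}. Hence $J_{[]}(1,\mathcal{F},L_2(\p))$ is bounded, the envelope moments coincide with those of Lemma \ref{le:R_2}, and the same computation leads to the bound $Kx^{-q}n^{1-q}$.
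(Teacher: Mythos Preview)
Your proposal is correct and follows essentially the same approach as the paper: enlarge the class $\mathcal{F}$ to a two-parameter family indexed by $(t_n,s)$ with $|s|\leq n^{-1/2+\alpha}$ so that on $E_n$ the random centering $U_{\hat\theta}(a)$ is absorbed via \eqref{eqn:par_est-true}, then apply the same maximal inequality from \cite{VW96} as in Lemma \ref{le:R_2} and check that the envelope moments and bracketing entropy are unchanged (the paper's bracket bound is $\Vert L_{i,j}-l_{i,j}\Vert_{L_2(\p)}^2\lesssim L_1(n^{-1/3}T_n)^2+L_1^2n^{-1/3}T_n$, and your decomposition leads to the same dominant term since $L_2\ll L_1$).
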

\begin{proof}
	Let ${\pi}$ be as in \eqref{def:pi} and let $\F$ be the class of functions on ${[U(a)+s,t_n]}\times\{0,1\}\times\R^p$ of the form 
	\[
	f_{t_n,s}(u,\delta,z)= e^{\beta_0' z}\pi(u;U(a)+s,t_n)\int_{t}^{t_n}\frac{\lambda_0(v)}{\Phi(v;\beta_0)}\,\dd v,
	\]
	where $t_n\in[U(a)+s,U(a)+s+cn^{-1/3}T_n]$ and $|s|\leq n^{-1/2+\alpha}$. The envelope function is 
	\[
	F_n(u,\delta,z)=e^{\beta_0' z}\1_{\left\{U(a)-n^{-1/2+\alpha}<u\leq U(a)+n^{-1/2+\alpha}+cn^{-1/3}T_n\right\}}\int_{t}^{U(a)+n^{-1/2+\alpha}+cn^{-1/3}T_n}\frac{\lambda_0(v)}{\Phi(v;\beta_0)}\,\dd v
	\]
	Then,  {using that \eqref{eqn:par_est-true} holds on $E_n$,}
	\[
	\begin{split}
	&\p_\theta\left[\left\{\sup_{|t_n-U_{\hat\theta}(a)|\leq cn^{-1/3}T_n}|R^2_n(U_{\hat\theta}{(a)},t_n)>x\right\}\cap E_n\right]\\
	&\leq\p\left[\sup_{f\in\F}\left|\int f(t,\delta,z)\,\dd\sqrt{n}(\p_n-\p)(t,\delta,z) \right|>n^{1/2}x\right].
	\end{split}
	\]
	The rest of the proof remains the same as in Lemma \ref{le:R_2} because we have
	\[
	\begin{split}
	\Vert F_n\Vert_{L_q(\p)}^q&=\int e^{q\beta_0'z}\1_{\left\{U(a)-n^{-1/2+\alpha}<u\leq U(a)+n^{-1/2+\alpha}+cn^{-1/3}T_n\right\}}\\
	&\qquad \left(\int_{u}^{U(a)+n^{-1/2+\alpha}+nc^{-1/3}T_n}\frac{\lambda_0(v)}{\Phi(v;\beta_0)}\,\dd v \right)^q\,\dd \p(u,\delta,z)\\
	&=\int_{\R^p} e^{q\beta_0'z}g_n(z) \,\dd F_Z(z)+o\left(\int_{\R^p} e^{q\beta_0'z}g_n(z) \,\dd F_Z(z)\right)
	\end{split}
	\]
	for 
	\[
	\begin{split}
	g_n(z)&=\int_{U(a)-n^{-1/2+\alpha}}^{U(a)+n^{-1/2+\alpha}+cn^{-1/3}T_n}\left[U(a)+n^{-1/2+\alpha}+cn^{-1/3}T_n-u\right]^q\left(\frac{\lambda_0(u)}{\Phi(u;\beta_0)}\right)^q\,\dd H(u|z).
	\end{split}
	\]
	{Since $\lambda_0$, $\Phi$ and the density of $T$ given $z$ are uniformly bounded above and below away from zero, there exist constants $c_1>0$ and $c_2>0$ independent of $t$ such that
		$$c_1(n^{-1/3}T_n)^{q+1}\leq \inf_zg_n(z)\leq\sup_zg_n(z)\leq c_2(n^{-1/3}T_n)^{q+1}.$$}
	It remains to show that $J_{[]}(1,\F,L_2(\p))$ is bounded. We first compute $N_{[]}\left(\epsilon\Vert F\Vert_{L_2(\p)},\F,L_2(\p) \right)$. Divide the interval $[0,cn^{-1/3}T_n]$ in $M$ subintervals of length $L_1=cn^{-1/3}T_n/M$ and divide the interval $[-n^{-1/2+\alpha},n^{-1/2+\alpha}]$ in $M$ subintervals of length $L_2=n^{-1/2+\alpha}/M$. For $i,j=1,\dots,M$,  let $d_j=-n^{-1/2+\alpha}+jL_2$ and $c_{i,j}=U(a)+d_j+iL_1.$ Define
	\[
	l_i(u,\delta,z)=e^{\beta_0'z}\1_{\left\{U(a)+d_{j}<t\leq c_{i-1,j-1}\right\}}\int_{u}^{U(a)+d_{j-1}+(i-1)L}\frac{\lambda_0(v)}{\Phi(v;\beta_0)}\,\dd v,
	\]
	\[
	L_i(u,\delta,z)=e^{\beta_0'z}\1_{\left\{U(a)+d_{j-1}<t\leq c_{i,j}\right\}}\int_{u}^{U(a)+d_j+iL}\frac{\lambda_0(v)}{\Phi(v;\beta_0)}\,\dd v.
	\]
	The brackets $[l_{i,j},L_{i,j}]$ cover all the class $\F$. Moreover, we have
	\[
	\begin{split}
	&\Vert L_{i,j}-l_{i,j}\Vert_{L_2(\p)}^2\\
	&\lesssim\int e^{2\beta_0'z}\left[\1_{\{U(a)+d_{j-1}<u\leq U(a)+d_{j}\}}+\1_{\{c_{i-1,j-1}<u\leq c_{i,j}}\right]\left(\int_{u}^{c_{i-1,j-1}}\frac{\lambda_0(v)}{\Phi(v;\beta_0)}\,\dd v\right)^2\,\dd\p(u,\delta,z)\\
	&\quad+\int e^{2\beta_0'z}\1_{\{U(a)+d_{j}<u\leq c_{i,j}\}}\left(\int_{c_{i-1,j-1}}^{c_{i,j}}\frac{\lambda_0(v)}{\Phi(v;\beta_0)}\,\dd v\right)^2\,\dd\p(u,\delta,z)\\
	&\lesssim L_1(n^{-1/3}T_n)^2+L_1^2n^{-1/3}T_n\lesssim L_1(n^{-1/3}T_n)^2.
	\end{split}
	\]
	As in the proof of Lemma \ref{le:R_1_2}, we get that $J_{[]}(1,\F,L_2(\p))$  is bounded.
\end{proof}
\begin{lemma}
	\label{le:R_3_2}
	Let $T_n=n^{\gamma}$ for some $\gamma\in (0,1/3)$ and $b={n^{-1/4}}$.
	Assume that, given $z$, the follow up time $T$ has a continuous density {uniformly bounded from above and below away from zero. 
		For $q\in[2,1/(\gamma+1/24))$, there exists $K>0$ such that, for $x\geq 0$ and all $a\in\R$,}
	\[
	\p_\theta\left[\left\{n^{1/3}T_n\sup_{|v-U_{\hat{\theta}}(a)|\leq b}\left|(\Phi_n -\Phi)(v;\beta_0)-(\Phi_n-\Phi )(U_{\hat{\theta}}(a);\beta_0)\right|>x\right\}\cap E_n\right]\leq K x^{-q}n^{1-q/3}.
	\] 
\end{lemma}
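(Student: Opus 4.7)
The plan is to reduce this parametric statement to the non-parametric analogue, Lemma~\ref{le:R_3}, via the triangle inequality and the bound $|U_{\hat\theta}(a) - U(a)| \lesssim n^{-1/2+\alpha}$ that holds on $E_n$ by \eqref{eqn:par_est-true}, with $\alpha < 1/6$. First I would note that, on $E_n$, every $v$ with $|v - U_{\hat\theta}(a)| \leq b$ also satisfies $|v - U(a)| \leq b + c n^{-1/2+\alpha}$; inserting $\pm (\Phi_n - \Phi)(U(a);\beta_0)$ inside the difference then yields
\[
\bigl|(\Phi_n - \Phi)(v;\beta_0) - (\Phi_n - \Phi)(U_{\hat\theta}(a);\beta_0)\bigr| \leq 2\sup_{|v' - U(a)| \leq b + cn^{-1/2+\alpha}} \bigl|(\Phi_n - \Phi)(v';\beta_0) - (\Phi_n - \Phi)(U(a);\beta_0)\bigr|.
\]
Since $\alpha < 1/6 < 1/4$ and $b = n^{-1/4}$, we have $b + c n^{-1/2+\alpha} \leq 2b$ for $n$ large, so the supremum is taken over a window of length $O(b)$ centred at the deterministic point $U(a)$.

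The right-hand side depends only on the data $(T_i,\Delta_i,Z_i)$ used for the Grenander estimator, which by the construction in Theorem~\ref{theo:CLT2} is independent of $\hat\theta_n$. Consequently its $\p_\theta$-probability equals its unconditional $\p$-probability. I would then invoke Lemma~\ref{le:R_3} directly (after substituting $x/2$ for $x$ and enlarging the radius from $b$ to $2b$), yielding the desired bound $K x^{-q} n^{1-q/3}$ for $q \in [2, 1/(\gamma + 1/24))$. A symmetric step handles the one-sided half of the window on the opposite side of $U_{\hat\theta}(a)$.

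The main (mild) obstacle is verifying that enlarging $b$ by a constant factor inside Lemma~\ref{le:R_3} does not alter the final rate. This is immediate from the structure of that proof: $b = n^{-1/4}$ enters only through the envelope estimate $\|F\|_{L_q(\p)}^q \asymp b$ and through the bracketing number $N_{[]}(\varepsilon\|F\|_{L_2(\p)},\mathcal{F},L_2(\p)) \lesssim b/(\varepsilon^2\|F\|_{L_2(\p)}^2) + 1$, so a constant multiplicative change in $b$ only rescales intermediate constants and preserves both the boundedness of $J_{[]}(1,\mathcal{F},L_2(\p))$ and the exponent $1 - q/3$. The calibration $q < 1/(\gamma + 1/24)$ then enters exactly as in the proof of Lemma~\ref{le:R_3}, so no sharpening of the admissible range of $q$ is required by the parametric perturbation.
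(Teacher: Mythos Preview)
Your approach is correct and is genuinely more economical than the paper's. The paper does not reduce to Lemma~\ref{le:R_3} via the triangle inequality; instead it absorbs the random center $U_{\hat\theta}(a)$ by enlarging the indexing class. Concretely, the paper introduces
\[
\mathcal{F}=\Bigl\{f_{v,s}(t,\delta,z)=\1_{\{U(a)+s<t\leq U(a)+s+v\}}e^{\beta_0'z}\;\Big|\;v\in[0,b],\ |s|\leq n^{-1/2+\alpha}\Bigr\},
\]
uses \eqref{eqn:par_est-true} on $E_n$ to obtain
\[
\p_\theta\bigl[\{\cdots\}\cap E_n\bigr]\leq \p\Bigl[\sup_{f\in\mathcal{F}}\Bigl|\int f\,\dd\sqrt{n}(\p_n-\p)\Bigr|>cn^{1/6}T_n^{-1}x\Bigr],
\]
and then redoes the envelope and bracketing-entropy calculations of Lemma~\ref{le:R_3} for this two-parameter class (brackets now indexed by $(i,j)$ over the partitions of $[0,b]$ and $[-n^{-1/2+\alpha},n^{-1/2+\alpha}]$). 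The extra parameter $s$ contributes only a factor $L_2\leq L_1$ to the bracket size, so the entropy integral remains bounded and the same exponent $1-q/3$ survives.

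Your route bypasses this repetition: the triangle-inequality step $|\Delta(v,U_{\hat\theta}(a))|\leq |\Delta(v,U(a))|+|\Delta(U(a),U_{\hat\theta}(a))|$ together with $|U_{\hat\theta}(a)-U(a)|\leq cn^{-1/2+\alpha}\ll b$ on $E_n$ gives a bound by $2\sup_{|v'-U(a)|\leq 2b}|\Delta(v',U(a))|$, an event that no longer involves $\hat\theta_n$; independence of the two subsamples then replaces $\p_\theta$ by $\p$, and Lemma~\ref{le:R_3} (with $b\mapsto 2b$, $x\mapsto x/2$) closes the argument. Your observation that $b$ enters the proof of Lemma~\ref{le:R_3} only through $\|F\|_{L_q}^q\asymp b$ and the bracketing number, so that a constant dilation preserves both $J_{[]}<\infty$ and the final rate, is exactly right. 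The remark about a ``symmetric step'' is unnecessary since Lemma~\ref{le:R_3} is already stated for the two-sided window $|v-U(a)|\leq b$.
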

\begin{proof}
	Let $\F$ be the following class of functions on $[0,1]\times\{0,1\}\times\R^p$,
	\[
	\F=\left\{f_{v,s}(t,\delta,z)=  \1_{\left\{U(a)+s<t\leq U(a)+s+v\right\}}e^{\beta_0'z}\,\bigg|\,v\in[0,b],\,|s|\leq n^{-1/2+\alpha}\right\}.
	\]	
	with envelope function
	\[
	F(t,\delta,z)=\1_{\left\{U(a)-n^{-1/2+\alpha}<t\leq U(a)+n^{-1/2+\alpha}+b\right\}}e^{\beta_0'z}.
	\]
	Then,  {using that \eqref{eqn:par_est-true} holds on $E_n$,}
	\[
	\begin{split}
	&\p_\theta\left[\left\{n^{1/3}T_n\sup_{v\in[U_{\hat{\theta}}(a),U_{\hat{\theta}}(a)+b]}\left|(\Phi_n -\Phi)(v;\beta_0)-(\Phi_n-\Phi )(U_{\hat{\theta}}(a);\beta_0)\right|>x\right\}\cap E_n\right]\\
	&\leq \p\left[\sup_{f\in\mathcal{F}}\left|\int f(t,\delta,z)\,\dd\sqrt{n}(\p_n-\p)(t,\delta,z)\right|>cn^{1/6}T_n^{-1}x\right].
	\end{split}	
	\]
	The rest of the proof remains the same as in Lemma \ref{le:R_3} because
	\[
	\Vert F\Vert_{L_q(\p)}^q=\int_{\R^p}e^{q\beta_0' z}\int_{U(a)-n^{-1/2+\alpha}}^{U(a)+n^{-1/2+\alpha}+b}\,\dd H(t|z)\,\dd F_Z(z)\in[c_1b,c_2b]
	\]
	for some $c_1,c_2>0$ independent of $a$ because the density of $T$ given $z$ is uniformly bounded from above and below away from zero.	It remains to show that $J_{[]}(1,\F,L_2(\p))$ is bounded. We first compute $N_{[]}\left(\epsilon\Vert F\Vert_{L_2(\p)},\F,L_2(\p) \right)$. Divide the interval $[0,b]$ in $M$ subintervals of length $L_1=b/M$ and divide the interval $[-n^{-1/2+\alpha},n^{-1/2+\alpha}]$ in $M$ subintervals of length $L_2=n^{-1/2+\alpha}/M$. For $i,j=1,\dots,M$,  let
	\[
	l_{i,j}(t,\delta,z)= \1_{\left\{U(a)-n^{-1/2+\alpha}+jL_2<t\leq U(a)-n^{-1/2+\alpha}+(j-1)L_2+(i-1)L_1\right\}}e^{\beta_0'z},
	\]
	\[
	L_{i,j}(t,\delta,z)= \1_{\left\{U(a)-n^{-1/2+\alpha}+(j-1)L_2<t\leq U(a)-n^{-1/2+\alpha}+jL_2+iL_1\right\}}e^{\beta_0'z}.
	\]
	The brackets $[l_{i,j},L_{i,j}]$ for $i,j=1,\dots,M$, cover all the class $\F$. Moreover, we have
	\[
	\begin{split}
	\Vert L_{i,j}-l_{i,j}\Vert_{L_2(\p)}^2&\lesssim\int \1_{\{U(a)-n^{-1/2+\alpha}+(j-1)L_2<t\leq U(a)-n^{-1/2+\alpha}+jL_2\}}e^{2\beta_0'z}\,\dd\p(t,\delta,z)\\
	&\quad+\int \1_{\{U(a)-n^{-1/2+\alpha}+(j-1)L_2+(i-1)L_1<t\leq U(a)-n^{-1/2+\alpha}+jL_2+iL_1\}}e^{2\beta_0'z}\,\dd\p(t,\delta,z)\\
	&\lesssim L_1.
	\end{split}
	\]
	If we take 
	\[
	M=\left\lfloor\frac{Kb}{{\epsilon^2 \Vert F\Vert_{L_2(\p)}^2}}+1\right\rfloor,
	\]
	for a properly chosen constant $K>0$, then 
	\[
	\Vert L_{i,j}-l_{i,j}\Vert_{L_2(\p)}\leq \epsilon\Vert F\Vert_{L_2(\p)}.
	\]
	It follows that 
	\[
	N_{[]}\left(\epsilon\Vert F\Vert_{L_2(\p)},\F,L_2(\p) \right)\leq \left\lfloor\frac{Kb}{{\epsilon^2 \Vert F\Vert_{L_2(\p)}^2}}+1\right\rfloor
	\]
	and consequently, as in Lemma \ref{le:R_3}, we have 
	\[
	\begin{split}
	J_{[]}(1,\F,L_2(\p))&\leq \int_0^{1}\sqrt{1+{\log} \left\lfloor\frac{Kb}{{\epsilon^2 \Vert F\Vert_{L_2(\p)}^2}}+1\right\rfloor  }\,\dd\epsilon\leq K'.
	\end{split}
	\] 
\end{proof}

\begin{proof}[Proof of Theorem \ref{theo:CLT2}]
	Let 
	\[
	\mathcal{J}_n=n^{p/3}\int_\epsilon^M\left|\hat{\lambda}_n(t)-\lambda_{\hat{\theta}}(t)\right|^p\,\text{d}t.
	\]
	\textbf{Step 1.} 
	We first show that $\mathcal{J}_n\1_{E_n}=\tilde{\mathcal{J}}_n\1_{E_n}+o_P(n^{-1/6})$ where 
	\[
	\tilde{\mathcal{J}}_n=n^{p/3}\int_{\lambda_{\hat{\theta}}(M)}^{\lambda_{\hat{\theta}}(\epsilon)}\left|\hat{U}_n(a)-U_{\hat\theta}(a)\right|^p\left|U'_{\hat\theta}(a)\right|^{1-p}\,\text{d}a.
	\]
	Let 
	\[
	I_1=\int_\epsilon^M\left[\hat{\lambda}_n(t)-\lambda_{\hat{\theta}}(t)\right]_+^p\,\text{d}t\quad\text{and}\quad J_1=\int_\epsilon^M\int_0^{(\lambda_{\hat{\theta}}(\epsilon)-\lambda_{\hat{\theta}}(t))^p}\1_{\{\hat{\lambda}_n(t)\geq \lambda_{\hat{\theta}}(t)+a^{1/p}\}}\,\dd a\,\dd t.
	\]
	Since $\hat{\lambda}_n(t)<\lambda_{\hat{\theta}}(t)$ for $t>\hat{U}_n(\lambda_{\hat{\theta}}{(t)})$, we have
	\[
	\begin{split}
	0&\leq (I_1-J_1)\1_{E_n}=\1_{E_n}\int_\epsilon^{\hat{U}_n(\lambda_{\hat{\theta}}(\epsilon))}\int_{(\lambda_{\hat{\theta}}(\epsilon)-\lambda_{\hat{\theta}}(t))^p}^\infty \1_{\{\hat{\lambda}_n(t)\geq \lambda_{\hat{\theta}}(t)+a^{1/p}\}}\,\dd a\,\dd t\\
	&\leq \1_{E_n} \int_\epsilon^{\hat{U}_n(\lambda_{\hat{\theta}}(\epsilon))}\left[\hat{\lambda}_n(t)-\lambda_{\hat{\theta}}(t)\right]_+^p\,\dd t\\
	&\leq \1_{E_n}\int_\epsilon^{\epsilon+n^{-1/3}\log n} \left[\hat{\lambda}_n(t)-\lambda_{\hat{\theta}}(t)\right]_+^p\,\dd t+\left|\hat{\lambda}_n(\epsilon)-\lambda_{\hat{\theta}}(M)\right|^p \1_{\left\{n^{1/3}\left(\hat{U}_n(\lambda_{\hat{\theta}}(\epsilon))-\epsilon\right)>\log n \right\}\cap E_n}.
	\end{split}
	\]
	{Since \eqref{eqn:par_est-true} holds on $E_n$ and $\lambda_0$ is bounded, we also have
		$$\sup_{t\in[\epsilon,M]}\vert\lambda_{\hat\theta}(t)\vert\lesssim1$$
		on that event.
		Combining this with} Lemma \ref{le:boundaries}, \eqref{eqn:inv2} and \eqref{eqn:bound_expectation_L_p2}, it can be shown exactly as in step 1, proof of Theorem \ref{theo:CLT}, that
	\[
	(I_1-J_1)\1_{\E_n}=o_P\left(n^{-p/3-1/6}\right).
	\]
	With a change of variable $b=\lambda_{\hat{\theta}}(t)+a^{1/p}$ we get
	\[
	I_1\1_{E_n}=\1_{E_n}\int_{\lambda_{\hat{\theta}}(M)}^{\lambda_{\hat{\theta}}(\epsilon)}\int _{U_{\hat{\theta}}(b)}^{\hat{U}_n(b)}p(b-\lambda_{\hat{\theta}}(t))^{p-1}\1_{\{U_{\hat{\theta}}(b)<\hat{U}_n(b) \}}\,\text{d}t\,\dd b+o_P\left(n^{-p/3-1/6}\right).
	\]
	Note that {the function $\frac{\partial^2 f(\vartheta,t)}{\partial t^2}$ is continuous with respect to both $\vartheta$ and $t$. Hence it is uniformly bounded on $\mathcal{B}_{\theta,1}\times[\epsilon,M]$ where $\mathcal{B}_{\theta,1}$ is the ball in $\R^d$ centered in the true parameter $\theta$ with radius one. Since, on $E_n$ we have $|\hat{\theta}_n-\theta|\lesssim n^{-1/2+\alpha}$, it follows that  $\hat\theta_n\in\mathcal{B}_{\theta,1} $ so in particular, $\lambda''_{\hat\theta}(t)=\frac{\partial^2 f(\hat\theta_n,t)}{\partial t^2}$ is uniformly bounded with respect to $t$ and $n$ on $E_n$. This implies that} $\lambda'_{\hat\theta}$ satisfies \eqref{eqn:assumption_derivative}. 
	Then, by a Taylor expansion  we have
	\[
	\left|[b-\lambda_{\hat{\theta}}(t)]^{p-1}-\left[(U_{\hat{\theta}}(b)-t)\lambda'_{\hat{\theta}}\circ U_{\hat{\theta}}(b)\right]^{p-1}\right|\lesssim (t-U_{\hat{\theta}}(b))^{p-1+s}
	\] 
	for all $b\in(\lambda_{\hat{\theta}}(M),\lambda_{\hat{\theta}}(\epsilon))$ and $t\in(U_{\hat{\theta}}(b),M)$. 
	As in step 1, proof of Theorem \ref{theo:CLT}, it follows that
	\[
	I_1\1_{E_n}=\1_{E_n}\int_{\lambda_{\hat{\theta}}(M)}^{\lambda_{\hat{\theta}}(\epsilon)}\int _{U_{\hat{\theta}}(b)}^{\hat{U}_n(b)}p(t-U_{\hat{\theta}}(b))^{p-1}|\lambda'_{\hat{\theta}}(U_{\hat{\theta}}(b))|^{p-1}\1_{\{U_{\hat{\theta}}(b)<\hat{U}_n(b) \}}\,\text{d}t\,\dd b+o_P\left(n^{-p/3-1/6}\right).
	\]
	In the same way we can deal with 
	\[
	I_2=\int_\epsilon^M\left[\lambda_{\hat{\theta}}(t)-\hat{\lambda}_n(t)\right]_+^p\,\text{d}t.
	\]
	\textbf{Step 2.} The notation is the same as in step 2 of the proof of Theorem \ref{theo:CLT}. 
	We argue conditionally on $\hat\theta_n$ {and assume that on $E_n$,  $|\hat\theta_n-\theta|\leq n^{-1/2+\alpha}$ and  \eqref{eqn:par_est-true} holds where $\alpha\in(0,1/6)$ is fixed.} We show that 
	\begin{equation}
	\label{eqn:step2_2}
	\tilde{\mathcal{J}}_n\1_{E_n}=\int_{\epsilon+b}^{M-b}\left|\tilde{V}(t)-n^{-1/6}\eta_n(t) \right|^p\left|\frac{\lambda'_0(t)}{h^{uc}(t)}\right|^p\,\dd t+o_{P_\theta}(n^{-1/6}),
	\end{equation}
	for some $\eta_n$ such that $\sup_{t\in[0,1]}|\eta_n(t)|=O_{P_\theta}(1)$. 
	For every {$a\in[\lambda_{\hat\theta}(M)+b,\lambda_{\hat\theta}(\epsilon)-b]$}, let $\tilde{a}=\lambda_0(U_{\hat\theta}(a))$ and define
	\[
	\begin{split}
	a^\xi=\tilde{a}-n^{-1/2}\xi_n\frac{h^{uc}(U_{\hat\theta}(a))}{\Phi(U_{\hat\theta}(a);\beta_0)}+(\hat\beta_n-\beta_0)A'_0(U_{\hat\theta}(a))-\Psi_n^s(U_{\hat\theta}(a))\frac{\lambda_0(U_{\hat\theta}(a))}{\Phi(U_{\hat\theta}(a);\beta_0)}.
	\end{split}
	\]
	{Since on $E_n$ we have $\sup_{t\in[\epsilon,M]}|\lambda_{\hat\theta}(t)-\lambda_0(t)|\lesssim n^{-1/2+\alpha}$ with $\alpha<1/6$ and $b=n^{-1/4}$, it follows that \[
		a\in[\lambda_{0}(M)-n^{-1/2+\alpha}+b,\lambda_{0}(\epsilon)+n^{-1/2+\alpha}-b]\subseteq[\lambda_{0}(M),\lambda_{0}(\epsilon)]
		\]
		so $a$ is in the range of $\lambda_{0}$ and $a=\lambda_0(U(a))$. It follows that $|\tilde{a}-a|\lesssim n^{-1/2+\alpha}$ {on $E_n$} and therefore, 
	}
	we again have $\frac{\dd a^\xi}{\dd a}=1+o(1)$. {Note that $h^{uc}$ is differentiable since $h^{uc}(t)=\lambda_{0}(t)\Phi(t;\beta_{0})$.} 
	
	We can assume that on the event $E_n$ it holds
	\begin{equation}
	\label{eqn:E_n_add2}
	\sup_{a\in\R}\left|a-a^\xi\right|\leq cn^{-1/2+\alpha}.
	\end{equation}
	Note that \eqref{eqn:inv2} hold also if we replace $\p$ by $\p_\theta$ because $\hat{U}_n$ is independent of $\hat{\theta}_n$. {Note that if suffices to prove  \eqref{eqn:inv2} for $x>n^{-1/3}$ because otherwise the exponential bound is trivial and that on $E_n$, we have $\sup_a|U_{\hat\theta}(a)-U(a)|\lesssim n^{-1/2+\alpha}$ (see \eqref{eqn:par_est-true}) where $\alpha<1/6$. Hence, the second probability on the  second line of \eqref{eqn:inv2}, which is the probability of a non random event under $\p_\theta$, is equal to zero.} Then, from Lemma 6(i) in \cite{durot2007} and the change of variable $a\to a^{\xi}$ we obtain
	\[
	\begin{split}
	\tilde{\mathcal{J}}_n\1_{E_n}
	&=\1_{E_n}n^{p/3}\int_{J_n}\left|\frac{H^{uc}(\hat{U}_n(a^\xi))-H^{uc}(U_{\hat\theta}(a^\xi))}{h^{uc}(U_{\hat\theta}(a))}\right|^p\left|U'_{\hat\theta}(a)\right|^{1-p}\,\text{d}a+o_{P_\theta}(n^{-1/6}),
	\end{split}
	\]
	with  
	\[
	J_n=\left[\lambda_{\hat\theta}(M)+n^{-1/6}/\log n,\lambda_{\hat\theta}(\epsilon)-n^{-1/6}/\log n\right] .
	\]
	Let ${t}_n=(H^{uc})^{-1}(H^{uc}(U_{\hat\theta}(a))+n^{-1/3}u)$. 
	By definition of $\hat{U}_n$ and  properties of the $\argmax$ function it follows  that
	\[
	n^{1/3}\left(H^{uc}(\hat{U}_n(a^\xi))-H^{uc}(U_{\hat\theta}(a))\right)=\argmax_{u\in I_n(a)}\left\{D_n(a,u)+S_n(a,u) \right\},
	\]
	where
	\[
	I_n(a)=\left[-n^{1/3}\left(H^{uc}(U_{\hat\theta}(a))-H^{uc}(\epsilon)\right),n^{1/3}\left(H^{uc}(M)-H^{uc}(U_{\hat\theta}(a))\right)\right],
	\]
	\[
	D_n(a,u)=n^{2/3}\Phi(U_{\hat\theta}(a);\beta_0)\left\{\left(\Lambda_0(t_n)-\tilde{a}t_n\right)-\left(\Lambda_0(U_{\hat\theta}(a))-\tilde{a}U_{\hat\theta}(a)\right) \right\}
	\]
	and
	\begin{equation}
	\label{def:S_n2}
	S_n(a,u)=n^{2/3}\Phi(U_{\hat\theta}(a);\beta_0)\left\{ (\tilde{a}-a^\xi)\left[t_n-U_{\hat\theta}(a)\right]+(\Lambda_n-\Lambda_0)\left(t_n\right)-(\Lambda_n-\Lambda_0)\left(U_{\hat\theta}(a)\right)\right\}.
	\end{equation}
	Next, we follow the proof of Theorem \ref{theo:CLT} with $U(a)$ replaced by $U_{\hat\theta}(a)$. From Lemmas \ref{le:R_1_2} and \ref{le:R_2_2}, it follows that Lemma \ref{le:Lambda_n-Lambda} holds also if even $t$ is allowed to depend on $n$ (here $t=U_{\hat\theta}(a)$). Hence, we can write
	\[
	S_n(a,u)=W_{U_{\hat\theta}(a)}(u)+R^3_n(a,u)+R^4_n(a,u)
	\]
	where
	\[
	R^3_n(a,u)=n^{2/3}\left\{\Psi_n^s\left(U_{\hat\theta}(a)\right)-\left[\Phi_n\left(U_{\hat\theta}(a);\beta_0\right)-\Phi\left(U_{\hat\theta}(a);\beta_0\right)\right]\right\}\lambda_0\left(U_{\hat\theta}(a)\right)\left[t_n-U_{\hat\theta}(a)\right]
	\] 	
	and
	\[
	R^4_n(a,u)=n^{2/3}\Phi\left(U_{\hat\theta}(a);\beta_0\right)r_n\left(U_{\hat\theta}(a),t_n\right)
	\]
	with $r_n$ as in Lemma \ref{le:Lambda_n-Lambda}. Let $R_n(a,u)=R^3_n(a,u)+R^4_n(a,u)$. We use Lemma 5 in \cite{durot2007} to show that $R_n$ is negligible.
	First we  localize. Let $q>12$,  $T_n=n^{1/(3(6q-11))}$ and define
	\[
	\tilde{U}_n=\argmax_{|u|\leq T_n}\left\{D_n(a,u)+W_{U_{\hat\theta}(a)}(u)+R_n(a,u) \right\}.
	\]
	As in step 2, proof of Theorem \ref{theo:CLT}, since $h^{uc}$ and $U'_{\hat\theta}$ are bounded, using \eqref{eqn:inv2} with $\p$ replaced by $\p_\theta$, we obtain 
	\[
	\p_\theta\left(\left\{\left|H^{uc}(\hat{U}_n(a^\xi))-H^{uc}(U_{\hat\theta}(a))\right|>x \right\}\cap E_n\right)\leq K_1\exp(-K_2nx^3)
	\]
	and
	\[
	\sup_{a\in\R}\E_\theta\left[\1_{E_n}n^{q'/3}\left|H^{uc}(\hat{U}_n(a^\xi))-H^{uc}(U_{\hat\theta}(a))\right|^{q'}\right]\leq K,
	\]
	for every $q'\geq 1$. It follows that 
	\[
	\tilde{\mathcal{J}}_n\1_{E_n}=\1_{E_n}\int_{J_n}\left|\tilde{U}_n(a)-n^{-1/6}\eta_n(a) \right|^p\frac{|U'_{\hat\theta}(a)|^{1-p}}{h^{uc}(U_{\hat\theta}(a))^p}\,\dd a+o_{P_\theta}(n^{-1/6})
	\]
	where
	\[
	\eta_n(a)=n^{1/2}(a-a^\xi)|U'_{\hat\theta}(a)|h^{uc}(U_{\hat\theta}(a))=O_{P_\theta}(1).
	\]
	Now we approximate $\tilde{U}_n$ by 
	\[
	\tilde{\tilde{U}}_n(a)=\argmax_{|u|\leq \log n}\left\{D_n(a,u)+W_{U_{\hat\theta}(a)}(u) \right\}.
	\]	
	By Taylor's expansion,
	\[
	\left|\frac{\partial}{\partial u}D_n(a,u) \right|\leq K|u|,\quad\text{and}\quad D_n(a,u)\leq -cu^2,
	\]
	for some positive constants $K$, $c$, $a\in J_n$ and $|u|\leq T_n$. Moreover, given $\hat\theta_n$, $W_{U_{\hat\theta}}$ is a standard Brownian motion.  It remains to show that
	\begin{equation}
	\label{eqn:bound_R_n2}
	\p_\theta\left[\left\{\sup_{|u|\leq T_n}|R_n(a,u)|>x \right\}\cap E_n\right]\lesssim x^{-q} n^{1-q/3},\quad\text{for all }x\in(0,n^{2/3}],
	\end{equation}
	then, as in Lemma 5(i) in \cite{durot2007}, it follows that 
	\[
	\E_\theta\left[\1_{E_n}\left|\tilde{U}_n-\tilde{\tilde{U}}_n\right|^r\right]\lesssim \left(n^{-1/6}/\log n\right)^r
	\]
	and by Lemma 6(i) in \cite{durot2007}, $\tilde{U}_n$ can be replaced by $\tilde{\tilde{U}}_n$.
	Again, for \eqref{eqn:bound_R_n2} it is sufficient to consider $x>n^{-1/3+1/q}$.
	From Lemma \ref{le:Lambda_n-Lambda} (with both $t$ and $t_n$ depending on $n$) we have 
	\[
	\p_\theta\left[\left\{\sup_{|u|\leq T_n}|R^4_n(a,u)|>x/2\right\}\cap E_n\right]\lesssim x^{-q} n^{1-q/3}.
	\]
	It remains to consider $R^3_n$. Note that, since $a\in J_n $, it follows that $b<U_{\hat\theta}(a)<1-b$. From Lemma \ref{le:R_3_2} it follows  that
	\[
	\begin{split}
	&\p_\theta\left[\left\{\sup_{|u|\leq T_n}|R^3_n(a,u)|>x/4 \right\}\cap E_n\right]\\
	&\leq\p_\theta\left[n^{1/3}T_n\sup_{|v-U_{\hat\theta}(a)|\leq b}\left|\Phi_n(v;\beta_0)-\Phi(v;\beta_0)-\Phi_n(U_{\hat\theta}(a);\beta_0)+\Phi(U_{\hat\theta}(a);\beta_0)\right|>cx \right]\\
	&\lesssim x^{-q} n^{1-q/3}.
	\end{split}
	\]
	To conclude, from  Lemma 5(i) in \cite{durot2007}, it follows that $\tilde{U}$ can be replaced by $\tilde{\tilde{U}}_n$.
	
	It remains to replace $\tilde{\tilde{U}}_n(a)$ by $\tilde{V}(U_{\hat\theta}(a))$. By a Taylor expansion and \eqref{eqn:assumption_derivative} we have
	\[
	\sup_{|u|\leq \log n}\left|D_n(a,u)-d(U_{\hat\theta}(a))u^2\right|\lesssim n^{-s/3}(\log n)^3
	\]
	{on $E_n$.}
	Since $\tilde{\tilde{U}}_n(a)$, $\tilde{V}(U_{\hat\theta}(a))$ and $\1_{E_n}n^{-1/6}\eta_n$ have bounded moments of any order, by Lemmas 5(ii) and 6(i) in \cite{durot2007} it follows that 
	\[
	\begin{split}
	\tilde{\mathcal{J}}_n\1_{E_n}&=\1_{E_n}\int_{J_n}\left|\tilde{V}(U_{\hat\theta}(a))-n^{-1/6}\eta_n(a) \right|^p\frac{|U'_{\hat\theta}(a)|^{1-p}}{h^{uc}(U_{\hat\theta}(a))^p}\,\dd a+o_{P_\theta}(n^{-1/6})\\
	&=\1_{E_n}\int_{\epsilon+b}^{M-b}\left|\tilde{V}(t)-n^{-1/6}\eta_n(t) \right|^p\left|\frac{\lambda'_{\hat\theta}(t)}{h^{uc}(t)}\right|^{p}\,\dd t+o_{P_\theta}(n^{-1/6}),
	\end{split}
	\]
	where
	\begin{equation}
	\label{def:eta_t2}
	\eta_n(t)=\frac{h^{uc}(t)}{|\lambda'_0(t)|}\left\{\xi_n\lambda_0(t)-n^{1/2}(\hat\beta_n-\beta_0)A'_0(t)+n^{1/2}\Psi_n^s(t)\frac{\lambda_0(t)}{\Phi(t;\beta_0)} -n^{1/2}\left(\lambda_{\hat\theta}(t)-\lambda_0(t)\right)\right\}.
	\end{equation}
	Then, \eqref{eqn:step2_2} follows from \eqref{eqn:par_est-true} and the fact that $\p_\theta
	(E_n)\to 1$.
	
	\textbf{Step 3.} Now we prove that $\eta_n$ can be removed from the integrand in \eqref{eqn:step2_2}. Let 
	\[
	\mathcal{D}_n=n^{1/6}\left\{\int_{\epsilon+b}^{M-b}\left|\tilde{V}(t) \right|^p\left|\frac{\lambda'_0(t)}{h^{uc}(t)}\right|^{p}\,\dd t-\int_{\epsilon+b}^{M-b}\left|\tilde{V}(t)-n^{-1/6}\eta_n(t) \right|^p\left|\frac{\lambda'_0(t)}{h^{uc}(t)}\right|^{p}\,\dd t \right\}.
	\]
	As in the proof of Theorem \ref{theo:CLT}, we obtain
	\[
	\mathcal{D}_n=p\int_{\epsilon+b}^{M-b} \eta_n(t)\tilde{V}(t)\left|\tilde{V}(t) \right|^{p-2}   \left|\frac{\lambda'_0(t)}{h^{uc}(t)}\right|^{p}\,\dd t+o_{P_\theta}(1).
	\]
	Using the definition of $\eta_n$ in \eqref{def:eta_t} we have
	\begin{equation}
	\label{eqn:integral_eta2}
	\begin{split}
	&\int_{\epsilon+b}^{M-b} \eta_n(t)\tilde{V}(t)\left|\tilde{V}(t) \right|^{p-2}   \left|\frac{\lambda'_0(t)}{h^{uc}(t)}\right|^{p}\,\dd t\\
	&=\xi_n\int_{\epsilon}^M \frac{\lambda_0(t)h^{uc}(t)}{|\lambda'_0(t)|}\tilde{V}(t)\left|\tilde{V}(t) \right|^{p-2}   \left|\frac{\lambda'_0(t)}{h^{uc}(t)}\right|^{p}\,\dd t\\
	&\quad-n^{1/2}(\hat\beta_n-\beta_0)\int_{\epsilon}^M \frac{A'_0(t)h^{uc}(t)}{|\lambda'_0(t)|}\tilde{V}(t)\left|\tilde{V}(t) \right|^{p-2}   \left|\frac{\lambda'_0(t)}{h^{uc}(t)}\right|^{p}\,\dd t\\
	&\quad+n^{1/2}\int_{\epsilon+b}^{M-b}\Psi_n^s(t) \tilde{V}(t)\left|\tilde{V}(t) \right|^{p-2} \frac{\lambda_0(t)^2}{|\lambda'_0(t)|}  \left|\frac{\lambda'_0(t)}{h^{uc}(t)}\right|^{p}\,\dd t\\
	&\quad-n^{1/2}\int_{\epsilon+b}^{M-b}\left(\lambda_{\hat{\theta}}(t)-\lambda_0(t)\right) \tilde{V}(t)\left|\tilde{V}(t) \right|^{p-2} \frac{h^{uc}(t)}{|\lambda'_0(t)|}  \left|\frac{\lambda'_0(t)}{h^{uc}(t)}\right|^{p}\,\dd t+o_{P_\theta}(1).
	\end{split}
	\end{equation}
	As in the proof of Theorem \ref{theo:CLT}, the first three terms on the right hand side of \eqref{eqn:integral_eta2} converge to zero. It remains to deal with the fourth term. Let $a_i=(\epsilon+b+iL)\wedge(M-b)$ for $i=0,\dots,M$ where $M=\lfloor (M-\epsilon-2b)/L\rfloor+1$ and $L\to 0$, $L>n^{-1/2}$. Then we can deal with this term in the same way as we did for the third term in the proof of Theorem \ref{theo:CLT}, replacing $\Psi_n^s(t)$ and $b^2$ by $\lambda_{\hat\theta}(t)-\lambda_0(t)$ and $L$ respectively. In this case, instead of \eqref{eqn:diff_psi}, we use
	\begin{equation}
	\label{eqn:diff_psi2}
	\begin{split}
	&\sup_{t\in[a_i,a_{i+1}]}\left|\left(\lambda_{\hat{\theta}}(t)-\lambda_0(t)\right)- \left(\lambda_{\hat{\theta}}(a_i)-\lambda_0(a_i)\right)\right|\\
	&= \sup_{t\in[a_i,a_{i+1}]}\left|(\hat\theta_n-\theta)\left(\frac{\partial}{\partial \theta}f(\theta,a_i)-\frac{\partial}{\partial \theta}f(\theta,t)\right)-(\hat\theta_n-\theta)^2 \left(\frac{\partial^2}{\partial \theta^2}f(\theta,a_i)\bigg|_{\theta^*_1}-\frac{\partial^2}{\partial \theta^2}f(\theta,t)\bigg|_{\theta^*_2}\right)\right|\\
	&=O_P(Ln^{-1/2}).
	\end{split}
	\end{equation}
	To conclude we have
	\[
	\tilde{\mathcal{J}}_n=\int_{\epsilon}^M\left|\tilde{V}(t) \right|^p\left|\frac{\lambda'_0(t)}{h^{uc}(t)}\right|^{p}\,\dd t+o_{P_\theta}(n^{-1/6}).
	\]
	\textbf{Step 4.} Now this is the same as $\mathcal{J}_n$ at the end of Step 3 of the proof of Theorem \ref{theo:CLT}. Hence, we have that, given {$\hat\theta_n$} 
	\[
	n^{1/6}\left\{\tilde{\mathcal{J}}_n\1_{E_n}-m_p\right\}\xrightarrow{d}N(0,\sigma^2_p).
	\]
	As a result, we obtain
	\[
	n^{1/6}\left\{\tilde{\mathcal{J}}_n\1_{E_n}-m_p\right\}\xrightarrow{d}N(0,\sigma^2_p)
	\]
	and the statement of the theorem follows from $\mathcal{J}_n\1_{E_n}=\tilde{\mathcal{J}}_n\1_{E_n}+o_P(n^{-1/6})$ and  the fact that $\p(E_n)\to 1$.
\end{proof}

\section{CLT for the bootstrap version}
\label{sec:supp_bootstrap}
Throughout this section we assume that the assumptions of {Theorem \ref{theo:CLT*}} hold and we do not state them again for the separate lemmas.
\begin{proof}[Proof of Lemma \ref{le:Breslow*}.]
	By definition and the form of the likelihood in the Cox model (see for example Section 2.1 in \cite{LopuhaaNane2013}), we have
	\begin{eqnarray}
	\label{eqn:expression_Breslow*}\notag
	&&\int \delta\1_{\{t\leq x\}}\frac{1}{\Phi^*(t;\tilde\beta_n)}\,\mathrm{d}P_n^*(t,\delta,z)\\
	&\notag=&\frac{1}{n}\sum_{i=1}^n\int \delta\1_{\{t\leq x\}}\frac{1}{\Phi^*(t;\tilde\beta_n)}\,\mathrm{d}P_n^*(t,\delta|z=Z_i)\\
	&\notag=&\frac{1}{n}\sum_{i=1}^n\int \1_{\{t\leq x\}}\frac{1}{\Phi^*(t;\tilde\beta_n)}f_{\hat\theta}(t|Z_i)[1-\hat G_n(t)]\,\mathrm{d}t\\
	&=&\frac{1}{n}\sum_{i=1}^n\int\1_{\{t\leq x\}}\frac{1}{\Phi^*(t;\tilde\beta_n)}\lambda_{\hat\theta}(t)e^{\hat\beta'_nZ_i}[1- F_{\hat\theta}(t|Z_i)][1-\hat G_n(t)]\,\mathrm{d}t,
	\end{eqnarray}
	where $P_n^*(t,\delta|z=Z_i)$ is the distribution of $(T^*,\Delta^*)$ given that $Z^*=Z_i$.
	Since we fix the covariates when generating bootstrap samples, $Z^*$ can only take values $Z_1,\dots, Z_n$, each with probability $1/n$.
	Note that here $f_{\hat\theta}$ and $F_{\hat\theta}$ are the  density and the cdf of the event times in the bootstrap sample.
	Moreover, 
	\begin{eqnarray}
	\label{eqn:expression_Phi*}\notag
	\Phi^*(t;\tilde\beta_n)&=&\int  \1_{\{u\geq t\}}e^{\tilde\beta'_nz}\,\mathrm{d}P_n^*(u,\delta,z)\\
	&\notag=&\frac{1}{n}\sum_{i=1}^ne^{\tilde\beta'_nZ_i}\int  \1_{\{u\geq t\}}\,\mathrm{d}P_n^*(u,\delta|z=Z_i)\\
	&\notag=&\frac{1}{n}\sum_{i=1}^ne^{\tilde\beta'_nZ_i}P_n^*(T^*\geq t|Z^*=Z_i)\\
	&=&\frac{1}{n}\sum_{i=1}^ne^{\tilde\beta'_nZ_i}[1- F_{\hat\theta}(t|Z_i)][1-\hat G_n(t)].
	\end{eqnarray}
	If we replace the expression for $\Phi^*(t;\tilde\beta_n)$ in the wright hand side of equation \eqref{eqn:expression_Breslow*} we obtain 
	\[
	\begin{split}
	&\int \delta\1_{\{t\leq x\}}\frac{1}{\Phi^*(t;\tilde\beta_n)}\,\mathrm{d}P_n^*(t,\delta,z)\\
	&=\frac{1}{n}\sum_{i=1}^n\int\1_{\{t\leq x\}}\frac{1}{\frac{1}{n}\sum_{j=1}^ne^{\tilde\beta'_nZ_j}[1- F_{\hat\theta}(t|Z_j)][1-\hat G_n(t)]}\lambda_{\hat\theta}(t)e^{\hat\beta'_nZ_i}[1- F_{\hat\theta}(t|Z_i)][1-\hat G_n(t)]\,\mathrm{d}t\\
	&=\int_0^x\frac{\frac{1}{n}\sum_{i=1}^ne^{\hat\beta'_nZ_i}[1- F_{\hat\theta}(t|Z_i)]}{\frac{1}{n}\sum_{j=1}^ne^{\tilde\beta'_nZ_j}[1- F_{\hat\theta}(t|Z_j)]}\lambda_{\hat\theta}(t)\,\mathrm{d}t\\
	&=\int_0^x\lambda_{\hat\theta}(t)\,\mathrm{d}t=\Lambda_{\hat\theta}(x).
	\end{split}
	\]
	{Hence we obtain the statement of the Lemma.}
\end{proof}
\begin{lemma}
	\label{le:Phi*}
	{Under the assumptions of Theorem \ref{theo:CLT*},} for any $\epsilon>0$ and $M<\tau_H$, given the data $(T_1,\Delta_1,Z_1),\dots,(T_n,\Delta_n,Z_n)$, we have
	\begin{itemize}
		\item[(i)] 
		\[
		0<\liminf_{n\to\infty}\inf_{x\leq M}\inf_{|\beta-\tilde\beta_n|\leq \epsilon}|\Phi^*(x;\beta)|\leq\limsup_{n\to\infty}\sup_{x\leq M}\sup_{|\beta-\tilde\beta_n|\leq \epsilon}|\Phi^*(x;\beta)|<\infty.
		\]
		\item[(ii)] For any sequence $\beta_n^*$ such that $|\beta_n^*-\tilde\beta_n|$ converges to zero almost surely (with respect to the conditional probability given the data),
		\[
		0<\liminf_{n\to\infty}\inf_{x\leq M}|\Phi^*_n(x;\beta_n^*)|\leq\limsup_{n\to\infty}\sup_{x\leq M}|\Phi^*_n(x;\beta_n^*)|<\infty,
		\]
		with probability one (given the data).
		\item[(iii)] For $D^{(1),*}$ defined in \eqref{def:A_0*}, it holds
		\[
		\limsup_{n\to\infty}\sup_{x\in\R}\sup_{|\beta-\tilde\beta_n|\leq \epsilon}\left|D^{(1),*}(x;\beta)\right|<\infty.
		\]
		\item[(iv)] For any sequence $\beta_n^*$ such that $|\beta_n^*-\tilde\beta_n|$ converges to zero almost surely, (with respect to the conditional probability given the data), and $D^{(1),*}_n$ defined in \eqref{def:D_n^*} 
		\[
		\limsup_{n\to\infty}\sup_{x\in\R}\left|D^{(1),*}_n(x;\beta^*_n)\right|<\infty,
		\]
		with probability one (given the data).
		\item[(v)] \[
		\sqrt{n}\sup_{x\in\R}\left|\Phi_n^*(x;\hat\beta_n^*)-\Phi^*(x;\tilde\beta_n)\right|=O_{P^*}(1).
		\]
	\end{itemize}
\end{lemma}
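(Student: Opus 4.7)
The plan is to anchor all five parts in the explicit representation of $\Phi^*(x;\beta)$ derived inside the proof of Lemma~\ref{le:Breslow*}, together with standard empirical process arguments conditional on the data. Writing out the conditional expectation as in that proof gives
\[
\Phi^*(x;\beta)=\bigl[1-\hat G_n(x)\bigr]\cdot\frac{1}{n}\sum_{i=1}^n e^{\beta'Z_i}\bigl[1-F_{\hat\theta}(x|Z_i)\bigr],
\]
and likewise $D^{(1),*}(x;\beta)$ has the same form with an extra factor $Z_i$. For~(i) and~(iii), I would combine three facts: (a) Gill's uniform consistency of the Kaplan--Meier estimator on $[0,M]$ with $M<\tau_H=\tau_G$, giving $\inf_{x\le M}[1-\hat G_n(x)]\ge [1-G(M)]/2>0$ for large~$n$; (b) $F_{\hat\theta}(x|Z_i)=1-\exp(-\Lambda_{\hat\theta}(x)e^{\tilde\beta_n'Z_i})$ stays bounded away from~$1$ uniformly on $[0,M]$ by a constant depending on $\Lambda_{\hat\theta}(M)$, itself bounded by consistency of $\hat\theta_n$; (c) the LLN applied uniformly over the compact set $\{|\beta-\tilde\beta_n|\le\epsilon\}$, justified by continuity and Assumptions (A2)--(A3), which controls $n^{-1}\sum_i e^{\beta'Z_i}$ and $n^{-1}\sum_i|Z_i|e^{\beta'Z_i}$ both from above and, in the former case, from below.

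For (ii) and (iv), I would pass from the bootstrap measure $P_n^*$ to the bootstrap empirical measure $\p_n^*$ by a uniform conditional law of large numbers (bootstrap Glivenko--Cantelli). The relevant classes
\[
\mathcal{F}_1=\bigl\{(t,\delta,z)\mapsto \mathds{1}_{\{t\ge x\}}e^{\beta'z}:\ x\in\R,\ |\beta-\tilde\beta_n|\le\epsilon\bigr\},\qquad \mathcal{F}_2=\bigl\{z\cdot\ \text{same}\bigr\}
\]
are VC-subgraph and admit envelopes $e^{\epsilon|z|}e^{|\tilde\beta_n||z|}$ and $|z|e^{\epsilon|z|}e^{|\tilde\beta_n||z|}$ which are integrable under $P_n^*$ (conditionally on the data) because the original data satisfies (A3) and $\tilde\beta_n\to\beta_0$ almost surely. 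Van der Vaart--Wellner type bootstrap Glivenko--Cantelli then yields
$\sup_{x\le M,|\beta-\tilde\beta_n|\le\epsilon}|\Phi_n^*(x;\beta)-\Phi^*(x;\beta)|\to0$
almost surely given the data, and combining with (i) and (iii) gives (ii) and (iv) after letting $\beta=\beta_n^*\to\tilde\beta_n$.

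For (v), I would split
\[
\Phi_n^*(x;\hat\beta_n^*)-\Phi^*(x;\tilde\beta_n)=\bigl[\Phi_n^*(x;\hat\beta_n^*)-\Phi_n^*(x;\tilde\beta_n)\bigr]+\bigl[\Phi_n^*(x;\tilde\beta_n)-\Phi^*(x;\tilde\beta_n)\bigr].
\]
The first bracket is bounded in sup-norm by $|\hat\beta_n^*-\tilde\beta_n|\cdot\sup_{x,|\beta-\tilde\beta_n|\le\epsilon}|D_n^{(1),*}(x;\beta)|$ thanks to a mean-value theorem in $\beta$; by \eqref{eqn:beta_n^*} and part (iv) this is $O_{P^*}(n^{-1/2})$. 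The second bracket is the bootstrap empirical process indexed by the class $\{\mathds{1}_{\{t\ge x\}}e^{\tilde\beta_n'z}:x\in\R\}$, a VC-subgraph class with square-integrable envelope under $P_n^*$ given the data; the bootstrap functional CLT (Theorem~3.6.1 in van der Vaart--Wellner, applied to a slightly enlarged class that does not depend on~$n$, e.g.\ $\{\mathds{1}_{\{t\ge x\}}e^{\beta'z}:|\beta-\beta_0|\le\epsilon_0\}$) then delivers $\sqrt{n}\,\sup_x|\Phi_n^*(x;\tilde\beta_n)-\Phi^*(x;\tilde\beta_n)|=O_{P^*}(1)$.

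The main obstacle is (v): specifically, showing the bootstrap empirical process over indicators $\mathds{1}_{\{t\ge x\}}$ weighted by an \emph{estimated} exponential $e^{\tilde\beta_n'z}$ is $\sqrt{n}$-tight uniformly in $x$, given that the function class is data-dependent through $\tilde\beta_n$. This is handled by embedding it in a fixed class indexed by $\beta$ in a neighbourhood of~$\beta_0$ and invoking almost-sure consistency $\tilde\beta_n\to\beta_0$ to argue that with $\p$-probability one all relevant functions eventually lie in the fixed, Donsker master class. The remaining verifications (VC-subgraph property, envelope integrability under $P_n^*$) then reduce to Assumption~(A3) applied to the original observations.
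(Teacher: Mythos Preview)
Your proposal is correct and follows essentially the same route as the paper: the same explicit representation of $\Phi^*$ from the proof of Lemma~\ref{le:Breslow*}, the same Kaplan--Meier consistency plus LLN argument for~(i), a deferral to Lemma~3 of \cite{LopuhaaNane2013} for (ii)--(iv) in the paper versus your more explicit bootstrap Glivenko--Cantelli, and the identical triangle-inequality split with mean-value theorem for~(v). One small point: for the second bracket in~(v) the paper invokes Theorem~2.8.4 of \cite{VW96} (uniform Donsker over the family $\{P_n^*\}$), which is the right tool here since the bootstrap is model-based rather than Efron's nonparametric bootstrap; your citation of Theorem~3.6.1 is slightly off, but your underlying VC-subgraph-plus-envelope verification is exactly what is needed to apply the uniform Donsker result, so the argument goes through.
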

\begin{proof}
	Note that 
	\[
	\Phi^*(x;\beta)=\frac{1}{n}\sum_{1}^n e^{\beta'Z_i}P^*_n(T\geq x|Z=Z_i)
	\]
	and the $Z_i$s are known (given the data).  Then we have 
	\[
	\begin{split}
	\inf_{x\leq M}\inf_{|\beta-\tilde\beta_n|\leq \epsilon}\Phi^*(x;\beta)&>\inf_{|\beta-\tilde\beta_n|\leq \epsilon}\frac{1}{n}\sum_{1}^n e^{\beta'Z_i}P^*_n(T\geq M|Z=Z_i)\\
	&=[1-\hat{G}_n(M)]\inf_{|\beta-\tilde\beta_n|\leq \epsilon}\frac{1}{n}\sum_{1}^n e^{\beta'Z_i}[1-F_{\hat\theta}(M|Z_i)]\\
	&=[1-G(M)]\inf_{|\beta-\tilde\beta_n|\leq \epsilon}\frac{1}{n}\sum_{1}^n e^{\beta'Z_i}[1-F(M|Z_i)]+o_P(1).
	\end{split}
	\]
	Moreover, since 
	\[
	\begin{split}
	&[1-G(M)]\inf_{|\beta-\tilde\beta_n|\leq \epsilon}\frac{1}{n}\sum_{1}^n e^{\beta'Z_i}[1-F(M|Z_i)]\\
	&\to [1-G(M)]\inf_{|\beta-\tilde\beta_n|\leq \epsilon}\E\left[ e^{\beta'Z}[1-F(M|Z)]\right]\\
	&=\inf_{|\beta-\beta_0|\leq 2\epsilon}\Phi(M;\beta)>0, 
	\end{split}
	\]
	the first inequality of (i) follows immediately. For the second inequality we have
	\[
	\sup_{x\leq M}\sup_{|\beta-\tilde\beta_n|\leq \epsilon}|\Phi^*(x;\beta)|\leq \sup_{|\beta-\tilde\beta_n|\leq \epsilon}\frac{1}{n}\sum_{1}^n e^{\beta'Z_i}\to \sup_{|\beta-\beta_0|\leq 2\epsilon}\E[Z]<\infty. 
	\]
	Similarly, statements (ii), (iii), (iv) can be proved following the reasoning in Lemma 3 in \cite{LopuhaaNane2013}. Next, we consider (v). By the triangular inequality we write
	\begin{equation}
	\label{eqn:Phi_triangular*}
	\sqrt{n}\sup_{x\in\R}\left|\Phi_n^*(x;\hat\beta_n^*)-\Phi^*(x;\tilde\beta_n)\right|\leq \sqrt{n}\sup_{x\in\R}\left|\Phi_n^*(x;\hat\beta_n^*)-\Phi^*_n(x;\tilde\beta_n)\right|+\sqrt{n}\sup_{x\in\R}\left|\Phi_n^*(x;\tilde\beta_n)-\Phi^*(x;\tilde\beta_n)\right|.
	\end{equation}
	The Taylor expansion gives us
	\[
	\left|\Phi_n^*(x;\hat\beta_n^*)-\Phi^*_n(x;\tilde\beta_n)\right|\leq |\hat\beta_n^*-\tilde\beta_n|\sup_{x\in[0,M]}|D_n^{(1),*}(\beta^*,x)|=O_{P^*}(n^{-1/2}).
	\]
	For the second term in the right hand side of~\eqref{eqn:Phi_triangular*}, note that the class of functions
	\[
	h_n(t,z;x)=\1_{\{t\geq x\}}e^{\hat\beta'_nz},\qquad x\in[0,M],
	\]
	is uniformly Donsker over the probabilities $P_n^*$ (satisfies Theorem 2.8.4 in \cite{VW96}). Hence 
	\[
	\sqrt{n}\sup_{x\in\R}\left|\Phi^*_n(x;\tilde\beta_n)-\Phi^*(x;\tilde\beta_n)\right|=O_{P^*}(1).
	\]	
\end{proof}
In what follows, $\p_T^*$ and $\E^*_T$ denote the conditional probability and conditional expectation given the data and $T_1^*,\dots,T_n^*$.
\begin{lemma}
	\label{le:S1*}
	Let $t\mapsto r(t)$ be a positive bounded function on $[\epsilon,M]$ with $R=\sup_{u\in[0,1]}r(t)$  and consider
	\[
	S_n^{1,*}(t)=\frac{1}{n}\sum_{i=1}^n \left(\Delta_i^*-\E_T^*[\Delta_i^*]\right)\1_{\{T_i^*\leq t\}}r(T_i^*).
	\] 
	{Under the assumptions of Theorem \ref{theo:CLT*},} there exists a positive constant $K$ such that for each $t\in[\epsilon,M]$, $t_n\in(0,M)$, and $x>0$ it holds
	\[
	P^*_n\left(\sup_{\substack{|s-t|<t_n\\s\in[\epsilon,M]}}\left|S_n^{1,*}(t)-S_n^{1,*}(s) \right|\geq x\right)\leq 4\exp\left(-K\tilde\theta nx\right)
	\]
	where
	\begin{equation}
	\label{def:theta_tilde*}
	\tilde\theta=\min\left\{1,\frac{4x}{3C(t_n+n^{-1/2+\alpha})e^{R^2/2}}\right\},
	\end{equation}
	for an appropriately chosen $C$. 
\end{lemma}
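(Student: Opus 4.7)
The plan is to mimic the proof of Lemma \ref{le:S1} in the bootstrap setting. Conditionally on the data and on $T_1^*,\dots,T_n^*$, the variables $\Delta_i^*$ are independent Bernoulli random variables and, after ordering the $T_i^*$'s, the partial sums defining $S_n^{1,*}$ form a martingale. Hence, I would first apply Doob's inequality (conditionally on $T_1^*,\dots,T_n^*$) to write, for every $\theta>0$,
\[
P^*_n\left(\sup_{s\in[t,t+t_n]\cap[\epsilon,M]}|S_n^{1,*}(s)-S_n^{1,*}(t)|\geq x\right)
\leq 2\E^*\!\left[\prod_{i=1}^n \E^*_T\!\left[e^{\theta |Y_i(s)|}\right]\right]e^{-\theta n x},
\]
with $Y_i(s)=(\Delta_i^*-\E^*_T[\Delta_i^*])\1_{\{t<T_i^*\leq s\}}r(T_i^*)$. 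Since $|Y_i(s)|\leq R\,\1_{\{t<T_i^*\leq s\}}$, Hoeffding's lemma gives $\E^*_T[e^{\pm \theta Y_i(s)}]\leq \exp(\theta^2R^2\1_{\{t<T_i^*\leq t+t_n\}}/2)$.

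The central new ingredient compared to Lemma \ref{le:S1} is to control $P^*_n(t<T^*\leq t+t_n)$, since under the bootstrap the distribution function $H^*$ of $T^*$ is random and need not have a bounded density. Here I would invoke \eqref{eqn:approx_H*}, which says that with probability tending to one,
\[
\sup_{s\in[0,M]}|H^*(s)-\tilde H^*(s)|\lesssim n^{-1/2+\alpha},
\]
where $\tilde H^*$ is defined in \eqref{def:tilde_H*}. Since $f_{\hat\theta}(\cdot|Z_i)$ and $G$ are bounded on $[\epsilon,M]$, $\tilde H^*$ is Lipschitz with a constant that is uniformly bounded in $n$; consequently, on the corresponding high-probability event there exists $C>0$ such that
\[
P^*_n\left(t<T^*\leq t+t_n\right)=H^*(t+t_n)-H^*(t)\leq C\left(t_n+n^{-1/2+\alpha}\right).
\]
Using the elementary bound $\E^*[e^{a\1_B}]\leq \exp(P^*(B)(e^a-1))$ with $a=\theta^2R^2/2\leq R^2/2$ for $\theta\leq 1$, we then obtain
\[
\E^*\!\left[\exp\left(\theta^2R^2\1_{\{t<T^*\leq t+t_n\}}/2\right)\right]\leq \exp\left(\tfrac12 C(t_n+n^{-1/2+\alpha})R^2e^{R^2/2}\theta^2\right).
\]

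Plugging this back gives the bound $2\exp(-n\theta\{x-\tfrac12\theta C(t_n+n^{-1/2+\alpha})R^2e^{R^2/2}\})$, which upon optimizing over $\theta\in(0,1]$ with the choice
\[
\theta=\tilde\theta=\min\!\left\{1,\frac{4x}{3C(t_n+n^{-1/2+\alpha})e^{R^2/2}}\right\}
\]
yields a factor $x-\tfrac12\theta C(t_n+n^{-1/2+\alpha})R^2e^{R^2/2}\geq x/3$ and hence the advertised bound $2\exp(-Knx\tilde\theta)$ with $K=1/3$. The analogous bound for $s\in[t-t_n,t]\cap[\epsilon,M]$ is obtained by the same argument, and summing the two bounds gives the factor $4$ in the statement. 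The only delicate point, which will be the main obstacle, is that \eqref{eqn:approx_H*} only holds on an event of probability tending to one; one must either restrict attention to that event (absorbing the outside contribution into the constant $K$ by choosing $n$ large enough, and noting that for small $n$ the inequality is trivial with appropriate $K$) or reformulate the statement to make this explicit, as is done implicitly in the analogous results of Section \ref{sec:supp_bootstrap}.
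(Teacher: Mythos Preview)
Your proposal is correct and follows essentially the same approach as the paper: both argue exactly as in Lemma~\ref{le:S1}, with the single modification that the Bernoulli success probability $P^*_n(t<T^*\leq t+t_n)=H^*(t+t_n)-H^*(t)$ is bounded via the smooth approximation $\tilde H^*$ from \eqref{def:tilde_H*} and \eqref{eqn:approx_H*}, yielding the extra $n^{-1/2+\alpha}$ term in $\tilde\theta$. Your remark about the high-probability event is also pertinent and matches how the paper handles this implicitly throughout Section~\ref{sec:supp_bootstrap}.
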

\begin{proof}
	It can be proved arguing as in Lemma \ref{le:S1}. The only change is that now $\1_{\{t<T^*\leq t+t_n\}}$ is a Bernoulli random variable with success probability 
	\[
	\begin{split}
	\gamma&=H^*(t+t_n)-H^*(t)\\
	&\leq\tilde{H}^*(t+t_n)-\tilde{H}^*(t)+2\sup_{t\in[\epsilon,M]}|\tilde{H}^*(t)-H^*(t)|\\
	&\lesssim t_n+n^{-1/2+\alpha},
	\end{split}
	\]
	where $H^*$ is the distribution function of $T^*$ and $\tilde{H}^*$ is defined in \eqref{def:tilde_H*}. For the last inequality we used 
	\begin{equation}
	\label{eqn:tilde_h*}
	\begin{split}
	\frac{\dd }{\dd t}\tilde{H}^*(t)&=g(t)\frac{1}{n}\sum_{i=1}^n\left[1-F_{\hat\theta}(t|Z_i)\right]+[1-G(t)]\frac{1}{n}\sum_{i=1}^n f_{\hat{\theta}}(t|Z_i)\\
	&\to g(t)\E\left[1-F(t|Z)\right]+[1-G(t)]\E\left[ f(t|Z)\right]\\
	&=\int_{\R^p}\left\{g(t)\left[1-F(t|z)\right]+[1-G(t)]f(t|z)\right\}\,\dd F_Z(z)=h(t),
	\end{split}
	\end{equation}
	i.e. $\frac{\dd }{\dd t}\tilde{H}^*(t)$ is uniformly bounded. As a result, $Ct_n$ in Lemma \ref{le:S1} will here be replaced by $C(t_n+n^{-1/2+\alpha})$. Note that the constant $C$ is not the same.
\end{proof}
\begin{lemma}
	\label{le:S2*}
	Let $t\mapsto m(t)$ be a positive continuously differentiable function on $[\epsilon,M]$ and consider
	\[
	S_n^{2,*}(t)=\frac{1}{n}\sum_{i=1}^n \left(m(T_i^*)\1_{\{T_i^*\leq t\}}-\E^*\left[m(T_i^*)\1_{\{T_i^*\leq t\}}\right]\right).
	\] 
	{Under the assumptions of Theorem \ref{theo:CLT*},} there exist two positive constants $K_1,\,K_2$ such that for each $t\in[\epsilon,M]$, $t_n\in(0,M)$ and $x>n^{-1}\log n$   it holds {on $E_n$}
	\[
	P^*_n\left(\sup_{\substack{|s-t|<t_n\\ s\in[\epsilon,M]}}\left|S_n^{2,*}(t)-S_n^{2,*}(s) \right|\geq x\right)\leq K_1\exp\left(-K_2nx\min\left\{1,xt_n^{-1}\right\}\right).
	\]
\end{lemma}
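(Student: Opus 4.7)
The plan is to mimic the proof of Lemma \ref{le:S2}, replacing everywhere the true distribution function $H$ by the (conditional) distribution function $H^*$ of $T^*$ given the data and the empirical distribution $H_n$ of the $T_i$'s by the bootstrap empirical distribution $H_n^*$ of $T_1^*,\dots,T_n^*$. The two key differences that must be handled carefully are: (i) $H^*$ is not continuous (it has a mass due to the jumps of the Kaplan-Meier estimator $\hat G_n$), and (ii) we must work conditionally on the data, so every probability bound must involve $P_n^*$ rather than $\p$. Both difficulties are resolved using the smoothed proxy $\tilde H^*$ defined in~\eqref{def:tilde_H*}, whose derivative is uniformly bounded and converges to $h$, the density of $T$, in view of \eqref{eqn:tilde_h*}; in particular, on $E_n$ one has $\sup_{t\in[\epsilon,M]}|H^*(t)-\tilde H^*(t)|\lesssim n^{-1/2+\alpha}$ by \eqref{eqn:approx_H*}, and $\tilde H^*$ has a density that is uniformly bounded from above and below away from zero.

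The first step is to use a change of variable through the quantile function $(H^*)^{-1}$, obtaining, as in \eqref{eqn:1}, the decomposition
\[
|S_n^{2,*}(s)-S_n^{2,*}(t)| \le \int_{H_n^*(t)}^{H_n^*(s)} \bigl|m\circ (H_n^*)^{-1}(v) - m\circ (H^*)^{-1}(v) \bigr|\,\dd v + \left| \int_0^1 m\circ(H^*)^{-1}(v)[\1_{\{H_n^*(t)<v\le H_n^*(s)\}} - \1_{\{H^*(t)<v\le H^*(s)\}}]\,\dd v\right|.
\]
For the first term, I would exploit the Lipschitz behavior of $m\circ (H^*)^{-1}$ (which holds because $m$ is continuously differentiable and $\tilde H^*$ has a density bounded away from zero, so the same holds for $H^*$ up to an $O(n^{-1/2+\alpha})$ error that is absorbed into $x$ provided $x>n^{-1}\log n$), and apply the bootstrap DKW inequality to control $\sup_u|H_n^*(u)-H^*(u)|$ conditional on the data.

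The second term is where a bootstrap martingale argument is needed. The key observation is that, conditionally on the data and on $H_n^*(t)$, the process $s\mapsto (H_n^*(s)-H_n^*(t))/(H^*(s)-H^*(t))$ is still a reverse-time martingale in $s\ge t$, since $T_1^*,\dots,T_n^*$ are i.i.d.\ with law $P_n^*(\cdot|\text{data})$. This allows an application of Doob's inequality identical to the one in \eqref{eq:b2}, together with the bootstrap version of Lemma \ref{le:binomial} applied to the Binomial variable $n(H_n^*(t+t_n)-H_n^*(t))$ with mean $np$ where $p=H^*(t+t_n)-H^*(t)\lesssim t_n+n^{-1/2+\alpha}$. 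After partitioning the supremum dyadically as in the proof of \eqref{eq:b2}, the resulting bound has the form $K_1\exp(-K_2nx\min\{1,xt_n^{-1}\})$ for all $x>n^{-1/2+\alpha}$, which is guaranteed by $x>n^{-1}\log n$ once $\alpha$ is chosen sufficiently small (and in fact any $x\gg n^{-1/2+\alpha}$ suffices).

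The main obstacle will be the careful bookkeeping around the threshold $x>n^{-1}\log n$: the approximation errors $H^*-\tilde H^*$ and the fact that $p\lesssim t_n+n^{-1/2+\alpha}$ rather than just $t_n$ mean that several of the Bernstein-type estimates of Lemma \ref{le:S2} degrade unless $x$ dominates the approximation error. The restriction $x>n^{-1}\log n$ turns out to be just enough so that the $n^{-1/2+\alpha}$ corrections in $p$ are negligible compared to the main exponent $nx\min\{1,xt_n^{-1}\}$, and so that the difference $H^*-\tilde H^*$ can be absorbed into $cx$. Once this bookkeeping is handled, the rest of the argument goes through verbatim as in the proof of Lemma \ref{le:S2}.
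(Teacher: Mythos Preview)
Your proposal correctly identifies the two difficulties (discontinuity of $H^*$ and working under $P_n^*$) and the second half of your argument---the reverse-time martingale structure of $\mathcal M_n^*(s)=(H_n^*(s)-H_n^*(t))/(H^*(s)-H^*(t))$, Doob's inequality, and Lemma~\ref{le:binomial} applied to Binomial increments of $H_n^*$---is exactly what the paper uses. The paper also exploits the Brownian bridge approximation of $\hat G_n-G$ to control the increments $(\tilde H^*-H^*)(s)-(\tilde H^*-H^*)(t)$ by $O(n^{-1/2}\sqrt{t_n})$, which is where the restriction $x>n^{-1}\log n$ enters; you anticipate this correctly.

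The gap is in your first step. The paper does \emph{not} mimic the quantile change-of-variable of Lemma~\ref{le:S2}. Precisely because $H^*$ has jumps (inherited from $\hat G_n$), the generalized inverse $(H^*)^{-1}$ has flat pieces, the identity $\E^*[m(T^*)\1_{\{t<T^*\le s\}}]=\int_{H^*(t)}^{H^*(s)}m\circ(H^*)^{-1}(v)\,\dd v$ fails, and $m\circ(H^*)^{-1}$ is not Lipschitz. Your remark that ``the same holds for $H^*$ up to an $O(n^{-1/2+\alpha})$ error'' does not rescue this: you would have to replace $H^*$ by $\tilde H^*$ \emph{inside} the decomposition, which introduces new error terms you do not control. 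Instead, the paper writes
\[
S_n^{2,*}(s)-S_n^{2,*}(t)=\int_t^s m(v)\,\dd[H_n^*(v)-H^*(v)]
\]
and integrates by parts (valid for right-continuous functions of bounded variation, using only that $m'$ is bounded) to get directly
\[
|S_n^{2,*}(s)-S_n^{2,*}(t)|\lesssim \bigl|(H^*-H_n^*)(s)-(H^*-H_n^*)(t)\bigr|+t_n\sup_{u}|H^*(u)-H_n^*(u)|.
\]
This reduces immediately to the DKW bound for the second term and to the martingale argument for the first, without ever touching $(H^*)^{-1}$. Replace your quantile decomposition by this integration-by-parts step and the rest of your outline goes through.
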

\begin{proof}
	We write
	\[
	S_n^{2,*}(t)-S_n^{2,*}(s)=\int_t^s m(v)\,\dd \left[H^*_n(v)-H^*(v)\right].
	\]
	Hence, {using integration by parts in generalized form with right continuous functions of bounded variation and the fact that the derivative of $m$ is bounded} we get 
	\[
	\left|S_n^{2,*}(t)-S_n^{2,*}(s) \right|\lesssim \left[H^*(u)-H^*_n(u)\right]\bigg|_t^s+t_n\sup_{u\in[\epsilon,M]}\left|H^*(u)-H^*_n(u)\right|
	\]
	and as a result
	{\small	\begin{equation*}
		\begin{split}
		P^*_n\left(\sup_{\substack{s\in[t,t+t_n]\\ s\leq M}}\left|S_n^{2,*}(t)-S_n^{2,*}(s) \right|\geq x\right)
		&\leq P^*_n\left(\sup_{u\in[\epsilon,M]}\left|H^*(u)-H^*_n(u)\right| \geq cxt_n^{-1}\right)\\
		&\quad +P^*_n\left(\sup_{\substack{s\in[t,t+t_n]\\ s\leq M}}\left|H^*(s)-H^*_n(s)-H^*(t)+H^*_n(t)\right|\geq cx\right).
		\end{split}	
		\end{equation*}}
	By the DKW inequality {the first probability on the right hand side satisfies}
	\[
	P^*_n\left(\sup_{u\in[\epsilon,M]}\left|H^*(u)-H^*_n(u)\right| \geq cxt_n^{-1}\right)\lesssim \exp\left(-cnx^2t_n^{-2}\right)\lesssim \exp\left(-cnx^2t_n^{-1}\right)
	\]
	From \eqref{eqn:tilde_h*} we have that  $C=\sup_u\frac{\dd}{\dd u}\tilde{H}^*(u)<\infty$ and is independent of $n$.  Assume first that $t_n\leq c_0x$ where $c_0=c/(2C)$.
	By monotonicity of both $H_n^*$ and $H^*$, for $s\in[t,t+t_n]$ we have
	\begin{eqnarray*}
		(H_n^*-H^*)(s)-(H_n^*-H^*)(t)&\leq& H_n^*(t+t_n)-H^*(t)-(H_n^*-H^*)(t)\\
		&\leq& (H_n^*-H^*)(t+t_n)-(H_n^*-H^*)(t)+H^*(s)-H^*(t)
	\end{eqnarray*} 		
	and
	\[
	\begin{split}
	H^*(s)-H^*(t)&\leq\tilde{H}^*(s)-\tilde{H}^*(t)+|(\tilde{H}^*-H^*)(s)-(\tilde{H}^*-H^*)(t)|\\
	&\leq C (s-t)+|(\tilde{H}^*-H^*)(s)-(\tilde{H}^*-H^*)(t)|.
	\end{split}
	\]
	Moreover, using the Brownian bridge approximation of $G_n-G$ we obtain
	\[
	\begin{split}
	&\sup_{\substack{s\in[t,t+t_n]\\ s\leq M}}|(\tilde{H}^*-H^*)(s)-(\tilde{H}^*-H^*)(t)|\\
	&\leq \sup_{\substack{s\in[t,t+t_n]\\ s\leq M}}|(G_n-G)(s)-(G_n-G)(t)|+ct_n\sup_{u\in[\epsilon,M]}|G_n(u)-G(u)|\\
	&=O_P(n^{-1/2}\sqrt{t_n}).
	\end{split}
	\]
	Therefore we can assume that on $E_n$ we have 
	\[
	\sup_{\substack{s\in[t,t+t_n]\\ s\leq M}}|(\tilde{H}^*-H^*)(s)-(\tilde{H}^*-H^*)(t)|\leq \frac{c}{4\sqrt{c_0}}\sqrt{t_n}n^{-1/2}(\log n)^{1/2}.
	\]
	Since $x\geq n^{-1}(\log n)$ and $t_n\leq c_0x$
	\[
	|(H_n^*-H^*)(s)-(H_n^*-H^*)(t)|\leq| (H_n^*-H^*)(t+t_n)-(H_n^*-H^*)(t)|
	+\frac34cx.
	\]
	Therefore, since $t_n\leq c_0x$ where $c_0=c/(2C)$ we obtain
	\begin{eqnarray*}
		&&P^*_n\left(\sup_{\substack{s\in[t,t+t_n]\\ s\leq M}}\left|(H_n^*-H^*)(s)-(H_n^*-H^*)(t)\right|\geq cx\right)\\
		&&\leq  P_n^*\left(\left|(H_n^*-H^*)(t+t_n)-(H_n^*-H^*)(t)\right|\geq cx/4\right).
	\end{eqnarray*}
	Using Lemma \ref{le:binomial} with $p=H^*(t_n)-H^*(t)\leq cx/2$ we have
	\[
	P_n^*\left(\left|(H_n^*-H^*)(t+t_n)-(H_n^*-H^*)(t)\right|\geq cx/4\right)
	\leq 2\exp\left(-\frac{nc^2x^2}{32p+16cx}\right)\leq 2\exp\left(-Knx\right).
	\] 
	It remains to consider the case $t_n>c_0 x$. We use the same type of argument as in the proof of Lemma \ref{le:S2}. Here we have 
	\[
	\mathcal{M}_n^*(s)=\frac{H_n^*(s)-H_n^*(t)}{H^*(s)-H^*(t)},\qquad s\in[t,1]
	\]
	which, given the data,  is a reverse time martingale conditionally on $H^*_n(t)$. Let $c_1=\inf_t \frac{\dd \tilde{H}^*(t)}{\dd t}$.
	We can again assume that, for all $k$ such that $ t_n>2^{k}c_0x$, 
	\[
	|(\tilde{H}^*-H^*)(t+t_n/2^k)-(\tilde{H}^*-H^*)(t)|\leq c_1 \sqrt{\frac{c_0}{4}}\sqrt{\frac{t_n}{2^k}}n^{-1/2}(\log n)^{1/2}.
	\]
	Then, we have 
	{\small\[
		\begin{split}
		&P_n^*\left(\sup_{\substack{s\in[t+c_0x,t+t_n]\\ s\leq M}}\left|(H_n^*-H^*)(s)-(H_n^*-H^*)(t)\right|\geq cx\right)\\
		&\leq \sum_{k:\, t_n>2^{k}c_0x}
		P^*_n\left(\sup_{\substack{s\in[t+t_n/2^{k+1},t+t_n/2^k]\\ s\leq M}}\left|\mathcal{M}_n^*(s)-1\right|\geq \tilde{c}x\left(\frac{t_n}{2^k}+c_1\sqrt{\frac{c_0}{4}}\sqrt{\frac{t_n}{2^k}}n^{-1/2}(\log n)^{1/2}\right)^{-1}\right)\\ 
		&\leq\sum_{k:\, t_n>2^{k}c_0x}
		\exp\left(-r_k\tilde{c}x\left(\frac{t_n}{2^k}+c_1\sqrt{\frac{c_0}{4}}\sqrt{\frac{t_n}{2^k}}n^{-1/2}(\log n)^{1/2}\right)^{-1}\right)\E\left[\exp\left(\frac{r_k}{np}\left|X-E[X]\right|\right)\right],
		\end{split}
		\]}
	where $p=H^*(t+t_n/2^{k+1})-H^*(t)$ and $X=n[H_n^*(t+t_n/2^{k+1})-H_n^*(t)]$ is a Binomial distribution with parameters $n$ and  $p$. It then follows as in the proof of Lemma \ref{le:S2} that, 
	\[
	\begin{split}
	&
	P_n^*\left(\sup_{\substack{s\in[t+c_0x,t+t_n]\\ s\leq M}}\left|(H_n^*-H^*)(s)-(H_n^*-H^*)(t)\right|\geq cx\right)\\
	&\leq\sum_{k=1}^{\infty} 2\exp\left(-Knpx^2\left(\frac{t_n}{2^k}+c_1\sqrt{\frac{c_0}{4}}\sqrt{\frac{t_n}{2^k}}n^{-1/2}(\log n)^{1/2}\right)^{-2}\right).
	\end{split}
	\]
	Note, since $x\geq n^{-1}\log n$ and $t\geq 2^kc_0x$,  we have 
	\[
	p\geq c_1\frac{t_n}{2^{k+1}}-c_1\sqrt{\frac{c_0}{4}}\sqrt{\frac{t_n}{2^{k+1}}}n^{-1/2}(\log n)^{1/2}\geq \frac{c_1}{2}\left(1-\frac{1}{\sqrt{2}}\right)\frac{t_n}{2^k}
	\]
	and
	\[
	\frac{t_n}{2^k}+c_1\sqrt{\frac{c_0}{4}}\sqrt{\frac{t_n}{2^k}}n^{-1/2}(\log n)^{1/2}\leq \left(1+\frac{c_1}{2}\right)\frac{t_n}{2^k}
	\]
	As a result we conclude that 
	\[
	P_n^*\left(\sup_{\substack{s\in[t+c_0x,t+t_n]\\ s\leq M}}\left|(H_n^*-H^*)(s)-(H_n^*-H^*)(t)\right|\geq cx\right)\leq 2\exp\left(-K_1nx^2t_n^{-1}\right).
	\]
	The case $t-t_n<s<t$ can be treated similarly.
\end{proof}
\begin{lemma}
	\label{lemma:approx_Phi*}
	{Under the assumptions of Theorem \ref{theo:CLT*},} it holds
	\[
	\sup_{x\in[\epsilon,M]}\left|\Phi^*(x;\tilde\beta_n)-\Phi(x;\beta_0)\right|=O_P(n^{-1/2}).
	\]
	In particular, with probability converging to one, given the data it holds
	\[
	\sup_{x\in[\epsilon,M]}\left|\Phi^*(x;\tilde\beta_n)-\Phi(x;\beta_0)\right|\lesssim n^{-1/2+\alpha}.
	\]
\end{lemma}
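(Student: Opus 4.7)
The plan is to decompose $\Phi^*(x;\tilde\beta_n)-\Phi(x;\beta_0)$ by exploiting the product structures of both terms. Since the censoring time is independent of the covariate under the assumptions of Theorem~\ref{theo:CLT*},
\[
\Phi(x;\beta_0)=[1-G(x)]\,\E\bigl[e^{\beta_0'Z}(1-F(x|Z))\bigr],
\]
while~\eqref{eqn:expression_Phi*} gives
\[
\Phi^*(x;\tilde\beta_n)=[1-\hat G_n(x)]\cdot\frac{1}{n}\sum_{i=1}^n e^{\tilde\beta_n'Z_i}(1-F_{\hat\theta}(x|Z_i)).
\]
I would write the difference as the sum of three pieces capturing, respectively, the error from the Kaplan--Meier estimator, the error from estimating $(\beta_0,\theta)$, and a centred empirical-process fluctuation:
\begin{align*}
\Phi^*(x;\tilde\beta_n)-\Phi(x;\beta_0)
&=[G(x)-\hat G_n(x)]\cdot\frac{1}{n}\sum_i e^{\tilde\beta_n'Z_i}(1-F_{\hat\theta}(x|Z_i))\\
&\quad+[1-G(x)]\cdot\frac{1}{n}\sum_i\bigl\{e^{\tilde\beta_n'Z_i}(1-F_{\hat\theta}(x|Z_i))-e^{\beta_0'Z_i}(1-F(x|Z_i))\bigr\}\\
&\quad+[1-G(x)]\cdot\biggl\{\frac{1}{n}\sum_i e^{\beta_0'Z_i}(1-F(x|Z_i))-\E\bigl[e^{\beta_0'Z}(1-F(x|Z))\bigr]\biggr\}.
\end{align*}

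For the first piece, I would invoke the classical $\sqrt n$-uniform rate $\sup_{x\leq M}|\hat G_n(x)-G(x)|=O_P(n^{-1/2})$ of the Kaplan--Meier estimator, valid on intervals strictly inside $[0,\tau_G)$ under~(A1) (recall $M<\tau_H\leq\tau_G$), together with the crude bound $n^{-1}\sum_i e^{\tilde\beta_n'Z_i}=O_P(1)$ provided by~(A2) and $\tilde\beta_n\to\beta_0$. For the third piece, the class $\{z\mapsto e^{\beta_0'z}(1-F(x|z)):x\in[\epsilon,M]\}$ is a product of the fixed envelope $e^{\beta_0'z}$, square-integrable by~(A3), with the monotone $[0,1]$-valued family $x\mapsto 1-F(x|z)$; this class is Donsker with finite bracketing integral, so the supremum is $O_P(n^{-1/2})$. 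For the second piece, which carries the main difficulty, I would write $e^{\beta'z}(1-F_\vartheta(x|z))=\exp(\beta'z-\Lambda_\vartheta(x)e^{\beta'z})$ and perform a first-order Taylor expansion in $(\beta,\vartheta)$ around $(\beta_0,\theta)$. Using $f\in\C^\infty(\R^d\times[\epsilon,M])$, all relevant partial derivatives of $\Lambda_\vartheta(x)=\int_0^x f(\vartheta,u)\,\dd u$ are uniformly bounded on $[\epsilon,M]$ times a neighbourhood of $\theta$, so the linear increment in $(\tilde\beta_n-\beta_0,\hat\theta_n-\theta)$ has pointwise envelopes of the form $|Z_i|^k e^{C|Z_i|}$; averaging over $i$ via~(A2)--(A3) and using $\sqrt n(\tilde\beta_n-\beta_0)=O_P(1)$ and $\sqrt n(\hat\theta_n-\theta)=O_P(1)$ (standard partial-likelihood and parametric MLE theory) yields $O_P(n^{-1/2})$ uniformly in $x$.

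Combining the three bounds gives $\sup_{x\in[\epsilon,M]}|\Phi^*(x;\tilde\beta_n)-\Phi(x;\beta_0)|=O_P(n^{-1/2})$, and the \emph{in particular} clause is a direct rewriting: any $O_P(n^{-1/2})$ sequence satisfies $\p(|\cdot|\leq n^{-1/2+\alpha})\to 1$ for every $\alpha>0$. The main technical obstacle is the uniform-in-$x$ Taylor bound for the second piece, since one must show that the envelopes arising from derivatives of $(\beta,\vartheta)\mapsto\exp(\beta'z-\Lambda_\vartheta(x)e^{\beta'z})$ stay integrable on a neighbourhood of $(\beta_0,\theta)$, which is precisely what~(A2) guarantees; the restriction $x\in[\epsilon,M]$ with $\epsilon>0$ is essential to keep the derivatives in $\vartheta$ from blowing up at the origin of the Weibull hazard.
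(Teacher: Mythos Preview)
Your proposal is correct and follows essentially the same route as the paper: both arguments write $\Phi^*(x;\tilde\beta_n)$ via the product formula~\eqref{eqn:expression_Phi*}, then successively replace $\hat G_n$ by $G$ using the Kaplan--Meier rate, replace $(\tilde\beta_n,\hat\theta_n)$ by $(\beta_0,\theta)$ using the $\sqrt n$-rates of the parameter estimators, and finally control the centred empirical average $n^{-1}\sum_i e^{\beta_0'Z_i}(1-F(x|Z_i))-\E[e^{\beta_0'Z}(1-F(x|Z))]$. The paper is terser on the last two steps (it simply states ``Similarly, using $|\tilde\beta_n-\beta_0|=O_P(n^{-1/2})$ and $\sup|\Lambda_{\hat\theta}-\Lambda_0|=O_P(n^{-1/2})$, we can replace\ldots''), whereas you spell out the Taylor expansion and the Donsker argument, but the underlying ideas are identical.
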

\begin{proof}
	By definition we have
	\[
	\begin{split}
	\Phi^*(x;\tilde\beta_n)
	&=\frac{1}{n}\sum_{i=1}^n e^{\tilde\beta'_nZ_i}P_n^*(T^*\geq x|Z^*=Z_i)\\
	&=\frac{1}{n}\sum_{i=1}^n e^{\tilde\beta'_nZ_i}P_n^*(X^*\geq x|Z^*=Z_i)P_n^*(C^*\geq x)\\
	&=[1-\hat{G}_n(x)]\frac{1}{n}\sum_{i=1}^n e^{\tilde\beta'_nZ_i}\exp\left[-\Lambda_{\theta}(x)e^{\tilde\beta'_nZ_i}\right].
	\end{split}
	\]
	Since, for the Kaplan-Meier estimator, it holds
	\[
	\sqrt{n}\sup_{x\in[0,M]}|\hat G_n(x)-G(x)|=O_P(1),
	\]
	we obtain 
	\[
	\begin{split}
	&\sup_{x\in[\epsilon,M]}\left|\Phi^*(x;\tilde\beta_n)-[1-G(x)]\frac{1}{n}\sum_{i=1}^n e^{\tilde\beta'_nZ_i}\exp\left[-\Lambda^s_n(x)e^{\tilde\beta'_nZ_i}\right]\right|\\
	&\leq O_P(n^{-1/2}) \limsup_{n\to\infty}\frac{1}{n}\sum_{i+1}^n e^{\tilde\beta'_nZ_i}\\
	&\leq O_P(n^{-1/2}) \sup_{|\beta-\beta_0|\leq\epsilon}\E\left[e^{\beta'Z}\right]\\
	&=O_P(n^{-1/2}).
	\end{split}
	\]
	Similarly, using $|\tilde\beta_n-\beta_0|=O_P(n^{-1/2})$ and $\sup_{t\in[\epsilon,M]}|\Lambda_{\hat\theta}(t)-\Lambda_0(t)|=O_P(n^{-1/2})$, we can replace $\tilde\beta_n$ and $\Lambda_{\hat\theta}$ by $\beta_0$ and $\Lambda_0$. Then, the statement follows from 
	\[
	\begin{split}
	&\frac{1}{n}\sum_{i=1}^n e^{\beta'_0Z_i}\left[1-F(x|Z_i)\right]\left[1-G(x)\right]=	\frac{1}{n}\sum_{i=1}^ne^{\beta'_0Z_i}\p\left(T>x|Z_i\right)\\
	&=\E\left[e^{\beta'_0Z}\p\left(T>x|Z\right)\right]+O_P(n^{-1/2})=\Phi(x;\beta_0)+O_P(n^{-1/2}).
	\end{split}
	\]
\end{proof}
The inverse process $\hat U_n^*$ is defined by
\begin{equation}
\label{def:U_n*}
\hat{U}_n^*(a)
=
\argmax_{x\in[\epsilon,M]} \left\{\Lambda_n^*(x)-ax\right\}.
\end{equation}
and it satisfies the switching relation, $\hat{\lambda}_n^*(t)\geq a$ if and only if $\hat{U}_n^*(a)\geq t$, for $t\in(\epsilon,M]$. 
Moreover, let $U_{\hat\theta}$ be defined as in \eqref{def:U_theta}.
\begin{lemma}
	\label{le:inv_tail_prob1*}
	{Under the assumptions of Theorem \ref{theo:CLT*},} there exist  constants $K_1,\, K_2>0$ such that, 	for every $a\geq 0$ and $x>0$, with probability converging to one, given the data 
	\begin{equation}
	\label{eqn:inv*}
	P_n^*\left(
	\left\{|\hat{U}_n^*(a)-U_{\hat\theta}(a)|\geq x\right\}
	\cap E_n^*
	\right)
	\leq
	K_1\exp\left(-K_2nx^3\right).
	\end{equation}
\end{lemma}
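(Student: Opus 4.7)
The plan is to follow the strategy of Lemma \ref{le:inv_tail_prob1}, adapting each step to the bootstrap setting by replacing $\Lambda_0,\lambda_0,U,\Phi,\Phi_n,\hat\beta_n$ with $\Lambda_{\hat\theta},\lambda_{\hat\theta},U_{\hat\theta},\Phi^*,\Phi_n^*,\hat\beta_n^*$, and working conditionally on the data. First I would reduce to $a\in[\lambda_{\hat\theta}(M),\lambda_{\hat\theta}(\epsilon)]$ (outside this range the event is either trivial or controlled by monotonicity of $\hat U_n^*$ and the boundary cases) and to $x\in[n^{-1/3},M-\epsilon]$, since for smaller $x$ the bound is trivial. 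Then I would split
\[
P_n^*\!\left(\{|\hat U_n^*(a)-U_{\hat\theta}(a)|\geq x\}\cap E_n^*\right)\leq P_n^*\!\left(\{\hat U_n^*(a)\geq U_{\hat\theta}(a)+x\}\cap E_n^*\right)+P_n^*\!\left(\{\hat U_n^*(a)\leq U_{\hat\theta}(a)-x\}\cap E_n^*\right)
\]
and concentrate on the first term, using the definition of $\hat U_n^*$ as an argmax.

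Next I would use a Taylor expansion of $\Lambda_{\hat\theta}$ around $U_{\hat\theta}(a)$. The key observation is that on $E_n$ (which satisfies $\1_{E_n^*}=1+o_{P^*}(1)$), \eqref{eqn:par_est-true} with $\alpha<1/6$ ensures that $\lambda'_{\hat\theta}$ is uniformly bounded above by a strictly negative constant (uniformly in $n$), so that
\[
\Lambda_{\hat\theta}(y)-\Lambda_{\hat\theta}(U_{\hat\theta}(a))\leq \bigl(y-U_{\hat\theta}(a)\bigr)a-c\bigl(y-U_{\hat\theta}(a)\bigr)^2
\]
for some constant $c>0$ independent of $n$. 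By Lemma \ref{le:Breslow*}, $\Lambda_n^*-\Lambda_{\hat\theta}$ admits the decomposition $S_n^*+R_n^*$, where
\[
S_n^*(t)=\int\frac{\delta\1_{\{u\leq t\}}}{\Phi^*(u;\tilde\beta_n)}\,\dd(\p_n^*-P_n^*)(u,\delta,z),\qquad R_n^*(t)=\int\delta\1_{\{u\leq t\}}\left(\frac{1}{\Phi_n^*(u;\hat\beta_n^*)}-\frac{1}{\Phi^*(u;\tilde\beta_n)}\right)\dd\p_n^*(u,\delta,z).
\]

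Then I would apply the peeling argument over dyadic shells $I_k=\{y:y-U_{\hat\theta}(a)\in[x2^k,x2^{k+1})\cap[0,M-U_{\hat\theta}(a)]\}$ and bound
\[
\sum_{k\geq 0}P_n^*\!\left(\{\sup_{y\in I_k}|S_n^*(y)-S_n^*(U_{\hat\theta}(a))|\geq \tfrac{c}{4}x^22^{2k}\}\cap E_n^*\right)+\sum_{k\geq 0}P_n^*\!\left(\{\sup_{y\in I_k}|R_n^*(y)-R_n^*(U_{\hat\theta}(a))|\geq \tfrac{c}{4}x^22^{2k}\}\cap E_n^*\right).
\]
For $S_n^*$ I would split it as $S_n^{1,*}+S_n^{2,*}$ with $r(t)=1/\Phi^*(t;\tilde\beta_n)$ and $m(t)=\E_T^*[\Delta]/\Phi^*(t;\tilde\beta_n)$, which on $E_n$ are bounded functions with bounded derivatives (uniformly in $n$) thanks to Lemma \ref{le:Phi*} and Lemma \ref{lemma:approx_Phi*}, and apply Lemmas \ref{le:S1*} and \ref{le:S2*} with $t_n=x2^{k+1}$. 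Summing the resulting exponential bounds over $k$ yields $K_1\exp(-K_2nx^3)$ as in Lemma \ref{le:inv_tail_prob1}. For $R_n^*$, I would use \eqref{eqn:bound_inv_Phi_n*} to obtain $|1/\Phi_n^*(T^*_i;\hat\beta_n^*)-1/\Phi^*(T^*_i;\tilde\beta_n)|\lesssim n^{-1/2+\alpha}$ on $E_n^*$, reducing the problem to bounding $\sum_i\Delta_i^*\1_{\{U_{\hat\theta}(a)<T_i^*\leq U_{\hat\theta}(a)+x2^{k+1}\}}$; its mean is $O(nx2^{k+1})$ which is dominated by the threshold $\frac{c}{4}n^{3/2-\alpha}x^22^{2k}$ for $x\geq n^{-1/3}$ and $\alpha$ small, and a standard Bernstein inequality finishes the estimate.

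The main obstacle I foresee is ensuring that all the constants arising from the bootstrap Lemmas \ref{le:S1*}, \ref{le:S2*}, \ref{le:Phi*} and from the Taylor expansion of $\Lambda_{\hat\theta}$ can be taken independent of $n$ with probability tending to one given the data. Concretely, one must verify that the term $n^{-1/2+\alpha}$ added to $t_n$ in \eqref{def:theta_tilde*} (compared with \eqref{def:theta_tilde}) is negligible relative to $x\,2^{k+1}$ when $x\geq n^{-1/3}$, and that the derivative bound on $m(t)=\lambda_{\hat\theta}(t)/h^*(t)$ is uniform in $n$ on $E_n$ thanks to the smoothness of $f$ and \eqref{eqn:par_est-true}. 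Once these uniform-in-$n$ controls are in place, the proof mirrors that of Lemma \ref{le:inv_tail_prob1} line by line, and the same bound is obtained for the symmetric event $\{\hat U_n^*(a)\leq U_{\hat\theta}(a)-x\}\cap E_n^*$.
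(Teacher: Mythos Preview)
Your overall architecture matches the paper's: reduce to $a\in[\lambda_{\hat\theta}(M),\lambda_{\hat\theta}(\epsilon)]$ and $x\in[n^{-1/3},M-\epsilon]$, use the argmax definition of $\hat U_n^*$, Taylor-expand $\Lambda_{\hat\theta}$ to produce the quadratic penalty, write $\Lambda_n^*-\Lambda_{\hat\theta}=S_n^*+R_n^*$, peel over dyadic shells, and handle $R_n^*$ via \eqref{eqn:bound_inv_Phi_n*} together with Bernstein. That part is fine.

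The gap is in your treatment of $S_n^*$. You propose the two-term split $S_n^{1,*}+S_n^{2,*}$ with $r(t)=1/\Phi^*(t;\tilde\beta_n)$ and $m(t)=\E^*_T[\Delta^*]/\Phi^*(t;\tilde\beta_n)=\lambda_{\hat\theta}(t)/h^*(t)$, and assert that these have bounded derivatives uniformly in $n$ by appealing to Lemmas~\ref{le:Phi*} and~\ref{lemma:approx_Phi*}. But those lemmas give only boundedness of $\Phi^*$ and sup-norm closeness to $\Phi$; they say nothing about derivatives. In fact, from \eqref{eqn:expression_Phi*} one has $\Phi^*(t;\tilde\beta_n)=[1-\hat G_n(t)]\,n^{-1}\sum_i e^{\tilde\beta_n'Z_i}[1-F_{\hat\theta}(t|Z_i)]$, and since $\hat G_n$ is the Kaplan--Meier estimator, $\Phi^*$ is a step function in $t$ and is \emph{not} differentiable. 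Likewise $H^*$ has atoms at the jump points of $\hat G_n$, so the density $h^*$ does not exist and your expression $m=\lambda_{\hat\theta}/h^*$ is not well defined. Consequently Lemma~\ref{le:S2*}, whose hypothesis is that $m$ be continuously differentiable, does not apply with your choice.

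The paper's fix is a three-term decomposition: it takes the \emph{smooth} weight $r(t)=1/\Phi(t;\beta_0)$ in the first two pieces $S_n^{1,*},S_n^{2,*}$ (so that Lemmas~\ref{le:S1*} and~\ref{le:S2*} apply), and isolates the discrepancy in a third term
\[
S_n^{3,*}(t)=\int \delta\,\1_{\{u\leq t\}}\left(\frac{1}{\Phi^*(u;\tilde\beta_n)}-\frac{1}{\Phi(u;\beta_0)}\right)\dd(\p_n^*-P_n^*)(u,\delta,z).
\]
By Lemma~\ref{lemma:approx_Phi*} one has $\sup_{u\in[\epsilon,M]}|1/\Phi^*(u;\tilde\beta_n)-1/\Phi(u;\beta_0)|\lesssim n^{-1/2+\alpha}$ with probability tending to one given the data, so $S_n^{3,*}$ is bounded exactly as you already bound $R_n^*$: pull out the $n^{-1/2+\alpha}$ factor, center the remaining binomial-type sum, and apply Bernstein. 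Once you insert this additional term, the rest of your argument goes through unchanged.
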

\begin{proof}
	We follow the proof of Lemma \ref{le:inv_tail_prob1}. Note that, since $\sup_{t\in[\epsilon,M]}|\lambda'_{\hat\theta}(t)-\lambda'_0(t)|\to 0$, we have $\liminf_{n\to\infty}\inf_{t\in[\epsilon,M]}|\lambda'_{\hat\theta}(t)|=c>0$.
	As a result, by Lemma \ref{le:Breslow*}, this time we have  
	\[
	\begin{split}
	&
	P_n^*\left(\left\{
	\hat{U}_n^*(a)\geq U_{\hat\theta}(a)+x \right\}\cap E_n^*\right)\\
	&\leq\sum_{k=0}^\infty
	P^*_n\left(\left\{
	\sup_{y\in I_k}
	\big|S_n^*(y)-S_n^*(U_{\hat\theta}(a))\big|
	\geq \frac{c}{2}\,x^2\,2^{2k}\right\}\cap E_n^*\right)\\
	&\qquad+\sum_{k=0}^\infty
	P^*_n\left(\left\{
	\sup_{y\in I_k}
	|R_n^*(y)-R_n^*(U_{\hat\theta}(a))|
	\geq \frac{c}{2}\,x^2\,2^{2k}\right\}\cap E_n^*\right)
	\end{split}
	\] 
	where
	\[
	S_n^*(t)=\int \frac{\delta\1_{\{u\leq t\}}}{\Phi^*(u;\tilde\beta_n)}\,\dd (\p_n^*-P^*_n)(u,\delta,z),
	\]
	and
	\[
	R_n^*(t)=\int \delta\1_{\{u\leq t\}}\left(\frac{1}{\Phi_n^*(u;\hat\beta_n^*)}-\frac{1}{\Phi^*(u;\tilde\beta_n)}\right)\,\dd \p_n^*(u,\delta,z).
	\]
	The process $S_n^*$ can be written as
	\[
	\begin{split}
	S_n^*(t)&=\frac{1}{n}\sum_{i=1}^n\left(\Delta_i^*-\E^*_T[\Delta_i^*]\right)\frac{1}{\Phi(T_i^*;\beta_0)}\1_{\{T_i^*\leq t\}}\\
	&\quad+\frac{1}{n}\sum_{i=1}^n\left(\E^*_T\left[\Delta_i^*\right]\frac{1}{\Phi(T_i^*;\beta_0)}\1_{\{T_i^*\leq t\}}-\E^*\left[\frac{\Delta_i^*}{\Phi(T_i^*;\beta_0)}\1_{\{T_i^*\leq t\}}\right]\right)\\
	&\quad+\int \delta\1_{\{u\leq t\}}\left(\frac{1}{\Phi^*(u;\hat\beta_n)}-\frac{1}{\Phi(u;\beta_0)}\right)\,\dd (\p_n^*-P^*_n)(u,\delta,z)\\
	&=S_n^{1,*}(t)+S^{2,*}_n(t)+S_n^{3,*}(t),
	\end{split}
	\]
	where $S^{1,*}_n,\,S^{2,*}_n$ are defined as in Lemmas \ref{le:S1*}, \ref{le:S2*} with 
	\[
	r(t)=\frac{1}{\Phi(t;\beta_0)}<\frac{1}{\Phi(M;\beta_0)}\quad\text{and}\quad	m(t)=\frac{\E^*\left[\Delta^*\,|\,T^*=t\right]}{\Phi(t;\beta_0)}<\frac{1}{\Phi(M;\beta_0)}.
	\]
	It follows by Lemmas \ref{le:S1*}, \ref{le:S2*} that  there exist some positive constants $K_1,\, K_2$ such that
	\begin{equation*}
	\begin{split}
	&P_n^*\left(
	\sup_{y\in I_k}
	\big|S^{1,*}_n(y)-S^{1,*}_n(U_{\hat\theta}(a))\big|
	\geq cx^2\,2^{2k}\right)+P^*_n\left(
	\sup_{y\in I_k}
	\big|S_n^{2,*}(y)-S_n^{2,*}(U_{\hat\theta}(a))\big|
	\geq cx^2\,2^{2k}\right)\\
	&\leq K_1\exp\left(-K_2nx^32^{3k}\right).
	\end{split}
	\end{equation*}
	Note that when we applying Lemmas \ref{le:S1*}, \ref{le:S2*} we have 
	$t_n=x2^{k+1}$ and $x$ replaced by $x^22^{2k}>n^{-1}\log n$ since it is sufficient to consider $x>n^{-1/3}.$ On the other hand, for $S^{3,*}_n$ we have 
	\[
	\begin{split}
	\sup_{y\in I_k}
	|S^{3,*}_n(y)-S^{3,*}_n(U_{\hat\theta}(a))|&\leq \sup_{y\in I_k}
	\frac{1}{n}\sum_{i=1}^n \Delta_i^*\1_{\{U_{\hat\theta}(a)<T_i^*\leq y\}}\left|\frac{1}{\Phi^*(T_i^*;\tilde\beta_n)}-\frac{1}{\Phi(T_i^*;\beta_0)}\right|\\
	& +\sup_{y\in I_k}\E^*\left[\Delta^*\1_{\{U_{\hat\theta}(a)<T^*\leq y\}}\left|\frac{1}{\Phi^*(T^*;\tilde\beta_n)}-\frac{1}{\Phi(T^*;\beta_0)}\right|\right].
	\end{split}
	\]
	By Lemma \ref{lemma:approx_Phi*}, we have that, with probability converging to one, given the data 
	\[
	\begin{split}
	&\sup_{y\in I_k}\E^*\left[\Delta^*\1_{\{U_{\hat\theta}(a)<T^*\leq y\}}\left|\frac{1}{\Phi^*(T^*;\tilde\beta_n)}-\frac{1}{\Phi(T^*;\beta_0)}\right|\right]\\
	&\leq\sup_{t\in[\epsilon,M]}\left|\frac{1}{\Phi^*(t;\tilde\beta_n)}-\frac{1}{\Phi(t;\beta_0)}\right|\sup_{y\in I_k}\E^*\left[\Delta^*\1_{\{U_{\hat\theta}(a)<T^*\leq y\}}\right]\\
	&\lesssim n^{-1/2+\alpha}\sup_{y\in I_k}\left[H^{uc,*}(y)-H^{uc,*}(U_{\hat\theta}(a))\right]\\
	&\lesssim n^{-1/2+\alpha}\left(x2^{k+1}+n^{-1/2+\alpha}\right).
	\end{split}
	\]
	Since $n^{-1/2+\alpha}x2^{k+1}<x^22^{2k}$, it follows that 
	\[
	\begin{split}
	&P^*_n\left(\left\{
	\sup_{y\in I_k}
	|S^{3,*}_n(y)-S^{3,*}_n(U_{\hat\theta}(a))|
	\geq cx^2\,2^{2k}\right\}\cap E_n^*\right)\\
	&\leq P_n^*\left(\left\{
	\sup_{y\in I_k}
	\sum_{i=1}^n \Delta_i^*\1_{\{U_{\hat\theta}(a)<T_i^*\leq y\}}\left|\frac{1}{\Phi^*(T_i^*;\tilde\beta_n)}-\frac{1}{\Phi(T_i^*;\beta_0)}\right|
	\geq cnx^2\,2^{2k}\right\}\cap E_n^*\right)\\
	&\leq P^*_n\left(\left\{
	\sum_{i=1}^n \Delta_i^*\1_{\{U_{\hat\theta}(a)<T_i^*\leq U_{\hat\theta}(a)+x2^{k+1}\}}
	\geq cn^{3/2-\alpha}\,x^2\,2^{2k}\right\}\cap E_n^*\right)\\
	&\leq P^*_n\left(\left\{
	\left|\sum_{i=1}^n \left(\Delta_i^*\1_{\{U_{\hat\theta}(a)<T_i^*\leq U_{\hat\theta}(a)+x2^{k+1}\}}-\E^*\left[\Delta^*\1_{\{U_{\hat\theta}(a)<T^*\leq U_{\hat\theta}(a)+x2^{k+1}\}}\right]\right)\right|
	\geq cn^{3/2-\alpha}\,x^2\,2^{2k}\right\}\cap E_n^*\right).
	\end{split}
	\]
	Note that $n\E^*\left[\Delta^*\1_{\{U_{\hat\theta}(a)<T^*\leq U_{\hat\theta}(a)+x2^{k+1}\}}\right]=O(nx2^{k+1})<O(n^{3/2-\alpha}\,x^2\,2^{2k})$ because $x>n^{-1/3}$ and $\alpha$ can be chosen small enough. From  Bernstein inequality we obtain
	\[
	\begin{split}
	&P^*_n\left(\left\{
	\sup_{y\in I_k}
	\left|S^{3,*}_n(y)-S^{3,*}_n(U_{\hat\theta}(a))\right|
	\geq c\,x^2\,2^{2k}\right\}\cap E_n^*\right)\\
	&\leq 2\exp\left\{-c\frac{n^{3-2\alpha}\,x^4\,2^{4k}}{n^{3/2-\alpha}\,x^2\,2^{2k}+nx2^{k+1}} \right\}\\
	&\leq2\exp\left\{-c\,2^{2k} n^{3/2-\alpha}\,x^2\right\}\\
	&\leq 2\exp\left\{-c\,2^{2k}  nx^3\right\}.
	\end{split}
	\]
	As a result we conclude that 
	\[
	\sum_{k=0}^\infty
	P^*_n\left(\left\{
	\sup_{y\in I_k}
	\big|S_n^*(y)-S_n^*(U_{\hat\theta}(a))\big|
	\geq c\,x^2\,2^{2k}\right\}\cap E_n^*\right)\leq K_1\exp\left(-K_2nx^3\right).
	\]
	In the same way, using \eqref{eqn:bound_inv_Phi_n*} we can deal with $R^*_n$.
\end{proof}
\begin{lemma}
	\label{le:inv_tail_prob2*}
	{Under the assumptions of Theorem \ref{theo:CLT*},} there exist positive constants $K_1,\, K_2$ such that, 	for every  {$x>0$} and $a\notin \lambda_{\hat\theta}([\epsilon,M])$ such that $x\left|\lambda_{\hat\theta}(U_{\hat\theta}(a))-a\right|\geq n^{-1}\log n$, with probability converging to one, given the data 
	\begin{equation*}
	P^*_n\left(
	\left\{|\hat{U}_n^*(a)-U_{\hat\theta}(a)|\geq x\right\}
	\cap E_n^*
	\right)
	\leq K_1\exp\left\{-K_2nx\left|\lambda_{\hat\theta}(U_{\hat\theta}(a))-a\right| \min\left(1,\left|\lambda_{\hat\theta}(U_{\hat\theta}(a))-a\right|\right)\right\}.
	\end{equation*}
\end{lemma}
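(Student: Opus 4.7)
The plan is to mirror the proof of Lemma \ref{le:inv_tail_prob2} while borrowing the bootstrap machinery developed in Lemma \ref{le:inv_tail_prob1*}. First, if $\left|\lambda_{\hat\theta}(U_{\hat\theta}(a))-a\right|\leq x$, then Lemma \ref{le:inv_tail_prob1*} immediately gives the bound $K_1\exp(-K_2nx^3)$, which is stronger than the one claimed since $x^3\geq x\left|\lambda_{\hat\theta}(U_{\hat\theta}(a))-a\right|^2$ in this regime. So I may assume $x<\left|\lambda_{\hat\theta}(U_{\hat\theta}(a))-a\right|$, and I can also restrict to $\left|\lambda_{\hat\theta}(U_{\hat\theta}(a))-a\right|>n^{-1/3}$ (otherwise the upper bound can be made trivially $\geq 1$).

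Next, as in Lemma \ref{le:inv_tail_prob2}, I use the linear Taylor bound $\Lambda_{\hat\theta}(y)-\Lambda_{\hat\theta}(U_{\hat\theta}(a))\leq (y-U_{\hat\theta}(a))\lambda_{\hat\theta}(U_{\hat\theta}(a))$ together with the definition of $\hat U_n^*$ to reduce the event $\{\hat U_n^*(a)\geq U_{\hat\theta}(a)+x\}\cap E_n^*$ to an event on the fluctuations of $M_n^*=\Lambda_n^*-\Lambda_{\hat\theta}$. I then peel the set $\{y-U_{\hat\theta}(a)\in[x2^k,x2^{k+1})\}$, and on the $k$-th shell the required deviation is $\tfrac12 x 2^k\left|\lambda_{\hat\theta}(U_{\hat\theta}(a))-a\right|$. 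Using the representation $\Lambda_n^*-\Lambda_{\hat\theta}=S_n^*+R_n^*$ from the proof of Lemma \ref{le:inv_tail_prob1*} and its further split $S_n^*=S_n^{1,*}+S_n^{2,*}+S_n^{3,*}$, I apply Lemmas \ref{le:S1*} and \ref{le:S2*} to $S_n^{1,*}$ and $S_n^{2,*}$ with $t_n=x2^{k+1}$ and the target deviation $cx2^k\left|\lambda_{\hat\theta}(U_{\hat\theta}(a))-a\right|$; this produces the desired factor $\left|\lambda_{\hat\theta}(U_{\hat\theta}(a))-a\right|\min(1,\left|\lambda_{\hat\theta}(U_{\hat\theta}(a))-a\right|)$ inside the exponent. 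For $S_n^{3,*}$ and $R_n^*$, I bound the integrand uniformly via Lemma \ref{lemma:approx_Phi*} and \eqref{eqn:bound_inv_Phi_n*} and then apply Bernstein's inequality to the sum of indicators $\sum_i\Delta_i^*\1_{\{U_{\hat\theta}(a)<T_i^*\leq U_{\hat\theta}(a)+x2^{k+1}\}}$, exactly as done for $S_n^{3,*}$ and $R_n$ in Lemma \ref{le:inv_tail_prob1*} and in the proof of Lemma \ref{le:inv_tail_prob2}.

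The main technical obstacle is the applicability of Lemma \ref{le:S2*}, which requires its ``$x$''-argument to be at least $n^{-1}\log n$. In the present peeling, the effective ``$x$'' on the $k$-th shell is $cx 2^k\left|\lambda_{\hat\theta}(U_{\hat\theta}(a))-a\right|$, so the lemma is applicable as soon as $x\left|\lambda_{\hat\theta}(U_{\hat\theta}(a))-a\right|\geq n^{-1}\log n$; this is exactly the extra assumption stated in the lemma, which did not appear in the non-bootstrap Lemma \ref{le:inv_tail_prob2} because Lemma \ref{le:S2} carries no analogous lower-bound condition on $x$. A second, milder, obstacle is that the ``true'' distribution in the bootstrap world depends on $n$ and only inherits the regularity of the original model up to errors $O_P(n^{-1/2+\alpha})$; here I use the estimates on $E_n^*$ together with Lemma \ref{lemma:approx_Phi*} to ensure that $\inf_{[\epsilon,M]}|\lambda'_{\hat\theta}|$, $\Phi^*(\cdot;\tilde\beta_n)$ and $\inf\tilde H^{uc,*}{}'$ are bounded away from zero uniformly in $n$, so that the constants $K_1,K_2$ produced by each concentration step can be chosen independently of $n$.

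Finally, summing the bounds over $k\geq 0$ converts the $2^{2k}$ (respectively $2^k$) factor in the exponent into an absolute constant via the geometric series, yielding the claim for $\p^*(\{\hat U_n^*(a)\geq U_{\hat\theta}(a)+x\}\cap E_n^*)$. The symmetric argument for $\p^*(\{U_{\hat\theta}(a)\geq \hat U_n^*(a)+x\}\cap E_n^*)$ is identical, and combining the two gives the lemma.
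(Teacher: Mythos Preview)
Your proposal is correct and follows essentially the same approach as the paper's proof: both argue as in Lemma \ref{le:inv_tail_prob2}, replace the Taylor inequality by $\Lambda_{\hat\theta}(y)-\Lambda_{\hat\theta}(U_{\hat\theta}(a))\leq (y-U_{\hat\theta}(a))\lambda_{\hat\theta}(U_{\hat\theta}(a))$, peel dyadically, apply Lemmas \ref{le:S1*} and \ref{le:S2*} to $S_n^{1,*}$ and $S_n^{2,*}$, and handle $S_n^{3,*}$ and $R_n^*$ via Lemma \ref{lemma:approx_Phi*}, \eqref{eqn:bound_inv_Phi_n*} and Bernstein's inequality. Your explicit identification of the condition $x\left|\lambda_{\hat\theta}(U_{\hat\theta}(a))-a\right|\geq n^{-1}\log n$ as precisely what is needed to invoke Lemma \ref{le:S2*} is a point the paper leaves implicit.
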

\begin{proof}
	We argue as in the proof of Lemma \ref{le:inv_tail_prob2}. Here we have 
	\[
	\Lambda_{\hat\theta}(y)-\Lambda_{\hat\theta}(U_{\hat\theta}(a))\leq(y-U_{\hat\theta}(a))\lambda_{\hat\theta}(U_{\hat\theta}(a))
	\]	
	and as a result, we obtain
	\[
	\begin{split}
	&
	P^*_n\left(\left\{
	\hat{U}_n^*(a)\geq U_{\hat\theta}(a)+x \right\}\cap E_n^*\right)\\
	&\sum_{k=0}^\infty
	P^*_n\left(\left\{
	\sup_{y\in I_k}
	\big|S_n^*(y)-S_n^*(U_{\hat\theta}(a))\big|
	\geq cx\,2^{k}\left|\lambda_{\hat\theta}(U_{\hat\theta}(a))-a\right|\right\}\cap E_n^*\right)\\
	&\quad+\sum_{k=0}^\infty
	P^*_n\left(\left\{
	\sup_{y\in I_k}
	|R^*_n(y)-R^*_n(U(a))|
	\geq cx\,2^{k}\left|\lambda_{\hat\theta}(U_{\hat\theta}(a))-a\right|\right\}\cap E_n^*\right),
	\end{split}
	\]
	where $S_n^*$ and $R_n^*$ are as in Lemma \ref{le:inv_tail_prob1*}.
	From Lemmas \ref{le:S1*}, \ref{le:S2*}, it follows that, for $i=1,2$,
	\[
	\begin{split}
	&\sum_{k=0}^\infty P^*_n\left(
	\sup_{y\in I_k}
	\big|S_n^{i,*}(y)-S_n^{i,*}(U_{\hat\theta}(a))\big|
	\geq cx\,2^{k}\left|\lambda_{\hat\theta}(U_{\hat\theta}(a))-a\right|\right)\\
	&\leq K_1\exp\Big(-K_2nx\left|\lambda_{\hat\theta}(U_{\hat\theta}(a))-a\right|\min\left\{1,\left|\lambda_{\hat\theta}(U_{\hat\theta}(a))-a\right| \right\}\Big).
	\end{split}
	\]
	Using \eqref{eqn:bound_inv_Phi_n*} and the approximation of $H^{uc,*}$ by $\tilde{H}^{uc,*}$, it can be shown as in  Lemma \ref{le:inv_tail_prob2} that
	{\small	\[
		\sum_{k=0}^\infty
		P^*_n\left(\left\{
		\sup_{y\in I_k}
		|R_n^*(y)-R_n^*(U_{\hat\theta}(a))|
		\geq cx\,2^{k}\left|\lambda_{\hat\theta}(U_{\hat\theta}(a))-a\right|\right\}\cap E_n^*\right)\leq K_1 \exp\left\{-K_2 nx\left|\lambda_{\hat\theta}(U_{\hat\theta}(a))-a\right|\right\}.
		\]}
	In the same we deal with $S^{3,*}_n$ (see also the proof of Lemma \ref{le:inv_tail_prob1*}) but, instead of \eqref{eqn:bound_inv_Phi_n*},  we use 
	\[
	\sup_{t\in[\epsilon,M]}\left|\frac{1}{\Phi^*(t;\hat\beta_n)}-\frac{1}{\Phi(t;\beta_0)}\right|\lesssim n^{-1/2+\alpha}
	\]
\end{proof}
\begin{lemma}
	\label{le:bound_expectation_L_P*}
	Let $p\geq 1$. {Under the assumptions of Theorem \ref{theo:CLT*},} for each $\alpha_0\in(0,1)$, there exists $K>0$ such that, with probability converging to one, given the data,
	\[
	\E^*\left[\1_{E_n^*}|\hat\lambda_n^*(t)-\lambda_{\hat\theta}(t)|^p\right]\leq Kn^{-p/3},
	\]
	for all $t\in [\epsilon+n^{-1/3},M-n^{-1/3}]$,
	\[
	\E^*\left[\1_{E_n^*}|\hat\lambda_n^*(t)-\lambda_{\hat\theta}(t)|^p\right]\leq K\left[n((t-\epsilon)\wedge(M-t))\right]^{-p/2},
	\]
	for all $t\in [\epsilon+n^{-1/2}\sqrt{\log n},\epsilon+n^{-1/3}]\cup[M-n^{-1/3},M-n^{-1/2}\sqrt{\log n}]$, and
	\[
	\E^*\left[\1_{E_n^*}|\hat\lambda_n^*(t)-\lambda_{\hat\theta}(t)|^p\right]\leq K\left[n((t-\epsilon)^{1+\alpha_0}\wedge(M-t)^{1+\alpha_0})\right]^{-p/2},
	\]
	for all $t\in [\epsilon+n^{-1}\log n,\epsilon+n^{-1/2}\sqrt{\log n}]\cup[M-n^{-1/2}\sqrt{\log n},M-n^{-1}\log n]$. 	
\end{lemma}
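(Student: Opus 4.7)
My plan is to adapt the proof of Theorem \ref{theo:bound_expectation_L_P} line by line, replacing the building blocks by their bootstrap analogs (Lemmas \ref{le:inv_tail_prob1*} and \ref{le:inv_tail_prob2*}), and then to isolate carefully the new restriction appearing in Lemma \ref{le:inv_tail_prob2*}, namely that the exponential tail bound is only valid when $x|\lambda_{\hat\theta}(U_{\hat\theta}(a))-a|\geq n^{-1}\log n$. This extra restriction is precisely what forces a third, weaker regime in the statement, whereas Theorem \ref{theo:bound_expectation_L_P} has only two regimes.

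First, I would decompose $\E^*[\1_{E_n^*}|\hat\lambda_n^*(t)-\lambda_{\hat\theta}(t)|^p]$ into the positive and negative parts $I_1^*$ and $I_2^*$ and write $I_1^*=\int_0^\infty P_n^*(\{\hat\lambda_n^*(t)-\lambda_{\hat\theta}(t)>x\}\cap E_n^*)\,p x^{p-1}\,dx$. Using the switching relation, $\hat\lambda_n^*(t)>\lambda_{\hat\theta}(t)+x$ implies $\hat U_n^*(\lambda_{\hat\theta}(t)+x)\geq t$. I would split the integration range into (i) $x\in[0,\lambda_{\hat\theta}(\epsilon)-\lambda_{\hat\theta}(t)]$, where $U_{\hat\theta}(\lambda_{\hat\theta}(t)+x)\leq t-cx$ and Lemma \ref{le:inv_tail_prob1*} directly yields $K_1\exp(-K_2nx^3)$, (ii) $x\in[\lambda_{\hat\theta}(\epsilon)-\lambda_{\hat\theta}(t),\lambda_{\hat\theta}(\epsilon)-\lambda_{\hat\theta}(t)+(t-\epsilon)]$, again handled by Lemma \ref{le:inv_tail_prob1*}, and (iii) $x>\lambda_{\hat\theta}(\epsilon)-\lambda_{\hat\theta}(t)+(t-\epsilon)$, where $U_{\hat\theta}$ saturates at $\epsilon$ and I turn to Lemma \ref{le:inv_tail_prob2*}. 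Integrating the exponential bounds as in the original proof (change of variable $y=x-(\lambda_{\hat\theta}(\epsilon)-\lambda_{\hat\theta}(t))$ in regime (iii), then splitting between $y\leq 1$ and $y>1$) yields the sum $n^{-p/3}+[n(t-\epsilon)]^{-p/2}+[n(t-\epsilon)]^{-p}$. This gives the first two regimes exactly as in Theorem \ref{theo:bound_expectation_L_P}.

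The main obstacle is that Lemma \ref{le:inv_tail_prob2*} is valid only when $x|\lambda_{\hat\theta}(U_{\hat\theta}(a))-a|\geq n^{-1}\log n$. Near $t=\epsilon$, with $|\lambda_{\hat\theta}(U_{\hat\theta}(a))-a|$ of order $y$ (the new variable above) and the required deviation being $t-\epsilon$, the allowed range of $y$ shrinks; concretely, we may apply the exponential bound only for $y\geq n^{-1}(\log n)/(t-\epsilon)$. For smaller $y$ I would instead use the monotonicity of $\hat U_n^*$ together with Lemma \ref{le:inv_tail_prob1*} applied at $a=\lambda_{\hat\theta}(\epsilon)$, writing
\[
P_n^*\left(\{\hat U_n^*(\lambda_{\hat\theta}(t)+x)\geq t\}\cap E_n^*\right)\leq P_n^*\left(\{|\hat U_n^*(\lambda_{\hat\theta}(\epsilon))-\epsilon|\geq t-\epsilon\}\cap E_n^*\right)\leq K_1\exp(-K_2n(t-\epsilon)^3).
\]
Multiplying this by the length $n^{-1}(\log n)/(t-\epsilon)$ of the ``bad'' $y$-range and by $x^{p-1}$, this residual contribution is absorbed into $[n(t-\epsilon)^{1+\alpha_0}]^{-p/2}$ for any $\alpha_0\in(0,1)$, provided $t-\epsilon\geq n^{-1}\log n$, which is exactly the hypothesis of the third regime. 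This extra $\alpha_0$ slack exists only to absorb the logarithmic factors arising from the $n^{-1}\log n$ threshold.

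The negative-part contribution $I_2^*$ is treated symmetrically: switching gives $\hat\lambda_n^*(t)<\lambda_{\hat\theta}(t)-x \Rightarrow \hat U_n^*(\lambda_{\hat\theta}(t)-x)\leq t$, one splits around the boundary point $M$ instead of $\epsilon$, and the same regime analysis yields the $(M-t)$ factors in the statement. Finally, a uniform bound from Lemma \ref{le:boundaries} (applied to $\hat\lambda_n^*$, whose boundary values are stochastically bounded because the sub-distribution of the observed bootstrap event times admits a uniformly bounded density on $[\epsilon,M]$, since $h^{uc,*}(t)=\lambda_{\hat\theta}(t)\Phi^*(t;\tilde\beta_n)$ is bounded uniformly on $E_n^*$) controls the tail integrals at values of $x$ exceeding all model quantities, so all remainder terms are dominated. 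Summing the contributions from the two parts and the three regimes delivers the three inequalities in the lemma.
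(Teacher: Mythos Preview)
Your proposal is correct and follows essentially the same strategy as the paper: decompose via the switching relation, use Lemma~\ref{le:inv_tail_prob1*} in the interior range and in the ``bad'' sub-range where the hypothesis $x|\lambda_{\hat\theta}(U_{\hat\theta}(a))-a|\geq n^{-1}\log n$ of Lemma~\ref{le:inv_tail_prob2*} fails, and use Lemma~\ref{le:inv_tail_prob2*} in the far tail; the threshold $y=n^{-1}(\log n)/(t-\epsilon)$ you identify is exactly the one the paper uses. The only organizational difference is that the paper treats the two cases $t-\epsilon\geq n^{-1/2}\sqrt{\log n}$ and $t-\epsilon<n^{-1/2}\sqrt{\log n}$ separately (splitting the integral at $t-\epsilon$ in the first case and at $n^{-1}(\log n)/(t-\epsilon)$ in the second), whereas you give a unified description in which the bad range is simply empty in the first case; your invocation of the bootstrap analog of Lemma~\ref{le:boundaries} is unnecessary here, since the tail integrals are already fully controlled by the two exponential tail lemmas.
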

\begin{proof}As in the proof of Theorem \ref{theo:bound_expectation_L_P} we obtain
	\begin{equation}
	\label{eqn:I_1*}
	\begin{split}
	I_1&\leq \int_0^{\lambda_{\hat\theta}(\epsilon)-\lambda_{\hat\theta}(t)} P^*_n\left[\left\{\hat{U}_n^*(\lambda_{\hat\theta}(t)+x)>t \right\} \cap E_n^*\right]\,px^{p-1}\,\dd x\\
	&\quad+\int_{\lambda_{\hat\theta}(\epsilon)-\lambda_{\hat\theta}(t)}^{\infty}P^*_n\left[\left\{\hat{U}_n^*(\lambda_{\hat\theta}(t)+x)>t \right\} \cap E_n^*\right]px^{p-1}\,\dd x.
	\end{split}
	\end{equation}
	Since 
	\[
	U'^*(a)=\frac{1}{\lambda'_{\hat\theta}(\lambda_{\hat{\theta}}^{-1}(a))}\to\frac{1}{\lambda'_0(\lambda_0^{-1}(a))},
	\]
	we have $\liminf_{n\to\infty}\inf_{a\in(\lambda_{\hat\theta}(M),\lambda_{\hat\theta}(\epsilon))}|U'^*(a)|>c>0$.
	As a result, for  $x<\lambda_{\hat\theta}(\epsilon)-\lambda_{\hat\theta}(t)$ it holds that $U_{\hat\theta}(\lambda_{\hat\theta}(t)+x)<t-cx$. Hence, as in the proof of Theorem \ref{theo:bound_expectation_L_P}, we obtain
	\[
	\int_0^{\lambda_{\hat\theta}(\epsilon)-\lambda_{\hat\theta}(t)} P^*_n\left[\left\{\hat{U}_n^*(\lambda_{\hat\theta}(t)+x)>t \right\} \cap E_n^*\right]\,px^{p-1}\,\dd x
	\lesssim
	n^{-p/3}.
	\]
	For $x>\lambda_{\hat\theta}(\epsilon)-\lambda_{\hat\theta}(t)$, we have $U_{\hat\theta}(\lambda_{\hat\theta}(t)+x))=\epsilon$. We consider first the case $t-\epsilon\geq n^{-1/2}\sqrt{\log n}$.  As in the proof of Theorem \ref{theo:bound_expectation_L_P} it follows that
	\[
	\begin{split}
	&\int_{\lambda_{\hat\theta}(\epsilon)-\lambda_{\hat\theta}(t)}^{\lambda_{\hat\theta}(\epsilon)-\lambda_{\hat\theta}(t)+t-\epsilon}P^*_n\left[\left\{\hat{U}_n^*(\lambda_{\hat\theta}(t)+x)>t \right\} \cap E_n^*\right]px^{p-1}\,\dd x\\
	&\lesssim \exp\left\{-K_2n(t-\epsilon)^3 \right\} \int_{0}^{\lambda_{\hat\theta}(\epsilon)-\lambda_{\hat\theta}(t)+t-\epsilon}px^{p-1}\,\dd x\\
	&\lesssim n^{-p/3}.
	\end{split}
	\]
	Here we used that $\limsup_{n\to\infty}\sup_{t\in[\epsilon,M]}|\lambda'_{\hat\theta}(t)|<c$.
	Moreover, from  Lemma \ref{le:inv_tail_prob2*}, we have
	\begin{equation}
	\label{eqn:bound_expectation1*}
	\begin{split}
	&\int_{\lambda_{\hat\theta}(\epsilon)-\lambda_{\hat\theta}(t)+t-\epsilon}^\infty P^*_n\left[\left\{\hat{U}_n^*(\lambda_{\hat\theta}(t)+x)>t \right\} \cap E_n^*\right]px^{p-1}\,\dd x\\
	&\lesssim \int_{\lambda_{\hat\theta}(\epsilon)-\lambda_{\hat\theta}(t)+t-\epsilon}^{\lambda_{\hat\theta}(\epsilon)-\lambda_{\hat\theta}(t)+M} \exp\left\{-K_2n(t-\epsilon)\left|\lambda_{\hat\theta}(\epsilon)-\lambda_{\hat\theta}(t)-x\right|^{2} \right\} px^{p-1}\,\dd x\\
	&\quad +\int_{\lambda_{\hat\theta}(\epsilon)-\lambda_{\hat\theta}(t)+M}^\infty \exp\left\{-K_2n(t-\epsilon)\left|\lambda_{\hat\theta}(\epsilon)-\lambda_{\hat\theta}(t)-x\right|\right\} px^{p-1}\,\dd x.
	\end{split}
	\end{equation}
	Here we needed $t-\epsilon\geq n^{-1/2}\sqrt{\log n}$ to apply Lemma \ref{le:inv_tail_prob2*}, i.e. we need $(t-\epsilon)\left|\lambda_{\hat\theta}(\epsilon)-\lambda_{\hat\theta}(t)-x\right|>n^{-1}\log n$, which is satisfied since $\left|\lambda_{\hat\theta}(\epsilon)-\lambda_{\hat\theta}(t)-x\right|>t-\epsilon $.
	As in the proof of Theorem \ref{theo:bound_expectation_L_P}, we have
	\[
	\begin{split}
	&\int_{\lambda_{\hat\theta}(\epsilon)-\lambda_{\hat\theta}(t)+t-\epsilon}^{\lambda_{\hat\theta}(\epsilon)-\lambda_{\hat\theta}(t)+M} \exp\left\{-K_2n(t-\epsilon)\left|\lambda_{\hat\theta}(\epsilon)-\lambda_{\hat\theta}(t)-x\right|^{2} \right\} px^{p-1}\,\dd x\\
	&=\int_{t-\epsilon}^{M} \exp\left\{-K_2n(t-\epsilon)y^{2} \right\} p\left(y+\lambda_{\hat\theta}(\epsilon)-\lambda_{\hat\theta}(t)\right)^{p-1}\,\dd y\\
	&\lesssim (n(t-\epsilon))^{-p/2}
	\end{split}
	\]
	and
	\[
	\begin{split}
	&\int_{\lambda_{\hat\theta}(\epsilon)-\lambda_{\hat\theta}(t)+M}^\infty \exp\left\{-K_2n(t-\epsilon)\left|\lambda_{\hat\theta}(\epsilon)-\lambda_{\hat\theta}(t)-x\right|\right\} px^{p-1}\,\dd x\\
	&=\int_{M}^{\infty} \exp\left\{-K_2n(t-\epsilon)y \right\} p\left(y+\lambda_{\hat\theta}(\epsilon)-\lambda_{\hat\theta}(t)\right)^{p-1}\,\dd y\\
	&\lesssim (n(t-\epsilon))^{-p}.
	\end{split}
	\]
	Since, $t>\epsilon+n^{-1}$, $(n(t-\epsilon))^{-p}\leq(n(t-\epsilon))^{-p/2}$ and, in particular,  for $t\geq\epsilon+ n^{-1/3}$ we get
	\[
	\int_{\lambda_{\hat\theta}(\epsilon)-\lambda_{\hat\theta}(t)+t-\epsilon}^\infty \exp\left\{-K_2n(t-\epsilon)\left|\lambda_{\hat\theta}(\epsilon)-\lambda_{\hat\theta}(t)-x\right|\right\} px^{p-1}\,\dd x\lesssim n^{-p/3}.
	\]
	Similarly we can deal with $I_2^*$.
	
	Now we consider  the case $n^{-1}\log n\leq t-\epsilon< n^{-1/2}\sqrt{\log n}$.  As in the proof of Theorem \ref{theo:bound_expectation_L_P} it follows that
	\[
	\begin{split}
	&\int_{\lambda_{\hat\theta}(\epsilon)-\lambda_{\hat\theta}(t)}^{\lambda_{\hat\theta}(\epsilon)-\lambda_{\hat\theta}(t)+\frac{\log n}{n(t-\epsilon)}}P^*_n\left[\left\{\hat{U}_n^*(\lambda_{\hat\theta}(t)+x)>t \right\} \cap E_n^*\right]px^{p-1}\,\dd x\\
	&\lesssim \exp\left\{-K_2n(t-\epsilon)^3 \right\} \int_{0}^{\lambda_{\hat\theta}(\epsilon)-\lambda_{\hat\theta}(t)+\frac{\log n}{n(t-\epsilon)}}px^{p-1}\,\dd x\\
	&\lesssim \frac{(\log n)^p}{(n(t-\epsilon)^3)^{\alpha_0 p/5}n^p(t-\epsilon)^{p}}\\
	&\lesssim \left(n(t-\epsilon)^{1+\alpha_0}\right)^{-p/2}.
	\end{split}
	\]
	Moreover, from  Lemma \ref{le:inv_tail_prob2*}, we have
	\begin{equation*}
	\begin{split}
	&\int_{\lambda_{\hat\theta}(\epsilon)-\lambda_{\hat\theta}(t)+\frac{\log n}{n(t-\epsilon)}}^\infty P^*_n\left[\left\{\hat{U}_n^*(\lambda_{\hat\theta}(t)+x)>t \right\} \cap E_n^*\right]px^{p-1}\,\dd x\\
	&\lesssim \int_{\lambda_{\hat\theta}(\epsilon)-\lambda_{\hat\theta}(t)+\frac{\log n}{n(t-\epsilon)}}^{\lambda_{\hat\theta}(\epsilon)-\lambda_{\hat\theta}(t)+M} \exp\left\{-K_2n(t-\epsilon)\left|\lambda_{\hat\theta}(\epsilon)-\lambda_{\hat\theta}(t)-x\right|^{2} \right\} px^{p-1}\,\dd x\\
	&\quad +\int_{\lambda_{\hat\theta}(\epsilon)-\lambda_{\hat\theta}(t)+M}^\infty \exp\left\{-K_2n(t-\epsilon)\left|\lambda_{\hat\theta}(\epsilon)-\lambda_{\hat\theta}(t)-x\right|\right\} px^{p-1}\,\dd x.
	\end{split}
	\end{equation*}
	Note that we can apply Lemma \ref{le:inv_tail_prob2*} because  $(t-\epsilon)\left|\lambda_{\hat\theta}(\epsilon)-\lambda_{\hat\theta}(t)-x\right|>n^{-1}\log n$.
	As in the proof of Theorem \ref{theo:bound_expectation_L_P}, we have
	\[
	\begin{split}
	&\int_{\lambda_{\hat\theta}(\epsilon)-\lambda_{\hat\theta}(t)+\frac{\log n}{n(t-\epsilon)}}^{\lambda_{\hat\theta}(\epsilon)-\lambda_{\hat\theta}(t)+M} \exp\left\{-K_2n(t-\epsilon)\left|\lambda_{\hat\theta}(\epsilon)-\lambda_{\hat\theta}(t)-x\right|^{2} \right\} px^{p-1}\,\dd x\\
	&=\int_{\frac{\log n}{n(t-\epsilon)}}^{M} \exp\left\{-K_2n(t-\epsilon)y^{2} \right\} p\left(y+\lambda_{\hat\theta}(\epsilon)-\lambda_{\hat\theta}(t)\right)^{p-1}\,\dd y\\
	&\lesssim (n(t-\epsilon))^{-p/2}
	\end{split}
	\]
	and
	\[
	\begin{split}
	&\int_{\lambda_{\hat\theta}(\epsilon)-\lambda_{\hat\theta}(t)+M}^\infty \exp\left\{-K_2n(t-\epsilon)\left|\lambda_{\hat\theta}(\epsilon)-\lambda_{\hat\theta}(t)-x\right|\right\} px^{p-1}\,\dd x\\
	&=\int_{M}^{\infty} \exp\left\{-K_2n(t-\epsilon)y \right\} p\left(y+\lambda_{\hat\theta}(\epsilon)-\lambda_{\hat\theta}(t)\right)^{p-1}\,\dd y\\
	&\lesssim (n(t-\epsilon))^{-p}\leq(n(t-\epsilon))^{-p/2}.
	\end{split}
	\]
	Similarly we can deal with $I_2^*$.
\end{proof}
\begin{lemma}
	\label{le:embedding*}
	Let 
	\begin{equation}
	\label{def:tilde_M_n*}
	\begin{split}
	\tilde{M}_n^*(t)&=\frac{1}{n}\sum_{i=1}^{n}\left(\Delta_i^*\1_{\{T_i^*\leq t\}}-\E^*\left[\Delta_i^*\1_{\{T_i^*\leq t\}} \right]\right)\\
	&=\frac{1}{n}\sum_{i=1}^{n}\Delta_i^*\1_{\{T_i^*\leq t\}}-H^{uc,*}(t).
	\end{split}
	\end{equation}
	There exist a Brownian bridge $B_n$ and positive constants $K,\,c_1, \,c_2$ such that 
	\[
	P^*_n\left[n\sup_{s\in[0,M]}\left|\tilde{M}_n^*(s)-n^{-1/2}B_n\left(H^{uc,*}(s)\right) \right|\geq x+c_1\log n \right]\leq K\exp\left(-c_2 x\right).
	\]
\end{lemma}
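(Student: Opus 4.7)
The plan is to adapt the proof of Lemma \ref{le:embedding} to the bootstrap setting, working conditionally on the data, by constructing iid auxiliary random variables whose distribution function is continuous so that the Koml\'os--Major--Tusn\'ady (KMT) strong approximation can be applied with its universal constants. Given the data, I would draw $V_1,\dots,V_n$ iid uniform on $(M+1,M+2)$, independent of the bootstrap sample, and set $Y_i=T_i^*$ if $\Delta_i^*=1$ and $T_i^*\leq M$, and $Y_i=V_i$ otherwise. Conditionally on the data, the $Y_i$'s are then iid with a distribution function $G_Y^*$ that equals $H^{uc,*}(t)$ on $[0,M]$, remains constant at $H^{uc,*}(M)$ on $(M,M+1]$, interpolates linearly from $H^{uc,*}(M)$ to $1$ on $(M+1,M+2]$, and is $0$ or $1$ outside.

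Continuity of $G_Y^*$ on $\R$ reduces to continuity of $H^{uc,*}$, which follows directly from the expression already provided in the excerpt, $H^{uc,*}(t)=n^{-1}\sum_i\int_0^t f_{\hat\theta}(u|Z_i)[1-\hat G_n(u)]\,\dd u$: the integrand is bounded, so $H^{uc,*}$ is absolutely continuous. With continuity in hand, KMT will provide a Brownian bridge $B_n$ and universal positive constants $K,c_1,c_2$ such that
\[
P_n^*\Bigl[n\sup_{t\in\R}\Bigl|\frac{1}{n}\sum_{i=1}^n\1_{\{Y_i\leq t\}}-G_Y^*(t)-n^{-1/2}B_n(G_Y^*(t))\Bigr|\geq x+c_1\log n\Bigr]\leq K\exp(-c_2 x).
\]
To conclude, I would restrict the supremum to $t\in[0,M]$ and use the identities $\1_{\{Y_i\leq t\}}=\Delta_i^*\1_{\{T_i^*\leq t\}}$ and $G_Y^*(t)=H^{uc,*}(t)$, valid for such $t$ by construction, which turn the quantity inside the absolute value into $\tilde M_n^*(t)-n^{-1/2}B_n(H^{uc,*}(t))$.

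The main subtlety is that $G_Y^*$ is itself random, being built from $\hat\theta_n$, $\hat G_n$ and the $Z_i$'s; however, the universality of the KMT constants ensures that the bound holds for each realisation of the data with the same $K,c_1,c_2$. The choice of the auxiliary interval $(M+1,M+2)$, disjoint from $[0,M]$, is what guarantees both the continuity of $G_Y^*$ at the gluing points $M$, $M+1$, $M+2$ and the pointwise identification of the empirical distribution of the $Y_i$'s with $n^{-1}\sum_i\Delta_i^*\1_{\{T_i^*\leq t\}}$ on $[0,M]$. Overall this is a direct bootstrap analog of Lemma \ref{le:embedding}, and no step is a serious obstacle beyond the verification of the continuity of $H^{uc,*}$ and the careful bookkeeping needed to ensure that conditioning on the data does not alter the applicability of KMT.
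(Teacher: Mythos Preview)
Your proposal is correct and follows essentially the same approach as the paper, which simply refers back to the proof of Lemma~\ref{le:embedding}. Your construction is a faithful bootstrap analogue of that argument: build auxiliary iid variables by keeping the uncensored $T_i^*$'s on $[0,M]$ and sending the rest to an independent uniform sample on a disjoint interval, verify that the resulting distribution function is continuous so that KMT applies with its universal constants, and then restrict to $[0,M]$ where the empirical distribution of the $Y_i$'s coincides with $n^{-1}\sum_i\Delta_i^*\1_{\{T_i^*\leq t\}}$ and the distribution function coincides with $H^{uc,*}$. Your added truncation $T_i^*\leq M$ and your explicit remark that the KMT constants are universal (so the bound holds uniformly over realisations of the data, even though $G_Y^*$ is random) are sensible clarifications.
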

\begin{proof}
	See proof of Lemma \ref{le:embedding}.
\end{proof}
\begin{lemma}
	\label{le:boundaries*}
	$\hat{\lambda}_n^*(\epsilon)$ and $\hat{\lambda}_n^*(M)$ are stochastically bounded with respect to $P^*_n$.
\end{lemma}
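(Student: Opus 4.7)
The plan is to mimic the proof of Lemma \ref{le:boundaries}: we will apply (the bootstrap analogue of) Lemma 1 in \cite{durot2007}, which reduces stochastic boundedness of $\hat\lambda_n^*(\epsilon)$ and $\hat\lambda_n^*(M)$ to controlling the probability that $\Lambda_n^*$ has a jump in a small interval of width of order $1/n$ at either endpoint of $[\epsilon,M]$. Concretely, for any $\delta>0$, the probability that $\Lambda_n^*$ jumps in $(\epsilon,\epsilon+\delta/n)\cup(M-\delta/n,M)$ is at most
\[
nP_n^*\bigl[\Delta^*T^*\in(\epsilon,\epsilon+\delta/n)\cup(M-\delta/n,M)\bigr]
= n\bigl[H^{uc,*}(\epsilon+\delta/n)-H^{uc,*}(\epsilon)\bigr]+n\bigl[H^{uc,*}(M)-H^{uc,*}(M-\delta/n)\bigr].
\]

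The key step will be to show that $H^{uc,*}$ admits a density $h^{uc,*}$ which is bounded uniformly over $[\epsilon,M]$, with probability tending to one given the data. Since in the bootstrap mechanism $X_i^*$ is drawn from $F_{\hat\theta}(\cdot|Z_i)$ independently of $C_i^*\sim\hat G_n$, conditioning on $Z_i$ and integrating yields
\[
H^{uc,*}(t)=\frac{1}{n}\sum_{i=1}^n\int_0^tf_{\hat\theta}(u|Z_i)\bigl[1-\hat G_n(u-)\bigr]\,\dd u,
\]
which is absolutely continuous with density
\[
h^{uc,*}(t)=\frac{1}{n}\sum_{i=1}^n\lambda_{\hat\theta}(t)\mathrm{e}^{\tilde\beta_n'Z_i}\bigl[1-F_{\hat\theta}(t|Z_i)\bigr]\bigl[1-\hat G_n(t-)\bigr]
\le \lambda_{\hat\theta}(t)\cdot\frac{1}{n}\sum_{i=1}^n\mathrm{e}^{\tilde\beta_n'Z_i}.
\]
On the event $E_n^*$ we have $|\tilde\beta_n-\beta_0|=O_P(n^{-1/2})$ and $\hat\theta_n\to\theta$, so by continuity of $f$ and assumption (A2), the right-hand side is uniformly bounded on $[\epsilon,M]$ by a constant $K$ independent of $n$, with probability going to one (given the data).

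Combining the two displays gives
\[
nP_n^*\bigl[\Delta^*T^*\in(\epsilon,\epsilon+\delta/n)\cup(M-\delta/n,M)\bigr]\le 2K\delta
\]
on $E_n^*$, which is exactly the bound needed to apply Lemma 1 of \cite{durot2007} in the bootstrap world and conclude that both $\hat\lambda_n^*(\epsilon)$ and $\hat\lambda_n^*(M)$ are stochastically bounded under $P_n^*$. The main (minor) obstacle is the discreteness of $\hat G_n$, which could cast doubt on whether $H^{uc,*}$ has a density; this is resolved by noting that on $\{\Delta^*=1\}$ one necessarily has $T^*=X^*$, and $X^*$ is continuous, so the only discreteness in $\hat G_n$ enters through the survival factor $1-\hat G_n(t-)\le 1$, which is harmless for the upper bound. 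The remaining ingredient is uniform boundedness of $\lambda_{\hat\theta}$ on $[\epsilon,M]$, which follows from $f\in\mathcal{C}^\infty(\R^d\times[\epsilon,M])$ and the consistency of $\hat\theta_n$.
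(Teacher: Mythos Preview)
Your proposal is correct and follows essentially the same approach as the paper: both reduce the claim to Lemma~1 of \cite{durot2007} by showing that the bootstrap sub-distribution $H^{uc,*}$ has a density $h^{uc,*}(t)=\frac{1}{n}\sum_{i=1}^n f_{\hat\theta}(t|Z_i)[1-\hat G_n(t)]$ that is uniformly bounded on $[\epsilon,M]$. The paper argues boundedness via closeness of $h^{uc,*}$ to $[1-G(t)]\E[f_{\hat\theta}(t|Z)]$, whereas you use the direct crude bound $h^{uc,*}(t)\le\lambda_{\hat\theta}(t)\cdot\frac{1}{n}\sum_i e^{\tilde\beta_n'Z_i}$; both are valid and the difference is cosmetic.
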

\begin{proof}
	See proof of Lemma \ref{le:boundaries}. Here we use that, since for 
	\[
	h^{uc,*}(t)=\frac{1}{n}\sum_{i=1}^nf_{\hat\theta}(t|Z_i)\left[1-\hat{G}_n(t)\right]
	\]
	we have 
	\[
	n^{1/2}\sup_{t\in[\epsilon,M]}\left|h^{uc,*}(t)-[1-G(t)]\E[f_{\hat\theta}(t|Z)]\right|,
	\]
	it follows that $h^{uc,*}$ is uniformly bounded.
\end{proof}

As in Section~\ref{sec:proof_CLT2}, we first need to establish similar results to those in Lemma \ref{le:inv_tail_prob1*} and Theorem \ref{le:bound_expectation_L_P*}, where $\lambda
_{\hat\theta}$ is replaced by $\lambda^*_{\hat\theta}$ and $U_{\hat\theta}$ by  $U^*_{\hat\theta}$ defined as
the inverse of $\lambda^*_{\hat{\theta}}$, 
\begin{equation}
\label{def:U_theta*}
U^*_{\hat\theta}(a)=\begin{cases}
\epsilon & \text{ if }a>\lambda^*_{\hat{\theta}}(\epsilon)\\
(\lambda^*_{\hat{\theta}})^{-1}(a) & \text{ if }a\in[\lambda^*_{\hat{\theta}}(M),\lambda^*_{\hat{\theta}}(\epsilon)]\\
M & \text{ if } a<\lambda^*_{\hat{\theta}}(M).
\end{cases}
\end{equation}
From \eqref{eqn:theta_hat*}, it follows that
\[
\begin{split}
&\sqrt{n}\sup_{t\in[\epsilon,M]}\left(\lambda^*_{\hat{\theta}}(t)-\lambda_{\hat\theta}(t)\right)=O_{P^*}(1),\quad\quad\sqrt{n}\sup_{t\in[\epsilon,M]}\left((\lambda^*_{\hat{\theta}})'(t)-\lambda'_{\hat\theta}(t)\right)=O_{P^*}(1),\\
&\sqrt{n}\sup_{a\in(0,\infty)}\left(U^*_{\hat\theta}(a)-U_{\hat\theta}(a)\right)=O_{P^*}(1),\quad\quad\sqrt{n}\sup_{a\in(0,\infty)}\left((U^*_{\hat\theta})'(a)-U'_{\hat\theta}(a)\right)=O_{P^*}(1).
\end{split}
\]
Thus, we can assume that on the event $E_n^*$ we have
\[
\left|\hat\theta_n^*-\hat\theta_n\right|\lesssim n^{-1/2+\alpha}
\]
and
\begin{equation}
\label{eqn:par_est-true*}
\begin{split}
&\sup_{t\in[\epsilon,M]}\left
|\lambda^*_{\hat{\theta}}(t)-\lambda_{\hat\theta}(t)\right|\lesssim n^{-1/2+\alpha},\qquad \sup_{t\in[\epsilon,M]}\left
|(\lambda^*_{\hat{\theta}})'(t)-\lambda'_{\hat\theta}(t)\right|\lesssim n^{-1/2+\alpha}\\
&\sup_{a\in(0,\infty)}\left|U^*_{\hat\theta}(a)-U_{\hat\theta}(a)\right|\lesssim n^{-1/2+\alpha},\qquad\sup_{a\in(0,\infty)}\left|U'_{\hat\theta}(a)-U'(a)\right|\lesssim n^{-1/2+\alpha}.
\end{split}
\end{equation}
From Lemma \ref{le:inv_tail_prob1*} and \eqref{eqn:par_est-true*}, it follows that
\begin{equation}
\label{eqn:inv2*}
\begin{split}
&P^*_n\left[\left\{\left|\hat{U}_n^*(a)-U_{\hat{\theta}}^*(a)\right|>x\right\}\cap E_n^*\right]\\
&\leq P^*_n\left[\left\{\left|\hat{U}_n^*(a)-U_{\hat\theta}(a)\right|>x/2\right\}\cap E_n^*\right]+P^*_n\left[\left\{\left|U_{\hat{\theta}}^*(a)-U_{\hat{\theta}}(a)\right|>x/2\right\}\cap E_n^*\right]\\
&\lesssim\exp(-Knx^3).
\end{split}
\end{equation}
Similarly, Theorem \ref{le:bound_expectation_L_P*} and \eqref{eqn:par_est-true*} imply that for  $t\in[\epsilon+n^{-1/3},M-n^{-1/3}] $ it holds
\begin{equation}
\label{eqn:bound_expectation_L_p1*}
\begin{split}
\E^*\left[1_{E_n^*}\left|\hat\lambda_n^*(t)-\lambda_{\hat{\theta}}^*(t)\right|^p\right]
&\lesssim \E^*\left[1_{E_n^*}\left|\hat\lambda_n^*(t)-\lambda_{\hat\theta}(t)\right|^p\right]+\E^*\left[1_{E_n^*}\left|\lambda_{\hat\theta}(t)-\lambda_{\hat{\theta}}^*(t)\right|^p\right]\\
&\lesssim n^{-p/3}+n^{p(-1/2+\alpha)}\lesssim n^{-p/3},
\end{split}
\end{equation}
for $t\in[\epsilon+n^{-1/2}\sqrt{\log n},\epsilon+n^{-1/3}]\cup[M-n^{-1/3},M-n^{-1/2}\sqrt{\log n}]$, 
\begin{equation}
\label{eqn:bound_expectation_L_p2*}
\E^*\left[1_{E_n^*}\left|\hat\lambda_n^*(t)-\lambda_{\hat{\theta}}^*       (t)\right|^p\right]\lesssim \left[n(t-\epsilon)\wedge n(M-t)\right]^{-p/2},
\end{equation}
and for $t\in[\epsilon+n^{-1}\log n,\epsilon+n^{-1/2}\sqrt{\log n}]\cup[M-n^{-1/2}\sqrt{\log n},M-n^{-1}\log n]$, 
\begin{equation}
\label{eqn:bound_expectation_L_p3*}
\E^*\left[1_{E_n^*}\left|\hat\lambda_n^*(t)-\lambda_{\hat{\theta}}^*       (t)\right|^p\right]\lesssim \left[n(t-\epsilon)^{1+\alpha_0}\wedge n(M-t)^{1+\alpha_0}\right]^{-p/2},
\end{equation}
Afterwards we  consider the conditional probability $P^*_\theta$ and conditional expectation $\E_{\theta^*}$ given $\hat\theta_n^*$ such that $\left|\hat\theta_n^*-\hat\theta_n\right|\lesssim n^{-1/2+\alpha}$ and obtain the bootstrap version of \ref{le:Lambda_n-Lambda}.
\begin{lemma}
	\label{le:Breslow_linear*} It holds
	\[
	\sup_{t,s\in[\epsilon,M]}\left|\int e^{\tilde\beta_nz}\int_{u\wedge t}^{u\wedge s}\left[\frac{\lambda_{\hat\theta}(v)}{\Phi^*(v;\tilde\beta_n)}-\frac{\lambda_{0}(v)}{\Phi(v;\beta_0)}\right]\,\dd v\,\dd (\p_n^*-P_n^*)(u,\delta,z)\right|=O_{P^*}(n^{-1}).
	\]
\end{lemma}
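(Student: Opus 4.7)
To establish the bound, I will first argue that the bracketed quantity
\[
\psi(v):=\frac{\lambda_{\hat\theta}(v)}{\Phi^*(v;\tilde\beta_n)}-\frac{\lambda_{0}(v)}{\Phi(v;\beta_0)}
\]
is uniformly small on $[\epsilon,M]$. Writing
\[
\psi(v)=\frac{[\lambda_{\hat\theta}(v)-\lambda_0(v)]\Phi(v;\beta_0)+\lambda_0(v)[\Phi(v;\beta_0)-\Phi^*(v;\tilde\beta_n)]}{\Phi^*(v;\tilde\beta_n)\Phi(v;\beta_0)}
\]
and using the smoothness of $f$ in $\theta$ together with $\sqrt{n}(\hat\theta_n-\theta)=O_P(1)$, Lemma \ref{lemma:approx_Phi*}, and the assumption that both $\Phi(\cdot;\beta_0)$ and $\Phi^*(\cdot;\tilde\beta_n)$ are bounded away from zero on $[\epsilon,M]$, I expect $\sup_{v\in[\epsilon,M]}|\psi(v)|=O_P(n^{-1/2})$. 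In particular, on an event $\tilde E_n$ of probability tending to one, $\|\psi\|_\infty\leq Cn^{-1/2}$ for some deterministic $C>0$, and I will restrict attention to realizations in $\tilde E_n$.

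The plan is then to apply a bracketing empirical process maximal inequality, conditionally on the data, to the class
\[
\mathcal{F}=\Big\{f_{t,s}(u,\delta,z)=e^{\tilde\beta_n z}\int_{u\wedge t}^{u\wedge s}\psi(v)\,\dd v:\, t,s\in[\epsilon,M]\Big\},
\]
with envelope $F(u,\delta,z)=e^{\tilde\beta_n z}(M-\epsilon)\|\psi\|_\infty$. Assumption (A3) together with the consistency of $\tilde\beta_n$ yields $\|F\|_{L_q(P_n^*)}\lesssim n^{-1/2}$ for every fixed $q\geq 2$, with a multiplicative constant that is $O_P(1)$. To bound the bracketing entropy, I will partition $[\epsilon,M]^2$ into $N^2$ squares of side-length $L=(M-\epsilon)/N$ (analogously to the constructions in Lemmas \ref{le:R_1}--\ref{le:R_2}) and produce brackets $[l_{i,j},L_{i,j}]$ whose $L_2(P_n^*)$-radius is dominated by $KL\|\psi\|_\infty$. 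Choosing $N$ so that this radius is at most $\delta\|F\|_{L_2(P_n^*)}$ yields
\[
N_{[]}(\delta\|F\|_{L_2(P_n^*)},\mathcal{F},L_2(P_n^*))\lesssim \delta^{-2},
\]
which forces the entropy integral $J_{[]}(1,\mathcal{F},L_2(P_n^*))$ to be bounded in probability.

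Combining the preceding bounds through Theorem 2.14.2 in \cite{VW96}, applied conditionally on the data, then gives
\[
\E^*\sup_{f\in\mathcal{F}}\Big|\int f\,\dd\sqrt{n}(\p_n^*-P_n^*)\Big|\lesssim J_{[]}(1,\mathcal{F},L_2(P_n^*))\,\|F\|_{L_2(P_n^*)}=O_P(n^{-1/2}),
\]
and dividing by $\sqrt{n}$ followed by Markov's inequality under $P_n^*$ yields the announced bound $O_{P^*}(n^{-1})$. The main obstacle will be transferring bounds that hold under $O_P(\cdot)$ with respect to the unconditional probability $\p$ into the $\p$-almost-sure form of the $O_{P^*}(\cdot)$ statement given in the definition used in the paper; this can be handled, as elsewhere in Section \ref{sec:supp_bootstrap}, by fixing realizations of the data in a set of probability one on which all the required bounds on $\|\psi\|_\infty$, $\E^*[e^{2\tilde\beta_n Z}]$, and $\|\hat G_n-G\|_\infty$ hold and by invoking the conditional maximal inequality along such sequences.
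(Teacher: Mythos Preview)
Your proposal is correct and follows essentially the same route as the paper: apply a bracketing maximal inequality (Theorem 2.14.2 in \cite{VW96}) conditionally on the data to the class $\mathcal F=\{f_{t,s}\}$, use that the envelope has $L_2(P_n^*)$–norm $O(n^{-1/2})$ because $\|\psi\|_\infty=O_P(n^{-1/2})$, and conclude by Markov's inequality.

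The one noteworthy difference is in the bracketing. You exploit the Lipschitz dependence $|f_{t,s}-f_{t',s'}|\leq e^{\tilde\beta_n z}\|\psi\|_\infty(|t-t'|+|s-s'|)$, so a grid of mesh $L$ on $[\epsilon,M]^2$ gives brackets of $L_2$–radius $\sim L\|\psi\|_\infty$; matching this to $\delta\|F\|_{L_2}\sim\delta\|\psi\|_\infty$ yields $N_{[]}\lesssim\delta^{-2}$ with an $n$–free constant, and hence a genuinely bounded entropy integral. The paper instead writes each $f_{t,s}$ as a difference of two monotone pieces (one involving $\lambda_{\hat\theta}/\Phi^*$, the other $\lambda_0/\Phi$) and brackets those separately; this produces bracket radii $\sim\sqrt L$ without the factor $\|\psi\|_\infty$, forcing the grid to scale with $\|F\|_{L_2}^{-2}\sim n$ and an entropy integral that is only $O(\sqrt{\log n})$. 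Your construction is therefore a bit cleaner, though either version suffices for the intended applications.
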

\begin{proof}
	Let $\F$ be the following class of functions on $[0,M]\times\{0,1\}\times\R^p$,
	\[
	\F=\left\{f_{t,s}(u,\delta,z)= e^{\tilde\beta_nz}\int_{u\wedge t}^{u\wedge s}\left[\frac{\lambda_{\hat\theta}(v)}{\Phi^*(v;\tilde\beta_n)}-\frac{\lambda_{0}(v)}{\Phi(v;\beta_0)}\right]\,\dd v\,\bigg|\,\epsilon\leq t<s\leq M\right\}
	\]	
	with envelope function
	\[
	F(u,\delta,z)=e^{\tilde\beta_nz}\1_{\{u>\epsilon\}}\int_\epsilon ^{u\wedge M}\left|\frac{\lambda_{\hat\theta}(v)}{\Phi^*(v;\tilde\beta_n)}-\frac{\lambda_{0}(v)}{\Phi(v;\beta_0)}\right|\,\dd v.
	\]
	By Markov inequality we have
	\[
	\begin{split}
	&P^*_n\left[n\sup_{t,s\in[\epsilon,M]}\left|\int e^{\tilde\beta_nz}\int_{u\wedge t}^{u\wedge s}\left[\frac{\lambda_{\hat\theta}(v)}{\Phi^*(v;\tilde\beta_n)}-\frac{\lambda_{0}(v)}{\Phi(v;\beta_0)}\right]\,\dd v\,\dd (\p_n^*-P_n^*)(u,\delta,z)\right|>x\right]\\
	&\lesssim nx^{-2}\E^*\left[\sup_{f\in\F}\left|\int f(u,\delta,z)\,\dd\sqrt{n}(\p_n^*-P^*_n)(u,\delta,z) \right|^2\right].
	\end{split}
	\]
	Hence, it suffices to show that the expectation in the right hand side of the previous equation is of order $n^{-1}$. From Theorem 2.14.5 and 2.14.2 in \cite{VW96}, it follows that 
	\begin{equation}
	\label{eqn:expectation*}
	\begin{split}
	&\E^*\left[\sup_{f\in\F}\left|\int f(u,\delta,z)\,\dd\sqrt{n}(\p_n^*-P^*_n)(u,\delta,z) \right|^2\right]^{1/2}\\
	&\lesssim \left[1+J_{[]}(1,\F,L_2(P^*_n))\right]\Vert F\Vert_{L_2(P^*_n)}.
	\end{split}
	\end{equation}
	Using Lemma \ref{lemma:approx_Phi*} and \eqref{eqn:par_est-true}, we obtain
	\[
	\Vert F\Vert_{L_2(P^*_n)}^2=\int e^{2\tilde\beta_nz}\1_{\{u>\epsilon\}}\left(\int_\epsilon^{u\wedge M}\left|\frac{\lambda_{\hat\theta}(v)}{\Phi^*(v;\tilde\beta_n)}-\frac{\lambda_{0}(v)}{\Phi(v;\beta_0)}\right|\,\dd v\right)^2 \,\dd P^*_n(u,\delta,z)=O(n^{-1}).
	\]
	Hence, 
	it remains to show that $J_{[]}(1,\F,L_2(P^*_n))$ is bounded. We first compute $N_{[]}\left(\varepsilon\Vert F\Vert_{L_2(P^*_n)},\F,L_2(P^*_n) \right)$. Divide the interval $[\epsilon,M]$ in $N$ subintervals of length $L=(M-\epsilon)/N$. For $i\leq j$,, $i,j=1,\dots,N$, let
	\[
	l_{i,j}(u,\delta,z)=e^{\tilde\beta_nz}\1_{\{u>\epsilon+iL\}}\int_{\epsilon+iL} ^{u\wedge (\epsilon+(j-1)L)}\frac{\lambda_{\hat\theta}(v)}{\Phi^*(v;\tilde\beta_n)}\,\dd v-e^{\tilde\beta_nz}\1_{\{u>\epsilon+(i-1)L\}}\int_{\epsilon+(i-1)L} ^{u\wedge (\epsilon+jL)}\frac{\lambda_{0}(v)}{\Phi(v;\beta_0)}\,\dd v,
	\]
	\[
	L_{i,j}(u,\delta,z)=e^{\tilde\beta_nz}\1_{\{u>\epsilon+(i-1)L\}}\int_{\epsilon+(i-1)L} ^{u\wedge (\epsilon+jL)}\frac{\lambda_{\hat\theta}(v)}{\Phi^*(v;\tilde\beta_n)}\,\dd v-e^{\tilde\beta_nz}\1_{\{u>\epsilon+iL\}}\int_{\epsilon+iL} ^{u\wedge (\epsilon+(j-1)L)}\frac{\lambda_{0}(v)}{\Phi(v;\beta_0)}\,\dd v.
	\]
	The brackets $[l_{i,j},L_{i,j}]$ for $i\leq j$, with the convention that $\int_a^b f(a)\,\dd x=0$ if $a>b$, cover all the class $\F$. Moreover,  we have
	$
	\Vert L_{i,j}-l_{i,j}\Vert_{L_2(P^*_n)}^2
	\lesssim L.
	$
	Thus, if we take 
	\[
	N=\left\lfloor\frac{K}{\varepsilon^2 \Vert F\Vert_{L_2(P^*_n)}^2}+1\right\rfloor,
	\]
	for a properly chosen constant $K>0$, then 
	$\Vert L_i-l_i\Vert_{L_2(P^*_n)}\leq \varepsilon\Vert F\Vert_{L_2(P^*_n)}.$
	It follows that 
	\[
	N_{[]}\left(\varepsilon\Vert F\Vert_{L_2(P^*_n)},\F,L_2(P^*_n) \right)\lesssim \left\lfloor\frac{K}{\varepsilon^2 \Vert F\Vert_{L_2(P^*_n)}^2}+1\right\rfloor ^2
	\]
	and consequently $J_{[]}(1,\F,L_2(P^*_n))$ is bounded.
\end{proof}
\begin{lemma}
	\label{le:R_1*}
	Let $T_n=n^{\gamma}$ for some $\gamma\in (0,1/3)$. For $t,\,s\in(\epsilon,M)$ and $\pi$ as in \eqref{def:pi},  define
	\begin{equation}
	\label{def:R_1*}
	R^{1,*}_n(t,s)=\int\delta \pi(u;t,s)\left(\frac{1}{\Phi(u;\beta_0)}-\frac{1}{\Phi(t;\beta_0)}\right)\,\dd (\p_n^*-P^*_n)(u,\delta,z).
	\end{equation}
	{Under the assumptions of Theorem \ref{theo:CLT*},} for each $2\leq q<2/(3\gamma)$ , there exist $K>0$ such that, for $x\geq 0$ and $a\in\R$,
	\[
	P^*_{\theta}\left[\left\{\sup_{|t_n-U^*_{\hat\theta}|\leq c  n^{-1/3}T_n}|R^{1,*}_n(t,t_n)|>x\right\}\cap E_n^*\right]\leq K x^{-q} n^{1-q}.
	\]
\end{lemma}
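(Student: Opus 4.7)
The plan is to adapt the proof of Lemma~\ref{le:R_1_2} to the bootstrap setting, replacing $\p_n$, $\p$, and $E_n$ respectively by $\p_n^*$, $P_n^*$, and $E_n^*$. First I would use the bound $|U^*_{\hat\theta}(a)-U_{\hat\theta}(a)|\lesssim n^{-1/2+\alpha}$ from \eqref{eqn:par_est-true*} (valid on $E_n^*$) to enlarge the supremum on the left hand side: it is dominated by the supremum over the two-parameter class
\[
\F=\left\{f_{t_n,s}(u,\delta,z)=\delta\,\pi(u;U_{\hat\theta}(a)+s,t_n)\left(\frac{1}{\Phi(u;\beta_0)}-\frac{1}{\Phi(U_{\hat\theta}(a)+s;\beta_0)}\right)\right\},
\]
with $|s|\leq n^{-1/2+\alpha}$ and $t_n\in[U_{\hat\theta}(a)+s,U_{\hat\theta}(a)+s+cn^{-1/3}T_n]$, and with natural envelope $F$ supported on $(U_{\hat\theta}(a)-n^{-1/2+\alpha},U_{\hat\theta}(a)+n^{-1/2+\alpha}+cn^{-1/3}T_n]$.

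Then, by Markov's inequality and the maximal inequality (Theorems~2.14.2 and~2.14.5 in \cite{VW96}) applied to the bootstrap empirical process,
\[
\E^*_\theta\left[\sup_{f\in\F}\left|\int f\,\dd\sqrt{n}(\p_n^*-P_n^*)\right|^q\right]^{1/q}\lesssim J_{[]}(1,\F,L_2(P_n^*))\,\|F\|_{L_2(P_n^*)}+n^{-1/2+1/q}\|F\|_{L_q(P_n^*)}.
\]
On the event $E_n^*$, the envelope moment is computed exactly as in Lemma~\ref{le:R_1_2}: by writing the integrand in terms of $H^{uc,*}$ on the $\delta=1$ slice and using that $H^{uc,*}$ has bounded density uniformly in $n$ (as in the proof of Lemma~\ref{le:boundaries*}, via the smooth approximation $\tilde{H}^{uc,*}$ with bounded derivative shown in \eqref{eqn:tilde_h*}) together with continuous differentiability of $\Phi(\cdot;\beta_0)$, we get $\|F\|^q_{L_q(P_n^*)}\lesssim (n^{-1/3}T_n)^{q+1}$, with a constant independent of $n$ and $a$.

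Next, I would transcribe the bracketing construction of Lemma~\ref{le:R_1_2}: partition the $t_n$-range into $M$ intervals of length $L_1=cn^{-1/3}T_n/M$ and the $s$-range into $M$ intervals of length $L_2=n^{-1/2+\alpha}/M$. The $L_2(P_n^*)$-size of each bracket satisfies (using the above uniform density bound for $H^{uc,*}$)
\[
\|L_{i,j}-l_{i,j}\|^2_{L_2(P_n^*)}\lesssim L_1(n^{-1/3}T_n)^2,
\]
with constants independent of $n$. Choosing $M=\lfloor KT_n^3/(n\varepsilon^2\|F\|^2_{L_2(P_n^*)})+1\rfloor$ yields the same entropy bound as in \eqref{eq: entropy}, hence $J_{[]}(1,\F,L_2(P_n^*))$ is bounded uniformly. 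Plugging back and combining the estimates exactly as in Lemma~\ref{le:R_1_2} gives the required tail bound $Kx^{-q}n^{1-q}$.

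The main obstacle will be ensuring that all constants in the envelope moment bound and the bracketing number computation are genuinely uniform in $n$ (with probability converging to one); this boils down to transferring bounds from $H^{uc,*}$ to its smooth approximation $\tilde{H}^{uc,*}$ via \eqref{eqn:approx_Huc*}, and from $\Phi^*(\cdot;\tilde\beta_n)$ to $\Phi(\cdot;\beta_0)$ via Lemma~\ref{lemma:approx_Phi*}. Once these uniform inputs are in hand, the argument is a routine transcription of Lemmas~\ref{le:R_1} and~\ref{le:R_1_2}.
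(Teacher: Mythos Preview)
Your proposal is correct and follows essentially the same approach as the paper: adapt Lemma~\ref{le:R_1_2} to the bootstrap setting by considering the same two-parameter class $\F$ with $U(a)$ replaced by $U_{\hat\theta}(a)$, and then verify that $\|F\|_{L_q(P_n^*)}^q=O((n^{-1/3}T_n)^{q+1})$ and $J_{[]}(1,\F,L_2(P_n^*))$ is bounded using the uniform boundedness of $h^{uc,*}$ (cf.\ the proof of Lemma~\ref{le:boundaries*}). Your identification of the uniformity issue and its resolution via the smooth approximation $\tilde H^{uc,*}$ is exactly the point the paper relies on.
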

\begin{proof}
	We can follow the proof of  Lemma \ref{le:R_1_2} considering exactly the same class of functions with $U(a)$ replaced by $U_{\hat{\theta}}(a)$. Hence, we just need to show that $\Vert F\Vert_{L_q(P^*_n)}^q=O\left((n^{-1/3}T_n)^{q+1}\right)$ and that $J_{[]}(1,\F,L_2(P^*_n))$ is bounded.
	We have
	\[
	\begin{split}
	\Vert F\Vert_{L_q(P^*_n)}^q&=\int_{U_{\hat\theta}(a)-n^{-\frac12+\alpha}}^{U_{\hat\theta}(a)+n^{-\frac12+\alpha}+n^{-\frac13}T_n}(u-U_{\hat\theta}(a)+n^{-\frac12+\alpha})^q \left(\frac{\Phi'(U_{\hat\theta}(a)-n^{-\frac12+\alpha};\beta_0)}{\Phi(U_{\hat\theta}(a)-n^{-\frac12+\alpha};\beta_0)^2}\right)^q\,\dd H^{uc,*}(u)\\
	&\quad+o\left((n^{-1/3}T_n)^{q+1}\right)\\
	&=C(n^{-1/3}T_n)^{q+1}+o\left((n^{-1/3}T_n)^{q+1}\right),
	\end{split}
	\]
	for some $C>0$. Here we use the fact that $h^{uc,*}$ is uniformly bounded (see proof of Lemma \ref{le:boundaries*}). Similarly, following the proof of  Lemma \ref{le:R_1_2} we can also show that $J_{[]}(1,\F,L_2(P^*_n))$ is bounded. 
\end{proof}
\begin{lemma}
	\label{le:R_2*}
	Let $T_n=n^{\gamma}$ for some $\gamma\in (0,1/3)$. For $t,\,s\in(\epsilon,M)$ and $\pi$ as in \eqref{def:pi}, define
	\begin{equation}
	\label{def:R_2*}
	R^{2,*}_n(t,s)=\int e^{\beta_0 z}\pi_n(u;t,s)\int_{u}^{s}\frac{\lambda_0(v)}{\Phi(v;\beta_0)}\,\dd v\,\dd(\p_n^*-P^*_n)(u,\delta,z).
	\end{equation}
	{Under the assumptions of Theorem \ref{theo:CLT*},} for each $2\leq q<2/(3\gamma)$, there exist $K>0$ such that, for $x\geq 0$  and $a\in\R$,
	\[
	P^*_\theta\left[\left\{\sup_{|t_n-U_{\hat\theta}^*(a)|\leq cn^{-1/3}T_n}|R^{2,*}_n(t,t_n)|>x\right\}\cap E_n^*\right]\leq Kx^{-q} n^{1-q}.
	\]
\end{lemma}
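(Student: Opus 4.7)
The plan is to mimic the proof of Lemma \ref{le:R_2_2} with the empirical pair $(\p_n,\p)$ replaced by $(\p_n^*,P_n^*)$, and to use the fact that under the bootstrap, the sub-density $h^{uc,*}$ of $H^{uc,*}$ is uniformly bounded above and below away from zero with probability converging to one, so that the relevant moment and entropy estimates carry over uniformly in $n$.

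First I would localize via $s$ (for the gap between $U_{\hat\theta}(a)$ and $U_{\hat\theta}^*(a)$, controlled by \eqref{eqn:par_est-true*} on $E_n^*$) and define the class
\[
\F=\left\{f_{t_n,s}(u,\delta,z)=e^{\beta_0'z}\pi(u;U_{\hat\theta}(a)+s,t_n)\int_{u}^{t_n}\frac{\lambda_0(v)}{\Phi(v;\beta_0)}\,\dd v\right\}
\]
with $|s|\leq n^{-1/2+\alpha}$ and $t_n\in[U_{\hat\theta}(a)+s,U_{\hat\theta}(a)+s+cn^{-1/3}T_n]$, with envelope
\[
F_n(u,\delta,z)=e^{\beta_0'z}\1_{\{U_{\hat\theta}(a)-n^{-\frac12+\alpha}<u\leq U_{\hat\theta}(a)+n^{-\frac12+\alpha}+cn^{-\frac13}T_n\}}\int_{u}^{U_{\hat\theta}(a)+n^{-\frac12+\alpha}+cn^{-\frac13}T_n}\frac{\lambda_0(v)}{\Phi(v;\beta_0)}\,\dd v.
\]
On $E_n^*$, the event $\{\sup_{|t_n-U_{\hat\theta}^*(a)|\leq cn^{-1/3}T_n}|R^{2,*}_n(U_{\hat\theta}^*(a),t_n)|>x\}$ is contained in $\{\sup_{f\in\F}|\int f\,\dd(\p_n^*-P_n^*)|>x\}$.

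Second, by Markov inequality combined with Theorem~2.14.5 and~2.14.2 in \cite{VW96} (applied to the conditional probability $P^*_\theta$), we get
\[
P^*_\theta\left[\sup_{f\in\F}\left|\int f\,\dd\sqrt{n}(\p_n^*-P_n^*)\right|>\sqrt{n}x\right]\lesssim n^{-q/2}x^{-q}\Bigl[J_{[]}(1,\F,L_2(P_n^*))\Vert F_n\Vert_{L_2(P_n^*)}+n^{-\frac12+\frac1q}\Vert F_n\Vert_{L_q(P_n^*)}\Bigr]^q.
\]
Since $h^{uc,*}(u)=\frac{1}{n}\sum_i f_{\hat\theta}(u|Z_i)[1-\hat G_n(u)]$ is uniformly bounded above and below away from zero on $[\epsilon,M]$ (cf.\ the argument in Lemma~\ref{le:boundaries*} and \eqref{eqn:tilde_h*}), and assumption (A3) controls the $z$-integrals as in Lemma~\ref{le:R_2}, we obtain
\[
\Vert F_n\Vert_{L_q(P_n^*)}^q\lesssim(n^{-1/3}T_n)^{q+1}
\]
with constants independent of $a$ and $n$.

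Third, the bracketing-entropy computation carries over verbatim from Lemma~\ref{le:R_2_2}: partition both $[0,cn^{-1/3}T_n]$ and $[-n^{-1/2+\alpha},n^{-1/2+\alpha}]$ into $M$ pieces of length $L_1$ and $L_2$ respectively, form the associated brackets, and use the uniform bound on $h^{uc,*}$ to obtain $\Vert L_{i,j}-l_{i,j}\Vert_{L_2(P_n^*)}^2\lesssim L_1(n^{-1/3}T_n)^2$. Choosing $M$ as in Lemma~\ref{le:R_1_2} gives
\[
N_{[]}(\varepsilon\Vert F_n\Vert_{L_2(P_n^*)},\F,L_2(P_n^*))\lesssim\left\lfloor\frac{KT_n^3}{n\varepsilon^2\Vert F_n\Vert_{L_2(P_n^*)}^2}+1\right\rfloor,
\]
so that $J_{[]}(1,\F,L_2(P_n^*))$ is bounded uniformly in $n$. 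Combining the entropy bound with the envelope estimate yields the target inequality
\[
P^*_\theta\Bigl[\{\sup_{|t_n-U_{\hat\theta}^*(a)|\leq cn^{-1/3}T_n}|R^{2,*}_n(U_{\hat\theta}^*(a),t_n)|>x\}\cap E_n^*\Bigr]\lesssim n^{1-q}x^{-q}
\]
for every $q\in[2,2/(3\gamma))$.

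The main obstacle is ensuring that all constants in the envelope-moment and bracketing-entropy bounds are truly independent of $n$ (and of the realization of the original sample $(T_i,\Delta_i,Z_i)$). This requires the two uniform facts just mentioned: the uniform positivity and boundedness of $h^{uc,*}$ on $[\epsilon,M]$, and the uniform boundedness (under (A3)) of the $z$-moments $\int e^{q\beta_0'z}\,\dd F_{Z,n}(z)$ where $F_{Z,n}$ is the empirical distribution of the covariates. Both hold on events of conditional probability tending to one and can be absorbed into $E_n^*$.
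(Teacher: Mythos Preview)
Your approach is essentially the same as the paper's: follow Lemma~\ref{le:R_2_2} verbatim with $U(a)$ replaced by $U_{\hat\theta}(a)$ and $(\p_n,\p)$ by $(\p_n^*,P_n^*)$, then verify the envelope-moment bound $\Vert F_n\Vert_{L_q(P_n^*)}^q=O((n^{-1/3}T_n)^{q+1})$ and the entropy bound $J_{[]}(1,\F,L_2(P_n^*))\leq K$.

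One point, however, needs correction. The envelope $F_n(u,\delta,z)$ does not depend on $\delta$, so $\Vert F_n\Vert_{L_q(P_n^*)}^q$ unwinds to
\[
\frac{1}{n}\sum_{i=1}^n e^{q\beta_0'Z_i}\int_{U_{\hat\theta}(a)-n^{-1/2+\alpha}}^{U_{\hat\theta}(a)+n^{-1/2+\alpha}+cn^{-1/3}T_n}\Bigl(\int_u^{\cdots}\frac{\lambda_0(v)}{\Phi(v;\beta_0)}\,\dd v\Bigr)^q\,\dd H^*(u|Z_i),
\]
i.e.\ the integration is against the \emph{full} conditional law $H^*(\cdot|Z_i)=1-[1-F_{\hat\theta}(\cdot|Z_i)][1-\hat G_n(\cdot)]$, not against $H^{uc,*}$. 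Because the Kaplan--Meier estimator $\hat G_n$ is a step function, $H^*(\cdot|Z_i)$ is \emph{not} absolutely continuous, so you cannot simply bound a density. The paper's proof deals with this by bounding increments of $H^*$ via the smooth approximation $\tilde H^*$ from~\eqref{def:tilde_H*} and~\eqref{eqn:approx_H*}: one has $H^*(b|Z_i)-H^*(a|Z_i)\lesssim (b-a)+n^{-1/2+\alpha}$, which for $b-a\asymp n^{-1/3}T_n$ gives the required order. The same approximation is what justifies the bracket-size estimate $\Vert L_{i,j}-l_{i,j}\Vert_{L_2(P_n^*)}^2\lesssim L_1(n^{-1/3}T_n)^2$. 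Once you replace your appeal to $h^{uc,*}$ by this $\tilde H^*$-approximation argument, the rest of your proof goes through.
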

\begin{proof}
	We follow the proof of Lemma \ref{le:R_2_2} considering the same class of functions with $U(a)$ replaced by $U_{\hat\theta}$. We just need to show that $\Vert F\Vert_{L_q(P^*_n)}^q=O\left((n^{-1/3}T_n)^{q+1}\right)$ and that $J_{[]}(1,\F,L_2(P^*_n))$ is bounded. 
	In this case, we have
	\[
	\begin{split}
	\Vert F\Vert_{L_q(P^*_n)}^q&=\int e^{q\beta_0z}\1_{\left\{U_{\hat\theta}(a)-n^{-1/2+\alpha}<u\leq U_{\hat\theta}(a)+n^{-1/2+\alpha}+cn^{-1/3}T_n\right\}}\\
	&\qquad \left(\int_{u}^{U_{\hat\theta}(a)+n^{-1/2+\alpha}+nc^{-1/3}T_n}\frac{\lambda_0(v)}{\Phi(v;\beta_0)}\,\dd v \right)^q\,\dd P_n^*(u,\delta,z)\\
	&=\frac{1}{n}\sum_{i=1}^n e^{q\beta_0Z_i}\int_{U_{\hat\theta}(a)-n^{-\frac12+\alpha}}^{U_{\hat\theta}(a)+n^{-\frac12+\alpha}+cn^{-\frac13}T_n}\left(\int_{u}^{U_{\hat\theta}(a)+n^{-\frac12+\alpha}+cn^{-\frac13}T_n}\frac{\lambda_0(v)}{\Phi(v;\beta_0)}\,\dd v \right)^q\,\dd H^*(u|Z_i)\\
	&\lesssim (n^{-1/3}T_n)^{q}\frac{1}{n}\sum_{i=1}^n e^{q\beta_0Z_i}\left[H^*(U_{\hat\theta}(a)+n^{-1/2+\alpha}+cn^{-1/3}T_n|Z_i)-H^*(U_{\hat\theta}(a)+n^{-1/2+\alpha}|Z_i)\right]
	\end{split}
	\]
	Using the approximation of $H^*$ by the differentiable version $\tilde{H}^*$ as in \eqref{eqn:approx_H*}, we obtain 	$\Vert F\Vert_{L_q(P^*_n)}^q=O\left((n^{-1/3}T_n)^{q+1}\right)$. In the same way one can adjust also the proof of the boundedness of $J_{[]}(1,\F,L_2(P^*_n))$. 
\end{proof}
\begin{lemma}
	\label{le:Lambda_n-Lambda*}
	Let $M_n^*=\Lambda_n^*-\Lambda_{\hat\theta}$, $a\in\R$ and  $T_n<n^{\gamma}$, $\gamma\in(0,1/12)$. Under the assumptions of Theorem \ref{theo:CLT*}, there exists a Brownian Bridge $B_n$ and an event $E_n^*$ such that 
	\[
	\begin{split}
	M_n^*(t_n)-M_n^*(U_{\hat\theta}^*(a))&= n^{-1/2}\frac{1}{\Phi(U_{\hat\theta}^*(a);\beta_0)}\left[B_n\left(H^{uc,*}(t_n)\right)-B_n\left(H^{uc,*}(U_{\hat\theta}^*(a))\right)\right]\\
	&\quad-\left[\Phi_n^*(U_{\hat\theta}^*(a);\tilde\beta_n)-\Phi^*(U_{\hat\theta}^*(a);\tilde\beta_n)\right] \frac{\lambda_0(U_{\hat\theta}^*(a))}{\Phi(U_{\hat\theta}^*(a);\beta_0)}\left(t_n-U_{\hat\theta}^*(a)\right)\\
	&\quad+(\hat\beta_n^*-\tilde\beta_n)A'_0(U_{\hat\theta}^*(a))\left(t_n-U_{\hat\theta}^*(a) \right)+{r}_n^*(U_{\hat\theta}^*(a),t_n),
	\end{split}
	\]
	with some $r_n^*(U_{\hat\theta}^*(a),t_n)$ that satisfies 
	\[
	\p^*_\theta\left[\left\{\sup_{|t_n-U_{\hat\theta}^*(a)|\leq cn^{-1/3}T_n}|{r}_n^*(U_{\hat\theta}^*(a),t_n)|>x \right\}\cap E_n^*\right]\leq K x^{-q}n^{1-q}, 
	\]
	for some $K>0$ and all $x\geq 0$,   $a\in\R$ and  $q<2/(3\gamma)$.
\end{lemma}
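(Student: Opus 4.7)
The plan is to mirror the proof of Lemma \ref{le:Lambda_n-Lambda} using the bootstrap Breslow linear representation \eqref{eqn:Breslow_linear*} as the starting point in place of \eqref{eqn:Breslow_linear}. First I would apply \eqref{eqn:Breslow_linear*} at $t_n$ and at $U_{\hat\theta}^*(a)$ and subtract, which yields a decomposition of $M_n^*(t_n)-M_n^*(U_{\hat\theta}^*(a))$ into (i) a $(\p_n^*-P_n^*)$-integral of $\delta\pi(u;U_{\hat\theta}^*(a),t_n)/\Phi^*(u;\tilde\beta_n)$; (ii) a $(\p_n^*-P_n^*)$-integral of $e^{\tilde\beta_n'z}\int_{u\wedge U_{\hat\theta}^*(a)}^{u\wedge t_n}\lambda_{\hat\theta}(v)/\Phi^*(v;\tilde\beta_n)\,\dd v$; (iii) the linear term $(\hat\beta_n^*-\tilde\beta_n)'[A_0^*(t_n)-A_0^*(U_{\hat\theta}^*(a))]$; and (iv) the remainder $R_n^*(t_n)-R_n^*(U_{\hat\theta}^*(a))$ which is $o_{P^*}(n^{-1+\alpha})$ uniformly and hence absorbed into $r_n^*$.

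The second step converts every bootstrap-indexed quantity into its non-bootstrap analogue, since the statement of the lemma features $\Phi(\cdot;\beta_0)$, $\lambda_0$, and $A_0'$. In (i) I split $1/\Phi^*(u;\tilde\beta_n)=1/\Phi(u;\beta_0)+[1/\Phi^*(u;\tilde\beta_n)-1/\Phi(u;\beta_0)]$; Lemma \ref{lemma:approx_Phi*} and \eqref{eqn:bound_inv_Phi_n*} give the difference to be $O(n^{-1/2+\alpha})$ on $E_n^*$, and the resulting extra $(\p_n^*-P_n^*)$-integral is controlled via the same empirical process argument as Lemma \ref{le:R_1*} to give a tail bound of the required $x^{-q}n^{1-q}$ type. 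In (ii) I apply Lemma \ref{le:Breslow_linear*} to swap $\lambda_{\hat\theta}/\Phi^*(\cdot;\tilde\beta_n)$ for $\lambda_0/\Phi(\cdot;\beta_0)$, paying only an $O_{P^*}(n^{-1})$ uniform error that is absorbed into $r_n^*$. In (iii) I use $\sup_{t\in[\epsilon,M]}|A_0^{*\prime}(t)-A_0'(t)|=o_{P^*}(1)$ (a consequence of \eqref{eqn:par_est-true*} together with Lemma \ref{le:Phi*}) to Taylor-expand $A_0^*(t_n)-A_0^*(U_{\hat\theta}^*(a))=A_0'(U_{\hat\theta}^*(a))(t_n-U_{\hat\theta}^*(a))+O_{P^*}(|t_n-U_{\hat\theta}^*(a)|^2+n^{-1/2+\alpha}|t_n-U_{\hat\theta}^*(a)|)$, absorbing the non-leading pieces into $r_n^*$.

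The third step handles the two leading pieces exactly as in Lemma \ref{le:Lambda_n-Lambda}. For (i), after the $1/\Phi$ replacement above, the $(\p_n^*-P_n^*)$-integral of $\delta\pi/\Phi(u;\beta_0)$ equals $\Phi(U_{\hat\theta}^*(a);\beta_0)^{-1}[\tilde M_n^*(t_n)-\tilde M_n^*(U_{\hat\theta}^*(a))]$ up to an $R^{1,*}_n$ term controlled by Lemma \ref{le:R_1*}; then the strong approximation Lemma \ref{le:embedding*} replaces $\tilde M_n^*(t_n)-\tilde M_n^*(U_{\hat\theta}^*(a))$ by $n^{-1/2}[B_n(H^{uc,*}(t_n))-B_n(H^{uc,*}(U_{\hat\theta}^*(a)))]$ with an $O(n^{-1}\log n)$ error uniformly. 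For (ii), after the Lemma \ref{le:Breslow_linear*} substitution, I repeat the calculation of \eqref{eqn:second_term_breslow_linear}: expand the inner integral around $v=U_{\hat\theta}^*(a)$ and factor out the $(\p_n^*-P_n^*)$-integral of $e^{\beta_0'z}\1_{\{u>U_{\hat\theta}^*(a)\}}$, which equals $-[\Phi_n^*(U_{\hat\theta}^*(a);\tilde\beta_n)-\Phi^*(U_{\hat\theta}^*(a);\tilde\beta_n)]$, yielding the required leading term; the quadratic leftover is controlled via Lemma \ref{le:R_2*}.

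The main obstacle will be bookkeeping the many substitution errors: each time one trades a bootstrap function for its non-bootstrap analogue, one produces a stochastic remainder that has to satisfy the uniform polynomial-tail bound $K x^{-q}n^{1-q}$ on $E_n^*$, conditionally on the data. The bounds \eqref{eqn:par_est-true*}, Lemma \ref{lemma:approx_Phi*}, \eqref{eqn:bound_inv_Phi_n*}, Lemma \ref{le:Breslow_linear*}, and the tuned empirical process estimates of Lemmas \ref{le:R_1*}--\ref{le:R_2*} are exactly what is needed to close this loop; the conditional strong approximation Lemma \ref{le:embedding*} then supplies the Brownian bridge piece. Setting $r_n^*(U_{\hat\theta}^*(a),t_n)$ equal to the sum of all the error terms enumerated above gives the stated decomposition with the required tail bound.
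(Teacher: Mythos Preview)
Your proposal is correct and follows essentially the same route as the paper's proof: start from the bootstrap linear representation \eqref{eqn:Breslow_linear*}, replace the bootstrap-indexed functions $\Phi^*$, $\lambda_{\hat\theta}$, $A_0^*$ by their non-bootstrap analogues $\Phi$, $\lambda_0$, $A_0$ via Lemma \ref{le:Breslow_linear*} and the approximation bounds, then invoke Lemmas \ref{le:embedding*}, \ref{le:R_1*}, \ref{le:R_2*} exactly as in the proof of Lemma \ref{le:Lambda_n-Lambda}. One small slip: after the substitution in (ii) the outer weight is $e^{\tilde\beta_n' z}$, not $e^{\beta_0'z}$, so the factored term is directly $\Phi_n^*(\cdot;\tilde\beta_n)-\Phi^*(\cdot;\tilde\beta_n)$ as in the statement; this is cosmetic and does not affect the argument.
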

\begin{proof}
	Let $\pi_(u;t,t_n)=\1_{\left\{u\leq t_n\right\}}-\1_{\left\{u\leq t\right\}}$.	Using \eqref{eqn:Breslow_linear*}, we can write
	\begin{equation}
	\label{eqn:Lambda_n-Lambda*}
	\begin{split}
	M_n^*(t_n)-M_n^*(U_{\hat\theta}^*(a))
	&=\int\frac{\delta \pi_n(u;U_{\hat\theta}^*(a),t_n)}{\Phi^*(u;\tilde\beta_n)}\,\dd (\p_n^*-P^*_n)(u,\delta,z)\\
	&\quad-\int e^{\tilde\beta_n z}\int_{u\wedge U_{\hat\theta}^*(a)}^{u\wedge t_n}\frac{\lambda_{\hat\theta}(v)}{\Phi^*(v;\tilde\beta_n)}\,\dd v\,\dd(\p_n^*-P^*_n)(u,\delta,z)\\
	&\quad+(\hat\beta_n^*-\tilde\beta_n)\left[A_0^*\left(t_n\right)-A_0^*\left(U_{\hat\theta}^*(a)\right)\right]+R_n^*(t_n)-R_n^*\left(U_{\hat\theta}^*(a)\right).
	\end{split}
	\end{equation}
	Similarly to Lemma \ref{le:Breslow_linear*}, it can be shown that 
	\[
	\sup_{t,s\in[\epsilon,M]}\left|\int\delta \pi(u;t,s)\left(\frac{1}{\Phi^*(u;\tilde\beta_n)}-\frac{1}{\Phi(u;\beta_0)}\right)\,\dd (\p_n^*-P^*_n)(u,\delta,z)\right|=O_{P^*}(n^{-1}).
	\]
	Moreover, since the left hand side does not depend on $\hat\theta^*$, the results holds also with respect to the conditional probability $P^*_\theta$.
	Hence, by adjusting the event $E^*_n$, we can assume that 
	\[
	\sup_{t,s\in[\epsilon,M]}\left|\int\delta \pi(u;t,s)\left(\frac{1}{\Phi^*(u;\tilde\beta_n)}-\frac{1}{\Phi(u;\beta_0)}\right)\,\dd (\p_n^*-P^*_n)(u,\delta,z)\right|\lesssim n^{-1+\alpha}.
	\]
	As in the proof of Lemma \ref{le:Lambda_n-Lambda}, we have
	\begin{equation*}
	\begin{split}
	&\int\frac{\delta \pi\left(u;U_{\hat\theta}^*(a),t_n\right)}{\Phi(u;\beta_0)}\,\dd (\p_n^*-P^*_n)(u,\delta,z)\\
	&=\frac{1}{\Phi\left(U_{\hat\theta}^*(a);\beta_0\right)}\int \delta \pi\left(u;U_{\hat\theta}^*(a),t_n\right)\,\dd (\p_n^*-P^*_n)(u,\delta,z)+R^{1,*}_n\left(U_{\hat\theta}^*(a),t_n\right)
	\end{split}
	\end{equation*}
	where
	\[
	R^{1,*}_n\left(U_{\hat\theta}^*(a),t_n\right)=\int\delta \pi\left(u;U_{\hat\theta}^*(a),t_n\right)\left(\frac{1}{\Phi(u;\beta_0)}-\frac{1}{\Phi\left(U_{\hat\theta}^*(a);\beta_0\right)}\right)\,\dd (\p_n^*-P^*_n)(u,\delta,z)
	\]
	and 
	\begin{equation*}
	\begin{split}
	&\int \delta \pi\left(u;U_{\hat\theta}^*(a),t_n\right)\,\dd (\p_n^*-P^*_n)(u,\delta,z)\\
	&=n^{-1/2}\left[B_n\left(H^{uc,*}(t_n)\right)-B_n\left(H^{uc,*}\left(U_{\hat\theta}^*(a)\right)\right)\right]\\
	&\quad+\left[(\tilde{M}_n^*-n^{-1/2}B_n\circ H^{uc,*})(t_n)-(\tilde{M}^*_n-n^{-1/2}B_n\circ H^{uc,*})\left(U_{\hat\theta}^*(a)\right)\right]
	\end{split}
	\end{equation*}
	where $\tilde{M}_n^*$ is defined in \eqref{def:tilde_M_n*}.
	Similarly, in the third term of \eqref{eqn:Lambda_n-Lambda*} we can replace $A_0^*$ by the twice differentiable version $A_0$ because
	\[
	\sup_{t\in[\epsilon,M]}\left|(\hat\beta_n^*-\tilde{\beta}_n)\left[(A_0^*-A_0)(t_n)-(A_0^*-A_0)(t)\right]\right|=O_{P^*}(n^{-1}).
	\]
	By Lemma \ref{le:Breslow_linear*}, also in the second term of \eqref{eqn:Lambda_n-Lambda*}, we can replace $\lambda_{\hat{\theta}}$ and $\Phi^*$ by $\lambda_0$ and $\Phi$. Then, we can proceed as in Lemma \ref{le:Lambda_n-Lambda} using Lemmas \ref{le:R_1*} and \ref{le:R_2*}.
\end{proof}
\begin{lemma}
	\label{le:R_3*}
	Let $T_n=n^{\gamma}$ for some $\gamma\in (0,1/3)$ and $b=n^{-1/4}$.  	
	{Under the assumptions of Theorem \ref{theo:CLT*},} for each $2\leq q<1/(\gamma+1/24)$, there exist $K>0$ such that, for $x\geq 0$ and all  $a\in\R$,
	\[
	P^*_\theta\left[n^{1/3}T_n\sup_{|v-U^*_{\hat\theta}(a)|\leq b}\left|(\Phi_n^* -\Phi^*)(v;\tilde\beta_n)-(\Phi_n^*-\Phi^* )(U^*_{\hat\theta}(a);\tilde\beta_n)\right|>x\right]\leq K x^{-q}n^{1-q/3}.
	\]
\end{lemma}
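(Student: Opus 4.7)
The plan is to mimic the proof of Lemma~\ref{le:R_3_2}, working with the bootstrap empirical measure $\p_n^*$ and its conditional mean $P_n^*$ in place of $\p_n$ and $\p$. Since \eqref{eqn:par_est-true*} gives $|U^*_{\hat\theta}(a)-U_{\hat\theta}(a)|\lesssim n^{-1/2+\alpha}$ on $E_n^*$, the supremum over $|v-U^*_{\hat\theta}(a)|\leq b$ can be controlled by the supremum over $|v-U_{\hat\theta}(a)|\leq b+n^{-1/2+\alpha}$, which for large $n$ is dominated by enlarging the radius to, say, $2b$. Thus I would consider the class
\[
\F=\left\{f_{v,s}(t,\delta,z)=\1_{\{U_{\hat\theta}(a)+s<t\leq U_{\hat\theta}(a)+s+v\}}e^{\tilde\beta_n'z}\,\Big|\,v\in[0,2b],\,|s|\leq n^{-1/2+\alpha}\right\}
\]
with envelope $F(t,\delta,z)=\1_{\{U_{\hat\theta}(a)-n^{-1/2+\alpha}<t\leq U_{\hat\theta}(a)+2b+n^{-1/2+\alpha}\}}e^{\tilde\beta_n'z}$, so that on $E_n^*$ the quantity in the statement is bounded by $n^{1/3}T_n\sup_{f\in\F}|\int f\,\dd(\p_n^*-P_n^*)|$.

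Markov's inequality then gives
\[
P_\theta^*\!\left[\left\{n^{1/3}T_n\sup_{f\in\F}\left|\int f\,\dd(\p_n^*-P_n^*)\right|>x\right\}\cap E_n^*\right]\lesssim n^{-q/6}T_n^q\,x^{-q}\E_\theta^*\!\left[\sup_{f\in\F}\left|\int f\,\dd\sqrt{n}(\p_n^*-P_n^*)\right|^q\right],
\]
and by Theorem~2.14.5 and 2.14.2 in~\cite{VW96},
\[
\E_\theta^*\!\left[\sup_{f\in\F}\left|\int f\,\dd\sqrt{n}(\p_n^*-P_n^*)\right|^q\right]^{1/q}\lesssim J_{[]}(1,\F,L_2(P_n^*))\,\|F\|_{L_2(P_n^*)}+n^{-1/2+1/q}\|F\|_{L_q(P_n^*)}.
\]
After establishing $\|F\|_{L_q(P_n^*)}^q\asymp b$ and boundedness of $J_{[]}(1,\F,L_2(P_n^*))$, the same arithmetic as in Lemma~\ref{le:R_3_2} (using $b=n^{-1/4}$, $T_n=n^\gamma$, and $q<1/(\gamma+1/24)$) yields the required bound $Kx^{-q}n^{1-q/3}$.

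For the envelope norm, I write
\[
\|F\|_{L_q(P_n^*)}^q=\frac1n\sum_{i=1}^n e^{q\tilde\beta_n'Z_i}\int_{U_{\hat\theta}(a)-n^{-1/2+\alpha}}^{U_{\hat\theta}(a)+2b+n^{-1/2+\alpha}}f_{\hat\theta}(t|Z_i)[1-\hat G_n(t)]\,\dd t.
\]
Using \eqref{eqn:approx_H*} and the fact that $\tilde H^{uc,*}$ has a uniformly bounded density $h^{uc,*}$ that is bounded away from zero (this follows from \eqref{eqn:tilde_h*}, the convergence of $\hat G_n$ to $G$, and the lower bound assumptions on $f$), I obtain $c_1 b\leq \|F\|_{L_q(P_n^*)}^q\leq c_2 b$ with probability converging to one, uniformly in $a$.

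The bracketing entropy bound is constructed exactly as in Lemma~\ref{le:R_3_2}: divide $[0,2b]$ and $[-n^{-1/2+\alpha},n^{-1/2+\alpha}]$ into $M$ pieces of lengths $L_1=2b/M$ and $L_2=n^{-1/2+\alpha}/M$, form the natural indicator brackets $[l_{i,j},L_{i,j}]$, and use the uniform bound on $h^{uc,*}$ to get $\|L_{i,j}-l_{i,j}\|_{L_2(P_n^*)}^2\lesssim L_1$. Choosing $M=\lfloor Kb/(\varepsilon^2\|F\|_{L_2(P_n^*)}^2)+1\rfloor$ gives $N_{[]}(\varepsilon\|F\|_{L_2(P_n^*)},\F,L_2(P_n^*))\leq M^2$, and the logarithmic integral is finite by the usual change of variable. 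The main obstacle is the mildly random nature of $P_n^*$: the conditional distribution involves $\hat G_n$ and $\hat\theta_n$, so the density $h^{uc,*}$ is not a fixed function. This is handled by restricting to the event $E_n^*$ where the approximation $h^{uc,*}\approx[1-G(t)]\E[f_{\hat\theta}(t|Z)]$ holds uniformly, so that all bounds involving $h^{uc,*}$ hold with constants independent of $n$ on an event of probability tending to one under $P_n^*$.
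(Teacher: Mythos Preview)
Your proposal is correct and follows essentially the same route as the paper: adapt Lemma~\ref{le:R_3_2} by replacing $\beta_0,\,U(a)$ with $\tilde\beta_n,\,U_{\hat\theta}(a)$, and then verify $\|F\|_{L_q(P_n^*)}^q\asymp b$ together with boundedness of $J_{[]}(1,\F,L_2(P_n^*))$ via the approximation of $H^*$ by the differentiable $\tilde H^*$. One small slip: since $F$ does not depend on $\delta$, the correct expression for $\|F\|_{L_q(P_n^*)}^q$ involves the full conditional increment $H^*(\,\cdot\mid Z_i)$ rather than only the uncensored density $f_{\hat\theta}(t\mid Z_i)[1-\hat G_n(t)]$, though this does not affect the $O(b)$ conclusion.
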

\begin{proof}
	We follow the proof of Lemma \ref{le:R_3_2} considering the same class of functions with $\beta_0$ and $U(a)$ replaced by $\tilde{\beta}_n$ and $U_{\hat\theta}(a)$ respectively. We just need to show that $\Vert F\Vert_{L_q(P^*_n)}^q=O(b)$ and that $J_{[]}(1,\F,L_2(P^*_n))$ is bounded. 
	For the envelope function, we obtain
	\[
	\begin{split}
	\Vert F\Vert_{L_q(P^*_n)}^q&=\int \1_{\left\{U_{\hat\theta}(a)+n^{-1/2+\alpha}<t\leq U_{\hat\theta}(a)+n^{-1/2+\alpha}+b\right\}}e^{q\tilde\beta_nz}\,\dd P^*_n(t,\delta,z)\\
	&=\frac{1}{n}\sum_{i=1}^n e^{q\tilde\beta_nZ_i}\left[H^*\left(U_{\hat\theta}(a)+n^{-1/2+\alpha}+b|Z_i\right)-H^*\left(U_{\hat\theta}(a)+n^{-1/2+\alpha}|Z_i\right)\right].
	\end{split}
	\]
	As in the proof of Lemma \ref{le:R_2*} it can be shown that $\Vert F\Vert_{L_q(P^*_n)}^q=O(b)$. In the same way e can adjust the proof for the boundedness of  
	$J_{[]}(1,\F,L_2(P^*_n))$. 
\end{proof}
\begin{proof}[Proof of Theorem \ref{theo:CLT*}]Let 
	\[
	\mathcal{J}^*_n=n^{p/3}\int_\epsilon^M\left|\hat{\lambda}_n^*(t)-\lambda_{\hat\theta}^*(t)\right|^p\,\text{d}t.
	\]
	
	\textbf{Step 1.} As in the proof of Theorem \ref{theo:CLT} and Theorem \ref{theo:CLT2}, it can be shown that  $\mathcal{J}_n^*\1_{E_n^*}=\tilde{\mathcal{J}}_n^*\1_{E_n^*}+o_{P^*}(n^{-1/6})$ where
	\[
	\tilde{\mathcal{J}}_n^*=n^{p/3}\int_{\lambda_{\hat\theta}^*(M)}^{\lambda_{\hat\theta}^*(\epsilon)}\left|\hat{U}_n^*(a)-U^*_{\hat\theta}(a)\right|^p\left|(U^*_{\hat\theta})'(a)\right|^{1-p}\,\text{d}a.
	\]
	Here we use \eqref{eqn:inv2*}, \eqref{eqn:bound_expectation_L_p1*}- \eqref{eqn:bound_expectation_L_p3*}, \ref{le:boundaries*} and the fact that $(\lambda^*_{\hat\theta})'$ satisfies \eqref{eqn:assumption_derivative}.
	Note that now we need $$\max\left(p-1/2,1,\frac{4(p-1)}{3(1-\alpha_0)}\right)<p'< \min\left(p,\frac{2}{1+\alpha_0}\right).$$ Such a choice is possible since $\alpha_0$ can be chosen arbitrarily small and $p<5/2$. Then we get
	\[
	\begin{split}
	\1_{E_n^*}\int_{\epsilon+n^{-1}\log n}^{\epsilon+n^{-1/2}\sqrt{\log n}}\left[\hat\lambda_n^*(t)-\lambda_{\hat\theta}^*(t)\right]_+^{p'}\,\dd t&=O_{P^*}\left(n^{-p'/2}n^{-\frac12\{-(1+\alpha_0)p'/2+1\}}\sqrt{\log n}^{-(1+\alpha_0)p'/2+1}\right)\\
	&=O_{P^*}\left(n^{-(1-\alpha_0)p'/4-1/2}\sqrt{\log n}^{-(1+\alpha_0)p'/2+1}\right)\\
	&=o_{P^*}\left(n^{-p/3-1/6}\right)
	\end{split}
	\]
	\textbf{Step 2.} Let 
	\[
	W_t(u)=n^{1/6}\left[W_n\left(H^{uc,*}(t)+n^{-1/3}u\right)-W_n\left(H^{uc,*}(t)\right)\right]
	\]
	and $d(t)$, $
	\tilde{V}(t)$ as in the proof of Theorem \ref{theo:CLT}. We argue conditionally on $\hat\theta_n^*$ such that $|\hat\theta_n^*-\hat\theta_n|\leq n^{-1/2+\alpha}$ and we
	show that 
	\begin{equation}
	\label{eqn:step2*}
	\tilde{\mathcal{J}}_n^*\1_{E_n^*}=\int_{\epsilon+b}^{M-b}\left|\tilde{V}(t)-n^{-1/6}\eta_n^*(t) \right|^p\left|\frac{\lambda'_0(t)}{h^{uc}(t)}\right|^p\,\dd t+o_{P^*_\theta}(n^{-1/6}),
	\end{equation}
	for some $\eta_n^*$ such that $\sup_{t\in[\epsilon+b,M-b]}|\eta_n^*(t)|=O_{P^*_\theta}(1)$.
	Let 
	\[
	\Psi_n^{s,*}(t)=\int k_b(t-v)\left(\Phi_n^*(v;\tilde\beta_n)-\Phi^*(v;\tilde\beta_n)\right)\,\dd v.
	\]
	Note that, from Lemma \ref{le:Phi*},
	\[
	\sup_{t\in[\epsilon+b,M-b]}\left|\Psi_n^{s,*}(t) \right|\leq \sup_{v\in[\epsilon,M]}\left| \Phi_n^*(v;\tilde\beta_n)-\Phi^*(v;\tilde\beta_n)\right|=O_{P^*}(n^{-1/2})
	\]
	and
	\[
	\sup_{t\in[\epsilon+b,M-b]}\left|\frac{\dd\Psi_n^{s,*}(t)}{\dd t}\right|\leq b^{-1}\sup_{|v-t|\leq b}\left| (\Phi_n^*(v;\tilde\beta_n)-\Phi^*(v;\tilde\beta_n))-(\Phi_n^*(t;\tilde\beta_n)-\Phi^*(t;\tilde\beta_n))\right|=o_{P^*}(n^{-1/6}).
	\]
	For every $a\in\R$ and $\tilde{a}=\lambda_{\hat\theta}(U_{\hat\theta}^*(a))$ and define
	\[
	\begin{split}
	a^\xi=\tilde{a}-n^{-1/2}\xi_n\frac{{h}^{uc}(U^*_{\hat\theta}(a))}{\Phi(U_{\hat\theta}^*(a);\beta_0)}+(\hat\beta_n^*-\tilde\beta_n)A'_0(U^*_{\hat\theta}(a))-\Psi_n^{s,*}(U^*_{\hat\theta}(a))\frac{\lambda_0(U^*_{\hat\theta}(a))}{\Phi(U^*_{\hat\theta}(a);\beta_0)}.
	\end{split}
	\]
	We can assume that on the event $E_n^*$ it holds
	\begin{equation}
	\label{eqn:E_n_add*}
	\sup_{a\in\R}\left|a-a^\xi\right|\leq cn^{-1/2+\alpha}.
	\end{equation}
	Then, from Lemma 6(i) in \cite{durot2007} and the change of variable $a\to a^{\xi}$ we have
	\[
	\begin{split}
	\mathcal{J}_n^*\1_{E_n^*}&=\1_{E_n^*}n^{p/3}\int_{\lambda_{\hat\theta}^*(M)}^{\lambda_{\hat\theta}^*(\epsilon)}\left|\frac{H^{uc}(\hat{U}^*_n(a))-H^{uc}(U^*_{\hat\theta}(a))}{h^{uc}(U^*_{\hat\theta}(a))}\right|^p\left|(U^*_{\hat\theta})'(a)\right|^{1-p}\,\text{d}a+o_{P^*_{\theta}}(n^{-1/6})\\
	&=\1_{E_n^*}n^{p/3}\int_{\lambda_{\hat\theta}^*(M)}^{\lambda_{\hat\theta}^*(\epsilon)}\left|\frac{H^{uc,*}(\hat{U}^*_n(a))-H^{uc,*}(U^*_{\hat\theta}(a))}{h^{uc}(U^*_{\hat\theta}(a))}\right|^p\left|(U^*_{\hat\theta})'(a)\right|^{1-p}\,\text{d}a+o_{P^*_\theta}(n^{-1/6})\\
	&=\1_{E_n^*}n^{p/3}\int_{J_n}\left|\frac{H^{uc,*}(\hat{U}^*_n(a^\xi))-H^{uc,*}(U^*_{\hat\theta}(a^\xi))}{h^{uc}(U^*_{\hat\theta}(a))}\right|^p\left|(U^*_{\hat\theta})'(a)\right|^{1-p}\,\text{d}a+o_{P^*_\theta}(n^{-1/6}),
	\end{split}
	\]
	with
	\[
	J_n=\left[\lambda_{\hat\theta}^*(M)+n^{-1/6}/\log n,\lambda_{\hat\theta}^*(\epsilon)-n^{-1/6}/\log n\right].
	\]
	Here we use that, for $r<3$, 
	\[
	\begin{split}
	&n^{r/3}\sup_{a\in[\lambda_{\hat\theta}^*(M),\lambda_{\hat\theta}^*(\epsilon)]}\E^*_\theta\left[\left|\frac{(H^{uc,*}-H^{uc})(U^*_n(a))-(H^{uc,*}-H^{uc})(U^*_{\hat\theta}(a))}{h^{uc}(U^*_{\hat\theta}(a))}\right|^r\right]\\
	&\lesssim \sup_{t\in[\epsilon,M]}\left|h^{uc,*}(t)-h^{uc}(t)\right|\sup_{a\in[\lambda_{\hat\theta}^*(M),\lambda_{\hat\theta}^*(\epsilon)]}\E^*_\theta\left[n^{r/3}\left|U^*_n(a)-U^*_{\hat\theta}(a)\right|^r\right]\\
	&=O\left(n^{-1/2}\right)=o\left(n^{-r/6}\right)
	\end{split}
	\]
	because, as in Lemma \ref{lemma:approx_Phi*}, we also have that with probability converging to one, given the data
	\begin{equation}
	\label{eqn:h^uc*}
	\sup_{t\in[\epsilon,M]}\left|h^{uc,*}(t)-h^{uc}(t)\right|=O(n^{-1/2}).
	\end{equation}
	Let $  t_n^*=(H^{uc,*})^{-1}(H^{uc,*}(U^*_{\hat\theta}(a))+n^{-1/3}u)$. Using properties of the $\argmax$ function we obtain
	\[
	n^{1/3}\left(H^{uc,*}(\hat{U}^*_n(a^\xi))-H^{uc,*}(U^*_{\hat\theta}(a))\right)=\argmax_{u\in I_n^*(a)}\left\{D_n^*(a,u)+S_n^*(a,u) \right\},
	\]
	where
	\[
	I_n^*(a)=\left[-n^{1/3}\left(H^{uc*,}(U^*_{\hat\theta}(a))-H^{uc,*}(\epsilon)\right),n^{1/3}\left(H^{uc,*}(M)-H^{uc,*}(U^*_{\hat\theta}(a))\right)\right],
	\]
	\[
	\begin{split}
	D_n^*(a,u)=n^{2/3}\Phi(U^*_{\hat\theta}(a);\beta_0)\left\{\left(\Lambda_{\hat\theta}(t_n^*)-\tilde{a}t_n^*\right)- \left(\Lambda_{\hat\theta}(U^*_{\hat\theta}(a))-\tilde{a}U^*_{\hat\theta}(a)\right) \right\}
	\end{split}
	\]
	and
	\begin{equation*}
	\begin{split}
	S_n^*(a,u)=n^{2/3}\Phi(U^*_{\hat\theta}(a);\beta_0)\left\{ (\tilde{a}-a^\xi)\left[t_n^*-U^*_{\hat\theta}(a)\right]+(\Lambda_n^*-\Lambda_{\hat\theta})\left(t_n^*\right)-(\Lambda_n^*-\Lambda_{\hat\theta})(U^*_{\hat\theta}(a))\right\}.
	\end{split}
	\end{equation*}
	Using Lemma \ref{le:Lambda_n-Lambda*} and the definition of $a^\xi$, we get
	\[
	S_n^*(a,u)=W_{U^*_{\hat\theta}(a)}(u)+R^{3,*}_n(a,u)+R^{4,*}_n(a,u)+R^{5,*}(a,u)
	\]
	where
	\[
	R^{3,*}_n(a,u)=n^{2/3}\left\{\Psi_n^{s,*}(U^*_{\hat\theta}(a)) -\left[\Phi_n^*(U^*_{\hat\theta}(a);\tilde\beta_n)-\Phi^*(U^*_{\hat\theta}(a);\tilde\beta_n)\right] \right\}\lambda_{0}(U^*_{\hat\theta}(a))\left[t_n^*-U^*_{\hat\theta}(a)\right],
	\]
	\[
	R^{4,*}_n(a,u)=n^{2/3}\Phi(U^*_{\hat\theta}(a);\beta_0)\,r_n^*\left(U^*_{\hat\theta}(a),t_n^*\right),
	\]
	with $r_n^*$ as in Lemma 
	\ref{le:Lambda_n-Lambda*} and
	\[
	R^{5,*}_n(a,u)=n^{1/6}\xi_nh^{uc}(U^*_{\hat\theta}(a))\left[t_n^*-(H^{uc})^{-1}(H^{uc}(U^*_{\hat\theta}(a))+n^{-1/3}u)\right].
	\]
	Note that, by the mean value theorem, 
	\[
	\begin{split}
	&\sup_{|u|\leq T_n}\left|(H^{uc,*})^{-1}(H^{uc,*}(U^*_{\hat\theta}(a))+n^{-1/3}u)-(H^{uc})^{-1}(H^{uc}(U^*_{\hat\theta}(a))+n^{-1/3}u)\right|\\
	&=n^{-1/3}T_n\sup_{|u|\leq T_n}\left|\frac{1}{h^{uc,*}(\zeta^1_n(u))}-\frac{1}{h^{uc}(\zeta_n^2(u))}\right|\\
	&\lesssim n^{-1/3}T_n\left(\sup_{t\in[\epsilon,M]}\left|h^{uc,*}(t)-h^{uc}(t)\right|+\sup_{|u|\leq T_n}\left|h^{uc}(\zeta^1_n(u))-h^{uc}(\zeta^2_n(u))\right|\right)\\
	&\lesssim n^{-2/3}T_n^2.
	\end{split}
	\]
	for some $\zeta^1_n(u),\,\zeta_n^2(u)$ such that $|\zeta^i_n(u)-U^*_{\hat\theta}(a)|\lesssim n^{-1/3}u$, $i=1,2$.
	Let $R_n^*(a,u)=R^{3,*}_n(a,u)+R^{4,*}_n(a,u)+R^{5,*}_n(a,u)$.	
	Now we use Lemma 5 in \cite{durot2007} to show that $R_n^*$ is negligible. As in the proof of Theorem \ref{theo:CLT}, using Lemma \ref{le:inv_tail_prob1*} and the fact that $h^{uc,*}$ and $(U^*)'$ are uniformly bounded (since they converge to $h^{uc}$ and $U$), we can localize obtaining that
	\[
	\tilde{\mathcal{J}}_n^*\1_{E_n^*}=\1_{E_n^*}\int_{J_n}\left|\tilde{U}_n^*(a)-n^{-1/6}\eta_n(a) \right|^p\frac{|(U^*_{\hat\theta})'(a)|^{1-p}}{h^{uc}(U^*_{\hat\theta}(a))^p}\,\dd a+o_{P^*_\theta}(n^{-1/6})
	\]
	where
	\[
	\tilde{U}_n^*(a)=\argmax_{|u|\leq T_n}\left\{D_n^*(a,u)+W_{U^*(a)}(u)+R_n^*(a,u)\right\},
	\]
	\[
	\eta_n^*(a)=n^{1/2}(a-a^\xi)|(U^*_{\hat\theta})'(a)|h^{uc,*}(U^*_{\hat\theta}(a))=O_{P^*_\theta}(1).
	\]
	Afterwards, using Lemmas \ref{le:Lambda_n-Lambda*}, \ref{le:R_3*} and 
	\[
	P^*_\theta\left[\sup_{|u|\leq T_n}\left|R^{5,*}_n(a,u)\right|>x\right]\lesssim P^*_\theta\left[\left|\xi_n\right|>cn^{1/2}T_n^{-2}x\right]\lesssim x^{-q}n^{1-q/3},
	\]
	we show as in the proof of Theorem \ref{theo:CLT} that $R_n^*$ is negligible. 
	For $D^*_n(a,u),$ in this case, by a Taylor expansion we have
	\[
	D_n^*(a,u)=n^{2/3}\Phi\left(U^*_{\hat\theta}(a);\beta_0\right)\lambda'_{\hat\theta}(\zeta_n)\left[(H^{uc,*})^{-1}(H^{uc,*}(U^*_{\hat\theta}(a))+n^{-1/3}u)-U^*_{\hat\theta}(a)\right]^2
	\]
	for some $\zeta_n$ such that $|\zeta_n-U^*_{\hat\theta}(a)|\lesssim n^{-1/3}\log n$. Since $\lambda'_{\hat\theta}$ and $h^{uc,*}$ are uniformly bounded we again obtain
	\[
	\left|\frac{\partial}{\partial u}D_n^*(a,u)\right|\leq K|u| \quad\text{and}\quad D_n^*(a,u)\leq -cu^2,
	\]
	for some positive constants $K,\,c$, $a\in J_n$ and $|u|\leq T_n$. Moreover, from \eqref{eqn:par_est-true*} it follows that 
	\[
	\sup_{|u|\leq \log n}\left|D^*_n(a,u)-n^{2/3}\Phi\left(U^*_{\hat\theta}(a);\beta_0\right)\lambda'_{0}(\zeta_n)\left[t_n^*-U^*_{\hat\theta}(a)\right]^2\right|\leq n^{-1/2+\alpha}(\log n)^2.
	\] 
	Then, \eqref{eqn:assumption_derivative} yields 
	\[
	\sup_{|u|\leq \log n}\left|D^*_n(a,u)-n^{2/3}\Phi\left(U^*_{\hat\theta}(a);\beta_0\right)\lambda'_{0}(U^*_{\hat\theta}(a))\left[t_n^*-U^*_{\hat\theta}(a)\right]^2\right|\leq n^{-s/3}(\log n)^3.
	\]
	Finally, using \eqref{eqn:h^uc*}, we obtain
	\[
	\begin{split}
	n^{2/3}\left[t_n^*-U^*_{\hat\theta}(a)\right]^2&=\frac{u^2}{h^{uc,*}(\zeta_n)^2}=\frac{u^2}{h^{uc}(\zeta_n)^2}+O\left(n^{-1/2}(\log n)^2\right)\\
	&=\frac{u^2}{h^{uc}(U^*(a))^2}+O\left(n^{-1/3}(\log n)^3\right)
	\end{split}
	\]
	for some $\zeta_n$ such that $|\zeta_n-U^*_{\hat\theta}(a)|\lesssim n^{-1/3}\log n$, and as a result 
	\[
	\sup_{|u|\leq \log n}\left|D^*_n(a,u)-d(U^*_{\hat\theta}(a))u^2\right|\leq n^{-s/3}(\log n)^3.
	\]
	Then, exactly as in Theorem \ref{theo:CLT2}, we obtain \eqref{eqn:step2*} with 
	\begin{equation}
	\label{def:eta_t*}
	\eta_n^*(t)=\frac{h^{uc}(t)}{|\lambda'_0(t)|}\left\{\xi_n\lambda_0(t)-n^{1/2}(\hat\beta_n^*-\tilde\beta_n)A'_0(t)+n^{1/2}\Psi_n^{s,*}(t)\frac{\lambda_0(t)}{\Phi(t;\beta_0)}-n^{1/2}\left(\lambda^*_{\hat\theta}(t)-\lambda_{\hat\theta}(t)\right) \right\}.
	\end{equation}
	Step 3 and 4 are exactly as in Theorem \ref{theo:CLT2}.
\end{proof}

\section{{Estimation in the Cox model}}\label{sec:Cox}
In this section, we gather properties of the maximum partial likelihood estimator $\hat\beta_n$ and the empirical estimator $\Phi_n$ in the Cox model that are used in our proofs. Although the results seem to be well know in the literature, we are not aware of a reference where they are proved. However, it is out of the scope of the current paper to provide a proof. 

We have
\begin{equation}\label{eq: moments betan}
\E\left[n^{1/2}\vert\hat\beta_n-\beta_0\vert\right]^p \lesssim 1
\end{equation}
and

\begin{equation}\label{eq: moments Phin}
\E\left[n^{1/2}\sup_t\vert\Phi_n(t,\beta_0)-\Phi(t,\beta_0)\vert\right]^p \lesssim 1
\end{equation}
for all $p\geq 1$.

\end{document}